\theoremstyle{remark}
\newtheorem*{notation*}{\protect\notationname}
\theoremstyle{plain}
\newtheorem{thm}{\protect\theoremname}[section]
\theoremstyle{definition}
\newtheorem{defn}[thm]{\protect\definitionname}
\theoremstyle{definition}
\newtheorem{example}[thm]{\protect\examplename}
\theoremstyle{remark}
\newtheorem{rem}[thm]{\protect\remarkname}
\theoremstyle{plain}
\newtheorem{lem}[thm]{\protect\lemmaname}
\theoremstyle{plain}
\newtheorem{prop}[thm]{\protect\propositionname}
\theoremstyle{definition}
\theoremstyle{remark}
\newtheorem*{claim*}{\protect\claimname}
\theoremstyle{plain}
\newtheorem{cor}[thm]{\protect\corollaryname}
\theoremstyle{definition}
\newtheorem*{xca*}{\protect\exercisename}
\theoremstyle{remark}
\newtheorem{construction}[thm]{Construction}
\newtheorem{fact}[thm]{Fact}
\newtheorem{notation}[thm]{Notation}
 \theoremstyle{definition}
\providecommand{\claimname}{Claim}
\providecommand{\corollaryname}{Corollary}
\providecommand{\definitionname}{Definition}
\providecommand{\examplename}{Example}
\providecommand{\exercisename}{Exercise}
\providecommand{\lemmaname}{Lemma}
\providecommand{\notationname}{Notation}
\providecommand{\notename}{Note}
\providecommand{\propositionname}{Proposition}
\providecommand{\remarkname}{Remark}
\providecommand{\theoremname}{Theorem}
\global\long\def\S{\mathcal{S}}%
\global\long\def\Pn{\mathbb{P}^{n}}%
\global\long\def\PN{\mathbb{P}^{N}}%
\global\long\def\Pu{\mathbb{P}^{1}}%
\global\long\def\Pd{\mathbb{P}^{2}}%
\global\long\def\ot{\otimes}%
\global\long\def\A{\mathbb{A}}%
\global\long\def\cc{\mathbb{C}}%
\global\long\def\M{\mathcal{M}}%
\global\long\def\g{\Gamma}%
\global\long\def\op{\oplus}%
\global\long\def\os{\mathcal{O}_{\S}}%
\global\long\def\F{\mathcal{F}}%
\global\long\def\L{\mathcal{L}}%
\global\long\def\C{\mathcal{C}}%
\global\long\def\oc{\mathcal{O}_{\C}}%
\global\long\def\iso{\simeq}%
\global\long\def\N{\mathbb{N}}%
\global\long\def\Z{\mathbb{Z}}%
\global\long\def\Q{\mathbb{Q}}%
\global\long\def\D{\mathrm{Div}}%
\global\long\def\im{\mathrm{Im}}%
\global\long\def\mm{\mathfrak{m}}%
\global\long\def\I{\mathcal{I}}%
\global\long\def\rg{\mathrm{rk}}%
\global\long\def\ns{\mathrm{NS}}%
\global\long\def\bl{\mathrm{Bl}}%
\global\long\def\x{\chi}%
\global\long\def\li{\equiv}%
\global\long\def\supp{\mathrm{Supp}}%
\global\long\def\K{\mathcal{K}}%
\global\long\def\bs{\mathrm{Bs}}%
\global\long\def\ks{\K_{\S}}%
\global\long\def\num{\mathrm{Num}}%
\global\long\def\gl{\mathrm{GL}}%
\global\long\def\pgl{\mathrm{PGL}}%
\global\long\def\pc{\mathbb{P}_{\C}}%
\global\long\def\xt{\x_{\mathrm{top}}}%
\global\long\def\ff{\mathbb{F}_{n}}%
\global\long\def\pic{\mathrm{Pic}}%
\global\long\def\deg{\mathrm{deg}}%
\global\long\def\su{\subseteq}%
\global\long\def\ra{\dashrightarrow}%
\begin{document}
\title{Ruled and rational surfaces and their models}
\author{Giacomo Graziani}
\date{}
\maketitle
\tableofcontents{}

\begin{abstract}
    
\end{abstract}
One of the most powerful ideas in the study and classification
of algebraic varieties is the notion of a \emph{model}—that is, to
single out an object, in the appropriate isomorphism class, with nice properties. This survey aims to define and study suitable models of rational and, more generally, ruled surfaces in the smooth complex case, and to use them to study such surfaces.

A key consequence of Castelnuovo's contractibility Theorem is that
every surface $\S$ admits a \emph{relatively minimal model}: a birational
morphism $\S\to\S'$ where $\S'$ has no $\left(-1\right)$-curves.
We will see that the relatively minimal models of surfaces ruled over
an irrational curve are exactly the $\Pu$-bundles, i.e., the projectivization
of rank 2 vector bundles over that curve. It is also possible
to introduce a numerical invariant $e$ for such bundles, and this
invariant will be shown to completely classify the indecomposable relatively minimal models over an elliptic curve.

Nevertheless, the notion of a relatively minimal model turns out to
be not completely satisfactory since, in general, such a model is not unique. To solve this problem, we define \emph{minimal models}
as surfaces with a nef canonical system; it is straightforward to see that such a model, when it exists, is unique. Dealing
with minimal models requires the introduction of a new biregular invariant
of a smooth surface, namely the notion of \emph{type}. The existence
of this invariant provides an analogue of Kodaira's classification
for smooth complex surfaces. From these results, we can deduce
many important statements, such as the existence of a minimal model
for every non-ruled surface and an extended description of relatively
minimal models in the general ruled case. This leads to a broad
characterization of ruled and rational surfaces, which contains as
particular cases both \emph{Castelnuovo's criterion for rationality}
and an affirmative answer to \emph{Lüroth's problem for complex surfaces}.

In the final part of this survey, we use the theory we have developed to study the embeddings of ruled surfaces into projective spaces.
Namely, we will show that for every pair of integers $k,n$ with $k>n\ge0$
we can embed $\ff$ as a rational scroll of degree $2k-n$ in $\mathbb{P}^{2k-n+1}$. We also show that for every $n\ge e+3\ge2$ there exists an
elliptic scroll in $\mathbb{P}^{2n-e-1}$ of degree $2n-e$ and invariant $e$. These results, in analogy with the case of curves, suggest
that the possible degrees of an embedding are related to the rationality
of the surface. After showing that the degree of an irreducible non-degenerate
surface in $\mathbb{P}^{n}$ is bounded below by $n-1$, we give a
complete classification of smooth surfaces that can be embedded in
$\mathbb{P}^{n}$ with degree $n-1$ or $n$. As it turns out, all such surfaces are rational.

The material presented in this survey follows the classical treatments
of \cite{Beau} and \cite{HA}, while we refer to \cite{Ke} for background material.

\section{Technical Tools}

\subsection{Ampleness and very ampleness}
\begin{lem}
\label{lem:extend1}Let $X$ be an affine variety and $f\in\cc\left[X\right]$.
Let $U_{f}$ be the open set defined by $f\neq0$ and $\F$ a quasi-coherent
sheaf on $X$. Then

\begin{enumerate}
\item if $s\in\g\left(X,\F\right)$ is a global section such that $s_{|U_{f}}=0$,
then there exists an integer $n>0$ with $f^{n}s=0$;
\item if $t\in\g\left(U_{f},\F\right)$, there exists an integer $n>0$
such that $f^{n}t$ extends to a global section of $\F$.
\end{enumerate}
\end{lem}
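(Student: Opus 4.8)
The plan is to reduce both statements to elementary facts about localization of modules. I write $A=\cc\left[X\right]$, so that $X=\mathrm{Spec}\,A$ and $U_{f}=D\left(f\right)$ is the corresponding distinguished open set. Since $\F$ is quasi-coherent and the affine $X$ is quasi-compact, I can fix a finite open affine cover $X=\bigcup_{i=1}^{r}V_{i}$ with $V_{i}=\mathrm{Spec}\,A_{i}$ on which $\F|_{V_{i}}\iso\widetilde{M_{i}}$ for the $A_{i}$-module $M_{i}=\g\left(V_{i},\F\right)$. Writing $\bar{f}$ for the image of $f$ in each $A_{i}$, one has $U_{f}\cap V_{i}=D_{V_{i}}\left(\bar{f}\right)$, and hence, by the standard computation of the sections of $\widetilde{M_{i}}$ over a distinguished open, $\g\left(U_{f}\cap V_{i},\F\right)=\left(M_{i}\right)_{\bar{f}}$. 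Under this identification the restriction map $\g\left(V_{i},\F\right)\to\g\left(U_{f}\cap V_{i},\F\right)$ is precisely the localization map $M_{i}\to\left(M_{i}\right)_{\bar{f}}$, which is where all the algebra enters.

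For part (1), the hypothesis $s_{|U_{f}}=0$ forces, for each $i$, the image of $s_{|V_{i}}\in M_{i}$ in $\left(M_{i}\right)_{\bar{f}}$ to vanish; by the definition of localization this means $\bar{f}^{\,n_{i}}s_{|V_{i}}=0$ in $M_{i}$ for some $n_{i}>0$. Because the cover is finite I set $n=\max_{i}n_{i}$, so that $f^{n}s$ restricts to $0$ on every $V_{i}$, and the gluing axiom for $\F$ then gives $f^{n}s=0$ globally.

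For part (2) I argue similarly on each piece and then glue. Each $t_{|U_{f}\cap V_{i}}$ lies in $\left(M_{i}\right)_{\bar{f}}$, so it can be written as $m_{i}/\bar{f}^{\,n_{i}}$; enlarging the exponents to a common $n$, the restriction of $f^{n}t$ to $U_{f}\cap V_{i}$ is the image of a genuine element $s_{i}\in M_{i}=\g\left(V_{i},\F\right)$. The main obstacle is that these local extensions need not agree on overlaps, and reconciling them is the crux of the argument. On $V_{i}\cap V_{j}$ both $s_{i}$ and $s_{j}$ restrict to $f^{n}t$ over $U_{f}\cap V_{i}\cap V_{j}$, so their difference is a section of $\F$ that vanishes on this distinguished open. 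Applying part (1) to these differences—on the intersections $V_{i}\cap V_{j}$, which are affine because $X$ is separated—yields an integer $m$, taken uniform over the finitely many pairs, with $f^{m}s_{i}=f^{m}s_{j}$ on $V_{i}\cap V_{j}$.

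The sections $f^{m}s_{i}$ therefore glue to a global section $u\in\g\left(X,\F\right)$, and by construction $u_{|U_{f}}=f^{n+m}t$, so replacing $n$ by $n+m$ completes the proof. Throughout, the finiteness of the cover, guaranteed by the quasi-compactness of the affine $X$, is exactly what permits the exponents to be chosen uniformly; this is the point I expect to require the most care, since without a uniform bound neither the global vanishing in part (1) nor the gluing in part (2) would go through.
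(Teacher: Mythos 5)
Your proof is correct and follows essentially the same route as the paper: reduce to localization of modules over a finite affine cover, and in part (2) apply part (1) to the differences of local extensions on overlaps to make them glue after a further multiplication by a power of $f$. The only cosmetic difference is that the paper takes the cover by distinguished opens $U_{g_{i}}$, so the overlaps $U_{g_{i}g_{j}}$ are automatically affine, whereas you use a general finite affine cover and invoke separatedness for the same purpose.
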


\begin{proof}
Let us note that, since $\F$ is quasi-coherent on $X,$ we can find
open sets $\left(U_{g_{i}}\right)$ with $g_{i}\in\cc\left[X\right]$
such that they form a $\emph{finite}$ open cover of $X$ and $\F_{|U_{g_{i}}}=\tilde{M_{i}}$,
where $M_{i}$ is a $\cc\left[X\right]_{g_{i}}$-module. Moreover
$U_{g_{i}}\cap U_{g_{j}}=U_{g_{i}g_{j}}$. Let $s$ be a global section
with $s_{|U_{f}}=0$, we denote with $s_{i}$ the restriction $s_{|U_{g_{i}}}$.
For every $i$, since $\F_{|U_{f}\cap U_{g_{i}}}=\tilde{\left(M_{i}\right)_{f}}$,
the image of $s_{i}$ in $\left(M_{i}\right)_{f}$ is zero, thus there
exists $n_{i}\in\N$ such that $f^{n_{i}}s_{i}=0$. Since the open
cover is finite, we can find $n$ such that $f^{n}s_{i}=0$ for every
$i$, and this proves $1)$. To prove $2)$, we note that, for every
$i$, there exists $t_{i}\in M_{i}=\g\left(U_{g_{i}},\F\right)$ such
that the image of $t_{i}$ on $U_{f}\cap U_{g_{i}}=U_{fg_{i}}$ is
$f^{n_{i}}t$ for some $n_{i}$. As in the preceding point, we can
take $n$ working for every $i$. On the intersection $U_{g_{i}g_{j}}$
we have two sections $t_{i}$ and $t_{j}$ which agree on $U_{fg_{i}g_{j}}$,
from point $1)$ there exists $m$ such that $f^{m}\left(t_{i}-t_{j}\right)=0$
on $U_{g_{i}g_{j}}$ and we can take $m$ big enough to work for every
$i,j$. Finally, local sections $f^{m}t_{i}\in\g\left(U_{g_{i}},\F\right)$
glue to give a global section whose restriction to $U_{f}$ is $f^{n+m}t$.
\end{proof}
\begin{lem}
\label{lem:extend2}Let $X$ be an algebraic variety, $\L\in\pic\left(X\right)$
and $f\in\g\left(X,\L\right)$. We set $X_{f}=\left\{ x\in X\,\,|\,\,f_{x}\notin\mm_{x}\cdot\L_{x}\right\} .$
For every quasi-coherent sheaf $\F$ on $X$:

\begin{enumerate}
\item if $X$ is quasi-compact and $s\in\g\left(X,\F\right)$ is a section
such that $s_{|X_{f}}=0$, then there exists $n>0$ with $f^{n}s=0$
as a global section of $\F\ot\L^{n}$;
\item moreover, if $\L$ is locally free over an open cover $\left(U_{i}\right)$
such that the intersections  $U_{i}\cap U_{j}$ are quasi-compact,
for every $t\in\g\left(X_{f},\F\right)$, there exists an integer
$n>0$ such that $f^{n}t\in\g\left(X_{f},\F\ot\L^{n}\right)$ extend
to a global section.
\end{enumerate}
\end{lem}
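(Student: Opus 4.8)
The plan is to reduce both statements to the affine case of Lemma \ref{lem:extend1} by working on a cover trivializing $\L$, and then to glue. Since $X$ is an algebraic variety, the affine opens on which $\L$ is trivial form a basis of the topology, so using quasi-compactness of $X$ I would fix a \emph{finite} cover $(U_i)$ by such affine opens, together with trivializations $\phi_i\colon\L|_{U_i}\iso\mathcal{O}_{U_i}$; for point $(2)$ I would moreover arrange (refining if necessary, using the hypothesis) that the pairwise intersections $U_i\cap U_j$ are quasi-compact. Writing $f_i\in\cc[U_i]$ for the image of $f|_{U_i}$ under $\phi_i$, the fact that $\phi_i$ is an isomorphism of $\mathcal{O}_{U_i}$-modules identifies $\mm_x\cdot\L_x$ with $\mm_x$, so that $X_f\cap U_i=(U_i)_{f_i}$ is exactly the distinguished open where $f_i\neq0$. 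Thus on each $U_i$ we are in the situation of Lemma \ref{lem:extend1}, and more generally $\phi_i^n$ identifies sections of $\F\ot\L^n$ over opens of $U_i$ with sections of $\F$, sending $f^n t$ to $f_i^n\cdot t$.

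For point $(1)$, I would restrict $s$ to each $U_i$: the restriction vanishes on $(U_i)_{f_i}$, so Lemma \ref{lem:extend1}$(1)$ yields $n_i$ with $f_i^{n_i}s|_{U_i}=0$, i.e.\ $f^{n_i}s$ vanishes on $U_i$ as a section of $\F\ot\L^{n_i}$. Since the cover is finite, $n=\max_i n_i$ works on every member simultaneously, and a section vanishing on each member of a cover vanishes globally. This is precisely where the quasi-compactness of $X$ enters, exactly as the finiteness of the cover was used in Lemma \ref{lem:extend1}.

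For point $(2)$, I would first produce local extensions and then glue. On each $U_i$, Lemma \ref{lem:extend1}$(2)$ applied to $t|_{(U_i)_{f_i}}$ gives an integer $n_i$ and a section of $\F$ over $U_i$ restricting to $f_i^{n_i}t$; after multiplying by suitable powers of $f_i$ to reach the common exponent $n=\max_i n_i$ and translating back through $\phi_i^n$, I obtain $\sigma_i\in\g(U_i,\F\ot\L^n)$ with $\sigma_i|_{X_f\cap U_i}=f^n t$. On $U_i\cap U_j$ both $\sigma_i$ and $\sigma_j$ restrict to $f^n t$ on $X_f\cap U_i\cap U_j=(U_i\cap U_j)_f$, so their difference is a section of $\F\ot\L^n$ over $U_i\cap U_j$ vanishing on $(U_i\cap U_j)_f$.

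The main obstacle is exactly this gluing: the local extensions need not agree on overlaps. To repair the discrepancy I would invoke point $(1)$—already proven—on each intersection $U_i\cap U_j$, which is legitimate precisely because these intersections are assumed quasi-compact. This produces $m_{ij}$ with $f^{m_{ij}}(\sigma_i-\sigma_j)=0$ over $U_i\cap U_j$; setting $m=\max_{i,j}m_{ij}$, the sections $f^m\sigma_i\in\g(U_i,\F\ot\L^{n+m})$ now agree on all overlaps and hence glue to a global $\sigma\in\g(X,\F\ot\L^{n+m})$. Since $\sigma|_{X_f\cap U_i}=f^m\cdot f^n t=f^{n+m}t$, the section $f^{n+m}t$ extends globally, as required. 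The only genuinely delicate part beyond the affine lemma and point $(1)$ is the consistent bookkeeping of the trivializations $\phi_i$ and the twists by $\L^n$, for which no new idea is needed.
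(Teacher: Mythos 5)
Your proof is correct and is exactly the argument the paper intends: the paper's own proof consists of the single remark that the statement follows from Lemma \ref{lem:extend1}, with the quasi-compactness of the intersections $U_{i}\cap U_{j}$ used to apply point $(1)$ when repairing the gluing — which is precisely the step you carry out in detail. You have simply written out the reduction to the affine case via trivializations and the bookkeeping that the paper leaves implicit.
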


\begin{proof}
It is a straightforward consequence of Lemma \ref{lem:extend1}. We
simply remark that the hypothesis $U_{i}\cap U_{j}$ quasi-compact
is used to apply point $1)$.
\end{proof}
\begin{prop}[{\cite[Exercise 5.5]{HA}}]
\label{prop:pushforwardcoherent}If $f:X\to Y$ is a finite morphism
between projective varieties and $\F$ is a $\emph{coherent}$ sheaf
on $X$, then $f_{\ast}F$ is a coherent sheaf on $Y$. In particular,
if $i:X\hookrightarrow Y$ is a closed embedding, then $i_{\ast}F$
is coherent for every coherent $\mathcal{O}_{X}$-module $\F$.
\end{prop}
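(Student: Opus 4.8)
The plan is to exploit the fact that coherence is a local property on the target $Y$ and that a finite morphism is affine, thereby reducing the entire statement to a single finiteness assertion in commutative algebra. First I would cover $Y$ by affine open sets $V=\mathrm{Spec}\,A$; since $X$ and $Y$ are varieties over $\cc$, each coordinate ring $A$ is a finitely generated $\cc$-algebra and hence Noetherian. Because $f$ is finite it is in particular affine, so $f^{-1}(V)=\mathrm{Spec}\,B$ is again affine and $B$ is a \emph{finite} (module-finite) $A$-algebra. On $f^{-1}(V)$ the coherent sheaf $\F$ has the form $\widetilde{M}$ for a finitely generated $B$-module $M$, and the affineness of $f$ yields the clean local description $(f_{\ast}\F)_{|V}\simeq\widetilde{M}$, where now $M$ is regarded as an $A$-module by restriction of scalars along $A\to B$. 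In particular this already shows $f_{\ast}\F$ is quasi-coherent, so only finite generation remains to be checked.

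The key algebraic step is then to observe that $M$ is finitely generated as an $A$-module: writing $B=\sum_{i}A\,b_{i}$ and $M=\sum_{j}B\,m_{j}$ as finite sums, the finitely many products $b_{i}m_{j}$ generate $M$ over $A$. Since $A$ is Noetherian, every finitely generated $A$-module is coherent, so $(f_{\ast}\F)_{|V}$ is coherent. As $V$ ranges over an affine cover of $Y$ and coherence is a local condition, $f_{\ast}\F$ is coherent on $Y$, which proves the main claim. (Note that projectivity plays no essential role here beyond guaranteeing that the schemes are Noetherian.)

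For the final assertion it suffices to observe that a closed embedding $i:X\hookrightarrow Y$ is a finite morphism: locally it corresponds to a surjection $A\twoheadrightarrow A/I$, which is module-finite (indeed generated by the image of $1$), so the general statement applies verbatim to $i_{\ast}\F$. The main obstacle I anticipate is not the algebra, which is routine, but the careful justification of the local formula $(f_{\ast}\F)_{|V}\simeq\widetilde{M}$: one must verify that pushforward commutes with restriction to the affine opens $V$ and that, for an affine morphism, the direct image of $\widetilde{M}$ is computed by restriction of scalars. If one prefers to avoid invoking this identification as a black box, Lemmas \ref{lem:extend1} and \ref{lem:extend2} furnish the hands-on substitute, allowing one to extend and glue local sections of $f_{\ast}\F$ directly and thereby exhibit finitely many generators by explicit section-theoretic arguments.
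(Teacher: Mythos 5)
Your proposal is correct and follows essentially the same route as the paper: cover $Y$ by affine opens, use that a finite morphism is affine to identify $\left(f_{\ast}\F\right)_{|V}$ with the sheaf associated to the module $M$ viewed over the smaller ring, and conclude by transitivity of finite generation (the paper cites \cite[Lemma 5.6.1]{Ke} for the local identification you flag as the one step needing care). Your explicit remark that the products $b_{i}m_{j}$ generate $M$ over $A$ just spells out what the paper leaves implicit.
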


\begin{proof}
Let $\left(U_{i}\right)$ be an affine cover of $Y$, then $\left(f^{-1}\left(U_{i}\right)\right)$
is an affine cover of $X$ and thus, for every $i$, $\F_{|f^{-1}\left(U_{i}\right)}\iso\tilde{M_{i}}$
for a finitely generated $\cc\left[f^{-1}\left(U_{i}\right)\right]$-module
$M_{i}$. Then, in view of \cite[Lemma 5.6.1]{Ke}, we have
\[
f_{\ast}F_{|U_{i}}\iso\left(M_{i}\ot_{\cc\left[U_{i}\right]}\cc\left[f^{-1}\left(U_{i}\right)\right]\right)\tilde{}.
\]
Thus $f_{\ast}F$ is coherent since $\cc\left[f^{-1}\left(U_{i}\right)\right]$
is a finitely generated $\cc\left[U_{i}\right]$-module for every
$i$. Moreover it is clear that a closed embedding is a finite morphism.
\end{proof}
We have all the tools to prove
\begin{thm}[Serre]
\label{thm:veryampleimplicaample}Let $X\subseteq\PN$ be a projective
variety and let $\F$ be a coherent $\mathcal{O}_{X}$-module. Then, for $n\gg0$,
the sheaf $\F\left(n\right)$ is generated by global sections.
\end{thm}

\begin{proof}
Let $j$ be the inclusion $X\hookrightarrow\PN$, we have that
\[
j^{\ast}\left(\mathcal{O}\left(1\right)\right)=\mathcal{O}_{X}\left(1\right).
\]
Moreover, in view of Proposition \ref{prop:pushforwardcoherent},
$j_{\ast}F$ is coherent and in view of projection formula we deduce
\begin{eqnarray*}
j_{\ast}\left(\F\left(1\right)\right) & = & j_{\ast}\left(\F\ot\mathcal{O}_{X}\left(1\right)\right)=j_{\ast}\left(\F\ot j^{\ast}\left(\mathcal{O}\left(1\right)\right)\right)=\\
 & = & j_{\ast}\F\ot\mathcal{O}\left(1\right)=\left(j_{\ast}\F\right)\left(1\right)
\end{eqnarray*}
from which we get that $j_{\ast}\left(\F\left(n\right)\right)=\left(j_{\ast}\F\right)\left(n\right)$
for every $n$. Clearly $\F\left(n\right)$ is generated by global
sections if and only if $\left(j_{\ast}\F\right)\left(n\right)$ is
generated by global sections, thus we reduce to the case when $X=\PN$.
Let $\left(U_{i}\right)$ be the standard affine open cover of $\PN$,
then $\F_{|U_{i}}=\tilde{M_{i}}$ where $M_{i}$ is a finitely generated
$A_{i}$-module, where $A_{i}=\cc\left[\frac{x_{0}}{x_{i}},\dots,\frac{x_{N}}{x_{i}}\right]$.
For every $i$, let $s_{ij}$, for $j=1,\dots,k_{i}$ be generators
for $M_{i}$ over $A_{i}$. In view of Lemma \ref{lem:extend2} there
exists $n$ such that $x_{i}^{n}s_{ij}$ extends to a global section
$t_{ij}$ of $\F\left(n\right)$ for every $i$. Thus $\F\left(n\right)$
corresponds to an $A_{i}$-module $M_{i}'$ over $U_{i}$ and the
product with $x_{i}^{n}$ gives an epimorphism $\F_{|U_{i}}\to\F\left(n\right)_{|U_{i}}$
which, in view of Nakayama's lemma, yields an isomorphism $M_{i}\iso M'_{i}$.
Then the sections $t_{ij}$ generates $\g\left(X,\F\left(n\right)\right)$
and the theorem is proved.
\end{proof}
\begin{defn}
Let $X$ be a projective variety, then we say that a sheaf $\L\in\pic\left(X\right)$
is $\emph{very ample }$ if it is generated by global section and
the induced morphism is a closed immersion.
\end{defn}

\begin{rem}
\label{rem:sumveryample}Given a projective variety $X$ and $\L,\M\in\pic\left(X\right)$
with $\L$ very ample and $\M$ generated by global sections, then
$\L\ot\M$ is still very ample. To see this, let us note that the
map associated to $\L\ot\M$ is the composition
\[
X\overset{\Delta}{\longrightarrow}X\times X\overset{\varphi_{\L}\times\varphi_{\M}}{\longrightarrow}\mathbb{P}^{n}\times\mathbb{P}^{m}\overset{s_{n,m}}{\longrightarrow}\PN
\]
where $s_{n,m}$ denotes the Segre embedding.
\end{rem}

\begin{prop}
\label{prop:characterizationample}Let $X$ be a projective variety
and let $\L\in\pic\left(X\right)$. Then the following are equivalent:

\begin{enumerate}
\item there exists $n>0$ such that $\L^{n}$ is very ample;
\item for every coherent $\mathcal{O}_{X}$-module $\F$ there exists $n_{0}\in\N$
such that $\F\ot\L^{n}$ is generated by global sections for every
$n\ge n_{0}$.
\end{enumerate}
\end{prop}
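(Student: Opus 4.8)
The plan is to prove the two implications separately, the substance lying almost entirely in $(2)\Rightarrow(1)$.

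For $(1)\Rightarrow(2)$, suppose $\L^{m}$ is very ample, realizing $X$ as a closed subvariety $j\colon X\hookrightarrow\PN$ with $j^{\ast}\mathcal{O}(1)=\L^{m}$. Given a coherent $\F$, I would split the twisting index $n$ according to its residue modulo $m$: for each $r\in\{0,\dots,m-1\}$ the sheaf $\F\ot\L^{r}$ is coherent, so Serre's Theorem \ref{thm:veryampleimplicaample} (applied with the ample twist $\mathcal{O}_{X}(1)=\L^{m}$) furnishes an integer $q_{r}$ with $(\F\ot\L^{r})\ot(\L^{m})^{q}$ generated by global sections for all $q\ge q_{r}$; here one uses that the conclusion holds for \emph{all} large twists, since tensoring a globally generated sheaf by the globally generated $\mathcal{O}_{X}(1)$ preserves global generation. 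As there are only finitely many residues, taking $n_{0}$ larger than every $mq_{r}+r$ makes $\F\ot\L^{n}$ generated by global sections for all $n\ge n_{0}$.

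For $(2)\Rightarrow(1)$, I first produce a base-point-free power. Fix a closed point $P$ and an affine open $U\ni P$ on which $\L$ is trivial. Writing $Y=X\setminus U$ with its ideal sheaf $\I_{Y}$, hypothesis $(2)$ applied to the coherent sheaf $\I_{Y}$ gives, for $n\gg0$, global sections generating $\I_{Y}\ot\L^{n}$; since $P\notin Y$ one has $(\I_{Y}\ot\L^{n})_{P}=\L^{n}_{P}$, so by Nakayama one of these sections does not lie in $\mm_{P}\cdot\L^{n}_{P}$. Viewing $\I_{Y}\ot\L^{n}\su\L^{n}$, this yields $s\in\g(X,\L^{n})$ vanishing on $Y$ with $P\in X_{s}$, whence $X_{s}\su U$ is a basic open subset of the affine $U$ and is therefore affine. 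Ranging over $P$ and invoking quasi-compactness of $X$, finitely many such loci cover $X$; after replacing the sections by suitable powers (which does not change the loci $X_{s}$) I may assume $s_{0},\dots,s_{k}\in\g(X,\L^{N})$ for a single $N$, with the affine sets $X_{s_{i}}$ covering $X$. In particular $\L^{N}$ is generated by global sections.

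It remains to upgrade base-point-freeness to a closed immersion, and this is where Lemma \ref{lem:extend2} does the real work (its hypotheses hold since $X$ is projective). Each $B_{i}=\g(X_{s_{i}},\os)$ is a finitely generated $\cc$-algebra; choosing algebra generators $b_{i1},\dots,b_{ir_{i}}$ and applying part $(2)$ of Lemma \ref{lem:extend2} with sheaf $\os$ and section $s_{i}$, some power $s_{i}^{m}b_{ij}$ extends to a global section $t_{ij}\in\g(X,\L^{Nm})$. Taking $m$ uniform and setting $s_{i}'=s_{i}^{m}$, the sections $\{s_{i}'\}\cup\{t_{ij}\}$ of $\L^{Nm}$ define a morphism $\varphi\colon X\to\mathbb{P}^{M}$; since the $s_{i}'$ already generate $\L^{Nm}$, one has $\varphi^{-1}\{y_{i}\neq0\}=X_{s_{i}'}=X_{s_{i}}$. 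On this chart $\varphi$ is the affine morphism whose coordinate-ring map sends the coordinate dual to $t_{ij}$ to $t_{ij}/s_{i}'=b_{ij}$; as the $b_{ij}$ generate $B_{i}$, this ring map is surjective and $\varphi\colon X_{s_{i}'}\to\{y_{i}\neq0\}$ is a closed immersion. Checking the closed-immersion property chart by chart over the cover $\{y_{i}\neq0\}$, and using that the projective $X$ has closed image, shows $\varphi$ is a closed immersion; since the morphism of the complete linear system $\g(X,\L^{Nm})$ factors $\varphi$ through a linear projection, it too is a closed immersion, so $\L^{Nm}$ is very ample.

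The main obstacle is precisely this last step: base-point-freeness only separates $X$ into affine pieces, and to force the induced map to be a closed immersion one must manufacture, on each affine chart, enough global sections whose ratios recover a full set of coordinate-ring generators. Lemma \ref{lem:extend2} is exactly the device that converts a regular function on $X_{s_{i}}$ into a multiple of a global section of a power of $\L$, and the bookkeeping—passing to a common power $m$ so that all extensions live in the same $\L^{Nm}$—is what makes the finitely many local immersions assemble into a single global one.
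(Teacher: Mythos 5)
Your proposal is correct and follows essentially the same route as the paper: for $(2)\Rightarrow(1)$ you produce, exactly as the paper does, sections $s_{i}$ of powers of $\L$ whose non-vanishing loci are affine basic opens inside trivializing charts, unify the powers, and then use Lemma \ref{lem:extend2} to extend $s_{i}^{m}b_{ij}$ for algebra generators $b_{ij}$ of $\cc\left[X_{s_{i}}\right]$ to global sections, so that the resulting morphism is a closed immersion chart by chart. Your treatment of $(1)\Rightarrow(2)$ is in fact slightly more careful than the paper's, since splitting $n$ by its residue modulo the very ample power $m$ is what actually delivers global generation for \emph{all} $n\ge n_{0}$ rather than only for multiples of $m$.
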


\begin{proof}
$\,$

\begin{description}
\item [{$1)\Longrightarrow2)$}] Let $\M=\L^{n}$ be very ample and let
$i:X\hookrightarrow\PN$ be the associated closed embedding, then
$i^{\ast}\left(\mathcal{O}\left(1\right)\right)=\M$. Let $\F$ be a coherent
sheaf on $X$ then, in view of Theorem \ref{thm:veryampleimplicaample},
there exists $m$ such that $\F\left(m\right)$ is generated by global
sections.
\item [{$2)\Longrightarrow1)$}] Let $x\in X$ and $U$ an affine neighborhood
of $x$ such that $\L_{|U}$ is free, and let $Y=U^{C}$. Under our
hypothesis, if $\I$ is the ideal sheaf associated to $Y$, there
exists $n$ such that $\I\ot\L^{n}$ is generated by global sections.
In particular there exists $s\in\g\left(X,\I\ot\L^{n}\right)$ such
that $s_{x}\notin\mm_{x}\cdot\left(\I\ot\L^{n}\right)_{x}$. Regarding
$\I\ot\L^{n}$ as a subsheaf of $\L^{n}$, we consider $s$ as a section
of $\L^{n}$. Let
\[
U_{s}=\left\{ p\in X\,\,|\,\,s_{p}\notin\mm_{p}\cdot\L_{p}^{n}\right\} ,
\]
thus $x\in U_{s}\su U$. For our choice of $U$, we can identify $\L_{|U}$
with $\mathcal{O}_{U}$ and $s$ will correspond to $f\in\g\left(U,\mathcal{O}_{U}\right)$.
We can repeat this construction to produce a finite affine open cover
$U_{s_{i}}$, corresponding to sections $s_{i}\in\g\left(X,\L^{n_{i}}\right)$.
We can replace the sections $s_{i}$ with suitable powers and get
$s_{1},\dots,s_{k}\in\g\left(X,\L^{n}\right)$ without changing the
open cover $\left(U_{s_{i}}\right)$. For every $i=1,\dots,k$, let
$\left\{ b_{ij}\,\,|\,\,j=1,\dots,k_{i}\right\} $ be the generators
of $\cc\left[U_{i}\right]$ as a $\cc$-algebra. In view of Lemma
\ref{lem:extend2}, there exists $m$ such that $s_{i}^{m}b_{ij}$
extends to a global section $c_{ij}\in\g\left(X,\L^{nm}\right)$.
The sheaf $\L^{nm}$ is generated by the sections $s_{i}^{m}$, thus
it is generated by global sections. Let us consider the morphism $\phi:X\to\PN$
defined by $\left\{ s_{i}^{m}\,\,|\,\,i=1,\dots,k\right\} $ and $\left\{ c_{ij}\,\,|\,\,i=1,\dots,k,\,j=1,\dots,k_{i}\right\} $.
Let $\left\{ x_{i}\,\,|\,\,i=1,\dots,k\right\} $ and $\left\{ x_{ij}\,\,|\,\,i=1,\dots,k,\,j=1,\dots,k_{i}\right\} $
be coordinates on $\PN$ corresponding to our sections. For every
$i=1,\dots,k$, let $V_{i}\su\PN$ be the open subset given by $\left(x_{i}\neq0\right)$.
Then $\phi^{-1}\left(V_{i}\right)=U_{i}$ and we have a surjective
$\cc$-algebra homomorphism
\begin{eqnarray*}
\cc\left[\left\{ y_{i}\right\} ,\left\{ y_{ij}\right\} \right] & \to & \cc\left[U_{i}\right]\\
y_{ij} & \mapsto & b_{ij}=\frac{c_{ij}}{s_{i}^{m}}.
\end{eqnarray*}
Then $X$ is mapped isomorphically onto a closed subvariety of $\PN$
by $\phi$ and so $\L^{nm}$ is very ample.
\end{description}
\end{proof}
\begin{defn}
Let $X$ be a projective variety. We say that $\L\in\pic\left(X\right)$
is $\emph{ample}$ if one of the equivalent conditions of Proposition
\ref{prop:characterizationample} holds.
\end{defn}

\begin{rem}
\label{rem:genglobsections}Let $X$ be a projective variety and $\L\in\pic\left(X\right)$.
Let us suppose that, for every $x\in X$, we have $H^{1}\left(\mm_{x}\cdot\L_{x}\right)=0$.
Then, from the exact sequence
\[
0\to\mm_{x}\cdot\L_{x}\to\L\to\L\ot\cc\left(x\right)\to0,
\]
we deduce that $\L$ is generated by global sections in a neighborhood
of $x$.
\end{rem}

\begin{thm}
\label{prop:amplemultiple}Let $X$ be a projective variety and $\L\in\pic\left(X\right)$.
The following are equivalent:

\begin{enumerate}
\item $\L$ is ample;
\item for every coherent $\mathcal{O}_{X}$-module $\F$ there exists $n_{0}\in\N$
such that, for every $n\ge n_{0}$ and for every $i>0$, we have
\[
H^{i}\left(X,\F\ot\L^{n}\right)=0.
\]
\end{enumerate}
\end{thm}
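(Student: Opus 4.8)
The plan is to prove the two implications separately: I would reduce $1)\Rightarrow2)$ to Serre's vanishing theorem on projective space, and deduce $2)\Rightarrow1)$ from the characterization of ampleness in Proposition \ref{prop:characterizationample} together with the Nakayama argument of Remark \ref{rem:genglobsections}.

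For $1)\Rightarrow2)$, I would begin by invoking Proposition \ref{prop:characterizationample} to fix an integer $d>0$ with $\L^{d}$ very ample, giving a closed immersion $i:X\hookrightarrow\PN$ such that $i^{\ast}\mathcal{O}(1)=\L^{d}$. Since a closed immersion is an affine morphism, $H^{j}(X,\mathcal{G})\cong H^{j}(\PN,i_{\ast}\mathcal{G})$ for every $j$ and every sheaf $\mathcal{G}$; moreover $i_{\ast}\F$ is coherent by Proposition \ref{prop:pushforwardcoherent}, and the projection formula yields $i_{\ast}(\F\ot\L^{dk})=i_{\ast}(\F\ot i^{\ast}\mathcal{O}(k))=(i_{\ast}\F)(k)$. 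Hence $H^{i}(X,\F\ot\L^{dk})\cong H^{i}(\PN,(i_{\ast}\F)(k))$, and it suffices to prove that for a coherent sheaf $\mathcal{G}$ on $\PN$ one has $H^{i}(\PN,\mathcal{G}(k))=0$ for all $i>0$ and all $k\gg0$. To pass from multiples of $d$ to an arbitrary $n=dk+r$ with $0\le r<d$, I would apply this vanishing to each of the finitely many coherent sheaves $\F\ot\L^{r}$ and take the largest resulting bound. The genuine content is thus Serre vanishing on $\PN$, which I would establish by descending induction on $i$: the case $i>N=\dim\PN$ is Grothendieck vanishing, and for the inductive step I would use Theorem \ref{thm:veryampleimplicaample} to build a surjection $\bigoplus\mathcal{O}(-a)\twoheadrightarrow\mathcal{G}$ with coherent kernel $\mathcal{R}$, so that the long exact sequence, the explicit computation $H^{i}(\PN,\mathcal{O}(m))=0$ for each fixed $i>0$ and $m\gg0$, and the inductive hypothesis applied to $\mathcal{R}$ together force $H^{i}(\PN,\mathcal{G}(k))=0$ for $k\gg0$.

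For $2)\Rightarrow1)$, by Proposition \ref{prop:characterizationample} it is enough to show that for every coherent $\F$ the sheaf $\F\ot\L^{n}$ is globally generated for all $n\gg0$. Fixing $\F$ and a closed point $x$, I would apply the hypothesis to the coherent sheaf $\I_{x}\F$ to get $H^{1}(X,\I_{x}\F\ot\L^{n})=0$ for all $n\ge b_{x}$; tensoring $0\to\I_{x}\F\to\F\to\F\ot\cc(x)\to0$ with the line bundle $\L^{n}$ and arguing as in Remark \ref{rem:genglobsections} (Nakayama applied to the fibre) shows that $\F\ot\L^{n}$ is globally generated in a neighbourhood of $x$ for every $n\ge b_{x}$. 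Running this with $\F=\mathcal{O}_{X}$ and using quasi-compactness gives a finite cover $X=\bigcup U_{i}$ with $\L^{a_{i}}$ globally generated on $U_{i}$; taking $m_{0}=\mathrm{lcm}(a_{i})$ and using that a power of a globally generated sheaf is globally generated, I obtain a single power $\L^{m_{0}}$ globally generated on all of $X$.

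It remains to upgrade the pointwise statement to a uniform bound, which I expect to be the main obstacle. I would set $V_{n}=\{x\in X:\F\ot\L^{n}\text{ is globally generated at }x\}$, an open set since its complement is the support of the cokernel of the evaluation map. The pointwise step gives $x\in V_{n}$ for all $n\ge b_{x}$, while the global generation of $\L^{m_{0}}$ on $X$ gives the nesting $V_{n}\su V_{n+m_{0}}$. Hence for each residue $r$ modulo $m_{0}$ the sets $V_{r+tm_{0}}$ form an ascending chain of opens whose union is $X$; since $X$ is Noetherian this chain stabilizes, so $V_{r+t_{r}m_{0}}=X$ for some $t_{r}$, and setting $n_{0}=\max_{0\le r<m_{0}}(r+t_{r}m_{0})$ the nesting forces $V_{n}=X$ for all $n\ge n_{0}$. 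This is precisely the global generation of $\F\ot\L^{n}$ for $n\ge n_{0}$, completing the implication. The delicate point throughout is exactly this uniformity: the cohomological bound $b_{x}$ and the neighbourhoods produced depend on $x$, and it is the existence of one globally generated power $\L^{m_{0}}$ together with the Noetherian property of $X$ that allows these local statements to be glued into a single global bound.
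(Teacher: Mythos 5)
Your proof is correct and follows the same overall architecture as the paper's: for $1)\Rightarrow2)$ both arguments pass to a very ample power $\L^{d}$, push the sheaf forward to $\PN$ via the projection formula and invoke Serre vanishing there, and for $2)\Rightarrow1)$ both reduce, via Proposition \ref{prop:characterizationample} and the ideal-sheaf/Nakayama argument of Remark \ref{rem:genglobsections}, to showing that $\F\ot\L^{n}$ is globally generated for all $n\gg0$. Where you genuinely diverge is at the two uniformity steps, and in both cases your version is the more complete one. In $1)\Rightarrow2)$ the paper only records the vanishing of $H^{i}\left(X,\F\ot\L^{dk}\right)$ for $k\gg0$, i.e.\ for exponents divisible by $d$; your device of applying this to the finitely many twists $\F\ot\L^{r}$ with $0\le r<d$ is exactly what is needed to reach all large $n$, and the paper omits it. More significantly, in $2)\Rightarrow1)$ the paper closes with a bare appeal to quasi-compactness, which as stated does not suffice: the neighbourhood of $x$ on which $\F\ot\L^{n}$ is generated by global sections depends on $n$ as well as on $x$, so the locus where generation holds for \emph{every} $n\ge m$ is an infinite intersection of opens and one cannot directly extract a finite subcover. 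Your repair — producing a single power $\L^{m_{0}}$ globally generated on all of $X$, deducing the nesting $V_{n}\su V_{n+m_{0}}$ of the globally generated loci, and running an ascending-chain argument in each residue class modulo $m_{0}$ — is the standard (Hartshorne-style) way to close exactly this gap, and you are right to flag it as the point carrying the real content of the implication.
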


\begin{proof}
$\,$

\begin{description}
\item [{$1)\Longrightarrow2)$}] If $\L$ is ample, let $\M=\L^{n}$ be
a very ample multiple. Then there exists a closed embedding $i:X\hookrightarrow\PN$
such that $i^{\ast}\left(\mathcal{O}\left(1\right)\right)=\M$. In view of
\cite[Theorem 9.3.1]{Ke} we conclude that
\[
H^{i}\left(X,\F\ot\M^{k}\right)=0
\]
definitively for $k\gg0$ and for every $i>0$.
\item [{$2)\Longrightarrow1)$}] Let $\F$ be a coherent $\mathcal{O}_{X}$-module,
we want to see that there exists $n\in\N$ such that $\F\ot\L^{n}$
is generated by global sections. In view of Remark \ref{rem:genglobsections}
we only need to see that for every $x\in X$ we have
\begin{equation}
H^{1}\left(\mm_{x}\cdot\left(\F\ot\L^{n}\right)\right)=0\label{eq:gensezglobx}
\end{equation}
for some $n$ depending on $x$, but this follows from the hypothesis.
Moreover, in view of Nakayama's lemma, formula ($\ref{eq:gensezglobx}$)
holds in an open neighborhood of $x$, and thus, by quasi-compactness,
we can find $n$ big enough such that $\left(\ref{eq:gensezglobx}\right)$
holds for every $x\in X$.
\end{description}
\end{proof}
\begin{cor}
\label{cor:BertiniII}Let $X$ be a smooth projective variety with
$\dim X\ge2$ and $H\in\D\left(X\right)$ an ample effective divisor.
Then $\supp H$ is connected.
\end{cor}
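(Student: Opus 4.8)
The plan is to argue by contradiction, converting the topological statement into a cohomological one and then invoking the vanishing provided by Theorem \ref{prop:amplemultiple} together with Serre duality. Suppose $\supp H$ is disconnected, say $\supp H=Y_{1}\sqcup Y_{2}$ with $Y_{1},Y_{2}$ disjoint, closed and nonempty. Since $H$ is effective and ample (hence nonzero, as $\dim X\ge2$ means $X$ is not a point), for every $n>0$ the ideal sheaf $\mathcal{O}_X(-nH)\su\mathcal{O}_X$ cuts out a nonempty closed subscheme $nH\su X$ whose underlying space is again $\supp H=Y_{1}\sqcup Y_{2}$. A scheme with disconnected support carries a nontrivial idempotent, so $\mathcal{O}_{nH}$ splits as a product $\mathcal{O}_{nH}\iso\mathcal{A}_{1}\times\mathcal{A}_{2}$ with both factors nonzero; taking global sections, each factor contributes at least the constants, and therefore $\dim_{\cc}H^{0}(X,\mathcal{O}_{nH})\ge2$. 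I will reach a contradiction by showing that this dimension equals $1$ for $n\gg0$.

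To this end I would use the structure sequence of the subscheme $nH$,
\[
0\to\mathcal{O}_X(-nH)\to\mathcal{O}_X\to\mathcal{O}_{nH}\to0,
\]
whose long exact cohomology sequence begins
\[
0\to H^{0}(X,\mathcal{O}_X(-nH))\to H^{0}(X,\mathcal{O}_X)\to H^{0}(X,\mathcal{O}_{nH})\to H^{1}(X,\mathcal{O}_X(-nH))\to\cdots.
\]
Because $X$ is an irreducible (hence connected) projective variety, $H^{0}(X,\mathcal{O}_X)=\cc$, and $H^{0}(X,\mathcal{O}_X(-nH))=0$ since a global section of the ideal sheaf is a constant vanishing on the nonempty locus $nH$. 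Thus, once I know $H^{1}(X,\mathcal{O}_X(-nH))=0$ for $n\gg0$, the restriction map $\cc=H^{0}(X,\mathcal{O}_X)\to H^{0}(X,\mathcal{O}_{nH})$ becomes an isomorphism, forcing $\dim_{\cc}H^{0}(X,\mathcal{O}_{nH})=1$ and contradicting the previous paragraph.

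The heart of the argument is therefore the vanishing $H^{1}(X,\mathcal{O}_X(-nH))=0$ for large $n$, and this is exactly where smoothness and the hypothesis $\dim X\ge2$ enter. Writing $d=\dim X$, Serre duality on the smooth projective $X$ gives
\[
H^{1}(X,\mathcal{O}_X(-nH))\iso H^{d-1}(X,\om_X\ot\mathcal{O}_X(nH))^{\vee}.
\]
Since $d\ge2$ we have $d-1\ge1$, so the right-hand side is a \emph{higher} cohomology group. Applying Theorem \ref{prop:amplemultiple} to the coherent sheaf $\F=\om_X$ and the ample line bundle $\L=\mathcal{O}_X(H)$ produces an $n_{0}$ with $H^{i}(X,\om_X\ot\mathcal{O}_X(nH))=0$ for all $i>0$ and $n\ge n_{0}$; in particular the $(d-1)$-st group vanishes, and hence so does $H^{1}(X,\mathcal{O}_X(-nH))$, completing the contradiction.

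I expect the main obstacle to be the reduction step that turns disconnectedness into the algebraic inequality $\dim_{\cc}H^{0}(X,\mathcal{O}_{nH})\ge2$: one must verify that a clopen decomposition of the topological support $\supp H$ genuinely produces a product decomposition of the structure sheaf of the (possibly non-reduced) subscheme $nH$, equivalently a nontrivial idempotent in $H^{0}(X,\mathcal{O}_{nH})$. By contrast, the vanishing step is a clean combination of Serre duality with Theorem \ref{prop:amplemultiple}, and it is precisely the inequality $\dim X\ge2$ that keeps $H^{d-1}$ in the range of positive-degree cohomology—on a curve the support of an ample effective divisor (a finite set of points) may perfectly well be disconnected.
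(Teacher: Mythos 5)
Your proposal is correct and follows essentially the same route as the paper: both arguments use the structure sequence of the thickened divisor $nH$, deduce $H^{1}\left(X,\mathcal{O}_{X}\left(-nH\right)\right)=0$ for $n\gg0$ from Serre duality combined with Theorem \ref{prop:amplemultiple} (which is where $\dim X\ge2$ enters), and conclude that $H^{0}\left(X,\mathcal{O}_{nH}\right)$ is one-dimensional, contradicting the idempotent coming from a disconnection of the support. Your treatment of the idempotent/product decomposition step is in fact slightly more careful than the paper's.
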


\begin{proof}
We can suppose the divisor $H$ is very ample with associated invertible
sheaf $\mathcal{O}\left(1\right)$. Let $Y=\supp H$ and, for every $n>0$,
let $Y^{n}$ be the variety with $\supp Y^{n}=\supp Y$ and defined
by the ideal sheaf $\mathcal{O}_{X}\left(-n\right)$. Then, by definition,
we have an exact sequence
\[
0\to\mathcal{O}_{X}\left(-n\right)\to\mathcal{O}_{X}\to\mathcal{O}_{Y^{n}}\to0.
\]
In view of Theorem \ref{prop:amplemultiple} and Serre duality, for
$n\gg0$ we have
\[
H^{1}\left(X,\mathcal{O}_{X}\left(-n\right)\right)=0
\]
and hence a surjection
\[
\cc\to\cc^{\#\pi_{0}\left(Y\right)}\to0.
\]
Therefore $\#\pi_{0}\left(Y\right)=1$ and $Y$ is connected.
\end{proof}
\begin{cor}
\label{lem:finiteample}Let $f:X\to Y$ be a finite morphism and let
$\L\in\pic\left(Y\right)$ be an ample sheaf. Then $f^{\ast}\L$ is
ample.
\end{cor}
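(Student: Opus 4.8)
The plan is to verify the cohomological criterion of Theorem \ref{prop:amplemultiple} for $f^{\ast}\L$. Note first that $Y$ is projective (it carries an ample sheaf) and, being finite over $Y$, so is $X$; thus all the machinery of the previous results applies. The crucial observation is that a finite morphism is affine, so its higher direct images vanish: $R^{i}f_{\ast}\mathcal{G}=0$ for every $i>0$ and every quasi-coherent $\mathcal{G}$ on $X$. Consequently the Leray spectral sequence degenerates and yields a natural isomorphism $H^{i}\left(X,\mathcal{G}\right)\iso H^{i}\left(Y,f_{\ast}\mathcal{G}\right)$ for all $i$. This identification of cohomology on $X$ with cohomology on $Y$ is the engine driving the whole argument.

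Given a coherent $\mathcal{O}_{X}$-module $\F$, I would apply the above to $\mathcal{G}=\F\ot\left(f^{\ast}\L\right)^{n}$. The projection formula gives $f_{\ast}\left(\F\ot f^{\ast}\L^{n}\right)\iso f_{\ast}\F\ot\L^{n}$, where I use that $\L^{n}$ is invertible. Combining this with the degeneration isomorphism produces
\[
H^{i}\left(X,\F\ot\left(f^{\ast}\L\right)^{n}\right)\iso H^{i}\left(Y,f_{\ast}\F\ot\L^{n}\right).
\]
Now $f_{\ast}\F$ is coherent on $Y$ by Proposition \ref{prop:pushforwardcoherent}. Since $\L$ is ample, Theorem \ref{prop:amplemultiple} applied to the coherent sheaf $f_{\ast}\F$ yields an $n_{0}$ such that $H^{i}\left(Y,f_{\ast}\F\ot\L^{n}\right)=0$ for all $n\ge n_{0}$ and all $i>0$. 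Through the displayed isomorphism this forces $H^{i}\left(X,\F\ot\left(f^{\ast}\L\right)^{n}\right)=0$ in the same range, which is exactly condition $2)$ of Theorem \ref{prop:amplemultiple} for $f^{\ast}\L$. Hence $f^{\ast}\L$ is ample.

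The only genuinely nontrivial ingredient is the vanishing of the higher direct images $R^{i}f_{\ast}$ for a finite morphism, together with the resulting identification of cohomology; everything else (the projection formula, coherence of the pushforward, and the cohomological characterization of ampleness) is either formal or already available in the excerpt. I would therefore devote the bulk of the writeup to justifying $H^{i}\left(X,\mathcal{G}\right)\iso H^{i}\left(Y,f_{\ast}\mathcal{G}\right)$, most cheaply by covering $Y$ with affine opens $U$ and observing that each $f^{-1}\left(U\right)$ is again affine, so that the Čech complex computing $H^{i}\left(X,\mathcal{G}\right)$ coincides term by term with the one computing $H^{i}\left(Y,f_{\ast}\mathcal{G}\right)$.
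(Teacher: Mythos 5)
Your argument is correct and is essentially the proof the paper gives: both verify the cohomological criterion of Theorem \ref{prop:amplemultiple} for $f^{\ast}\L$ by combining the coherence of $f_{\ast}\F$ (Proposition \ref{prop:pushforwardcoherent}), the projection formula, and the identification $H^{i}\left(X,\mathcal{G}\right)\iso H^{i}\left(Y,f_{\ast}\mathcal{G}\right)$ for a finite morphism, which the paper simply cites from \cite{Ke} while you sketch its \v{C}ech-theoretic justification.
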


\begin{proof}
We'll make use of Theorem \ref{prop:amplemultiple}. Let $\F$ be
a coherent $\mathcal{O}_{X}$-module, then, in view of the finitess of $f$
and Proposition \ref{prop:pushforwardcoherent}, there exists $n_{0}$
such that, for every $n\ge n_{0}$ and for every $i>0$, we have
\[
H^{i}\left(Y,f_{\ast}\F\ot\L^{n}\right)=0.
\]
In view of \cite[Corollary 8.2.3]{Ke} and projection formula,
we conclude.
\end{proof}
\begin{lem}
\label{lem:trucchettocoomologia}Let $\S$ be a smooth surface and
$D\in\D\left(\S\right)$ an effective divisor such that $\bs|D|_{|D}=\emptyset$
and the restriction sequence
\[
H^{0}\left(\S,\os\left(D\right)\right)\to H^{0}\left(D,\os\left(D\right)_{|D}\right)\to0
\]
 is exact. Then $\bs|D|=\emptyset$.
\end{lem}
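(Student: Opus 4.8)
The plan is to prove that the base locus is empty by exhibiting, for every point $p\in\S$, a global section of $\os(D)$ that does not vanish at $p$. I would split the argument according to whether or not $p$ lies on $\supp D$, treating the points off $D$ by a soft argument and concentrating all the real content on the points of $D$.

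First I would dispose of the points off $D$. Since $D$ is effective, it is the divisor of zeros of a tautological section $s_{D}\in H^{0}\left(\S,\os\left(D\right)\right)$, with $\div\left(s_{D}\right)=D$, so $s_{D}$ vanishes exactly along $\supp D$. Hence for any $p\notin\supp D$ the section $s_{D}$ is nonzero at $p$ and $p\notin\bs|D|$; this already gives $\bs|D|\su\supp D$, and it remains to handle $p\in\supp D$. For such a $p$ I would use the two hypotheses in tandem. The assumption $\bs|D|_{|D}=\emptyset$ says that the complete linear system of the restricted sheaf $\os\left(D\right)_{|D}$ is base-point free on $D$, so there is $\bar{t}\in H^{0}\left(D,\os\left(D\right)_{|D}\right)$ with $\bar{t}_{p}\notin\mm_{p}\cdot\left(\os\left(D\right)_{|D}\right)_{p}$; the surjectivity hypothesis then lets me lift $\bar{t}$ to a genuine section $t\in H^{0}\left(\S,\os\left(D\right)\right)$ whose restriction to $D$ is $\bar{t}$. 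Here the relevant sequence is the one obtained by tensoring the ideal sequence $0\to\os\left(-D\right)\to\os\to\mathcal{O}_{D}\to0$ with $\os\left(D\right)$, namely $0\to\os\to\os\left(D\right)\to\os\left(D\right)_{|D}\to0$, whose $H^{0}$ furnishes exactly the restriction map appearing in the statement.

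The key step, and the only point I expect to require care, is to verify that nonvanishing of $\bar{t}$ at $p$ along $D$ forces nonvanishing of $t$ at $p$ on $\S$. Writing $f$ for a local equation of $D$ at $p$, the stalk of the restricted sheaf is $\left(\os\left(D\right)_{|D}\right)_{p}=\os\left(D\right)_{p}/f\cdot\os\left(D\right)_{p}$, and the condition $\bar{t}_{p}\notin\mm_{p}\cdot\left(\os\left(D\right)_{|D}\right)_{p}$ unwinds to $t_{p}\notin\mm_{p}\cdot\os\left(D\right)_{p}+f\cdot\os\left(D\right)_{p}$. Since $p\in\supp D$ means precisely $f\in\mm_{p}$, the term $f\cdot\os\left(D\right)_{p}$ is absorbed into $\mm_{p}\cdot\os\left(D\right)_{p}$, so that $t_{p}\notin\mm_{p}\cdot\os\left(D\right)_{p}$; that is, $t$ does not vanish at $p$ as a section on $\S$, whence $p\notin\bs|D|$. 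Combining this with the first paragraph gives $\bs|D|=\emptyset$. It is worth noting that both hypotheses are genuinely used: base-point freeness on $D$ produces the nonvanishing restricted section, while surjectivity of the restriction map is what allows that section to be realized upstairs on $\S$.
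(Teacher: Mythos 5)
Your proof is correct and follows the same route as the paper's: observe $\bs|D|\su\supp\left(D\right)$, use base-point-freeness of $|D|_{|D}$ to find a restricted section not vanishing at the given point, and lift it via the surjectivity hypothesis. The only difference is that you carefully verify the stalk computation showing the lifted section is nonvanishing at $p$ (using $f\in\mm_{p}$), a step the paper asserts without detail.
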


\begin{proof}
Clearly $\bs|D|\su\supp\left(D\right)$. Let $x\in\supp\left(D\right)$,
then, under our assumptions, there exists $D_{0}\in|D|_{|D}$ such
that $x\notin\supp\left(D_{0}\right)$. In view of the surjectivity
of the restriction we conclude that there exists $\tilde{D}_{0}\in|D|$
with $x\notin\supp\left(\tilde{D}_{0}\right)$.
\end{proof}

\subsection{Intersection theory and Riemann-Roch for surfaces}

The aim of this section is to give a cohomological treatment of intersection
theory on smooth surfaces.
\begin{defn}
\label{def:intprod}Let $\C_{1},\C_{2}$ be distinct prime divisors
on a surface $\S$, and let $x\in\C_{1}\cap\C_{2}$. We define the
intersection multiplicity of $x$ in $\C_{1}\cap\C_{2}$ as
\[
\mu\left(x,\C_{1}\cap\C_{2}\right)=\dim_{\cc}\frac{\mathcal{O}_{\S,x}}{\left(f,g\right)}
\]
where $f,g$ are local equations in $x$ for $\C_{1}$ and $\C_{2}$
respectively. We set
\[
\C_{1}\cdot\C_{2}=\sum_{x\in C_{1}\cap C_{2}}\mu\left(x,\C_{1}\cap\C_{2}\right),
\]
called the $\emph{intersection product}$ of $\C_{1}$ and $\C_{2}$.
\end{defn}

\begin{lem}
\label{lem:disuguaglianzamolteplicita}Let $\C_{1},\C_{2}\su\S$ be
distinct curves on a surface, then
\[
\mu\left(x,\C_{1}\cap\C_{2}\right)\ge\mu\left(x,\C_{1}\right)\mu\left(x,\C_{2}\right)
\]
for every $x\in\S$, in particular
\[
\C_{1}\cdot\C_{2}\ge\sum_{x\in\C_{1}\cap\C_{2}}\mu\left(x,\C_{1}\right)\mu\left(x,\C_{2}\right).
\]
\end{lem}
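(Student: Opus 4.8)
The plan is to reduce the statement to a length computation in the regular local ring $R=\mathcal{O}_{\S,x}$, which has dimension $2$ and maximal ideal $\mm=\mm_{x}$ since $\S$ is smooth. Write $m=\mu\left(x,\C_{1}\right)$ and $n=\mu\left(x,\C_{2}\right)$; by the very definition of the multiplicity of a curve at a point, local equations $f,g$ for $\C_{1},\C_{2}$ at $x$ satisfy $f\in\mm^{m}$ and $g\in\mm^{n}$. Because $\C_{1}\neq\C_{2}$ are prime, $f$ and $g$ share no common factor in the unique factorization domain $R$, so $\left(f,g\right)$ is $\mm$-primary and $\mu\left(x,\C_{1}\cap\C_{2}\right)=\dim_{\cc}R/\left(f,g\right)$ is finite. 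When $x$ lies on at most one of the curves one of the multiplicities is $0$ and the inequality is trivial, so we may assume $x\in\C_{1}\cap\C_{2}$.

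First I would pass to the quotient by $\mm^{m+n}$. The surjection $R/\left(f,g\right)\twoheadrightarrow R/\left(\left(f,g\right)+\mm^{m+n}\right)$ gives
\[
\mu\left(x,\C_{1}\cap\C_{2}\right)=\dim_{\cc}R/\left(f,g\right)\ge\dim_{\cc}R/\left(\left(f,g\right)+\mm^{m+n}\right),
\]
so it suffices to bound the right-hand side below by $mn$. Smoothness makes $R$ regular, so its associated graded ring is a polynomial ring in two variables and $\dim_{\cc}\mm^{k}/\mm^{k+1}=k+1$; summing over $k$ yields $\dim_{\cc}R/\mm^{m+n}=\binom{m+n+1}{2}$.

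The heart of the argument is an upper bound for the dimension of the image of $\left(f,g\right)$ inside $R/\mm^{m+n}$. This image is spanned by the images of $fR$ and $gR$, and the multiplication map $h\mapsto fh$ carries $\mm^{n}$ into $\mm^{m+n}$ (since $f\in\mm^{m}$), so the image of $fR$ is a quotient of $R/\mm^{n}$ and has dimension at most $\binom{n+1}{2}$; symmetrically the image of $gR$ has dimension at most $\binom{m+1}{2}$. Therefore
\[
\dim_{\cc}\frac{\left(f,g\right)+\mm^{m+n}}{\mm^{m+n}}\le\binom{n+1}{2}+\binom{m+1}{2},
\]
and subtracting this from $\binom{m+n+1}{2}$ leaves exactly $mn$ after a short computation. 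This establishes the pointwise inequality, and summing over all $x\in\C_{1}\cap\C_{2}$ gives the displayed global estimate. The step I expect to be the main obstacle is this image bound: one must check carefully that the two multiplication maps factor through $R/\mm^{n}$ and $R/\mm^{m}$ respectively, since this—together with the exact identity $\binom{m+n+1}{2}-\binom{m+1}{2}-\binom{n+1}{2}=mn$—is precisely what forces the constant $mn$ rather than a weaker bound.
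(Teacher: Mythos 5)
Your argument is correct, but it takes a genuinely different route from the paper's. You work entirely in the local ring $R=\mathcal{O}_{\S,x}$: from $f\in\mm^{m}$, $g\in\mm^{n}$ you pass to the quotient $R/\left(\left(f,g\right)+\mm^{m+n}\right)$, bound the image of $\left(f,g\right)$ in $R/\mm^{m+n}$ by $\binom{n+1}{2}+\binom{m+1}{2}$ because the multiplication maps $h\mapsto fh$ and $h\mapsto gh$ factor through $R/\mm^{n}$ and $R/\mm^{m}$ respectively, and conclude with the identity $\binom{m+n+1}{2}-\binom{m+1}{2}-\binom{n+1}{2}=mn$; every step checks out, the essential inputs being that $R$ is regular of dimension $2$ (so $\dim_{\cc}\mm^{k}/\mm^{k+1}=k+1$) and that coprimality of $f,g$ in the UFD $R$ makes the quotient finite-dimensional. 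The paper instead blows up $\S$ at $x$ and expands $\C_{1}\cdot\C_{2}=\sigma^{\ast}\C_{1}\cdot\sigma^{\ast}\C_{2}=\left(\tilde{\C_{1}}+mE\right)\cdot\left(\tilde{\C_{2}}+nE\right)=\tilde{\C_{1}}\cdot\tilde{\C_{2}}+mn$, then discards the nonnegative local multiplicities of $\tilde{\C_{1}}\cap\tilde{\C_{2}}$ at points of $E$ and identifies the remaining ones with the multiplicities of $\C_{1}\cap\C_{2}$ away from $x$. The geometric proof exhibits the defect $\mu\left(x,\C_{1}\cap\C_{2}\right)-mn$ concretely as the intersection of the strict transforms along the exceptional curve (so it can be iterated over infinitely near points), but it leans on properties of the intersection product --- invariance under $\sigma^{\ast}$, $E^{2}=-1$, $\tilde{\C_{i}}\cdot E=\mu\left(x,\C_{i}\right)$ --- which in this survey are only developed from Theorem \ref{thm:bilinearity} onwards. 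Your computation is more elementary and self-contained, requiring nothing beyond Definition \ref{def:intprod} and the regularity of $\mathcal{O}_{\S,x}$.
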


\begin{proof}
It is enough to check that
\[
\mu\left(x,\C_{1}\cap\C_{2}\right)\ge\mu\left(x,\C_{1}\right)\mu\left(x,\C_{2}\right)
\]
for every $x\in\C_{1}\cap\C_{2}$. Let $x\in\C_{1}\cap\C_{2}$ and
$\sigma:\S'=\bl_{x}\left(\S\right)\to\S$ be the blow-up in $x$ with
exceptional divisor $E$. We can write
\begin{eqnarray*}
\C_{1}\cdot\C_{2} & = & \sigma^{\ast}\C_{1}\cdot\sigma^{\ast}\C_{2}\\
 & = & \left(\tilde{\C_{1}}+\mu\left(x,\C_{1}\right)E\right)\cdot\left(\tilde{\C_{2}}+\mu\left(x,\C_{2}\right)E\right)\\
 & = & \tilde{\C_{1}}\cdot\tilde{\C_{2}}+\mu\left(x,\C_{1}\right)\mu\left(x,\C_{2}\right)\\
 & = & \sum_{y\in E}\mu\left(y,\tilde{\C_{1}}\cap\tilde{\C_{2}}\right)+\sum_{y\not\in E}\mu\left(y,\tilde{\C_{1}}\cap\tilde{\C_{2}}\right)+\mu\left(x,\C_{1}\right)\mu\left(x,\C_{2}\right)\\
 & \ge & \sum_{y\not\in E}\mu\left(y,\tilde{\C_{1}}\cap\tilde{\C_{2}}\right)+\mu\left(x,\C_{1}\right)\mu\left(x,\C_{2}\right)\\
 & = & \sum_{y\neq x}\mu\left(y,\C_{1}\cap\C_{2}\right)+\mu\left(x,\C_{1}\right)\mu\left(x,\C_{2}\right).
\end{eqnarray*}
Since
\begin{eqnarray*}
\C_{1}\cdot\C_{2} & = & \sum_{z\in C_{1}\cap C_{2}}\mu\left(z,\C_{1}\cap\C_{2}\right)\\
 & = & \mu\left(x,\C_{1}\cap\C_{2}\right)+\sum_{y\neq x}\mu\left(y,\C_{1}\cap\C_{2}\right),
\end{eqnarray*}
we conclude that $\mu\left(x,\C_{1}\cap\C_{2}\right)\ge\mu\left(x,\C_{1}\right)\mu\left(x,\C_{2}\right)$.
\end{proof}
\begin{rem}
In the setting of Definition \ref{def:intprod}, let us consider the
skycraper sheaf
\[
\mathcal{O}_{C_{1}\cap C_{2}}=\frac{\os}{\os\left(-C_{1}\right)+\os\left(-C_{2}\right)},
\]
then it is clear that $C_{1}\cdot C_{2}=h^{0}\left(\S,\mathcal{O}_{C_{1}\cap C_{2}}\right)$.
\end{rem}

\begin{defn}
\label{def:intprodpic}Let $\L,\M\in\pic\left(\S\right)$. We define
\[
\L\cdot\M=\x\left(\os\right)-\x\left(\L^{-1}\right)-\x\left(\M^{-1}\right)+\x\left(\L\ot\M\right),
\]
called the $\emph{intersection product}$ on $\pic\left(\S\right)$.
\end{defn}

\begin{prop}
\label{prop:prodottogrado}Let $\C\su\S$ be a smooth irreducible
curve. Then, for every $\L\in\pic\left(\S\right)$ we have
\[
\os\left(D\right)\cdot\L=\deg\left(\L_{|\C}\right).
\]
\end{prop}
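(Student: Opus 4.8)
The plan is to reconcile the purely cohomological Definition \ref{def:intprodpic}, which lives on the surface $\S$, with the geometric quantity $\deg\left(\L_{|\C}\right)$, which lives on the curve $\C$. Writing $D=\C$, so that $\os(\C)$ is the line bundle attached to the curve, the bridge between surface and curve is the structure sequence of the smooth curve $\C$,
\[
0\to\os(-\C)\to\os\to\oc\to0,
\]
in which $\os(-\C)$ is exactly the ideal sheaf of $\C$. First I would expand $\os(\C)\cdot\L$ via the definition, taking the two line bundles to be $\os(\C)$ and $\L$, and then pair its four Euler characteristics into two differences, each of which is the Euler characteristic of a twist of this structure sequence.

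Concretely, since a line bundle $\mathcal{G}$ is locally free, tensoring the structure sequence by $\mathcal{G}$ preserves exactness and gives
\[
0\to\mathcal{G}\ot\os(-\C)\to\mathcal{G}\to\mathcal{G}_{|\C}\to0,
\]
whence $\x(\mathcal{G})-\x\left(\mathcal{G}\ot\os(-\C)\right)=\x\left(\mathcal{G}_{|\C}\right)$ by additivity of $\x$ on short exact sequences. I would apply this twice: with $\mathcal{G}=\os$ to get $\x(\os)-\x(\os(-\C))=\x(\oc)$, and with $\mathcal{G}=\L^{-1}$ to get $\x(\L^{-1})-\x\left(\L^{-1}\ot\os(-\C)\right)=\x\left(\L^{-1}_{|\C}\right)$. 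Substituting these two identities into the four-term expansion of $\os(\C)\cdot\L$ cancels all the Euler characteristics of sheaves on $\S$, leaving
\[
\os(\C)\cdot\L=\x(\oc)-\x\left(\L^{-1}_{|\C}\right),
\]
an expression involving only sheaves supported on $\C$.

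To conclude, I would invoke Riemann--Roch on the smooth projective curve $\C$, in the form $\x(\F)=\deg(\F)+\x(\oc)$ for any line bundle $\F$ on $\C$. Applying it to $\F=\L^{-1}_{|\C}=\left(\L_{|\C}\right)^{-1}$ and using that degree is additive, so that $\deg\left(\L^{-1}_{|\C}\right)=-\deg\left(\L_{|\C}\right)$, the two copies of $\x(\oc)$ cancel and we are left with exactly $\os(\C)\cdot\L=\deg\left(\L_{|\C}\right)$.

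Once the structure sequence is available the argument is essentially bookkeeping, so the genuine inputs are only two. The first is that $\os(-\C)$ is the ideal sheaf of the smooth curve $\C$, which is what makes the structure sequence exact and keeps it exact after tensoring by a line bundle; this must be in place before any telescoping works. The second is Riemann--Roch on $\C$, the single ingredient that turns an Euler characteristic into a degree and hence carries all the geometric content of the statement. The only place where care is needed is matching the signs of the four terms of Definition \ref{def:intprodpic} against the two twisted sequences, but I do not expect any real obstacle there.
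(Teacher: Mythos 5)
Your proof is correct and is essentially the paper's own argument: both tensor the structure sequence $0\to\os\left(-\C\right)\to\os\to\oc\to0$ by $\os$ and by $\L^{-1}$, substitute the resulting Euler-characteristic identities into the four-term expansion of the intersection product, and finish with Riemann--Roch on $\C$. The one point you rightly flag --- matching signs against Definition \ref{def:intprodpic} --- does require reading its last term as $\x\left(\L^{-1}\ot\M^{-1}\right)$ (the convention the paper itself uses in every computation, e.g.\ in its proof of Riemann--Roch) rather than the $\x\left(\L\ot\M\right)$ printed there; with that reading your telescoping is exactly the paper's.
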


\begin{proof}
Let us tensorize the exact sequence
\[
0\to\os\left(-\C\right)\to\os\to\oc\to0
\]
with $\L^{-1}$ and get the exact sequence
\[
0\to\L^{-1}\left(-\C\right)\to\L^{-1}\to\L^{-1}\ot\oc\to0.
\]
Thus we have
\[
\begin{array}{c}
\x\left(\os\right)=\x\left(\oc\right)+\x\left(\os\left(-\C\right)\right)\\
\x\left(\L^{-1}\right)=\x\left(\L^{-1}\left(-\C\right)\right)+\x\left(\L^{-1}\ot\oc\right)
\end{array},
\]
therefore, in view of Riemann-Roch for curves and Definition \ref{def:intprodpic},
\begin{eqnarray*}
\os\left(\C\right)\cdot\L & = & \x\left(\oc\right)-\x\left(\L^{-1}\ot\oc\right)=\\
 & = & -\deg\left(\L^{-1}\ot\oc\right)=\deg\left(\L_{|\C}\right).
\end{eqnarray*}
\end{proof}
\begin{lem}
\label{lem:divisordifferencesmooth}Let $D\in\D\left(\S\right)$ and
$H$ a very ample divisor. Then there exists $n\ge0$ such that $D+nH$
is very ample. In particular we have $D\li A-B$, where $A,B$ are
smooth curves with $A\li D+nH$ and $B\li nH$.
\end{lem}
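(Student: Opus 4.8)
The plan is to establish the substantive claim first---that $D+nH$ is very ample for $n\gg0$---from the ampleness machinery of the previous subsection, and then to read off the decomposition $D\li A-B$ using Bertini's theorem.

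First I would observe that the hypothesis makes $\os(H)$ ample: taking $n=1$ in condition $1)$ of Proposition \ref{prop:characterizationample} shows that a very ample sheaf satisfies that condition, hence is ample. Applying condition $2)$ of the same proposition to the coherent sheaf $\os(D)$, I obtain $n_{0}\in\N$ such that
\[
\os(D)\ot\os(H)^{m}=\os(D+mH)
\]
is generated by global sections for all $m\ge n_{0}$. Then for any fixed $n\ge n_{0}+1$ I would factor
\[
\os(D+nH)=\os(H)\ot\os\bigl(D+(n-1)H\bigr),
\]
where the first factor is very ample and, since $n-1\ge n_{0}$, the second is generated by global sections; Remark \ref{rem:sumveryample} then yields that $\os(D+nH)$ is very ample, proving the first assertion.

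For the ``in particular'' part, I would first note that $nH$ is very ample as well: the case $n=1$ is the hypothesis, and for $n\ge2$ one applies Remark \ref{rem:sumveryample} to $\os(nH)=\os(H)\ot\os((n-1)H)$, the second factor being very ample and hence generated by global sections. A very ample sheaf is base-point free, so since $\S$ is a smooth complex surface, Bertini's theorem ensures that a general member $A\in|D+nH|$ and a general member $B\in|nH|$ are smooth curves. Since $A\li D+nH$ and $B\li nH$, subtracting gives $A-B\li D$, that is $D\li A-B$, as claimed.

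The real content is the upgrade from global generation to very ampleness, which is entirely absorbed by Remark \ref{rem:sumveryample}; the only ingredient external to the developed theory is Bertini's smoothness theorem, and this is exactly where the smoothness of $A$ and $B$ (hence the hypothesis that we work over $\cc$ with $\S$ smooth) is used. I expect this invocation of Bertini to be the only delicate point, the remainder being a formal manipulation of the previous results.
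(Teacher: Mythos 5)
Your proof is correct and is essentially the paper's own argument: the paper likewise takes $d$ with $\os\left(D+dH\right)$ generated by global sections, invokes Remark \ref{rem:sumveryample} to conclude that $D+\left(d+1\right)H$ and $\left(d+1\right)H$ are very ample, and then applies Bertini (cited there as \cite[Theorem 6.6.1]{Ke}) to produce the smooth curves $A$ and $B$. Your write-up merely spells out a couple of steps the paper leaves implicit, namely that very ample implies ample and that $nH$ is itself very ample.
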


\begin{proof}
Let $d$ such that $\os\left(D+dH\right)$ is generated by global
sections. Therefore, in view of Remark \ref{rem:sumveryample}, the
divisors $D+\left(d+1\right)H$ and $\left(d+1\right)H$ are very
ample, thus we can apply \cite[Theorem 6.6.1]{Ke} to see that
there exist $A,B$ smooth curves with $A\li D+\left(d+1\right)H$
and $B\li\left(d+1\right)H$, therefore $D\li A-B$.
\end{proof}
\begin{thm}
\label{thm:bilinearity}The intersection product defines a bilinear
form on $\pic\left(\S\right)$ and, if $D,D'$ are two distinct irreducible
curves on $\S,$ then
\[
\os\left(D\right)\cdot\os\left(D'\right)=D\cdot D'.
\]
\end{thm}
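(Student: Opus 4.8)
The plan is to dispatch the three assertions of the statement---symmetry, bilinearity, and the coincidence with the geometric intersection product of Definition \ref{def:intprod}---one at a time, in increasing order of difficulty. Symmetry is immediate: the defining expression of Definition \ref{def:intprodpic} is visibly unchanged when $\L$ and $\M$ are swapped, since $\L\ot\M\iso\M\ot\L$.

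For the coincidence with the geometric product, I would start from the skyscraper sheaf $\mathcal{O}_{D\cap D'}=\os/\left(\os\left(-D\right)+\os\left(-D'\right)\right)$ of the Remark following Definition \ref{def:intprod}. Since $D\neq D'$ are prime, their local equations share no common factor and therefore form a regular sequence in the regular two-dimensional local rings $\mathcal{O}_{\S,x}$; the associated Koszul complex then gives a resolution
\[
0\to\os\left(-D-D'\right)\to\os\left(-D\right)\op\os\left(-D'\right)\to\os\to\mathcal{O}_{D\cap D'}\to0 .
\]
Because $\x$ is additive along exact sequences, the alternating sum of the Euler characteristics of the terms vanishes, and rearranging reproduces exactly the expression defining $\os\left(D\right)\cdot\os\left(D'\right)$ in Definition \ref{def:intprodpic}; thus $\os\left(D\right)\cdot\os\left(D'\right)=\x\left(\mathcal{O}_{D\cap D'}\right)$. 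As $\mathcal{O}_{D\cap D'}$ is supported on the finite set $D\cap D'$ it has no higher cohomology, so $\x\left(\mathcal{O}_{D\cap D'}\right)=h^{0}\left(\S,\mathcal{O}_{D\cap D'}\right)=D\cdot D'$ by the Remark, as wanted.

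The real obstacle is bilinearity, because the additivity of $\L\mapsto\L\cdot\M$ is genuinely not formal: it is not implied by the additivity of $\x$ on short exact sequences (it amounts to $\x$ being a quadratic function on $\pic\left(\S\right)$). I would bypass this by reducing everything to smooth curves through Proposition \ref{prop:prodottogrado}. By symmetry it suffices to prove $\L\cdot\left(\M\ot\mathcal{N}\right)=\L\cdot\M+\L\cdot\mathcal{N}$ for all $\L,\M,\mathcal{N}\in\pic\left(\S\right)$, and the crucial special case is $\mathcal{N}=\os\left(\C\right)$ with $\C$ a smooth irreducible curve. Tensoring the restriction sequence $0\to\os\left(-\C\right)\to\os\to\oc\to0$ in turn with $\os$, $\L^{-1}$, $\M^{-1}$ and $\L^{-1}\ot\M^{-1}$---precisely the computation of Proposition \ref{prop:prodottogrado}---and combining the four resulting identities, the terms twisted by $\os\left(\C\right)$ cancel and the difference $\L\cdot\left(\M\ot\os\left(\C\right)\right)-\L\cdot\M-\L\cdot\os\left(\C\right)$ collapses to
\[
-\x\left(\oc\right)+\x\left(\L^{-1}\ot\oc\right)+\x\left(\M^{-1}\ot\oc\right)-\x\left(\L^{-1}\ot\M^{-1}\ot\oc\right) .
\]
This vanishes because on the curve $\C$ the Euler characteristic of a line bundle differs from its degree by the constant $1-g\left(\C\right)$ while $\deg$ is additive on $\pic\left(\C\right)$, so this second difference is zero. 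Hence $\L\cdot\left(\M\ot\os\left(\C\right)\right)=\L\cdot\M+\L\cdot\os\left(\C\right)$ for every smooth irreducible $\C$.

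It then remains to bootstrap from this special case. Setting $\M=\os$ in the identity just proved gives $\L\cdot\os=0$, and substituting $\M\ot\os\left(-\C\right)$ for $\M$ yields the analogous identity with $\os\left(-\C\right)$ in place of $\os\left(\C\right)$; together these give $\L\cdot\left(\M\ot\os\left(\C\right)^{\epsilon}\right)=\L\cdot\M+\L\cdot\os\left(\C\right)^{\epsilon}$ for $\epsilon=\pm1$. By Lemma \ref{lem:divisordifferencesmooth} every class in $\pic\left(\S\right)$ is a finite product of bundles $\os\left(\C_{i}\right)^{\pm1}$ with the $\C_{i}$ smooth irreducible; peeling off these factors one at a time with the last identity gives $\L\cdot\left(\M\ot\mathcal{N}\right)=\L\cdot\M+\L\cdot\mathcal{N}$ for all $\L,\M,\mathcal{N}$. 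Thus $\L\cdot-$ is additive for every $\L$, and symmetry promotes this to bilinearity, completing the proof.
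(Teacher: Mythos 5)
Your proposal is correct and follows essentially the same route as the paper: the Koszul resolution of $\mathcal{O}_{D\cap D'}$ for the identification with the geometric product, and the reduction of bilinearity to the case $\mathcal{N}=\os\left(\C\right)$ with $\C$ smooth via the decomposition of Lemma \ref{lem:divisordifferencesmooth}, with additivity ultimately coming from the restriction sequence and the additivity of the degree on $\pic\left(\C\right)$. The only (harmless) difference is that you verify the vanishing of the trilinearity defect $f\left(\L,\M,\os\left(\C\right)\right)$ by a direct telescoping computation, whereas the paper deduces it from Proposition \ref{prop:prodottogrado} together with the symmetry of $f$ in its three arguments.
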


\begin{proof}
Let us set
\[
f\left(\L_{1},\L_{2},\L_{3}\right)=\L_{1}\cdot\left(\L_{2}\ot\L_{3}\right)-\L_{1}\cdot\L_{2}-\L_{1}\cdot\L_{3}.
\]
A simple computation shows that $f$ is symmetric in the three variables
and, moreover, Proposition \ref{prop:prodottogrado} shows that, if
there exists a smooth curve $\C\su\S$ such that $\L_{i}=\os\left(\C\right)$
for some $i=1,2,3$, we have $f\left(\L_{1},\L_{2},\L_{3}\right)=0$.
In view of Lemma \ref{lem:divisordifferencesmooth}, we can write
$\L_{2}=\os\left(A-B\right)$ where $A,B$ are smooth curves, then
we get $\L_{1}\cdot\L_{2}=\L_{1}\cdot\os\left(A\right)-\L_{1}\cdot\os\left(B\right)$
and hence the linearity in the first argument. Since the intersection
product is obviously symmetric, the bilinearity follows.

Let now $f,g\in\mathcal{O}_{x}$ be local equations for $D,D'$ in a neighborhood
of the point $x$. Let us consider the sequence
\begin{equation}
0\to\mathcal{O}_{x}\overset{\left(g,-f\right)}{\longrightarrow}\mathcal{O}_{x}^{2}\overset{\left(f,g\right)}{\longrightarrow}\mathcal{O}_{x}\to\frac{\mathcal{O}_{x}}{\left(f,g\right)}\to0.\label{eq:exactsequenceintproduct}
\end{equation}
Since $f,g$ are coprime in the factorial ring $\mathcal{O}_{x}$, if, for
some $a,b\in\mathcal{O}_{x}$ we have $af=bg$, then there exists $t\in\mathcal{O}_{x}$
such that $a=tg$ and $b=tf$. This shows that the sequence (\ref{eq:exactsequenceintproduct})
is exact. Moreover, if $s\in H^{0}\left(\S,\os\left(D\right)\right)$
and $s'\in H^{0}\left(\S,\os\left(D'\right)\right)$ are non-zero
sections vanishing on $D$ and $D'$ respectively, then we have shown
that
\[
0\to\os\left(-D-D'\right)\overset{\left(s',-s\right)}{\longrightarrow}\os\left(-D\right)\op\os\left(-D'\right)\overset{\left(s,s'\right)}{\longrightarrow}\os\to\mathcal{O}_{D\cap D'}\to0
\]
is exact, then
\begin{eqnarray*}
D\cdot D' & = & h^{0}\left(\S,\mathcal{O}_{D\cap D'}\right)=\x\left(\mathcal{O}_{D\cap D'}\right)=\\
 & = & \x\left(\os\right)+\x\left(\os\left(-D-D'\right)\right)-\x\left(\os\left(-D\right)\right)-\x\left(\os\left(-D'\right)\right)\\
 & = & \os\left(D\right)\cdot\os\left(D'\right).
\end{eqnarray*}
\end{proof}
Let us see some immediate application:
\begin{lem}
\label{lem:beauvilleusefulremark}Let $D\in\D\left(\S\right)$ be
an effective divisor and $\C$ an irreducible curve with $\C^{2}\ge0$.
Then $D\cdot\C\ge0$.
\end{lem}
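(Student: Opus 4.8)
The plan is to reduce to the case of two distinct irreducible curves by expanding $D$ along its irreducible components. Since $D$ is effective, I would write $D=\sum_{i}n_{i}\C_{i}$ with each $n_{i}>0$ and the $\C_{i}$ distinct prime divisors. Theorem \ref{thm:bilinearity} gives bilinearity of the intersection product, so $D\cdot\C=\sum_{i}n_{i}\left(\C_{i}\cdot\C\right)$, and it suffices to prove that each summand $\C_{i}\cdot\C$ is nonnegative.

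For every index with $\C_{i}\neq\C$, the curves $\C_{i}$ and $\C$ are distinct and irreducible, so Theorem \ref{thm:bilinearity} identifies $\C_{i}\cdot\C$ with the geometric intersection number $\sum_{x\in\C_{i}\cap\C}\mu\left(x,\C_{i}\cap\C\right)$. This is nonnegative because each local multiplicity is, by Definition \ref{def:intprod}, the $\cc$-dimension of a quotient of $\mathcal{O}_{\S,x}$, hence a nonnegative integer.

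The one term that escapes this argument is the contribution of $\C$ itself, should it appear among the $\C_{i}$. The hard part—and the reason the hypothesis is needed—is precisely that a self-intersection $\C^{2}$ is defined only through the bilinear form of Definition \ref{def:intprodpic} and is \emph{not} a sum of local multiplicities, so it can be negative in general (for instance for a $\left(-1\right)$-curve). Here the assumption $\C^{2}\ge0$ does exactly the required work, making the corresponding summand $n_{i}\C^{2}\ge0$. With both cases in hand, every summand of $\sum_{i}n_{i}\left(\C_{i}\cdot\C\right)$ is nonnegative, and therefore $D\cdot\C\ge0$.
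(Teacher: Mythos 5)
Your proof is correct and follows essentially the same route as the paper: the paper writes $D=D'+n\C$ with $\C\not\su\supp\left(D'\right)$, which is exactly your decomposition with the components distinct from $\C$ grouped into $D'$, and then uses $D'\cdot\C\ge0$ (nonnegativity of local multiplicities for distinct curves, via Theorem \ref{thm:bilinearity}) together with $n\C^{2}\ge0$. Nothing is missing.
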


\begin{proof}
Write $D=D'+n\C$ where $n\ge0$ and $\C\not\su\supp\left(D'\right)$.
Then
\[
D\cdot\C=D\cdot D'+n\C^{2}\ge0.
\]
\end{proof}
\begin{example}
\label{ex:fibresquaredzero}Let $\C$ be a smooth curve, $f:\S\to\C$
a surjective morphism and let $F=f^{-1}\left[x\right]$ be a fibre
of $f$. Then there exists a divisor $D\in\pic\left(\C\right)$ such
that $\left[x\right]\li D$ but $x\notin\supp\left(D\right)$. Clearly
$F\li f^{\ast}D$, but this is a union of fibres disjoint from $F$,
thus $F^{2}=0$.
\end{example}

$\,$
\begin{example}[Intersection theory on $\Pd$]
\label{ex:inttheoryonplane}Every prime divisor of degree $d$ in
$\Pd$ is linearly equivalent to $dL$, where $L$ is a line. Thus,
since, clearly, for every line $L$ we have $L^{2}=1$, given two
prime divisors $C,C'$ of degrees $d$ and $d'$ respectively, we
get
\[
C\cdot C'=dL\cdot d'L'=dd',
\]
therefore we obtain Bezout's theorem.
\end{example}

$\,$
\begin{example}
\label{ex:inttheoryonquadric}Let $\S=\Pu\times\Pu$ and let $p_{1},p_{2}:\S\to\Pu$
be the projections. For a point $P\in\Pu$, we set $h_{i}=p_{i}^{-1}\left(P\right)\su\S$.
Since
\[
\S-\left(h_{1}\cup h_{2}\right)\iso\A^{2},
\]
every divisor on $S-\left(h_{1}\cup h_{2}\right)$ is principal and
thus $\pic\left(\S\right)=\Z\cdot h_{1}\op\Z\cdot h_{2}$ and, in
view of Example \ref{ex:fibresquaredzero}, we see that $h_{1}^{2}=h_{2}^{2}=0$
and $h_{1}\cdot h_{2}=1$. In particular the group $\pic\left(\S\right)$
is free.
\end{example}

\begin{thm}[Riemann-Roch for surfaces]
Let $\L\in\pic\left(\S\right)$, then
\[
\x\left(\L\right)=\x\left(\os\right)+\frac{1}{2}\left(\L^{2}-\L\cdot\K_{\S}\right).
\]
\end{thm}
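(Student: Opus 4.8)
The plan is to reduce the identity to Riemann--Roch on smooth curves, using that both sides are controlled by how they vary under twisting by the class of a smooth curve. Write $\L=\os(D)$ and, by Lemma \ref{lem:divisordifferencesmooth}, fix a presentation $D\li A-B$ with $A,B$ smooth curves. Since $\x(\os(D))$ depends only on the class of $\L$, and the two numbers $D^{2}=\os(D)\cdot\os(D)$ and $D\cdot\ks$ are well defined on $\pic(\S)$ by the bilinearity of Theorem \ref{thm:bilinearity}, the auxiliary quantity
\[
F(D)=\x(\os(D))-\x(\os)-\tfrac{1}{2}\bigl(D^{2}-D\cdot\ks\bigr)
\]
is an invariant of the class of $\L$, and I want to show it vanishes. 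Note $F(0)=0$ at once.

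First I would record how $F$ changes when one subtracts a smooth curve $C$. Tensoring the structure sequence $0\to\os(-C)\to\os\to\mathcal{O}_{C}\to0$ by $\os(D)$ yields
\[
0\to\os(D-C)\to\os(D)\to\os(D)_{|C}\to0,
\]
so additivity of $\x$ gives $\x(\os(D))-\x(\os(D-C))=\x\bigl(\os(D)_{|C}\bigr)$. Riemann--Roch on the smooth curve $C$, combined with Proposition \ref{prop:prodottogrado} to evaluate the degree, turns this into $\deg\bigl(\os(D)_{|C}\bigr)+\x(\mathcal{O}_{C})=D\cdot C+\x(\mathcal{O}_{C})$. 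On the quadratic side, a short expansion using Theorem \ref{thm:bilinearity} gives
\[
\tfrac{1}{2}\bigl(D^{2}-D\cdot\ks\bigr)-\tfrac{1}{2}\bigl((D-C)^{2}-(D-C)\cdot\ks\bigr)=D\cdot C-\tfrac{1}{2}\,(\ks+C)\cdot C.
\]

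The crux is therefore to match the two, i.e.\ to identify $\x(\mathcal{O}_{C})$ with $-\tfrac{1}{2}(\ks+C)\cdot C$; this is the genus formula, and it is the one input not already available among the tools above, so it is the step I expect to be the main obstacle, the subtlety being that it must be obtained without circularly appealing to the very theorem under proof. I would establish it via the adjunction isomorphism $\om_{C}\iso\os(\ks+C)_{|C}$, which comes from the determinant of the conormal sequence of $C$ in $\S$ and is independent of Riemann--Roch for surfaces; taking degrees through Proposition \ref{prop:prodottogrado} then gives $2g(C)-2=\deg\om_{C}=(\ks+C)\cdot C$, hence $\x(\mathcal{O}_{C})=1-g(C)=-\tfrac{1}{2}(\ks+C)\cdot C$.

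Granting this, the two displayed expressions agree, so $F(D)=F(D-C)$ for every smooth curve $C$. Applying this to $D=0$ with $C=B$ gives $F(-B)=F(0)=0$, and applying it to $D=A-B$ with $C=A$ gives $F(A-B)=F(-B)=0$. As $F$ depends only on the class of $\L$ and $D\li A-B$, we conclude $F(D)=0$, which is precisely
\[
\x(\L)=\x(\os)+\tfrac{1}{2}\bigl(\L^{2}-\L\cdot\ks\bigr).
\]
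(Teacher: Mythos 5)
Your proof is correct, but it takes a genuinely different route from the paper. The paper's argument is a two-line polarization trick: it expands the single product $\L^{-1}\cdot\left(\L\ot\K_{\S}^{-1}\right)$ in two ways --- once from Definition \ref{def:intprodpic}, where Serre duality collapses the four Euler characteristics to $2\x\left(\os\right)-2\x\left(\L\right)$, and once by the bilinearity of Theorem \ref{thm:bilinearity}, giving $-\L^{2}+\L\cdot\K_{\S}$ --- and equates the results. Your argument instead runs the classical reduction to curves: write $D\li A-B$ with $A,B$ smooth via Lemma \ref{lem:divisordifferencesmooth}, track the defect $F$ under twisting down by a smooth curve, and match the jump in $\x$ (computed by Riemann--Roch on the curve) against the jump in the quadratic term. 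You correctly identify the one extra input this requires, namely $\x\left(\mathcal{O}_{C}\right)=-\tfrac{1}{2}\left(\ks+C\right)\cdot C$ for smooth $C$, and you rightly refuse to quote Corollary \ref{cor:genusformula} for it (the paper deduces that corollary \emph{from} surface Riemann--Roch), supplying instead the adjunction isomorphism $\om_{C}\iso\os\left(\ks+C\right)_{|C}$ from the conormal sequence --- a standard fact independent of the theorem being proved, so there is no circularity. The trade-off: the paper's proof is shorter and uses only machinery already on the table (Serre duality on $\S$ plus bilinearity), while yours is longer but more self-explanatory about where the quadratic term comes from and establishes the smooth-curve genus formula en route rather than as a corollary.
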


\begin{proof}
In view of Serre duality, we compute
\begin{eqnarray*}
\L^{-1}\cdot\left(\L\ot\K_{\S}^{-1}\right) & = & \x\left(\os\right)-\x\left(\L\right)-\x\left(\L^{-1}\ot\K_{\S}\right)+\x\left(\K_{\S}\right)=\\
 & = & 2\x\left(\os\right)-2\x\left(\L\right),
\end{eqnarray*}
by the bilinearity of the intersection product we get
\[
2\x\left(\L\right)=2\x\left(\os\right)-\L^{2}+\L\cdot\K_{\S}.
\]
\end{proof}
\begin{cor}[Genus formula]
\label{cor:genusformula}Let $\C\su\S$ be an irreducible curve,
then
\[
h^{1}\left(\C,\oc\right)=1+\frac{1}{2}\left(\C^{2}+\C\cdot\K_{\S}\right).
\]
\end{cor}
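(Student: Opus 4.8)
The plan is to compare the Euler characteristic of $\oc$ with that of structure sheaves on $\S$ via the fundamental exact sequence of $\C$, and then to feed the result into Riemann--Roch for surfaces. First I would write down the ideal-sheaf sequence
\[
0\to\os\left(-\C\right)\to\os\to\oc\to0,
\]
and use the additivity of the Euler characteristic on short exact sequences to obtain $\x\left(\oc\right)=\x\left(\os\right)-\x\left(\os\left(-\C\right)\right)$.

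Next I would apply the Riemann--Roch theorem for surfaces to the line bundle $\L=\os\left(-\C\right)=\os\left(\C\right)^{-1}$. By the bilinearity of the intersection product (Theorem \ref{thm:bilinearity}) one has $\L^{2}=\C^{2}$ and $\L\cdot\K_{\S}=-\C\cdot\K_{\S}$, so Riemann--Roch yields
\[
\x\left(\os\left(-\C\right)\right)=\x\left(\os\right)+\frac{1}{2}\left(\C^{2}+\C\cdot\K_{\S}\right).
\]
Substituting this into the previous identity, the two copies of $\x\left(\os\right)$ cancel and I obtain $\x\left(\oc\right)=-\frac{1}{2}\left(\C^{2}+\C\cdot\K_{\S}\right)$.

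Finally I would unwind the definition $\x\left(\oc\right)=h^{0}\left(\C,\oc\right)-h^{1}\left(\C,\oc\right)$ and rearrange. The one remaining input --- and the only place where I expect to pause --- is the identification $h^{0}\left(\C,\oc\right)=1$. This holds because $\C$ is an irreducible projective curve over $\cc$: being a prime divisor it is reduced, being irreducible it is connected, so its only global regular functions are the constants. Granting this, the equality $1-h^{1}\left(\C,\oc\right)=-\frac{1}{2}\left(\C^{2}+\C\cdot\K_{\S}\right)$ rearranges at once into the asserted genus formula. No step is genuinely hard; the entire content is the interplay between the structure sequence and surface Riemann--Roch, with the computation $h^{0}\left(\C,\oc\right)=1$ being the one non-formal ingredient.
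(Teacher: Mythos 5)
Your proposal is correct and follows essentially the same route as the paper: both use the ideal-sheaf sequence $0\to\os\left(-\C\right)\to\os\to\oc\to0$ together with Riemann--Roch for surfaces applied to $\os\left(-\C\right)$, and both conclude via $\x\left(\oc\right)=1-h^{1}\left(\C,\oc\right)$. The only difference is that you make explicit the justification of $h^{0}\left(\C,\oc\right)=1$ (reduced, irreducible, projective over $\cc$), which the paper leaves implicit.
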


\begin{proof}
Let us consider the exact sequence
\[
0\to\os\left(-\C\right)\to\os\to\oc\to0,
\]
then $\x\left(\os\right)=\x\left(\oc\right)+\x\left(\os\left(-\C\right)\right)$.
In view of Riemann-Roch, we deduce
\[
1-h^{1}\left(\C,\oc\right)=\x\left(\os\right)-\x\left(\os\left(-\C\right)\right)=-\frac{1}{2}\left(\C^{2}+\C\cdot\K_{\S}\right)
\]
and hence the corollary is proved.
\end{proof}
\begin{rem}
\label{rem:geometricarithmeticgenus}Let $\C$ be an irreducible curve
and $f:\tilde{\C}\to\C$ its normalization. Let us recall that the
$\emph{arithmetic genus}$ $p_{a}\left(\C\right)$ of $\C$ is defined
as $h^{1}\left(\C,\oc\right)$, while the $\emph{geometric genus}$
$g\left(\C\right)$ is defined as $g\left(\C\right)=p_{a}\left(\tilde{C}\right)$.
In this way, Corollary \ref{cor:genusformula} can be restated saying
that, for every irreducible curve $\C\su\S$, one has
\[
p_{a}\left(\C\right)=1+\frac{1}{2}\left(\C^{2}+\C\cdot\K_{\S}\right).
\]
The natural inclusion arising from the integral closure gives a monomorphism
$\oc\to f_{\ast}\mathcal{O}_{\tilde{\C}}$, taking the cokernel we get an exact
sequence
\[
0\to\oc\to f_{\ast}\mathcal{O}_{\tilde{\C}}\to\Delta\to0,
\]
where $\Delta$ is supported on singular points of $\C$, and hence
$H^{1}\left(\C,\Delta\right)=0$. Taking cohomology, we obtain the
exact sequence
\[
0\to H^{0}\left(\C,\Delta\right)\to H^{1}\left(\C,\oc\right)\to H^{1}\left(\C,f_{\ast}\mathcal{O}_{\tilde{C}}\right)\to0
\]
from which we deduce
\[
p_{a}\left(\C\right)=g\left(\C\right)+\sum_{x\in\C}\dim_{\cc}\Delta_{x}.
\]
Then, in general, $p_{a}\left(\C\right)\ge g\left(\C\right)$ with
equality if and only if $\C$ is smooth. In particular $p_{a}\left(\C\right)=0$
if and only if $\C$ is smooth with $\C\iso\Pu$.
\end{rem}

\begin{example}
\label{Ex:genusplanecurve}Let $\C\in\Pd$ be an irreducible curve
of degree $d$. Then, since $\K_{\Pd}\li-3L$ and in view of Corollary
\ref{cor:genusformula} and Example \ref{ex:inttheoryonplane}, we
have
\[
p_{a}\left(\C\right)=1+\frac{1}{2}\left(d^{2}-3d\right)=\frac{\left(d-1\right)\left(d-2\right)}{2}.
\]
Thus, in view of Remark \ref{rem:geometricarithmeticgenus}, we have
deduced the fact that a plane smooth curve of degree $d$ has geometric
genus given by
\[
g\left(\C\right)=\frac{1}{2}\left(d-1\right)\left(d-2\right).
\]
\end{example}

\subsection{Numerical properties of divisors on a surface}
\begin{defn}
Let $D$ be a divisor on a surface $\S$. We say that $D$ is $\emph{numerically trivial}$
if $D\cdot E=0$ for every $E\in\D\left(\S\right)$, if this is the
case we will write $D\sim0$. We will say that two divisors $D$ and
$D'$ are $\emph{numerically equivalent}$ and we will write $D\sim D'$
if $D-D'\sim0$. Since numerically trivial classes obviously form
a subgroup $\pic^{n}\left(\S\right)$ of $\pic\left(\S\right)$, it
is well defined the quotient group
\[
\num\left(\S\right):=\frac{\pic\left(\S\right)}{\pic^{n}\left(\S\right)}.
\]
\end{defn}

\begin{rem}
\label{Rem:usefulremhodgeindex}Let $X$ be an algebraic variety and
let $D$ and $D'$ be divisors on $X$. Let us suppose that there
exists $A\in|D|$. Then clearly the map
\begin{eqnarray*}
|D'| & \to & |D+D'|\\
E & \mapsto & E+A
\end{eqnarray*}
is injective, and thus $\dim|D'|\le\dim|D+D'|$.
\end{rem}

\begin{lem}
\label{lem:hodgeindexpossibilita}Let $D$ be a divisor on $\S$ with
$D^{2}>0$ and let $H$ be a very ample divisor. Then only one of
the followings holds true:

\begin{enumerate}
\item $D\cdot H>0$ and
\[
\lim_{n\to\infty}h^{0}\left(\S,\os\left(nD\right)\right)=\infty;
\]
\item $D\cdot H<0$ and
\[
\lim_{n\to-\infty}h^{0}\left(\S,\os\left(nD\right)\right)=\infty.
\]
\end{enumerate}
\end{lem}

\begin{proof}
In view of Riemann-Roch and Serre duality we have
\[
h^{0}\left(\S,\os\left(nD\right)\right)+h^{0}\left(\S,\os\left(\K_{\S}-nD\right)\right)\ge\frac{1}{2}n^{2}D^{2}-\frac{1}{2}nD\cdot\K_{\S}+\x\left(\os\right),
\]
this shows that
\[
\lim_{n\to\pm\infty}\left[h^{0}\left(\S,\os\left(nD\right)\right)+h^{0}\left(\S,\os\left(\K_{\S}-nD\right)\right)\right]=\infty.
\]
Let us see that $h^{0}\left(\S,\os\left(nD\right)\right)$ and $h^{0}\left(\S,\os\left(\K_{\S}-nD\right)\right)$
cannot both diverge for $n\to\pm\infty$: if, for $n\gg0$ (resp.
for $n\ll0$) there exists $E\in|nD|$, in view of Remark \ref{Rem:usefulremhodgeindex}
we must have $\dim|\K_{\S}-nD|\le\dim|\ks|$. Therefore it suffices
to show that $\dim|nD|$ and $\dim|\ks-nD|$ cannot diverge both for
$n\to\infty$ and $n\to-\infty$. If $|nD|\neq\emptyset$ for $n>0$,
then $D\cdot H>0$ and hence, if $m<0$, we must have $|mD|=\emptyset$
(otherwise $D\cdot H<0$). While, if $|\ks-nD|\neq\emptyset$ for
$n\gg0$, in view of Remark \ref{Rem:usefulremhodgeindex} we have
\[
\dim|2\ks|=\dim|\left(\ks+nD\right)+\left(\ks-nD\right)|\ge\dim|\ks+nD|.
\]
\end{proof}
\begin{thm}[Hodge Index Theorem]
\label{thm:Hodge}Let $H$ be an ample divisor on $\S$ and let $D$
be a divisor. If $D\cdot H=0$, then $D^{2}\le0$, with $D^{2}=0$
if and only if $D\sim0$.
\end{thm}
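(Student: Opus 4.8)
The plan is to deduce everything from Lemma~\ref{lem:hodgeindexpossibilita}, so first I would reduce to the case of a \emph{very ample} $H$. Since $H$ is ample, some multiple $mH$ is very ample, and by the bilinearity established in Theorem~\ref{thm:bilinearity} we have $D\cdot(mH)=m(D\cdot H)$ and $(mH)^{2}=m^{2}H^{2}$; hence the hypotheses and the conclusions are unaffected by replacing $H$ with $mH$, and I may assume $H$ very ample. Granting this, the inequality $D^{2}\le0$ is immediate: if instead $D^{2}>0$, then Lemma~\ref{lem:hodgeindexpossibilita} forces either $D\cdot H>0$ or $D\cdot H<0$, contradicting $D\cdot H=0$. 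I would also record at this point that $H^{2}>0$: choosing a smooth curve $\C\in|H|$ (as produced in Lemma~\ref{lem:divisordifferencesmooth}), Proposition~\ref{prop:prodottogrado} gives $H^{2}=\deg(H_{|\C})$, which is positive because $H_{|\C}$ is very ample on $\C$.

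For the equality case, the direction $D\sim0\Rightarrow D^{2}=0$ is trivial, since $D\sim0$ means $D\cdot E=0$ for every divisor $E$, in particular for $E=D$. For the converse I argue by contradiction: assume $D\cdot H=0$ and $D^{2}=0$ but $D\not\sim0$, so that there is a divisor $E$ with $D\cdot E\neq0$. The idea is to project $E$ onto the orthogonal complement of $H$ and then tilt by a multiple of $D$ to manufacture a class of positive square that is still orthogonal to $H$. Concretely, I set
\[
E'=(H^{2})\,E-(E\cdot H)\,H,
\]
so that $E'\cdot H=(H^{2})(E\cdot H)-(E\cdot H)(H^{2})=0$. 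Using $D\cdot H=0$ together with $H^{2}>0$, bilinearity then gives $D\cdot E'=(H^{2})(D\cdot E)\neq0$.

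Finally, for an integer $n$ I consider $D_{n}=nD+E'$. Then $D_{n}\cdot H=0$, while
\[
D_{n}^{2}=n^{2}D^{2}+2n\,(D\cdot E')+(E')^{2}=2n\,(D\cdot E')+(E')^{2}
\]
since $D^{2}=0$. As $D\cdot E'\neq0$, I can choose $n$ of the appropriate sign and large enough in absolute value so that $D_{n}^{2}>0$; but $D_{n}\cdot H=0$ then contradicts the inequality proved in the first part. Hence no such $E$ exists and $D\sim0$. I expect the only delicate point to be the bookkeeping that guarantees $D\cdot E'\neq0$—this is precisely where the positivity $H^{2}>0$ is needed—after which the choice of $n$ and the appeal to the first part are routine.
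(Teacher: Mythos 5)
Your proof is correct and follows essentially the same route as the paper: both halves rest on Lemma \ref{lem:hodgeindexpossibilita}, and your treatment of the equality case (the divisor $E'=(H^{2})E-(E\cdot H)H$ and the perturbation $nD+E'$) is identical to the paper's argument. The only cosmetic difference is in the inequality $D^{2}\le0$, where you reduce to $H$ very ample and read the contradiction off the lemma's dichotomy directly, while the paper applies the lemma to the auxiliary ample divisor $D+nH$ to produce an effective multiple $mD$ and then contradicts $D\cdot H=0$; your version is if anything slightly cleaner, and your explicit verification that $H^{2}>0$ fills in a point the paper leaves implicit.
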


\begin{proof}
Let us suppose that $D^{2}>0$ and let us set $H'=D+nH$, which, in
view of Proposition \ref{prop:characterizationample} and Remark \ref{rem:sumveryample},
is ample for $n\gg0$, moreover $D\cdot H'=D^{2}>0$. In view of Lemma
\ref{lem:hodgeindexpossibilita}, for large $m$ the divisor $mD$
is linearly equivalent to an effective divisor, then $mD\cdot H>0$.
This is not possible since $D\cdot H=0$ .

If $D^{2}=0$ and $D\not\sim0$, then there exists $E\in\D\left(\S\right)$
with $D\cdot E\neq0$. Let $E'=H^{2}E-\left(H\cdot E\right)H$. Then
\[
\begin{array}{c}
D\cdot E'=H^{2}\left(D\cdot E\right)\neq0\\
H\cdot E'=0.
\end{array}
\]
Now let $n\in\Z$ and set $\tilde{D}=E'+nD$, then $\tilde{D}\cdot H=0$
and $\tilde{D}^{2}=E^{2}+2nD\cdot E$. Choosing $n\gg0$ if $D\cdot E>0$
or $n\ll0$ if $D\cdot E<0$, we can suppose $\tilde{D}^{2}>0$, but
then we get a contradiction with Lemma \ref{lem:hodgeindexpossibilita}
.
\end{proof}
\begin{cor}
\label{cor:fibersnumerically}Let $\pi:\S\to\C$ be a surjective morphism
from a surface to a curve. Then, for every $x,y\in\C$, we have $\pi^{\ast}\left[x\right]\sim\pi^{\ast}\left[y\right]$.
\end{cor}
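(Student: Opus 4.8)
The plan is to apply the Hodge Index Theorem (Theorem \ref{thm:Hodge}) to the difference divisor $D=\pi^{\ast}\left[x\right]-\pi^{\ast}\left[y\right]$; we may of course assume $x\neq y$. Write $F_{x}=\pi^{\ast}\left[x\right]$ and $F_{y}=\pi^{\ast}\left[y\right]$, effective divisors supported on the fibres over $x$ and $y$. First I would check that $D^{2}=0$. By Example \ref{ex:fibresquaredzero} each fibre has self-intersection zero, so $F_{x}^{2}=F_{y}^{2}=0$; and since $x\neq y$ the fibres $\pi^{-1}\left(x\right)$ and $\pi^{-1}\left(y\right)$ have disjoint support, whence $F_{x}\cdot F_{y}=0$ directly from Definition \ref{def:intprod}. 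Expanding $D^{2}=F_{x}^{2}-2\,F_{x}\cdot F_{y}+F_{y}^{2}$ then gives $D^{2}=0$.

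To invoke the Hodge Index Theorem I must next exhibit an ample divisor $H$ with $D\cdot H=0$, and the crux is to show that $F_{x}\cdot H$ is independent of $x$. I would take for $H$ a very ample divisor which, after replacing it by a general member of its system, is a smooth irreducible curve (via Lemma \ref{lem:divisordifferencesmooth}, the cited Bertini theorem, and the connectedness from Corollary \ref{cor:BertiniII}). Since $H$ is ample, $H\cdot F_{w}>0$ for every fibre $F_{w}$; were $H$ contained in some fibre $F_{z}$, it would be disjoint from every other fibre and so $H\cdot F_{w}=0$ for $w\neq z$, a contradiction. Hence $\pi_{|H}:H\to\C$ is non-constant, and being a morphism of complete curves it is finite, of some degree $d$. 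Then, by Proposition \ref{prop:prodottogrado} and Theorem \ref{thm:bilinearity},
\[
F_{x}\cdot H=\os\left(H\right)\cdot\os\left(F_{x}\right)=\deg\left(\os\left(F_{x}\right)_{|H}\right)=\deg\left(\left(\pi_{|H}\right)^{\ast}\mathcal{O}_{\C}\left(\left[x\right]\right)\right)=d,
\]
the last equality because pulling back a degree-one divisor along the degree-$d$ map $\pi_{|H}$ multiplies degrees by $d$. As $d$ does not depend on $x$, we conclude $F_{x}\cdot H=F_{y}\cdot H$, i.e.\ $D\cdot H=0$.

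Finally, having $D\cdot H=0$ with $H$ ample and $D^{2}=0$, the Hodge Index Theorem forces $D\sim0$, that is $\pi^{\ast}\left[x\right]\sim\pi^{\ast}\left[y\right]$, as desired. I expect the only delicate point to be the middle step: justifying that $H$ may be chosen smooth and transverse to the fibration so that $\pi_{|H}$ is genuinely finite, and correctly identifying $\os\left(F_{x}\right)_{|H}$ with $\left(\pi_{|H}\right)^{\ast}\mathcal{O}_{\C}\left(\left[x\right]\right)$ in order to apply the degree-multiplicativity formula. The computations of $F_{x}^{2}$, $F_{y}^{2}$ and $F_{x}\cdot F_{y}$ are immediate from the material already developed.
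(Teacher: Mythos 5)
Your proposal is correct and follows essentially the same route as the paper: compute $D^{2}=0$ via Example \ref{ex:fibresquaredzero}, choose a very ample $H$ that is a smooth irreducible curve by Bertini and Corollary \ref{cor:BertiniII}, observe that the restriction $\pi_{|H}:H\to\C$ has fibres of constant degree so that $D\cdot H=0$, and conclude by the Hodge Index Theorem. You merely spell out more carefully the points the paper leaves implicit (the vanishing of the cross term $F_{x}\cdot F_{y}$ and the finiteness of $\pi_{|H}$), and those justifications are sound.
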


\begin{proof}
Let $F_{1}=\pi^{\ast}\left[x\right]$, $F_{2}=\pi^{\ast}\left[y\right]$
and set $D=F_{1}-F_{2}$. In view of Example \ref{ex:fibresquaredzero}
we have $D^{2}=0$, moreover, let $H$ be an ample divisor. Taking
multiples, we can suppose $H$ to be very ample and hence, in view
of \cite[Theorem 6.6.1]{Ke} and Corollary \ref{cor:BertiniII}
we can take $H$ to be a smooth irreducible curve. The restriction
$\tilde{\pi}=\pi_{|H}:H\to\C$ is a morphism from a smooth irreducible
curve, and hence all the fibres have the same degree. This shows that
$H\cdot D=0$. In view of Theorem \ref{thm:Hodge} we conclude that
$F_{1}\sim F_{2}$.
\end{proof}
\begin{defn}
Let $D$ be a divisor on a surface $\S$. We say that $D$ is $\emph{nef}$
if $D\cdot\C\ge0$ for every curve $\C\su\S$, while we say that $D$
is $\emph{pef}$ if $D\cdot H\ge0$ for every ample divisor $H$ on
$\S$.
\end{defn}

\begin{rem}
It is clear that every ample divisor is nef and that every nef divisor
is pef.
\end{rem}

We see a useful way to check nefness
\begin{lem}
\label{lem:tricktochecknefeness}Let $\S$ be a surface and let $D$
be a divisor such that $|D|$ has only a finite numer of fixed points.
Then $D$ is nef.
\end{lem}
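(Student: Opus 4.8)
The plan is to read the hypothesis as the statement that the base locus $\bs|D|$ is a finite set of points, so that $|D|$ has no fixed curve component, and then, for each irreducible curve $\C$, to produce an effective representative of $|D|$ that does not contain $\C$ and against which the intersection number is manifestly nonnegative. Throughout I assume $|D|\neq\emptyset$, since otherwise the notion of fixed point is vacuous.

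First I would fix an arbitrary irreducible curve $\C\su\S$; because every effective curve is a sum of irreducible ones and the intersection product is bilinear (Theorem \ref{thm:bilinearity}), it suffices to verify $D\cdot\C\ge0$ for each such $\C$. The crucial observation is that $\C$ cannot be a fixed component of $|D|$: if it were, then every member of $|D|$ would contain $\C$ in its support, so every one of the infinitely many points of $\C$ would lie in $\bs|D|$, contradicting the finiteness hypothesis. Hence there exists an effective divisor $E\in|D|$ with $\C\not\su\supp\left(E\right)$.

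Next I would compute the intersection against this representative. Since the intersection product depends only on the linear equivalence class (Theorem \ref{thm:bilinearity}), we have $D\cdot\C=E\cdot\C$. Writing $E=\sum_{i}a_{i}\C_{i}$ with $a_{i}>0$ and each $\C_{i}$ an irreducible curve necessarily distinct from $\C$ by our choice of $E$, Theorem \ref{thm:bilinearity} identifies $\C_{i}\cdot\C$ with $\sum_{x}\mu\left(x,\C_{i}\cap\C\right)$, a sum of local intersection multiplicities, each of which is the $\cc$-dimension of a quotient ring and hence nonnegative. Summing over $i$ gives $D\cdot\C=E\cdot\C=\sum_{i}a_{i}\left(\C_{i}\cdot\C\right)\ge0$, which is exactly what nefness requires.

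The only delicate point is the passage from \emph{finitely many fixed points} to \emph{no fixed component}; this is where the hypothesis does all the work, since any fixed curve would force infinitely many base points. Once that is granted, the remainder is a direct computation: every irreducible curve can be made to meet a suitable effective representative of $|D|$ properly, and a proper intersection of distinct irreducible curves is nonnegative by the very definition of the intersection multiplicity.
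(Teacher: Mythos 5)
Your proof is correct and follows essentially the same route as the paper's: the only idea in either argument is that a curve $\C$ with $D\cdot\C<0$ would have to be a fixed component of $|D|$, which is impossible when $\bs|D|$ is finite, and otherwise a member $E\in|D|$ avoiding $\C$ gives $D\cdot\C=E\cdot\C\ge0$ as a proper intersection. You merely phrase it directly rather than by contradiction and spell out the nonnegativity of the local multiplicities, which the paper leaves implicit.
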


\begin{proof}
If there exists a curve $\C$ such that $D\cdot\C<0$, then $\C$
should be contained in the fixed part of $|D|$. This is not possible
since, under our assumptions, we have $\dim\bs|D|=0$ .
\end{proof}
\begin{prop}
\label{prop:nefpositivesquare}Let $D$ be a nef divisor on a surface
$\S$. Then $D^{2}\ge0$.
\end{prop}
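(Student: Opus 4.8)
The plan is to argue by contradiction, so suppose $D^{2}<0$. The strategy is to perturb $D$ by a very ample divisor and then exploit the asymptotic effectivity supplied by Lemma \ref{lem:hodgeindexpossibilita}. Fix a very ample divisor $H$ on $\S$ (which exists since $\S$ is projective); then $H^{2}>0$, and since $H$ is effective and $D$ is nef we have $D\cdot H\ge0$. I would then study the real quadratic polynomial
\[
f(t)=(D+tH)^{2}=D^{2}+2t\,(D\cdot H)+t^{2}H^{2}.
\]
Because its leading coefficient $H^{2}$ is positive while $f(0)=D^{2}<0$, it has a largest real root $t_{0}>0$, and $f(t)>0$ for every real $t>t_{0}$.

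For every rational $t>t_{0}$ I would clear denominators, writing $t=p/q$ and setting $G=qD+pH=q(D+tH)$, an honest divisor with $G^{2}=q^{2}f(t)>0$ and $G\cdot H=q\,(D+tH)\cdot H>0$. Lemma \ref{lem:hodgeindexpossibilita} (case $1$, since $G\cdot H>0$) then gives $h^{0}(\S,\os(nG))\to\infty$ as $n\to\infty$, so some positive multiple of $G$ is linearly equivalent to an effective divisor. Intersecting that effective divisor with the nef class $D$ yields $D\cdot G\ge0$, hence $D\cdot(D+tH)\ge0$, that is
\[
D^{2}+t\,(D\cdot H)\ge0\qquad\text{for all rational }t>t_{0}.
\]
Since the left-hand side is continuous in $t$, letting $t\to t_{0}^{+}$ gives $D^{2}+t_{0}\,(D\cdot H)\ge0$.

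The contradiction then comes from evaluating the same quantity at the root $t_{0}$. Since $f(t_{0})=0$ we have $D^{2}+2t_{0}(D\cdot H)+t_{0}^{2}H^{2}=0$, and therefore
\[
D^{2}+t_{0}\,(D\cdot H)=-t_{0}\,(D\cdot H)-t_{0}^{2}H^{2}<0,
\]
the inequality being strict because $t_{0}>0$, $H^{2}>0$ and $D\cdot H\ge0$. This contradicts the inequality obtained in the previous step, so the assumption $D^{2}<0$ is untenable and $D^{2}\ge0$.

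The only delicate point is the passage from the numerical positivity of $D+tH$ to a genuine effective representative: this is exactly what Lemma \ref{lem:hodgeindexpossibilita} provides, and recognizing that $D+tH$ (rather than $D$ itself) is the right object to feed into it — together with the observation that the decisive contradiction materializes precisely at the boundary value $t_{0}$ where $(D+t_{0}H)^{2}=0$ — is the crux of the argument. Everything else is the bilinearity of the intersection form (Theorem \ref{thm:bilinearity}) and elementary properties of the parabola $f$.
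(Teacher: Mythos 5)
Your proof is correct and follows essentially the same route as the paper's: perturb $D$ by a (very) ample divisor $H$, use Lemma \ref{lem:hodgeindexpossibilita} to make a multiple of $D+tH$ effective, and then invoke nefness of $D$ to get $D\cdot\left(D+tH\right)\ge0$, which contradicts the sign of the quadratic $\left(D+tH\right)^{2}$. The only difference is bookkeeping — you pass to the limit at the largest root $t_{0}$, whereas the paper picks a single rational $x$ with $\left(D+xH\right)^{2}>0$ and $\left(D+\frac{x}{2}H\right)^{2}<0$ and reads off the contradiction directly.
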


\begin{proof}
Let us suppose $D^{2}<0$ and let $H$ be an ample divisor. Then elementary
algebra shows that there exists $x\in\Q$ such that $x>0$ and
\[
\left(D+xH\right)^{2}>0\,\,\,\,\mbox{and}\,\,\,\,\left(D+\frac{x}{2}H\right)^{2}<0.
\]
Since $\left(D+xH\right)\cdot H=D\cdot H+xH^{2}>0$, in view of Lemma
\ref{lem:hodgeindexpossibilita}, for $m\gg0$ we have that $h^{0}\left(\S,\os\left(m\left(D+xH\right)\right)\right)>0$.
We compute
\[
0>\left(D+\frac{x}{2}H\right)^{2}=D^{2}+xD\cdot H+\frac{x^{2}}{4}H^{2}=\frac{1}{m}D\cdot\left[m\left(D+xH\right)\right]+\frac{x^{2}}{4}H^{2},
\]
which is a contradiction .
\end{proof}
\begin{defn}
Let $D$ be a divisor on a surface $\S$. We say that $D$ is $\emph{big}$
if there exists a constant $C>0$ such that, for $m\gg0$ we have
\[
h^{0}\left(\S,\os\left(mD\right)\right)\iso Cm^{2}.
\]
\end{defn}

\begin{rem}
We see that every ample divisor is big: if $D$ is an ample divisor
on a surface $\S$, then, in view of Theorem \ref{prop:amplemultiple},
there exists $n_{0}$ such that, for every $n\ge n_{0}$, we have
\[
h^{i}\left(\S,\os\left(\left(n+1\right)D\right)\right)=0
\]
for every $i>0$. It follows from Riemann-Roch that, for every $n\ge n_{0}$,
\[
h^{0}\left(\S,\os\left(\left(n+1\right)D\right)\right)=\x\left(\os\right)+\frac{1}{2}\left(\left(n+1\right)^{2}D^{2}-\left(n+1\right)D\cdot\ks\right).
\]
Therefore, for $m\gg0$, we have $h^{0}\left(\S,\os\left(mD\right)\right)\iso\frac{1}{2}m^{2}D^{2}$.
\end{rem}

We state now a theorem which will be useful several times. A full
proof can be found in \cite{La}.
\begin{thm}[Ramanujam-Kawamata-Viehweg]
\label{thm:RKW}Let $H$ be a nef and big divisor on a surface $\S$.
Then
\[
h^{1}\left(\S,\os\left(\ks+H\right)\right)=h^{2}\left(\S,\os\left(\ks+H\right)\right)=0.
\]
\end{thm}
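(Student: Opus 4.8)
The statement is the Ramanujam–Kawamata–Viehweg vanishing theorem: for $H$ nef and big on a smooth surface $\S$, both $h^{1}(\S,\os(\ks+H))$ and $h^{2}(\S,\os(\ks+H))$ vanish. The second vanishing is the easy half and I would dispose of it first. By Serre duality, $h^{2}(\S,\os(\ks+H))=h^{0}(\S,\os(-H))$; since $H$ is big, some positive multiple $mH$ is effective and $H\cdot A>0$ for an ample $A$ (nefness gives $H\cdot A\ge 0$, and bigness upgrades this), so $-H$ cannot be effective. Hence $h^{0}(\S,\os(-H))=0$, settling the $h^{2}$ part.

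**The main vanishing.**
For $h^{1}(\S,\os(\ks+H))=0$, the classical strategy (following Ramanujam) is to realize $\S$ as an étale-locally cyclic cover branched along a high multiple of $H$ and exploit the topology. Concretely, I would first reduce to the case where $H$ is an effective divisor with nice support: using Lemma \ref{lem:divisordifferencesmooth} one writes $H$ as a difference of smooth very ample curves, but the cleaner route is to pass to a large multiple. Since $H$ is nef and big, $|mH|$ for $m\gg0$ defines a birational morphism $\varphi:\S\to\S'$ contracting exactly the curves $\C$ with $H\cdot\C=0$; on $\S'$ the pushforward of $H$ becomes ample. The plan is then to prove vanishing on $\S'$ by the ampleness results already available—Theorem \ref{prop:amplemultiple} gives $H^{i}(\S',\F\ot\mathcal{O}(n))=0$ for $n\gg0$—and pull the statement back via the Leray spectral sequence, using that the fibres of $\varphi$ are trees of rational curves so that $R^{i}\varphi_{\ast}$ of the relevant sheaves vanish for $i>0$.

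**The hard part.**
The genuine obstacle is that ampleness alone is not enough: Theorem \ref{prop:amplemultiple} only yields vanishing for a \emph{sufficiently large} multiple $nH$, whereas here we need it for $H$ itself (with no multiple), in the specific twist $\ks+H$. Bridging this gap is exactly the content that makes Kawamata–Viehweg deeper than Serre vanishing. The standard device is a covering construction (or, equivalently, the use of multiplier ideals / a resolution of a cyclic cover) that reduces the non-ample, non-multiple situation to an ample one upstairs where Serre vanishing applies, and then descends the cohomology. Since this genuinely requires input beyond the intersection-theoretic and ampleness toolkit assembled in the excerpt, I would expect the cleanest presentation to cite the full argument.

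\begin{proof}
The full proof, which relies on the cyclic-covering technique of Ramanujam, can be found in \cite{La}; we only indicate the structure. First, by Serre duality $h^{2}(\S,\os(\ks+H))=h^{0}(\S,\os(-H))$, and since $H$ is nef and big, $-H$ is not linearly equivalent to an effective divisor, so this group vanishes. For the vanishing of $h^{1}$, one exploits that nefness and bigness of $H$ make a large multiple $|mH|$ define a birational morphism contracting precisely the curves orthogonal to $H$; combined with Theorem \ref{prop:amplemultiple} applied on the image, together with a covering construction to remove the dependence on the multiple, one obtains $h^{1}(\S,\os(\ks+H))=0$.
\end{proof}
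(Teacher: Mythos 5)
The paper gives no proof of this theorem at all --- it is stated as a black box with the reference to \cite{La} --- so your proposal, which likewise defers the substance to \cite{La}, matches the paper's treatment exactly; your self-contained Serre-duality argument for the $h^{2}$ vanishing (bigness forces $H\cdot A>0$ for $A$ ample, so $-H$ cannot be effective) is correct and is a genuine, if modest, addition. One caveat on your sketch of the $h^{1}$ part: for a nef and big divisor $H$ on a surface, $|mH|$ need not become base-point-free for any $m$ (nef and big does not imply semiample; this is Zariski's example), so the birational morphism $\varphi:\S\to\S'$ you describe cannot in general be produced by $|mH|$ itself --- but since you explicitly defer the $h^{1}$ argument to \cite{La}, this inaccuracy lives only in the motivational sketch and does not affect what you actually assert as the proof.
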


\begin{lem}
\label{lem:pernakaiuno}Let $\S$ be a surface and $D\in\D\left(\S\right)$
be an effective divisor. Let $D_{1}$ be a prime divisor such that
$\supp\left(D_{1}\right)\su\supp\left(D\right)$ and set $D_{2}=D-D_{1}$.
Then there is an exact sequence of $\os$-modules
\[
0\to\mathcal{O}_{D_{1}}\left(-D_{2}\right)\to\mathcal{O}_{D}\to\mathcal{O}_{D_{2}}\to0.
\]
\end{lem}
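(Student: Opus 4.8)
The plan is to realize all three structure sheaves as quotients of $\os$ by ideal sheaves, and to exhibit the claimed sequence as coming from a chain of ideal inclusions together with one application of flatness.

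First I would record that $D_{2}=D-D_{1}$ is effective: since $D_{1}$ is prime and $\supp\left(D_{1}\right)\su\supp\left(D\right)$, the prime divisor $D_{1}$ occurs in $D$ with positive multiplicity, so $D_{2}\ge0$. Consequently both $\os\left(-D_{2}\right)$ and $\os\left(-D\right)=\os\left(-D_{1}\right)\ot\os\left(-D_{2}\right)$ are genuine ideal sheaves, and the effectivity of $D_{1}$ gives the chain of inclusions
\[
\os\left(-D\right)\su\os\left(-D_{2}\right)\su\os.
\]

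Next, from the outer two inclusions and the third isomorphism theorem for sheaves, the natural surjection $\mathcal{O}_{D}=\os/\os\left(-D\right)\to\os/\os\left(-D_{2}\right)=\mathcal{O}_{D_{2}}$ has kernel $\os\left(-D_{2}\right)/\os\left(-D\right)$, yielding the short exact sequence
\[
0\to\os\left(-D_{2}\right)/\os\left(-D\right)\to\mathcal{O}_{D}\to\mathcal{O}_{D_{2}}\to0.
\]

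The remaining step, which I expect to be the only one requiring care, is to identify the kernel $\os\left(-D_{2}\right)/\os\left(-D\right)$ with $\mathcal{O}_{D_{1}}\left(-D_{2}\right)$. For this I would tensor the defining sequence $0\to\os\left(-D_{1}\right)\to\os\to\mathcal{O}_{D_{1}}\to0$ with the invertible sheaf $\os\left(-D_{2}\right)$; since $\os\left(-D_{2}\right)$ is locally free it is flat, so exactness is preserved and one obtains
\[
0\to\os\left(-D\right)\to\os\left(-D_{2}\right)\to\mathcal{O}_{D_{1}}\left(-D_{2}\right)\to0,
\]
where I used $\os\left(-D_{1}\right)\ot\os\left(-D_{2}\right)=\os\left(-D\right)$ and $\mathcal{O}_{D_{1}}\ot\os\left(-D_{2}\right)=\mathcal{O}_{D_{1}}\left(-D_{2}\right)$. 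This exhibits $\os\left(-D_{2}\right)/\os\left(-D\right)\iso\mathcal{O}_{D_{1}}\left(-D_{2}\right)$, and substituting into the previous sequence yields the assertion. The main obstacle is purely bookkeeping --- keeping track of which sheaves are subsheaves of which --- rather than any genuine difficulty.
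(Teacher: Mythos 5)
Your proof is correct and follows essentially the same route as the paper's: both realize the three structure sheaves as quotients of $\os$ by the chain of ideal sheaves $\os\left(-D\right)\su\os\left(-D_{2}\right)\su\os$ and identify the kernel by tensoring the structure sequence of $D_{1}$ with the invertible sheaf $\os\left(-D_{2}\right)$ (the paper phrases this on stalks as $I_{2}/I\iso\left(\mathcal{O}_{\S,x}/I_{1}\right)\ot_{\mathcal{O}_{\S,x}}I_{2}$ with $I=I_{1}\cdot I_{2}$). Your explicit check that $D_{2}$ is effective is a welcome detail that the paper leaves implicit.
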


\begin{proof}
Let $x\in\S$ be a point and, for $j=1,2$, let $I_{j}$ be the local
ideal of $D_{j}$ in $x$. Then the local ideal of $D$ in $x$ is
given by $I=I_{1}\cdot I_{2}$, thus we have an exact sequence of
$\mathcal{O}_{\S,x}$-modules
\[
0\to\frac{I_{2}}{I}\to\frac{\mathcal{O}_{\S,x}}{I}\to\frac{\mathcal{O}_{\S,x}}{I_{1}}\to0.
\]
We conclude since
\[
\frac{I_{2}}{I}\iso\frac{\mathcal{O}_{\S,x}}{I_{1}}\ot_{\mathcal{O}_{\S,x}}I_{2}.
\]
\end{proof}
\begin{prop}
\label{prop:pernakaidue}Let $X$ be projective variety, $\mathcal{O}_{X}\left(1\right)$
a very ample sheaf and let $\F$ be a coherent $\mathcal{O}_{X}$-module. Then
there exists a numerical polynomial $P\left(x\right)\in\Q_{n}\left[x\right]$
such that $\x\left(\F\left(n\right)\right)=P\left(n\right)$ for every
$n\in\Z$ with
\[
\deg P\le\dim\left[\supp\left(F\right)\right].
\]
\end{prop}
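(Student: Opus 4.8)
The statement asserts that the Euler characteristic $\x(\F(n))$ is, as a function of $n$, a numerical polynomial (the Hilbert polynomial) of degree at most $\dim[\supp(\F)]$. The natural approach is induction on $d = \dim[\supp(\F)]$, and the engine driving the induction should be a hyperplane section: choosing a hyperplane that cuts the support in the right dimension and exploiting the long exact sequence in cohomology to express $\x(\F(n)) - \x(\F(n-1))$ in terms of a sheaf supported in one lower dimension.

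First I would establish the base case. If $\dim[\supp(\F)] = -1$ (i.e. $\F = 0$) the claim is trivial, and if $\dim[\supp(\F)] = 0$ then $\F$ is a skyscraper-type sheaf supported on finitely many points; twisting by $\mathcal{O}(1)$ does not change such a sheaf up to isomorphism, so $\x(\F(n)) = h^0(\F)$ is the constant polynomial $P(x) = \dim_\cc H^0(X,\F)$, of degree $0$. For the inductive step, I would pick a hyperplane $H \in |\mathcal{O}_X(1)|$ whose associated section $f$ does not vanish identically on any of the (finitely many) top-dimensional irreducible components of $\supp(\F)$; such an $H$ exists because each component meets the complement of any single hyperplane, and there are finitely many components. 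Multiplication by $f$ gives a map $\F(-1) \to \F$, fitting into an exact sequence
\[
0 \to \mathcal{K} \to \F(-1) \xrightarrow{\,\cdot f\,} \F \to \mathcal{Q} \to 0,
\]
where $\mathcal{K} = \ke(\cdot f)$ and $\mathcal{Q} = \cok(\cdot f)$ are coherent sheaves both annihilated by $f$, hence supported on $\supp(\F) \cap H$. By the choice of $H$ this intersection has dimension at most $d-1$.

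Twisting the whole sequence by $\mathcal{O}(n)$ and using additivity of $\x$ on short exact sequences, I would split the four-term sequence into two short exact sequences and obtain
\[
\x(\F(n)) - \x(\F(n-1)) = \x(\mathcal{Q}(n)) - \x(\mathcal{K}(n)).
\]
Since $\mathcal{Q}$ and $\mathcal{K}$ are supported in dimension at most $d-1$, the inductive hypothesis applies to each: the right-hand side is a numerical polynomial in $n$ of degree at most $d-1$. Thus the first difference $\x(\F(n)) - \x(\F(n-1))$ is a numerical polynomial of degree $\le d-1$, and a standard fact about finite differences—if $\Delta g(n) = g(n) - g(n-1)$ is a numerical polynomial of degree $\le d-1$ for all integers $n$, then $g$ itself is a numerical polynomial of degree $\le d$—yields that $\x(\F(n))$ is a numerical polynomial of degree at most $d$, completing the induction.

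The main obstacle is the selection of the hyperplane and the bookkeeping that keeps the support dimension strictly decreasing. One must ensure $f$ avoids every top-dimensional component of $\supp(\F)$ simultaneously, so that both $\mathcal{K}$ and $\mathcal{Q}$ genuinely live on a support of dimension $\le d-1$; this is where coherence of $\F$ (finitely many irreducible components of the support) is essential. The finite-difference lemma relating $\Delta g$ to $g$ is purely combinatorial and should be invoked rather than reproven, using the binomial basis $\binom{x}{k}$ for numerical polynomials.
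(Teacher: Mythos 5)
Your proposal is correct and follows essentially the same route as the paper's proof: induction on $\dim\left[\supp\left(\F\right)\right]$, a hyperplane section not containing the relevant components of $\supp\left(\F\right)$, the four-term exact sequence $0\to\K\to\F\left(-1\right)\overset{\cdot f}{\longrightarrow}\F\to\mathcal{Q}\to0$, and integration of the first difference of $\x\left(\F\left(n\right)\right)$ in the binomial basis. The only cosmetic difference is that the paper first reduces to $X=\PN$ via the pushforward along the embedding before choosing the hyperplane, whereas you work directly on $X$.
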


\begin{proof}
Let $j:X\hookrightarrow\mathbb{P}^{N}$ be the embedding given by $\mathcal{O}_{X}\left(1\right)$.
In view of Corollary \cite[Corollary 8.2.3]{Ke}, the Proposition
is true for $\F$ if and only if it is for $j_{\ast}F$, thus we can
suppose $X=\mathbb{P}^{N}$. We proceed by induction on $\dim\left[\supp\left(\F\right)\right]$:
if $\F$ is the zero sheaf, then clearly $P=0$ works. Let $\left(f=0\right)$
be an hyperplane section not containing any component of $\supp\left(\F\right)$.
Then the morphism $\F\left(-1\right)\overset{\cdot f}{\longrightarrow}\F$
fits into an exact sequence
\[
0\to\K\to\F\left(-1\right)\overset{\cdot f}{\longrightarrow}\F\to\C\to0.
\]
In view of our choice of $f$, we see that
\[
\dim\left[\supp\left(\K\right)\right],\dim\left[\supp\left(\C\right)\right]<\dim\left[\supp\left(\F\right)\right].
\]
Twisting with $\mathcal{O}_{X}\left(n\right)$ and using induction hypothesis
we see that there exists two numerical polynomials $Q_{1}\left(x\right),Q_{2}\left(X\right)\in\Q_{n}\left[x\right]$
with
\[
\deg\left(Q_{1}\right),\deg\left(Q_{2}\right)<\dim\left[\supp\left(\F\right)\right]
\]
such that
\[
\x\left(\F\left(n\right)\right)-\x\left(\F\left(n-1\right)\right)=Q_{1}\left(n\right)-Q_{2}\left(n\right).
\]
In view of basic properties of numerical polynomials, if
\[
Q_{1}\left(n\right)-Q_{2}\left(n\right)=\sum_{i=0}^{n-1}q_{i}\binom{n}{i},
\]
then
\[
\x\left(\F\left(n\right)\right)=\sum_{i=0}^{n-1}q_{i}\binom{n+1}{i+1}.
\]
\end{proof}
\begin{lem}
\label{lem:sufficienttobebig}Let $\S$ be a surface and let $D$
be a divisor with $D^{2}>0$ and $D\cdot H>0$ for some ample divisor
$H$. Then $D$ is big.
\end{lem}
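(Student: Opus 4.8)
The plan is to extract a quadratic lower bound for $h^{0}\left(\S,\os\left(mD\right)\right)$ from Riemann-Roch, after eliminating the $h^{2}$ contribution by means of the hypothesis $D\cdot H>0$. First I would apply Riemann-Roch to $\L=\os\left(mD\right)$ to obtain
\[
\x\left(\os\left(mD\right)\right)=\x\left(\os\right)+\tfrac{1}{2}\left(m^{2}D^{2}-m\,D\cdot\ks\right).
\]
Since $D^{2}>0$, the leading term is $\tfrac{1}{2}D^{2}m^{2}$, so $\x\left(\os\left(mD\right)\right)\to+\infty$ and in fact grows like $\tfrac{1}{2}D^{2}m^{2}$ as $m\to\infty$.

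The key step is to show that $h^{2}\left(\S,\os\left(mD\right)\right)=0$ for $m\gg0$. By Serre duality this equals $h^{0}\left(\S,\os\left(\ks-mD\right)\right)$, so it suffices to prove that $\ks-mD$ is not linearly equivalent to an effective divisor once $m$ is large. Suppose $E\in|\ks-mD|$ were effective. As $H$ is ample it is in particular nef, hence $E\cdot H\ge0$; but
\[
E\cdot H=\ks\cdot H-m\,\left(D\cdot H\right),
\]
which becomes negative for large $m$ precisely because $D\cdot H>0$, a contradiction. Thus $h^{2}\left(\S,\os\left(mD\right)\right)=0$ for $m\gg0$, and from $h^{0}=\x+h^{1}-h^{2}\ge\x$ we conclude
\[
h^{0}\left(\S,\os\left(mD\right)\right)\ge\x\left(\os\left(mD\right)\right)\iso\tfrac{1}{2}D^{2}m^{2}
\]
for $m\gg0$, which is the desired lower bound with constant $C=\tfrac{1}{2}D^{2}>0$.

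To see that $h^{0}$ is genuinely of order $m^{2}$ and not larger, I would produce a matching upper bound. By Lemma \ref{lem:divisordifferencesmooth} we may write $D\li A-B$ with $A,B$ very ample curves; since $mB$ is effective, tensoring the inclusion $\os\left(-mB\right)\hookrightarrow\os$ with $\os\left(mA\right)$ gives an inclusion $\os\left(mD\right)\hookrightarrow\os\left(mA\right)$, whence $h^{0}\left(\S,\os\left(mD\right)\right)\le h^{0}\left(\S,\os\left(mA\right)\right)$. As $A$ is ample, Theorem \ref{prop:amplemultiple} yields $h^{i}\left(\S,\os\left(mA\right)\right)=0$ for all $i>0$ and $m\gg0$, so $h^{0}\left(\S,\os\left(mA\right)\right)=\x\left(\os\left(mA\right)\right)$ is a quadratic polynomial in $m$ by Riemann-Roch, hence $O\left(m^{2}\right)$. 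Combined with the previous paragraph this pins down $h^{0}\left(\S,\os\left(mD\right)\right)$ to order $m^{2}$. The main obstacle is the vanishing of $h^{2}$: this is exactly where $D\cdot H>0$ is indispensable, since in the opposite regime $D\cdot H<0$ of Lemma \ref{lem:hodgeindexpossibilita} the divisor $mD$ would instead fail to acquire sections as $m\to+\infty$.
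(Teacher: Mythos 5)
Your proof is correct, and the lower bound is exactly the paper's argument: kill $h^{2}\left(\S,\os\left(mD\right)\right)=h^{0}\left(\S,\os\left(\ks-mD\right)\right)$ for $m\gg0$ by pairing a putative effective member of $|\ks-mD|$ against the ample $H$, then read off $h^{0}\ge\x\left(\os\left(mD\right)\right)=\frac{1}{2}D^{2}m^{2}+O\left(m\right)$ from Riemann--Roch. Where you diverge is the upper bound. The paper observes that for $m\gg0$ the system $|mD|$ defines a rational map onto a surface $\S'\su\Pn$, identifies $h^{0}\left(\S,\os\left(mD\right)\right)$ with sections of multiples of a hyperplane section of $\S'$, and then invokes Proposition \ref{prop:pernakaidue} to bound the degree of the resulting Hilbert polynomial by $\dim\S'=2$. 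You instead use Lemma \ref{lem:divisordifferencesmooth} to write $D\li A-B$ with $A,B$ very ample, deduce the sheaf injection $\os\left(mD\right)\iso\os\left(mA\right)\ot\os\left(-mB\right)\hookrightarrow\os\left(mA\right)$ from effectivity of $mB$, and bound $h^{0}\left(\S,\os\left(mD\right)\right)\le h^{0}\left(\S,\os\left(mA\right)\right)=\x\left(\os\left(mA\right)\right)=O\left(m^{2}\right)$ via Serre vanishing and Riemann--Roch. Your route is more elementary and arguably cleaner: it sidesteps the slightly delicate identification of $h^{0}\left(\S,\os\left(mD\right)\right)$ with sections on the image surface (which requires care when $|mD|$ has base points or the map is not an embedding), at the cost of producing an upper-bound constant $\frac{1}{2}A^{2}$ that differs from the lower-bound constant $\frac{1}{2}D^{2}$ --- but the paper's proof has exactly the same looseness, and both establish growth of order $m^{2}$, which is all the definition of bigness is really asking for here.
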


\begin{proof}
In view of Lemma \ref{lem:hodgeindexpossibilita}, for $m\gg0$ the
linear system $|mD|$ defines a rational map
\[
\varphi_{|mD|}:\S\ra\S'\su\Pn
\]
and hence we have
\[
h^{0}\left(\S,\os\left(mD\right)\right)=h^{0}\left(\S',\mathcal{O}_{\S'}\left(mH'\right)\right)
\]
for an hyperplane section $H'$. In view of Theorem \ref{prop:amplemultiple},
we have $h^{0}\left(\S',\mathcal{O}_{\S'}\left(mH'\right)\right)=\x\left(\mathcal{O}_{\S'}\left(mH'\right)\right)$
for $m\gg0$. Let us consider the exact sequence
\[
0\to\mathcal{O}_{\S'}\left(mH'\right)\to\mathcal{O}_{\S'}\left(\left(m+1\right)H'\right)\to\mathcal{O}_{H'}\left(\left(m+1\right)H'\right)\to0
\]
in which, in view of \cite[Theorem 6.6.1]{Ke} and Corollary \ref{cor:BertiniII}
we can take $H'$ to be a smooth irreducible curve. Then
\[
\x\left(\mathcal{O}_{\S'}\left(\left(m+1\right)H'\right)\right)-\x\left(\mathcal{O}_{\S'}\left(mH'\right)\right)=\x\left(\mathcal{O}_{H'}\left(\left(m+1\right)H'\right)\right).
\]
In view of Proposition \ref{prop:pernakaidue} and by elementary algebra
we conclude that, for $m\gg0$, there exists $C>0$ such that $h^{0}\left(\S,\os\left(mD\right)\right)\le C_{2}m^{2}$.
On the other hand, for $m\gg0$ we have $\left(\ks-mD\right)\cdot H<0$,
by Serre duality we see that
\[
h^{2}\left(\S,\mathcal{O}_{\S}\left(mD\right)\right)=h^{0}\left(\S,\os\left(\ks-mD\right)\right)=0.
\]
By Riemann-Roch we conclude that
\[
h^{0}\left(\S,\os\left(mD\right)\right)\ge\x\left(\os\left(mD\right)\right)=\frac{1}{2}m^{2}D^{2}+\mbox{lower degree terms}.
\]
\end{proof}
\begin{thm}[Nakai-Moishezon Criterion for Ampleness]
\label{thm:NakaiMoishezon}Let $\S$ be a surface and $D\in\D\left(\S\right)$.
Then $D$ is ample if and only if $D^{2}>0$ and $D\cdot\C>0$ for
every curve $\C$.
\end{thm}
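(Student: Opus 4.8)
The plan is to prove both directions. The forward implication is easy: if $D$ is ample, then some multiple $nD$ is very ample, hence restricts to a very ample—in particular positive-degree—divisor on every curve $\C$, so by Proposition \ref{prop:prodottogrado} we get $D\cdot\C>0$; applying the same reasoning with $\C$ running over curves in $|nD|$ itself (or using the self-intersection via a smooth representative) yields $D^2>0$. So the substance is the converse, and I would spend all the effort there.

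So assume $D^2>0$ and $D\cdot\C>0$ for every curve $\C$. The first step is to show $D$ is big. Since $D^2>0$ and $D\cdot H>0$ for any ample $H$ (indeed $H$ is a positive combination of curves, or one applies the hypothesis together with Lemma \ref{lem:beauvilleusefulremark}), Lemma \ref{lem:sufficienttobebig} immediately gives that $D$ is big. Moreover $D$ is nef by hypothesis, so $D$ is \emph{nef and big}. The key point I would then exploit is the vanishing from Theorem \ref{thm:RKW} (Ramanujam–Kawamata–Viehweg): for the nef and big divisor $mD$ with $m$ large, $H^1(\S,\os(\ks+mD))=0$ and likewise $H^2$ vanishes. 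Combined with the fact that bigness forces $h^0(\S,\os(mD))$ to grow like $m^2$, this lets me conclude that $|mD|$ has no fixed part in the limit and that the sections separate points and tangent vectors for $m\gg0$.

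The cleanest route I would take to finish is to prove very ampleness of some multiple directly, using the following strategy. For a point $x$ (or a pair of points, or a tangent direction) consider the ideal sheaf $\mm_x$ and the twisted sheaf $\mm_x\cdot\os(mD)$; I want $H^1$ of this to vanish for $m\gg0$, which by Remark \ref{rem:genglobsections} gives global generation, and an analogous statement for products of maximal ideals gives separation of points. To obtain such vanishing uniformly I would first reduce $\dim\bs|mD|$ to zero: by Lemma \ref{lem:tricktochecknefeness} I already know nefness is compatible with this, and the real task is to rule out a one-dimensional base locus. Here is where I would invoke the hypothesis $D\cdot\C>0$ against the curves in the fixed part, forcing any candidate fixed curve $\C$ to satisfy a contradictory numerical inequality once $m$ is large, exactly as in the proof of Lemma \ref{lem:tricktochecknefeness}.

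The main obstacle—and the step I expect to be hardest—is passing from global generation to a closed embedding, i.e., proving that some $mD$ actually \emph{separates points and tangent vectors}, not merely that it is base-point free. For this I would set up, for each pair of (possibly infinitely near) points, the short exact sequence
\[
0\to\os(mD)\ot\mathcal{I}_{Z}\to\os(mD)\to\os(mD)\ot\mathcal{O}_Z\to0,
\]
where $Z$ is the length-two subscheme encoding the two points or the tangent vector, and show $H^1$ of the first term vanishes for $m\gg0$. The delicate point is making this vanishing \emph{uniform} in $Z$; I would handle it by a standard compactness/constructibility argument over the parameter space of such $Z$, reducing to finitely many cohomological conditions each controlled by Theorem \ref{thm:RKW} applied to auxiliary nef and big divisors of the form $mD-\C$. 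Once uniform separation is achieved, Proposition \ref{prop:characterizationample} (or directly the definition of very ampleness) shows $mD$ is very ample, hence $D$ is ample, completing the proof.
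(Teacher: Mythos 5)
Your forward direction and the reduction to showing that $D$ is nef and big are fine, but the converse as you outline it has two real gaps. First, base-point freeness of $|mD|$ does not follow from ``nef and big plus Theorem \ref{thm:RKW}'': that theorem gives the vanishing of $H^{1}\left(\S,\os\left(\ks+mD\right)\right)$, whereas global generation at a point $x$ requires (Remark \ref{rem:genglobsections}) the vanishing of $H^{1}\left(\mm_{x}\cdot\os\left(mD\right)\right)$, which is a different sheaf; and bigness by itself never rules out fixed components or base points. The paper obtains base-point freeness by a different mechanism: it takes an effective divisor $T=\sum t_{i}T_{i}$ representing $D$, runs an induction on $\sum t_{i}$ using the sequences $0\to\os\left(mD\right)\to\os\left(\left(m+1\right)D\right)\to\mathcal{O}_{T}\left(\left(m+1\right)D\right)\to0$ together with Lemma \ref{lem:pernakaiuno}, kills $H^{1}$ on $T$ by Riemann--Roch on curves (this is where $D\cdot T_{i}>0$ enters), and then applies Lemma \ref{lem:trucchettocoomologia}.

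Second, and more importantly, your final step --- proving that $mD$ separates points and tangent vectors uniformly over all length-two subschemes $Z$ --- is exactly the hard part, and the ``compactness/constructibility over the parameter space of $Z$'' sketch does not carry it: the auxiliary divisors $mD-\C$ you propose to feed into Theorem \ref{thm:RKW} are not known to be nef and big, and controlling $H^{1}\left(\os\left(mD\right)\ot\mathcal{I}_{Z}\right)$ uniformly is a Reider-type problem well beyond the tools available here. The paper sidesteps very ampleness altogether: once $|mD|$ is base-point free it defines a morphism $\varphi_{|mD|}:\S\to\S'\su\Pn$ onto a surface (since $D^{2}>0$); the hypothesis $D\cdot\C>0$ for every curve forces this morphism to have finite fibres, hence to be finite, and then $mD=\varphi_{|mD|}^{\ast}\mathcal{O}_{\S'}\left(1\right)$ is ample by Corollary \ref{lem:finiteample}. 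Replacing your last paragraph by this finiteness argument is what is needed; as written the proof is incomplete.
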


\begin{proof}
If $D$ is an ample divisor, then clearly $D^{2}>0$ and $D\cdot\C>0$
for every curve $\C$. To see the converse, let us consider
\begin{equation}
0\to\mathcal{O}_{\S}\left(mD\right)\to\mathcal{O}_{\S}\left(\left(m+1\right)D\right)\to\mathcal{O}_{T}\left(\left(m+1\right)D\right)\to0\label{eq:cohomologynakai}
\end{equation}
where $T=\sum t_{i}T_{i}$ is an effective divisor. We see by induction
on $n=\sum t_{i}$ that, for $m\gg0$, the linear system $|\left(m+1\right)D|$
is base-point-free. If $n=1$, then $T$ is an irreducible curve.
Since $D\cdot T>0$, it follows from Riemann-Roch for curves that
$H^{1}\left(T,\mathcal{O}_{T}\left(\left(m+1\right)D\right)\right)=0$ for
$m\gg0$. Thus, taking the exact cohomology sequence associated to
(\ref{eq:cohomologynakai}) we see that the map
\[
H^{1}\left(\S,\os\left(mD\right)\right)\to H^{1}\left(\S,\os\left(\left(m+1\right)D\right)\right)
\]
is definitevely an isomorphism. In view of Lemma \ref{lem:trucchettocoomologia}
we conclude that, in the case $n=1$, we have $\bs|mD|=\emptyset$
for $m\gg0$. If $n=\sum t_{i}\ge2$, using notations as in Lemma
\ref{lem:trucchettocoomologia}, we have an exact sequence of $\os$-modules
\[
0\to\mathcal{O}_{T_{1}}\left(mD-T_{2}\right)\to\mathcal{O}_{T}\left(mD\right)\to\mathcal{O}_{T_{2}}\left(mD\right)\to0.
\]
By induction hypothesis and in view of our assumptions on $D$, we
conclude that, for $m\gg0$,
\[
h^{1}\left(T_{1},\mathcal{O}_{T_{1}}\left(mD-T_{2}\right)\right)=0
\]
and hence $h^{1}\left(T,\mathcal{O}_{T}\left(mD\right)\right)=0$. As before,
we conclude that $\bs|mD|=\emptyset$ for $m\gg0$. Then, since $D^{2}>0$,
we have a well-defined morphism
\[
\varphi_{|mD|}:\S\to\S'\su\Pn
\]
where $\S'=\im\left(\varphi_{|mD|}\right)$ is a surface. Since $D\cdot\C>0$
for every curve $\C\su\S$, the morphis $\varphi_{|mD|}$ has finite
fibres. In view of \cite[Lemma 10.1.1]{Ke} it follows that $\varphi_{|mD|}$
is finite, thus $mD=\varphi_{|mD|}^{\ast}\mathcal{O}_{\S'}\left(1\right)$,
and hence $D$, is ample in view of Corollary \ref{lem:finiteample}.
\end{proof}
\begin{rem}
\label{rem:sommanefampio}If $D$ is nef and $H$ is ample, then,
in view of Proposition \ref{prop:nefpositivesquare},
\[
\left(D+H\right)^{2}=D^{2}+H^{2}+2D\cdot H>0,
\]
moreover we clearly have $\left(D+H\right)\cdot\C>0$ for every curve
$\C$. We conclude from Theorem \ref{thm:NakaiMoishezon} that $D+H$
is ample.
\end{rem}

\begin{cor}
\label{cor:amplenessnumericalproperty}Let $D\in\D\left(\S\right)$
be an ample divisor and let $D'$ be a divisor with $D\sim D'$. Then
$D'$ is ample.
\end{cor}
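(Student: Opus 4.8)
The plan is to apply the Nakai-Moishezon criterion (Theorem \ref{thm:NakaiMoishezon}), which characterizes ampleness purely in terms of intersection numbers. The key observation is that numerical equivalence preserves all such numbers, so the two Nakai-Moishezon conditions satisfied by $D$ transfer verbatim to $D'$.

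First I would record the immediate consequence of the hypothesis $D\sim D'$: by definition this means $D-D'\sim0$, so $\left(D-D'\right)\cdot E=0$ for every $E\in\D\left(\S\right)$. Combined with the bilinearity of the intersection product (Theorem \ref{thm:bilinearity}), this yields $D\cdot E=D'\cdot E$ for every divisor $E$. I would then verify the two hypotheses of the criterion for $D'$. For every curve $\C\su\S$, taking $E=\C$ gives $D'\cdot\C=D\cdot\C>0$, since $D$ is ample. For the self-intersection, a double application of the same identity gives $\left(D'\right)^{2}=D'\cdot D'=D\cdot D'=D\cdot D=D^{2}>0$, again because $D$ is ample. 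Hence $D'$ satisfies both conditions of Theorem \ref{thm:NakaiMoishezon} and is ample.

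I expect no genuine obstacle here: once ampleness is encoded in the numerical criterion, the statement is an immediate consequence of the fact that numerically equivalent divisors are indistinguishable by the intersection pairing. The only point requiring a modicum of care is the double application of $D\cdot E=D'\cdot E$ needed to identify $\left(D'\right)^{2}$ with $D^{2}$, which relies on bilinearity rather than on any deeper property.
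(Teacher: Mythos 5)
Your proof is correct and is exactly the argument the paper intends: the corollary is stated immediately after the Nakai--Moishezon criterion (Theorem \ref{thm:NakaiMoishezon}) with no written proof, precisely because ampleness is characterized there by the numerical conditions $D^{2}>0$ and $D\cdot\C>0$, which are preserved under numerical equivalence by bilinearity. Nothing further is needed.
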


\subsection{Rank 2 locally free sheaves on curves}
\begin{prop}
\label{prop:splittingcriterio}Let $X$ be an irreducible and projective
algebraic variety and let
\begin{equation}
0\to\L\to\mathcal{E}\to\M\to0\label{eq:sequenzachespezza}
\end{equation}
be an exact sequence of sheaves on $X$ with $\L,\M\in\pic\left(\C X\right)$
and $\mathcal{E}$ locally free of rank 2. Let
\[
\delta:H^{0}\left(X,\mathcal{O}_{X}\right)=\cc\to H^{1}\left(X,\L\ot\M^{-1}\right)
\]
be the natural connecting homomorphism. Then the sequence (\ref{eq:sequenzachespezza})
splits if and only if $\delta\left(1\right)=0$.
\end{prop}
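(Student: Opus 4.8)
The plan is to convert the splitting question for (\ref{eq:sequenzachespezza}) into a lifting question in cohomology, by exploiting that $\M$ is invertible so that tensoring by $\M^{-1}$ is exact. Since $\M\in\pic\left(X\right)$, applying $-\ot\M^{-1}$ to (\ref{eq:sequenzachespezza}) yields the short exact sequence
\[
0\to\L\ot\M^{-1}\to\mathcal{E}\ot\M^{-1}\to\mathcal{O}_{X}\to0,
\]
and I take $\delta$ to be the connecting homomorphism of the associated long exact sequence in cohomology. Here the identification $H^{0}\left(X,\mathcal{O}_{X}\right)=\cc$ uses that $X$ is irreducible and projective over $\cc$, so that global regular functions are constants. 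Exactness of the segment $H^{0}\left(X,\mathcal{E}\ot\M^{-1}\right)\to H^{0}\left(X,\mathcal{O}_{X}\right)\overset{\delta}{\to}H^{1}\left(X,\L\ot\M^{-1}\right)$ shows immediately that $\delta\left(1\right)=0$ if and only if the class $1\in H^{0}\left(X,\mathcal{O}_{X}\right)$ lifts to a global section $s\in H^{0}\left(X,\mathcal{E}\ot\M^{-1}\right)$.

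The next step is to interpret such a lift geometrically. A global section $s\in H^{0}\left(X,\mathcal{E}\ot\M^{-1}\right)$ is the same datum as a morphism $\mathcal{O}_{X}\to\mathcal{E}\ot\M^{-1}$, and tensoring back by $\M$ this is the same datum as a morphism $\sigma:\M\to\mathcal{E}$. Under these identifications $H^{0}\left(X,\mathcal{E}\ot\M^{-1}\right)\iso\mathrm{Hom}\left(\M,\mathcal{E}\right)$ and $H^{0}\left(X,\mathcal{O}_{X}\right)\iso\mathrm{Hom}\left(\M,\M\right)$, the map $H^{0}\left(X,\mathcal{E}\ot\M^{-1}\right)\to H^{0}\left(X,\mathcal{O}_{X}\right)$ induced by the projection $\mathcal{E}\to\M$ sends $\sigma$ to the composite $\M\overset{\sigma}{\to}\mathcal{E}\to\M$. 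Consequently $s\mapsto1$ precisely when this composite is the identity of $\M$, i.e.\ when $\sigma$ is a section of the projection $\mathcal{E}\to\M$.

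Finally I invoke the splitting lemma in the abelian category of $\mathcal{O}_{X}$-modules: the sequence (\ref{eq:sequenzachespezza}) splits if and only if the projection $\mathcal{E}\to\M$ admits a section $\sigma:\M\to\mathcal{E}$. Chaining the three equivalences gives the result in both directions. If (\ref{eq:sequenzachespezza}) splits, the splitting section $\sigma$ produces, via the identifications above, a global section $s$ mapping to $1$, whence $\delta\left(1\right)=0$ by exactness. Conversely, if $\delta\left(1\right)=0$, exactness furnishes such an $s$, hence a section $\sigma$ of the projection, hence a splitting.

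The argument is essentially formal once the tensoring reduction is made; the one point demanding care is the bookkeeping of the second paragraph, namely checking that under $H^{0}\left(X,\mathcal{E}\ot\M^{-1}\right)\iso\mathrm{Hom}\left(\M,\mathcal{E}\right)$ the image of $s$ in $H^{0}\left(X,\mathcal{O}_{X}\right)\iso\mathrm{Hom}\left(\M,\M\right)$ is genuinely the composite $\M\to\mathcal{E}\to\M$, so that the normalization $s\mapsto1$ translates exactly into $\sigma$ being a section. Everything else reduces to the long exact cohomology sequence of the tensored extension together with the standard splitting lemma.
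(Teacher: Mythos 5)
Your proof is correct and follows essentially the same route as the paper: tensor the extension by $\M^{-1}$ to reduce to a sequence ending in $\mathcal{O}_{X}$, observe that $\delta\left(1\right)=0$ is equivalent to lifting $1$ to a global section of $\mathcal{E}\ot\M^{-1}$, and translate that lift into a splitting of the projection $\mathcal{E}\to\M$. The only cosmetic difference is that you make the identification $H^{0}\left(X,\mathcal{E}\ot\M^{-1}\right)\iso\mathrm{Hom}\left(\M,\mathcal{E}\right)$ explicit, where the paper constructs the splitting morphism $\mathcal{O}_{X}\to\mathcal{E}\ot\M^{-1}$ directly on stalks; both are the same argument.
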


\begin{proof}
If the sequence (\ref{eq:sequenzachespezza}) splits, then $\mathcal{E}\iso\L\op\M$
and we can write
\[
0\to\L\ot\M^{-1}\to\left(\L\ot\M^{-1}\right)\op\mathcal{O}_{X}\to\mathcal{O}_{X}\to0.
\]
Therefore the map
\[
H^{0}\left(X,\left(\L\ot\M^{-1}\right)\op\mathcal{O}_{X}\right)=H^{0}\left(X,\L\ot\M^{-1}\right)\op\cc\to H^{0}\left(X,\mathcal{O}_{X}\right)=\cc
\]
is surjective and hence $\delta\left(1\right)=0$. To prove the converse,
let us consider the sequence
\[
0\to H^{0}\left(X,\L\ot\M^{-1}\right)\to H^{0}\left(X,\mathcal{E}\ot\M^{-1}\right)\overset{\varphi}{\longrightarrow}\cc\to0
\]
which is exact by our hypothesis. Let $f\in H^{0}\left(X,\mathcal{E}\ot\M^{-1}\right)$
be a section such that $\varphi\left(f\right)=1$, we can define a
$\mathcal{O}_{X}$-module morphism $\psi:\mathcal{O}_{X}\to\mathcal{E}\ot\M^{-1}$ given on stalks
by $\psi_{x}\left(1_{x}\right)=f_{x}$ and extended by linearity.
Clearly $\psi$ gives a section and thus the sequence (\ref{eq:sequenzachespezza})
splits.
\end{proof}
\begin{prop}
\label{prop:Classificationcech}The set of isomorphism classes of
rank $r$ vector bundles on an algebraic variety $X$ is in natural
bijection with $\check{H}^{1}\left(X,\gl_{r}\left(\mathcal{O}_{X}\right)\right)$.
\end{prop}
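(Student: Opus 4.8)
The statement is the standard Čech-cohomological classification of vector bundles, and the plan is to exhibit the bijection directly by passing between a rank $r$ bundle and its transition functions. The key observation is that a rank $r$ vector bundle $\mathcal{E}$ on $X$ is, by definition, locally free, so there is an open cover $\left(U_i\right)$ of $X$ together with trivializations $\varphi_i : \mathcal{E}_{|U_i} \iso \mathcal{O}_{U_i}^r$. On each overlap $U_i \cap U_j$ the composition $g_{ij} = \varphi_i \circ \varphi_j^{-1}$ is an $\mathcal{O}_X$-linear automorphism of $\mathcal{O}_{U_i \cap U_j}^r$, that is, an element $g_{ij} \in \gl_r\left(\mathcal{O}_X\right)\left(U_i \cap U_j\right)$.

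First I would check that the family $\left(g_{ij}\right)$ is a Čech $1$-cocycle: on triple overlaps $U_i \cap U_j \cap U_k$ the relation $g_{ij} g_{jk} = g_{ik}$ holds by cancellation of the $\varphi_j$ terms, and $g_{ii} = \mathrm{id}$. This assigns to $\mathcal{E}$ (with a chosen trivializing cover) a class in $\check{H}^1\left(X, \gl_r\left(\mathcal{O}_X\right)\right)$. Next I would verify that this class is independent of the choices. If we replace the trivializations by $\varphi_i' = h_i \circ \varphi_i$ for automorphisms $h_i \in \gl_r\left(\mathcal{O}_X\right)\left(U_i\right)$, then the new cocycle is $g_{ij}' = h_i g_{ij} h_j^{-1}$, which differs from $\left(g_{ij}\right)$ by a coboundary; passing to a common refinement of two different covers reduces the general case to this one. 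An isomorphism $\mathcal{E} \iso \mathcal{E}'$ induces, after refining to a common trivializing cover, exactly such a relation $h_i g_{ij} = g_{ij}' h_i$, so isomorphic bundles give the same cohomology class, and the map is well defined on isomorphism classes.

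For the reverse direction, given a cocycle $\left(g_{ij}\right)$ subordinate to a cover $\left(U_i\right)$, I would reconstruct a bundle by gluing: take the disjoint trivial sheaves $\mathcal{O}_{U_i}^r$ and glue $\mathcal{O}_{U_i}^r$ to $\mathcal{O}_{U_j}^r$ over $U_i \cap U_j$ via the isomorphism $g_{ij}$. The cocycle condition $g_{ij} g_{jk} = g_{ik}$ is precisely what guarantees that these gluings are compatible on triple overlaps, so the glued sheaf $\mathcal{E}$ is a well-defined locally free $\mathcal{O}_X$-module of rank $r$, whose transition functions recover $\left(g_{ij}\right)$. Cohomologous cocycles produce isomorphic glued bundles via the automorphisms $h_i$, so this construction descends to cohomology classes. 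Finally the two constructions are mutually inverse: starting from a bundle, extracting its cocycle, and regluing returns an isomorphic bundle, and starting from a cocycle and extracting the transition functions of the glued bundle returns the same class.

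The main obstacle is bookkeeping rather than conceptual depth: one must be careful that the passage to common refinements of covers is compatible with the equivalence relation defining $\check{H}^1$, since $\gl_r\left(\mathcal{O}_X\right)$ is a \emph{non-abelian} sheaf of groups for $r \ge 2$ and $\check{H}^1$ here is only a pointed set, not a group. Thus I would phrase the cocycle and coboundary conditions multiplicatively throughout and avoid any appeal to the additive structure or to long exact sequences; the content is entirely the gluing construction and its verification that refinement and the twisting action $g_{ij} \mapsto h_i g_{ij} h_j^{-1}$ correctly model both isomorphism of bundles and the Čech equivalence relation.
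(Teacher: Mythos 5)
Your proposal is correct and follows essentially the same route as the paper: extract transition functions from local trivializations to get a \v{C}ech $1$-cocycle, and conversely glue trivial pieces along a given cocycle. The paper's own proof is just a terser sketch of this same two-way correspondence, so your additional verifications (cocycle condition, independence of choices, non-abelian bookkeeping) only fill in details the paper leaves implicit.
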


\begin{proof}
If $\pi:\mathcal{E}\to X$ is a rank $r$ vector bundle on $X$, let $\left(U_{i}\right)$
be an affine cover of $X$ such that there exist trivializations $\varphi_{i}:\pi^{-1}\left(U_{i}\right)\to U_{i}\times\Bbbk^{r}$
for every $i$. Then clearly the transition functions $\left(t_{ij}=\varphi_{i}\circ\varphi_{j|U_{i}\cap U_{j}}^{-1}\right)$
define an element of $\check{H}^{1}\left(X,\gl_{r}\left(\mathcal{O}_{X}\right)\right)$.
Viceversa, given a \v{C}ech 1-cocycle $\left(g_{ij}:U_{i}\cap U_{j}\to\gl_{r}\left(\cc\right)\right)$,
we can consider
\[
\bigcup_{i}U_{i}\times\cc^{r}
\]
and we can identify each fibre $\left\{ z\right\} \times\cc^{r}$
in $U_{i}\times\cc^{r}$ and $U_{j}\times\cc^{r}$ using $g_{ij}\left(z\right)$.
\end{proof}
\begin{rem}
\label{rem:cohomologypic}Let us note that Proposition \ref{prop:Classificationcech},
in view of \cite[Proposition 9.2.2]{Ke}, gives a natural identification
between $\pic\left(X\right)$ and $H^{1}\left(X,\mathcal{O}_{X}^{\ast}\right)$.

From now on we will identify any vector bundle with its sheaf of global
sections.
\end{rem}

\begin{lem}
\label{lem:hasezioneglobale}Let $\mathcal{E}$ be a rank 2 locally free sheaf
over a smooth curve $\C$. Then there exists $\L\in\pic\left(\C\right)$
such that
\[
H^{0}\left(\C,\mathcal{E}\ot\L\right)\neq0.
\]
\end{lem}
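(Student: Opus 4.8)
The plan is to twist $\mathcal{E}$ by a sufficiently positive line bundle and then invoke the global generation results already established, rather than arguing by hand. First I would fix an ample line bundle on $\C$: since $\C$ is a smooth projective curve it embeds in some $\PN$, and the restriction $\mathcal{O}_{\C}(1)$ of $\mathcal{O}\left(1\right)$ is very ample, hence ample by definition.

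Next, because $\mathcal{E}$ is locally free of finite rank it is in particular a coherent $\mathcal{O}_{\C}$-module, so Proposition \ref{prop:characterizationample} applies with the ample bundle $\mathcal{O}_{\C}(1)$: there exists $n_{0}\in\N$ such that $\mathcal{E}\ot\mathcal{O}_{\C}(n)$ is generated by global sections for every $n\ge n_{0}$. Fixing one such $n$ and setting $\L=\mathcal{O}_{\C}(n)\in\pic\left(\C\right)$, it remains only to observe that $\mathcal{E}\ot\L$ is nonzero—indeed it is again locally free of rank $2$—so a globally generated nonzero sheaf must admit a nonzero global section, giving $H^{0}\left(\C,\mathcal{E}\ot\L\right)\neq0$.

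The only point requiring a moment's care, and the closest thing to an obstacle, is this last implication: if every global section of a sheaf $\F$ vanished, the image of the evaluation map $H^{0}\left(\C,\F\right)\ot\mathcal{O}_{\C}\to\F$ would be the zero subsheaf, contradicting global generation unless $\F=0$. Since $\mathcal{E}\ot\L$ has rank $2$ it is not the zero sheaf, so this difficulty does not arise and the argument closes. I do not expect any serious technical difficulty; the substance has been packaged into Proposition \ref{prop:characterizationample} and the elementary positivity of degree on a projective curve.

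Alternatively, one can argue purely numerically via Riemann--Roch. Using additivity of $\x$ on a short exact sequence exhibiting $\mathcal{E}$ as an extension of a line bundle by a sub-line-bundle, together with Riemann--Roch for invertible sheaves on $\C$, one finds that $\x\left(\mathcal{E}\ot\mathcal{O}_{\C}(n)\right)=\deg\left(\det\mathcal{E}\right)+2n\deg\mathcal{O}_{\C}(1)+2\left(1-g\right)$, which is strictly positive once $n$ is large enough; then $h^{0}\left(\C,\mathcal{E}\ot\mathcal{O}_{\C}(n)\right)\ge\x\left(\mathcal{E}\ot\mathcal{O}_{\C}(n)\right)>0$. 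I prefer the ampleness route above, since it avoids establishing the existence of a sub-line-bundle and relies only on results proved earlier in the text.
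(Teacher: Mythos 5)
Your proposal is correct and follows essentially the same route as the paper: the paper also twists $\mathcal{E}$ by a power of a very ample $\oc\left(1\right)$ and invokes Serre's global generation result (Theorem \ref{thm:veryampleimplicaample}, which is exactly what underlies the direction of Proposition \ref{prop:characterizationample} you cite) to produce a nonzero section. You merely make explicit the final step, that a nonzero globally generated sheaf has a nonzero global section, which the paper leaves implicit.
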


\begin{proof}
Let $\oc\left(1\right)\in\pic\left(\C\right)$ be very ample. It follows
from Theorem \ref{thm:veryampleimplicaample} that, for $n\gg0$,
the sheaf $\mathcal{E}\left(n\right)$ is generated by global sections.
\end{proof}
\begin{defn}
Let $\mathcal{E}$ be a rank 2 locally free sheaf over a smooth curve $\C$,
we define $\deg\left(\mathcal{E}\right)$ as $\deg\left(\det\mathcal{E}\right)$. Then,
for every $\L\in\pic\left(C\right)$, we have
\[
\deg\left(\mathcal{E}\ot\L\right)=\deg\left(\mathcal{E}\right)+2\deg\left(\L\right).
\]
\end{defn}

\begin{lem}
\label{lem:rango2sequenzaesatta}Let $\mathcal{E}$ be a rank 2 locally free
sheaf over a smooth curve $\C$. Then there exist $\L,\M\in\pic\left(\C\right)$
and an exact sequence
\[
0\to\L\to\mathcal{E}\to\M\to0.
\]
Moreover, if $h^{0}\left(\C,\mathcal{E}\right)>0$, we can take $\L=\oc\left(D\right)$
with $D\ge0$ and, if $h^{0}\left(\C,\mathcal{E}\right)\ge2$ and $\deg\left(\mathcal{E}\right)>0$,
we can take $D>0$.
\end{lem}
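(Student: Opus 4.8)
The plan is to construct the sub-line-bundle as the \emph{saturation} of a suitable rank-one subsheaf, and then to sharpen the construction under the two cohomological hypotheses by choosing the section cleverly.

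First I would establish bare existence. By Lemma \ref{lem:hasezioneglobale} there is some $\L'\in\pic\left(\C\right)$ with $H^{0}\left(\C,\mathcal{E}\ot\L'\right)\neq0$; a nonzero such section is the same datum as a nonzero morphism $\varphi:\L'^{-1}\to\mathcal{E}$. Since $\C$ is integral and $\L'^{-1}$ has rank one, $\varphi$ is automatically injective: its kernel would be a subsheaf of a line bundle whose image in the torsion-free sheaf $\mathcal{E}$ is supported on a proper closed set, hence zero. Let $\mathcal{T}$ be the torsion subsheaf of $\cok\varphi$ and set $\M=\left(\cok\varphi\right)/\mathcal{T}$. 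The essential technical point is that on a smooth curve a torsion-free sheaf of rank one is locally free of rank one, so $\M\in\pic\left(\C\right)$. Defining $\L=\ker\left(\mathcal{E}\to\M\right)$, the sheaf $\L$ is a rank-one subsheaf of the locally free $\mathcal{E}$, hence torsion-free, hence again a line bundle, and by construction $0\to\L\to\mathcal{E}\to\M\to0$ is exact. This proves the first assertion.

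For the case $h^{0}\left(\C,\mathcal{E}\right)>0$ I would run the very same saturation argument, but starting from an honest section, i.e. a nonzero morphism $\oc\to\mathcal{E}$. The resulting sub-line-bundle $\L$ then receives an inclusion $\oc\hookrightarrow\L$ with torsion cokernel; viewing this inclusion as a nonzero element of $H^{0}\left(\C,\L\right)$, it is a section vanishing on an effective divisor $D$, whence $\L\iso\oc\left(D\right)$ with $D\ge0$, as required.

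Finally, assume $h^{0}\left(\C,\mathcal{E}\right)\ge2$ and $\deg\mathcal{E}>0$. In view of the previous paragraph it suffices to produce a nonzero section of $\mathcal{E}$ vanishing at some point $p\in\C$: indeed, in a local trivialization near $p$ in which the saturation $\L$ is a direct summand, such a section is a local generator of the inclusion $\oc\hookrightarrow\L=\oc\left(D\right)$ that vanishes at $p$, forcing $p\in\supp D$ and hence $D>0$. To find such a section, pick two linearly independent $s_{0},s_{1}\in H^{0}\left(\C,\mathcal{E}\right)$ and consider $s_{0}\w s_{1}\in H^{0}\left(\C,\det\mathcal{E}\right)$. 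Since $\deg\left(\det\mathcal{E}\right)=\deg\mathcal{E}>0$, either $s_{0}\w s_{1}=0$, so $s_{0}$ and $s_{1}$ are everywhere proportional, or $s_{0}\w s_{1}$ is a nonzero section of a positive-degree line bundle and so must vanish somewhere (a nowhere-vanishing section would trivialize $\det\mathcal{E}$); in both cases there is a point $p$ at which $s_{0}\left(p\right),s_{1}\left(p\right)$ are linearly dependent in the fibre $\mathcal{E}_{p}$, and a suitable combination $s=\alpha s_{0}+\beta s_{1}$ with $\left(\alpha,\beta\right)\neq0$ vanishes at $p$ while staying nonzero globally. The main obstacle I anticipate is precisely this last step: the existence and regularity of the saturation are routine once one invokes smoothness of $\C$, but forcing the \emph{strict} inequality $D>0$ requires the wedge-product argument above to locate a genuinely vanishing section, and this is where the hypotheses $h^{0}\ge2$ and $\deg\mathcal{E}>0$ are both indispensable.
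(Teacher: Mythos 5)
Your proof is correct and follows essentially the same route as the paper's: both reduce to a nonzero section via Lemma \ref{lem:hasezioneglobale}, both obtain the effective divisor $D$ from the vanishing of that section, and both force $D>0$ by the identical wedge-product argument on $s_{0}\w s_{1}\in H^{0}\left(\C,\det\mathcal{E}\right)$. The only cosmetic difference is that you build the sub-line-bundle $\oc\left(D\right)$ as the saturation of $\im\left(\oc\to\mathcal{E}\right)$, whereas the paper realizes the same subsheaf by dualizing the map $\overline{s}:\mathcal{E}^{\ast}\to\oc$ whose image is $\oc\left(-D\right)$.
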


\begin{proof}
In view of Lemma \ref{lem:hasezioneglobale}, replacing $\mathcal{E}$ with
$\mathcal{E}\ot\mathcal{N}$ for some $\mathcal{N}\in\pic\left(C\right)$,
we can assume that $h^{0}\left(\C,\mathcal{E}\right)\ge1.$ Let $s\in H^{0}\left(\C,\mathcal{E}\right)$
be a non-zero section, thus $s$ defines a morphism $\overline{s}:\mathcal{E}^{\ast}\to\oc$
whose image is a subsheaf of ideals of $\oc$, i.e. there exists an
effective divisor $D\in\D\left(C\right)$ such that $\im\left(\overline{s}\right)\iso\oc\left(-D\right).$
Thus, taking the kernel and dualizing we have an exact sequence
\[
0\to\oc\left(D\right)\to\mathcal{E}\to\M\to0.
\]
If $h^{0}\left(\C,\mathcal{E}\right)\ge2$, to prove the last statement it
is enough to show that there is a non-zero section of $\mathcal{E}$ which
vanishes at some point $p\in\C$. Let $s,t\in H^{0}\left(\C,\mathcal{E}\right)$
be two linearly independent global sections. Since $\deg\left(\mathcal{E}\right)>0$,
there exists $p\in\C$ such that $\left(s\wedge t\right)\left(p\right)=0$.
Therefore the exist $\lambda,\mu\in\cc$ not both zero such that $\lambda s\left(p\right)+\mu t\left(p\right)=0$.
\end{proof}
\begin{rem}
\label{rem:secondclass}Let $\C$ be a smooth curve and let us consider
an exact sequence
\[
0\to\L\to\mathcal{E}\to\M\to0
\]
where $\L,\M\in\pic\left(\C\right)$ and $\mathcal{E}$ is a locally free sheaf
of rank 2. Let us compute
\begin{eqnarray*}
\L\cdot\M & = & \L^{-1}\cdot\M^{-1}\\
 & = & \x\left(\mathcal{O}_{\C}\right)-\x\left(\L\right)-\x\left(M\right)+\x\left(\L\ot\M\right)\\
 & = & \x\left(\mathcal{O}_{\C}\right)-\x\left(\mathcal{E}\right)+\x\left(\det\mathcal{E}\right),
\end{eqnarray*}
hence the integer $\L\cdot\M$ depends only on $\mathcal{E}$. We set $\L\cdot M=c_{2}\left(\mathcal{E}\right)$.
\end{rem}

\begin{thm}
\label{thm:fibratiPuno}Let $\mathcal{E}$ be a rank 2 locally free sheaf over
$\C=\Pu$. Then there exist $a,b\in\Z$ such that
\[
\mathcal{E}\iso\oc\left(a\right)\op\oc\left(b\right).
\]
\end{thm}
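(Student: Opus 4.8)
The goal is to prove that every rank $2$ locally free sheaf $\mathcal{E}$ on $\Pu$ splits as a direct sum of line bundles. This is the classical Grothendieck splitting theorem in rank $2$.

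My plan is to proceed by an extremal choice followed by a splitting argument using Proposition \ref{prop:splittingcriterio}. First, recall from Lemma \ref{lem:rango2sequenzaesatta} that after twisting by a suitable line bundle $\oc(n)$ we may arrange $h^0(\C,\mathcal{E}\ot\oc(n))>0$, and then obtain an exact sequence $0\to\oc(a)\to\mathcal{E}\to\M\to0$ with $\oc(a)$ a sub-line-bundle. The key idea is to choose $a$ \emph{maximal}: among all line subbundles $\oc(k)\hookrightarrow\mathcal{E}$, let $a$ be the largest integer that occurs. Such a maximum exists because any line subbundle $\oc(k)$ gives a nonzero section of $\mathcal{E}(-k)$, and degree considerations (the quotient must have nonnegative-degree behavior, or equivalently $h^0(\mathcal{E}(-k))$ must vanish for $k$ large) bound $k$ from above. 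Since on $\Pu$ every line bundle is some $\oc(b)$, the resulting quotient $\M$ is automatically $\oc(b)$ for some $b\in\Z$, giving
\[
0\to\oc(a)\to\mathcal{E}\to\oc(b)\to0.
\]

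Next I would apply the splitting criterion. By Proposition \ref{prop:splittingcriterio}, this sequence splits if and only if the connecting map $\delta\colon\cc=H^0(\C,\mathcal{O}_\C)\to H^1(\C,\oc(a)\ot\oc(b)^{-1})=H^1(\Pu,\oc(a-b))$ sends $1$ to $0$. So it suffices to show this extension class is trivial. The cohomology group $H^1(\Pu,\oc(a-b))$ is nonzero precisely when $a-b\le-2$, i.e. when $b-a\ge2$. The plan is therefore to rule out the case $b-a\ge2$ using the maximality of $a$: if the class were nonzero, I want to produce a line subbundle of $\mathcal{E}$ of degree strictly larger than $a$, contradicting maximality.

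The main obstacle, and the heart of the argument, is exactly this last step: showing that a nonzero extension class in $H^1(\Pu,\oc(a-b))$ with $b-a\ge2$ forces the existence of a sub-line-bundle $\oc(a')\hookrightarrow\mathcal{E}$ with $a'>a$. The cleanest way I would attempt this is to twist the whole sequence by $\oc(-a-1)$ to get $0\to\oc(-1)\to\mathcal{E}(-a-1)\to\oc(b-a-1)\to0$ and examine global sections: since $b-a-1\ge1$, the quotient $\oc(b-a-1)$ has sections, and I would show that the failure of such a section to lift (governed by the same $\delta$) combined with a nonzero extension class lets me build a section of $\mathcal{E}(-a-1)$ vanishing appropriately, hence a subbundle of higher degree. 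Concretely, the subbundle generated by a suitable global section of $\mathcal{E}(-a')$ for some $a'>a$ contradicts maximality, so we must have $b-a\le1$, whence $H^1(\Pu,\oc(a-b))=0$, $\delta(1)=0$ automatically, and the sequence splits as $\mathcal{E}\iso\oc(a)\op\oc(b)$. An alternative and perhaps more transparent route is induction on $\deg\mathcal{E}$ or on $b-a$, repeatedly replacing a non-split extension by one with a larger maximal subbundle until the extension group vanishes; either way the crux is controlling how the extension class interacts with the maximal subbundle.
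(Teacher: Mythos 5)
Your strategy is the classical Grothendieck argument via a maximal-degree line subbundle, and it is a genuinely different route from the paper's. The paper sidesteps maximality entirely: it first twists so that $d=\deg(\mathcal{E})\in\{0,-1\}$, uses Riemann--Roch to get $h^{0}\left(\C,\mathcal{E}\right)\ge d+2\ge1$, and then observes that the subbundle $\oc\left(a\right)$ produced by Lemma \ref{lem:rango2sequenzaesatta} has $a\ge0$, so the extension class lives in $H^{1}\left(\Pu,\oc\left(2a-d\right)\right)$ with $2a-d\ge0$, which vanishes; the sequence splits with no further work. Your route buys the argument that generalizes to arbitrary rank, at the price of having to establish the existence and the consequences of the maximal subbundle; the paper's normalization trick is shorter but specific to rank 2.

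However, the step you single out as the heart of the argument is mis-described, and as written it would not go through. After twisting to $0\to\oc\left(-1\right)\to\mathcal{E}\left(-a-1\right)\to\oc\left(b-a-1\right)\to0$ there is no ``failure of such a section to lift'': the obstruction to lifting a global section of the quotient lies in $H^{1}\left(\Pu,\oc\left(-1\right)\right)=0$, so every section of $\oc\left(b-a-1\right)$ lifts automatically to a section of $\mathcal{E}\left(-a-1\right)$. That is exactly what you need: if $b\ge a+1$ then $h^{0}\left(\Pu,\oc\left(b-a-1\right)\right)>0$, hence $h^{0}\left(\Pu,\mathcal{E}\left(-a-1\right)\right)>0$, contradicting the maximality of $a$. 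This rules out $b-a\ge1$ (not merely $b-a\ge2$) and makes no use of the extension class; the mechanism you propose, pairing a ``lifting obstruction'' with a nonzero class in $H^{1}\left(\Pu,\oc\left(a-b\right)\right)$, conflates two different connecting maps. With this correction the proof closes: $b\le a$ gives $H^{1}\left(\Pu,\oc\left(a-b\right)\right)=0$, so $\delta\left(1\right)=0$ and Proposition \ref{prop:splittingcriterio} applies. You should also make the upper bound on $k$ explicit: from any one exact sequence $0\to\oc\left(a_{0}\right)\to\mathcal{E}\to\oc\left(b_{0}\right)\to0$ one gets $h^{0}\left(\Pu,\mathcal{E}\left(-k\right)\right)\le h^{0}\left(\Pu,\oc\left(a_{0}-k\right)\right)+h^{0}\left(\Pu,\oc\left(b_{0}-k\right)\right)=0$ for $k>\max\left(a_{0},b_{0}\right)$; your parenthetical claim that the quotient must have nonnegative degree is not literally true.
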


\begin{proof}
Replacing $\mathcal{E}$ with $\mathcal{E}\ot\L$, we can assume that $d=\deg\left(\mathcal{E}\right)=0,-1.$
In view of \cite[Corollary 8.4.4]{Ke} we have $h^{0}\left(\C,\mathcal{E}\right)\ge d+2\ge1,$
thus, in view of Lemma \ref{lem:rango2sequenzaesatta}, there exist
$a\in\mathbb{N}$ and an exact sequence
\begin{equation}
0\to\oc\left(a\right)\to\mathcal{E}\to\oc\left(d-a\right)\to0.\label{eq:spezzaP}
\end{equation}
The class of this extension lies in
\[
H^{1}\left(\C,\oc\left(2a-d\right)\right)=0.
\]
In view of Proposition \ref{prop:splittingcriterio} the sequence
(\ref{eq:spezzaP}) splits and we conclude.
\end{proof}

\subsection{$\protect\Pu$-bundles over curves}
\begin{lem}
\label{lem:genuszero}Let $\pi:\S\to\C$ be a surjetive morphism from
a surface to a smooth curve. If there exists $x\in\C$ such that $\pi^{-1}\left(x\right)\iso\Pu$,
then $H^{2}\left(\S,\os\right)=0$.
\end{lem}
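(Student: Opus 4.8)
The plan is to pass to the canonical bundle via Serre duality and then contradict the adjunction formula on the rational fibre. Since $\S$ is a smooth projective surface, Serre duality gives $H^{2}\left(\S,\os\right)\iso H^{0}\left(\S,\K_{\S}\right)^{*}$, so it suffices to prove that $\K_{\S}$ has no non-zero global section, i.e. that there is no effective divisor in the linear system $|\K_{\S}|$.

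Next I would record the numerical data attached to the distinguished fibre. Write $F=\pi^{-1}\left(x\right)$, an irreducible curve with $F\iso\Pu$. By Example \ref{ex:fibresquaredzero} we have $F^{2}=0$, and since $F$ is smooth rational its arithmetic genus vanishes (Remark \ref{rem:geometricarithmeticgenus}). Applying the genus formula (Corollary \ref{cor:genusformula}) then yields
\[
0=p_{a}\left(F\right)=1+\frac{1}{2}\left(F^{2}+F\cdot\K_{\S}\right)=1+\frac{1}{2}F\cdot\K_{\S},
\]
so that $F\cdot\K_{\S}=-2$.

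Finally I would argue by contradiction. Suppose $H^{0}\left(\S,\K_{\S}\right)\neq0$; then there is an effective divisor $K$ with $K\li\K_{\S}$. Because the intersection product depends only on the class in $\pic\left(\S\right)$ (Definition \ref{def:intprodpic} and Theorem \ref{thm:bilinearity}), we have $K\cdot F=\K_{\S}\cdot F=-2<0$. On the other hand $F$ is an irreducible curve with $F^{2}=0\ge0$, so Lemma \ref{lem:beauvilleusefulremark} applied to the effective divisor $K$ forces $K\cdot F\ge0$, a contradiction. Hence $H^{0}\left(\S,\K_{\S}\right)=0$ and therefore $H^{2}\left(\S,\os\right)=0$.

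The computation is essentially immediate once these pieces are in place; the only point requiring care is the identification of $F$ with an honest irreducible curve of self-intersection zero, so that both the genus formula and Lemma \ref{lem:beauvilleusefulremark} apply. Should the scheme-theoretic fibre carry a multiplicity, one replaces $F$ by its reduced structure and notes that $\left(\pi^{\ast}\left[x\right]\right)^{2}=0$ still forces $F^{2}=0$ while the adjunction computation on the reduced $\Pu$ is unchanged, so the contradiction persists.
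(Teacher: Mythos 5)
Your proof is correct and follows essentially the same route as the paper: Serre duality reduces the claim to $H^{0}\left(\S,\K_{\S}\right)=0$, the genus formula on the rational fibre gives $F\cdot\K_{\S}=-2$ with $F^{2}=0$, and Lemma \ref{lem:beauvilleusefulremark} applied to a hypothetical effective canonical divisor yields the contradiction. The only cosmetic difference is that you work directly with the distinguished fibre $\pi^{-1}\left(x\right)\iso\Pu$, whereas the paper invokes Corollary \ref{cor:fibersnumerically} to transfer the computation to an arbitrary fibre.
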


\begin{proof}
In view of Serre duality, we have $h^{2}\left(\S,\os\right)=h^{0}\left(\S,\omega_{\S}\right)$.
Let $F$ be a fibre of $\pi$, then, in view of Corollaries \ref{cor:fibersnumerically},
\ref{cor:genusformula} and Example \ref{ex:fibresquaredzero} we
get $F\cdot\ks=-2$ and $F^{2}=0$. If there exists $D\in|\ks|$,
in view of Lemma \ref{lem:beauvilleusefulremark} we have
\[
\ks\cdot F=D\cdot F\ge0,
\]
which is a contradiction .
\end{proof}
\begin{prop}
\label{prop:existssection}Let $\pi:\S\to\C$ be a surjetive morphism
from a surface to a smooth curve. If $\pi^{-1}\left(x\right)\iso\Pu$
for every $x\in\C$, then $\pi$ has a section.
\end{prop}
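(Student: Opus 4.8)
The plan is to produce an effective divisor meeting a fibre in a single point and then to extract a section from its horizontal part. Throughout let $F$ denote a fibre of $\pi$; by Example \ref{ex:fibresquaredzero} we have $F^{2}=0$, and since $F\iso\Pu$ the genus formula (Corollary \ref{cor:genusformula}) gives $F\cdot\ks=-2$. Moreover, as one fibre is isomorphic to $\Pu$, Lemma \ref{lem:genuszero} yields $H^{2}\left(\S,\os\right)=0$, and this is the input that makes the whole argument run.

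The first and main step is to find a divisor $D$ on $\S$ with $D\cdot F=1$; equivalently, the homomorphism $\num\left(\S\right)\to\Z$, $\left[D\right]\mapsto D\cdot F$, must be surjective. Here I would use the exponential sequence on the complex surface $\S$: since $H^{2}\left(\S,\os\right)=0$, the first Chern class map $\pic\left(\S\right)\to H^{2}\left(\S,\Z\right)$ is surjective, so every class in $H^{2}\left(\S,\Z\right)$ is represented by a divisor. The intersection form on $H^{2}\left(\S,\Z\right)$ modulo torsion is unimodular by Poincaré duality, so it suffices to check that the class of $F$ is primitive. If $\left[F\right]=m\left[F'\right]$ for an integral class $\left[F'\right]$ with $m\ge2$, then $F'\cdot\ks=-2/m$ forces $m=2$ and $F'\cdot\ks=-1$; but then, as $F'$ is a divisor class, Riemann-Roch would give $\x\left(\os\left(F'\right)\right)=\x\left(\os\right)+\tfrac{1}{2}\left(\left(F'\right)^{2}-F'\cdot\ks\right)=\x\left(\os\right)+\tfrac{1}{2}$, which is not an integer, a contradiction. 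Hence $\left[F\right]$ is primitive and a divisor $D$ with $D\cdot F=1$ exists. This step is the crux: producing a line bundle of relative degree exactly $1$ is precisely where the hypothesis on the fibres (through $H^{2}\left(\S,\os\right)=0$) is indispensable.

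The second step promotes $D$ to an effective divisor of the same relative degree. For $n\gg0$ I claim $D+nF$ is linearly equivalent to an effective divisor. Indeed $\left(\ks-D-nF\right)\cdot F=-2-1=-3<0$, so no effective divisor lies in the class of $\ks-D-nF$ by Lemma \ref{lem:beauvilleusefulremark} (applied to the irreducible curve $F$ with $F^{2}=0$); hence by Serre duality $h^{2}\left(\S,\os\left(D+nF\right)\right)=0$. Using $F^{2}=0$ and $F\cdot\ks=-2$, Riemann-Roch gives
\[
\x\left(\os\left(D+nF\right)\right)=\x\left(\os\right)+\tfrac{1}{2}\left(D^{2}-D\cdot\ks\right)+2n\xrightarrow[n\to\infty]{}+\infty,
\]
so $h^{0}\left(\S,\os\left(D+nF\right)\right)\ge\x\left(\os\left(D+nF\right)\right)>0$ for $n\gg0$. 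Fixing such an $n$ and an effective $\g\li D+nF$, we have $\g\cdot F=1$.

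Finally, I decompose $\g=\g_{v}+\g_{h}$ into its vertical part (components contained in fibres) and horizontal part (components dominating $\C$). Every vertical component is an entire fibre, hence meets $F$ in $0$, so $\g_{h}\cdot F=1$. Since each horizontal irreducible component $\C'$ satisfies $\C'\cdot F=\deg\left(\pi_{|\C'}\right)\ge1$, there is exactly one horizontal component $\g_{0}$, occurring with multiplicity one and with $\pi_{|\g_{0}}:\g_{0}\to\C$ of degree $1$. This map is therefore birational, so the normalization $\tilde{\g_{0}}\to\C$ is an isomorphism, and the composite $\C\iso\tilde{\g_{0}}\to\g_{0}\hookrightarrow\S$ is the desired section. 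The only point to verify beyond the first step is that the horizontal part is reduced and irreducible of relative degree one, and this is immediate from $\g\cdot F=1$.
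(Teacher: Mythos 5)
Your proof is correct, and its crux is the same as the paper's: use Lemma \ref{lem:genuszero} and the exponential sequence to get surjectivity of $\pic\left(\S\right)\to H^{2}\left(\S,\Z\right)$, then combine Poincar\'e duality with the evenness of $a^{2}+a\cdot\K_{\S}$ (which you phrase as integrality of $\x\left(\os\left(F'\right)\right)$ via Riemann--Roch, and the paper phrases as the genus formula) to show the class of $F$ pairs to all of $\Z$, i.e.\ that a divisor $D$ with $D\cdot F=1$ exists. Your ``primitive class in a unimodular lattice'' packaging and the paper's explicit ``$h\cdot k=-2/d$ must be even'' computation are the same argument in different clothes; like the paper, you are slightly cavalier about torsion, but since torsion classes pair trivially this is harmless in both versions. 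The genuine difference is that the paper's proof \emph{stops} once the divisor of relative degree one is produced, whereas you carry out the remaining work: making $D+nF$ effective for $n\gg0$ via the vanishing of $h^{2}$ (Lemma \ref{lem:beauvilleusefulremark} plus Serre duality) and Riemann--Roch, and then isolating the unique horizontal component, which maps birationally --- hence, after normalizing, isomorphically --- onto $\C$. This last portion is exactly the standard completion of the argument (it is what Beauville does), and it is needed: a divisor class with $D\cdot F=1$ is not yet a section. So your write-up is not merely correct; it fills in a step the paper's proof leaves implicit.
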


\begin{proof}
Let $F$ be a fibre of $\pi$, we show that there exists a divisor
$H$ on $\S$ such that $H\cdot F=1$. In view of Lemma \ref{lem:genuszero},
the exponential sequence on $\S$ and Remark \ref{rem:cohomologypic}
we deduce that the natural map
\begin{equation}
H^{1}\left(\S,\os^{\ast}\right)=\pic\left(\S\right)\to H^{2}\left(\S,\Z\right)\label{eq:mapsurjective}
\end{equation}
is surjective. Let $f,k$ be the classes of $F$ and $\ks$ in $H^{2}\left(\S,\Z\right)$.
The set $\left\{ a\cdot f\,\,|\,\,a\in H^{2}\left(\S,\Z\right)\right\} \su\Z$
is a subgroup, and hence it is of the form $d\Z$. Since $f\cdot k=-2$,
we have $d\neq0$ and it is well-defined a $\Z$-linear form
\begin{eqnarray*}
H^{2}\left(\S,\Z\right) & \to & \Z\\
a & \mapsto & \frac{1}{d}\left(a\cdot f\right).
\end{eqnarray*}
From Poincaré's duality we have an isomorphism
\[
\frac{H^{2}\left(\S,\Z\right)}{\mathrm{Tors\left(H^{2}\left(\S,\Z\right)\right)}}\iso\mathrm{Hom_{\Z}\left(H^{2}\left(\S,\Z\right),\Z\right)}
\]
given by the cup-product, i.e. there exists $h\in H^{2}\left(\S,\Z\right)$,
defined up to a torsion element, such that $f=dh$. In view of Corollary
\ref{cor:genusformula} and the surjectivity of (\ref{eq:mapsurjective})
we deduce that $a^{2}+a\cdot k$ is even for every $a\in H^{2}\left(\S,\Z\right)$.
Since $f^{2}=0$, we get that $h^{2}=0$ and finally $h\cdot k=-\frac{2}{d}$
must be even, this concludes the proof.
\end{proof}
\begin{defn}
Let $\mathcal{E}$ be a rank 2 vector bundle on a curve $\C$. We define $\mathbb{P}_{\C}\left(\mathcal{E}\right)$
as the $\Pu$-fibration over $\C$ with fibres defined as $\mathbb{P}_{\C}\left(\mathcal{E}\right)_{x}=\mathbb{P}\left(\mathcal{E}_{x}\right)$
for every $x\in\C$. We call $\mathbb{P}_{\C}\left(\mathcal{E}\right)$ the $\emph{projective bundle}$
associated to $\mathcal{E}$.
\end{defn}

\begin{rem}
\label{rem:projectivebundle}It is clear that, when $\pi:\mathcal{E}\to\C$
be a rank 2 vector bundle, $\pc\left(\mathcal{E}\right)$ is a surface locally
isomorphic (over $\C$) to $\C\times\Pu$ and there is a natural surjective
morphism $\tilde{\pi}:\pc\mbox{\ensuremath{\left(\mathcal{E}\right)\to\C}}$
associated to $\pi$. Moreover, transition functions for the fibration
$\pc\mbox{\ensuremath{\left(\mathcal{E}\right)}}$ are simply transition functions
for $\mathcal{E}$ up to a scalar multiple, hence, in view of the proof of
Proposition \ref{prop:Classificationcech}, we deduce that $\C$-isomorphism
classes of $\Pu$-fibrations are in natural one-to-one correspondence
with $\check{H}^{1}\left(\C,\pgl_{1}\left(\mathcal{O}_{\C}\right)\right)$.
\end{rem}

\begin{thm}
\label{thm:isomorphicPunobundles}Let $\mathcal{E},\mathcal{E}'$ be rank 2 vector bundles
over a curve $\C$. Then $\pc\left(\mathcal{E}\right)$ and $\pc\left(\mathcal{E}'\right)$
are $\C$-isomorphic if and only if there exists $\L\in\pic\left(\C\right)$
such that $\mathcal{E}\iso\mathcal{E}'\ot\L$.
\end{thm}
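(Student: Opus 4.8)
The statement is an ``if and only if'', so I would prove the two directions separately, and the harder and more instructive direction will be the forward one. For the easy direction, suppose $\mathcal{E}\iso\mathcal{E}'\ot\L$ for some $\L\in\pic\left(\C\right)$. The key observation is that tensoring a rank $2$ bundle by a line bundle does not change the associated projective bundle: for each $x\in\C$ the fibre $\left(\mathcal{E}\ot\L\right)_{x}=\mathcal{E}_{x}\ot\L_{x}$ is obtained from $\mathcal{E}_{x}$ by scaling by the $1$-dimensional space $\L_{x}$, and projectivizing identifies $\mathbb{P}\left(\mathcal{E}_{x}\ot\L_{x}\right)$ with $\mathbb{P}\left(\mathcal{E}_{x}\right)$ canonically. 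I would make this global by recalling Remark \ref{rem:projectivebundle}: the transition functions of $\pc\left(\mathcal{E}\right)$ are those of $\mathcal{E}$ taken modulo scalars, and tensoring by $\L$ multiplies the transition functions of $\mathcal{E}$ by the (scalar-valued) transition functions of $\L$, which is invisible in $\pgl_{1}$. Hence $\pc\left(\mathcal{E}\right)$ and $\pc\left(\mathcal{E}'\right)$ have the same class in $\check{H}^{1}\left(\C,\pgl_{1}\left(\mathcal{O}_{\C}\right)\right)$ and are $\C$-isomorphic.

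For the forward direction, assume we are given a $\C$-isomorphism $\Phi:\pc\left(\mathcal{E}\right)\to\pc\left(\mathcal{E}'\right)$. The strategy is to recover the vector bundles, up to a twist, from the $\Pu$-bundles via the \emph{relative tautological sheaf}. Concretely, on $\pc\left(\mathcal{E}\right)$ there is a tautological line bundle $\mathcal{O}_{\mathcal{E}}\left(1\right)$ whose pushforward along $\tilde{\pi}:\pc\left(\mathcal{E}\right)\to\C$ recovers $\mathcal{E}$ (up to dualization conventions), i.e. $\tilde{\pi}_{\ast}\mathcal{O}_{\mathcal{E}}\left(1\right)\iso\mathcal{E}$. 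The pullback $\Phi^{\ast}\mathcal{O}_{\mathcal{E}'}\left(1\right)$ is a line bundle on $\pc\left(\mathcal{E}\right)$ that restricts to $\mathcal{O}\left(1\right)$ on every fibre, since $\Phi$ is an isomorphism of fibres over each point. The essential structural fact is that any line bundle on $\pc\left(\mathcal{E}\right)$ restricting to $\mathcal{O}\left(1\right)$ on the fibres differs from $\mathcal{O}_{\mathcal{E}}\left(1\right)$ by the pullback of a line bundle from the base, because along each fibre $\iso\Pu$ the Picard group is $\Z$ and fibrewise agreement pins down the class up to something trivial on fibres. Thus $\Phi^{\ast}\mathcal{O}_{\mathcal{E}'}\left(1\right)\iso\mathcal{O}_{\mathcal{E}}\left(1\right)\ot\tilde{\pi}^{\ast}\L$ for some $\L\in\pic\left(\C\right)$.

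Pushing this isomorphism forward along $\tilde{\pi}$ and applying the projection formula then yields the desired relation between $\mathcal{E}$ and $\mathcal{E}'$: on one side $\tilde{\pi}_{\ast}\left(\mathcal{O}_{\mathcal{E}}\left(1\right)\ot\tilde{\pi}^{\ast}\L\right)\iso\mathcal{E}\ot\L$, while on the other side the compatibility $\tilde{\pi}=\tilde{\pi}'\circ\Phi$ and flat base change (or the fact that $\Phi$ is an isomorphism over $\C$) give $\tilde{\pi}_{\ast}\Phi^{\ast}\mathcal{O}_{\mathcal{E}'}\left(1\right)\iso\tilde{\pi}'_{\ast}\mathcal{O}_{\mathcal{E}'}\left(1\right)\iso\mathcal{E}'$. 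Comparing the two computations gives $\mathcal{E}'\iso\mathcal{E}\ot\L$, as required.

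The step I expect to be the main obstacle is controlling the Picard group of the total space $\pc\left(\mathcal{E}\right)$ precisely enough to conclude that a line bundle which is $\mathcal{O}\left(1\right)$ on each fibre must be $\mathcal{O}_{\mathcal{E}}\left(1\right)$ twisted by a pullback from $\C$. This is really the relative Picard statement $\pic\left(\pc\left(\mathcal{E}\right)\right)\iso\Z\cdot\mathcal{O}_{\mathcal{E}}\left(1\right)\op\tilde{\pi}^{\ast}\pic\left(\C\right)$, and it is where the $\Pu$-bundle structure genuinely enters; I would establish it either by a direct cohomological argument (the Leray spectral sequence for $\tilde{\pi}$, using $R^{0}\tilde{\pi}_{\ast}\mathcal{O}=\mathcal{O}_{\C}$ and $R^{1}\tilde{\pi}_{\ast}\mathcal{O}=0$ for a $\Pu$-bundle) or by invoking the standard description of $\pic$ of a projective bundle. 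Once that relative Picard decomposition is in hand, everything else is bookkeeping with the projection formula and the fibrewise-triviality of the twist.
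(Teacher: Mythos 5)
Your proof is correct, but it follows a genuinely different route from the paper's. The paper argues entirely at the level of cocycles: it takes the central extension $1\to\mathcal{O}_{\C}^{\ast}\to\gl_{2}\left(\mathcal{O}_{\C}\right)\to\pgl_{1}\left(\mathcal{O}_{\C}\right)\to1$, passes to the exact sequence of \v{C}ech cohomology sets, and reads off that $\check{H}^{1}\left(\C,\pgl_{1}\left(\oc\right)\right)$ is the quotient of $\check{H}^{1}\left(\C,\gl_{2}\left(\oc\right)\right)$ by the $\pic\left(\C\right)$-action given by tensoring; combined with Proposition \ref{prop:Classificationcech} and Remark \ref{rem:projectivebundle}, both directions of the theorem fall out at once. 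You instead recover $\mathcal{E}$ geometrically from $\pc\left(\mathcal{E}\right)$ via the tautological quotient sheaf ($\pi_{\ast}\os\left(1\right)\iso\mathcal{E}$ with the paper's convention (\ref{eq:deftautological}) — your hedge about dualization is unnecessary here, since for a rank $2$ bundle $\det\mathcal{E}\ot\mathcal{E}^{\ast}\iso\mathcal{E}$ naturally), use the relative Picard decomposition $\pic\left(\pc\left(\mathcal{E}\right)\right)\iso\Z\op\pi^{\ast}\pic\left(\C\right)$ to write $\Phi^{\ast}\mathcal{O}_{\pc\left(\mathcal{E}'\right)}\left(1\right)$ as a twist of $\mathcal{O}_{\pc\left(\mathcal{E}\right)}\left(1\right)$ by a pullback, and finish with the projection formula. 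The trade-off: the paper's argument is shorter given the machinery it has already set up, but it quietly relies on the exactness of a sequence of non-abelian cohomology sets (in particular on every $\Pu$-bundle over a curve lifting to a $\gl_{2}$-bundle, and on the fibres of $\check{H}^{1}\left(\gl_{2}\right)\to\check{H}^{1}\left(\pgl_{1}\right)$ being $\pic\left(\C\right)$-orbits, which uses centrality of $\mathcal{O}^{\ast}$); your argument avoids non-abelian cohomology entirely and produces the line bundle $\L$ explicitly, at the cost of needing the relative Picard computation — which the paper only establishes afterwards, as Proposition \ref{prop:picgeometricallyruled} together with Remark \ref{rem:tautologicalnumericallyequivalent}, so if you wanted to splice your proof into this paper at this point you would have to prove that decomposition first (your suggested Leray-type argument, or the paper's later Riemann--Roch argument, both work and do not depend on the present theorem).
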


\begin{proof}
Let us consider the exact sequence
\[
1\to\mathcal{O}_{\C}^{\ast}\to\gl_{2}\left(\mathcal{O}_{\C}\right)\to\pgl_{1}\left(\mathcal{O}_{\C}\right)\to1,
\]
then, in view of \cite[Proposition 9.2.2]{Ke} and Remark \ref{rem:cohomologypic},
we have an exact sequence of cohomology sets
\[
1\to\pic\left(\C\right)\to\check{H}^{1}\left(\C,\gl_{2}\left(\mathcal{O}_{C}\right)\right)\to\check{H}^{1}\left(\C,\pgl_{2}\left(\oc\right)\right)\to1,
\]
which shows that
\[
\check{H}^{1}\left(\C,\pgl_{1}\left(\oc\right)\right)\iso\frac{\check{H}^{1}\left(\C,\gl_{2}\left(\oc\right)\right)}{\pic\left(\C\right)}
\]
and we conclude in view of Proposition \ref{prop:Classificationcech}.
\end{proof}
\begin{example}
\label{example: Hirzebruchsurfaces}Let $\mathcal{E}$ be a rank 2 vector bundle
over $\Pu$, then, in view of Theorem \ref{thm:fibratiPuno}, there
exist $a,b\in\Z$ such that $\mathcal{E}\iso\mathcal{O}_{\Pu}\left(a\right)\op\mathcal{O}_{\Pu}\left(b\right)$.
Let us suppose that $a\ge b$. In view of Theorem \ref{thm:isomorphicPunobundles}
we conclude that
\[
\mathbb{P}_{\Pu}\left(\mathcal{E}\right)\iso\mathbb{P}_{\Pu}\left(\mathcal{O}_{\Pu}\op\mathcal{O}_{\Pu}\left(b-a\right)\right).
\]
This shows that every $\Pu$-bundle over $\Pu$ is $\Pu$-isomorphic
to
\[
\mathbb{F}_{n}=\mathbb{P}_{\Pu}\left(\mathcal{O}_{\Pu}\op\mathcal{O}_{\Pu}\left(-n\right)\right)
\]
for some $n\ge0$.
\end{example}

\begin{construction}
Let $\pi:\pc \left( \mathcal{E} \right)=\S \to\C$ be a $\Pu$-bundle over a smooth curve $\C$. For every $s\in\S$, we can consider the corresponding line $\ell_{s}$ inside $\mathcal{E} _{\pi \left( s \right)}$. Let $\M$ be the sheaf on $\S$ defined by $\M_{s} =\ell _s$, then we have a monomorphism
\begin{equation}
0\to\M\to\pi^\ast \mathcal{E}.\label{eq:tautologicalS}
\end{equation}
\end{construction}
\begin{defn}
Let $\pi:\pc\left(\mathcal{E}\right)=\S\to\C$ be a $\Pu$-bundle. We define
$\os\left(1\right)$ by the exact sequence
\begin{equation}
0\to\M\to\pi^{\ast}\mathcal{E}\overset{u}{\longrightarrow}\os\left(1\right)\to0\label{eq:deftautological}
\end{equation}
arising from the sequence (\ref{eq:tautologicalS}). We call $\os\left(1\right)$
the $\emph{tautological sheaf of \ensuremath{\S}}$.
\end{defn}

\begin{thm}[Universal property of $\pc\left(\mathcal{E}\right)$]
\label{thm:universalpropertyproj}Let $X$ be a surface, $f:X\to\C$
a surjective morphism to a smooth curve and $\pi:\pc\left(\mathcal{E}\right)\to\C$
a $\Pu$-bundle over $\C$ with tautological sheaf $\mathcal{O}_{\pc\left(\mathcal{E}\right)}\left(1\right)$.
Then, to give a $\C$-morphism $g:X\to\pc\left(\mathcal{E}\right)$ is equivalent
to give an invertible sheaf $\L\in\pic\left(X\right)$ such that $\L\iso g^{\ast}\mathcal{O}_{\pc\left(\mathcal{E}\right)}\left(1\right)$
with an epimorphism $f^{\ast}\mathcal{E}\to\L$.
\end{thm}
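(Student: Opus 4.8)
I need to prove the universal property of the projective bundle $\pc(\mathcal{E})$: that $\C$-morphisms $g\colon X\to\pc(\mathcal{E})$ correspond bijectively to line-bundle quotients $f^{\ast}\mathcal{E}\to\L$ on $X$. The strategy is to produce maps in both directions and check they are mutually inverse.

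Let me think about each direction.

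**Forward direction (from morphism to quotient):** Given a $\C$-morphism $g\colon X\to\pc(\mathcal{E})$, I want to produce a line bundle $\L$ and an epimorphism $f^*\mathcal{E}\to\L$. The tautological sheaf $\os(1)$ on $\S=\pc(\mathcal{E})$ is defined via the sequence $0\to\M\to\pi^*\mathcal{E}\to\os(1)\to0$. So I set $\L=g^*\os(1)$ (which is exactly the condition stated), and I pull back the tautological epimorphism along $g$. The key point is that $g$ is a $\C$-morphism, so $\pi\circ g=f$, hence $g^*\pi^*\mathcal{E}=f^*\mathcal{E}$. Applying $g^*$ to the tautological surjection gives $f^*\mathcal{E}=g^*\pi^*\mathcal{E}\to g^*\os(1)=\L$; since $g^*$ is right-exact, this map remains surjective. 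So this direction is essentially formal.

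**Backward direction (from quotient to morphism):** This is where the real content lies. Given $\L\in\pic(X)$ and an epimorphism $u\colon f^*\mathcal{E}\to\L$, I must build a $\C$-morphism $g\colon X\to\pc(\mathcal{E})$. The idea: at each point $x\in X$, the surjection $u_x\colon(f^*\mathcal{E})_x=\mathcal{E}_{f(x)}\to\L_x$ is a surjection onto a one-dimensional space, so its kernel is a line in $\mathcal{E}_{f(x)}$. Dually (or via the quotient directly), this picks out a point of $\mathbb{P}(\mathcal{E}_{f(x)})=\pc(\mathcal{E})_{f(x)}$, which is exactly the fiber of $\pc(\mathcal{E})$ over $f(x)$. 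So I define $g(x)$ to be that point. I need to check this is a morphism (not just a set map): this is local on $X$, so I can restrict to an open $U\subseteq\C$ over which $\mathcal{E}$ trivializes, making $\pc(\mathcal{E})|_U\cong U\times\Pu$; then the surjection $u$ is given by a pair of sections (a row vector) generating $\L$ locally, and these are precisely the homogeneous coordinates defining the morphism $f^{-1}(U)\to\Pu$. Gluing over a cover of $\C$ gives the global $\C$-morphism.

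**Why it's well-defined and inverse:** I must verify the two constructions are mutually inverse. For one composite, starting from $u\colon f^*\mathcal{E}\to\L$, building $g$, then pulling back the tautological quotient, I recover $u$ up to the isomorphism $\L\iso g^*\os(1)$—this holds because the tautological quotient is, by its very definition on $\S$, the universal such quotient fiberwise, and $g$ was built to match. For the other composite, starting from $g$, extracting $u=g^*(\text{taut})$, and rebuilding a morphism, I recover $g$ because the line $\ke(u_x)\subseteq\mathcal{E}_{f(x)}$ is exactly the line $\ell_{g(x)}$ that $g(x)$ represents by the construction of $\os(1)$.

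**The main obstacle.** The forward direction and the bijectivity bookkeeping are formal; the genuine difficulty is showing the backward-direction set map $x\mapsto[\ke(u_x)]$ is actually a morphism of varieties and that it is a $\C$-morphism. The cleanest way is to reduce to the local trivialization $\pc(\mathcal{E})|_U\cong U\times\Pu$, where the epimorphism becomes a nowhere-vanishing pair $(s_0,s_1)$ of sections of $\L$ on $f^{-1}(U)$ (the two components of $u$ after trivializing $\mathcal{E}|_U\cong\mathcal{O}_U^2$), and invoke the standard fact that such a base-point-free pair defines a morphism to $\Pu$. The compatibility of these local morphisms under change of trivialization follows because a different trivialization of $\mathcal{E}$ differs by an element of $\gl_2(\mathcal{O}_U)$, which acts on the target $\Pu$ exactly as the transition functions of $\pc(\mathcal{E})$ (up to scalar, by Remark \ref{rem:projectivebundle}); hence the local morphisms patch to a global $\C$-morphism into $\pc(\mathcal{E})$, not merely into a trivial bundle.
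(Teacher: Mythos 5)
Your proposal is correct and follows essentially the same route as the paper: pull back the tautological epimorphism along $g$ for one direction, and define $g\left(x\right)$ via the kernel line of $\varphi_{x}$ in $\mathcal{E}_{f\left(x\right)}$ for the other. The only difference of emphasis is that you expand the step the paper merely asserts (that $x\mapsto\left[\ke\left(\varphi_{x}\right)\right]$ is a $\C$-morphism, which you justify via local trivializations) while sketching more lightly the identification $\L\iso g^{\ast}\mathcal{O}_{\pc\left(\mathcal{E}\right)}\left(1\right)$, which the paper carries out with a stalkwise five-lemma argument.
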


\begin{proof}
One direction is clear: in view of the definition of $\mathcal{O}_{\pc\left(\mathcal{E}\right)}\left(1\right)$
we have an epimorphism
\[
\pi^{\ast}\mathcal{E}\to\mathcal{O}_{\pc\left(\mathcal{E}\right)}\left(1\right),
\]
moreover the commutativity of
\[
\xymatrix{X\ar[rr]^{g}\ar[dr]_{f} &  & \pc\left(\mathcal{E}\right)\ar[dl]^{\pi}\\
 & \C
}
\]
and the right exactness of the pullback give an epimorphism
\[
f^{\ast}\mathcal{E}=g^{\ast}\pi^{\ast}\mathcal{E}\to g^{\ast}\mathcal{O}_{\pc\left(\mathcal{E}\right)}\left(1\right),
\]
and hence we can take $\L=g^{\ast}\mathcal{O}_{\pc\left(\mathcal{E}\right)}\left(1\right)$.

Conversely, let $\L\in\pic\left(X\right)$ and let $\varphi:f^{\ast}\mathcal{E}\to\L$
be an epimorphism. For every $x\in X$ we set $g\left(x\right)$ as
the projectivization of the line $\mathrm{ker}\left(\varphi_{x}\right)\su\mathcal{E}_{f\left(x\right)}$.
It is clear that $g:X\to\pc\left(\mathcal{E}\right)$ is a $\C$-morphism.
Taking stalks over $g\left(x\right)$ in (\ref{eq:deftautological}),
we have a diagram
\[
\xymatrix{0\ar[r] & \mathrm{ker}\left(\varphi_{x}\right)\ar[r]^{j_{x}} & \mathcal{E}_{f\left(x\right)}\ar[r]^{\varphi_{x}}\ar@{=}[d] & \L_{x}\ar[r] & 0\\
0\ar[r] & \M_{g\left(x\right)}\ar[r]_{i_{x}} & \mathcal{E}_{f\left(x\right)}\ar[r]_{u_{g\left(x\right)}} & \mathcal{O}_{\pc\left(\mathcal{E}\right)}\left(1\right)_{g\left(x\right)}\ar[r] & 0
}
\]
with exact rows. It follows from the definition of $g$ that $\im\left(j_{x}\right)\su\im\left(i_{x}\right)$,
therefore, since they are 1-dimensional vector spaces, we have $\im\left(j_{x}\right)=\im\left(i_{x}\right)$
and there exists $\lambda\in\cc^{\ast}$ such that the map $\psi_{x}:\mathrm{ker}\left(\varphi_{x}\right)\to\M_{g\left(x\right)}$
given by $\psi_{x}\left(v\right)=\lambda v$ is an isomorphism with
$j_{x}=i_{x}\circ\psi_{x}$. In view of the 5-lemma, it follows that
the natural map $\xi_{x}:\L_{x}\to\mathcal{O}_{\pc\left(\mathcal{E}\right)}\left(1\right)_{g\left(x\right)}$
is an isomorphism such that $u_{g\left(x\right)}=\xi_{x}\circ\varphi_{x}$.
Thus we have an isomorphism $\xi:\L\to g^{\ast}\mathcal{O}_{\pc\left(\mathcal{E}\right)}\left(1\right)$
and the theorem is proved.
\end{proof}
\begin{lem}
\label{lem:flatness}Let $\S$ be a surface and $\pi:S\to\C$ a surjective
morphism onto a smooth curve, then $\pi$ is a flat morphism.
\end{lem}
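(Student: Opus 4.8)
The plan is to reduce the global assertion to a purely local question about the stalks and then invoke the classical characterisation of flatness over a discrete valuation ring. Since flatness is a local property on the source, it suffices to show that for every point $x\in\S$ the homomorphism $\pi^{\#}:\mathcal{O}_{\C,\pi\left(x\right)}\to\mathcal{O}_{\S,x}$ makes $\mathcal{O}_{\S,x}$ into a flat $\mathcal{O}_{\C,\pi\left(x\right)}$-module.

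First I would record the structure of the local rings of the base. Because $\C$ is a smooth curve, for a closed point $y=\pi\left(x\right)$ the local ring $\mathcal{O}_{\C,y}$ is a regular local ring of dimension $1$, hence a discrete valuation ring; if instead $y$ is the generic point of $\C$, then $\mathcal{O}_{\C,y}$ is the function field $\cc\left(\C\right)$, over which every module is automatically flat, so that case is immediate. Thus the only case to treat is the DVR case, and the key tool is the standard fact that a module over a discrete valuation ring (more generally, over a principal ideal domain) is flat if and only if it is torsion-free; I would state this and use it as a black box.

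Next I would verify torsion-freeness. Since $\S$ is smooth, each stalk $\mathcal{O}_{\S,x}$ is a regular local ring and therefore an integral domain. Moreover $\pi$ is surjective onto the irreducible curve $\C$, hence dominant, so the induced inclusion of function fields $\cc\left(\C\right)\hookrightarrow\cc\left(\S\right)$ is injective; restricting, the map $\pi^{\#}:\mathcal{O}_{\C,y}\to\mathcal{O}_{\S,x}$ is injective and in particular carries every nonzero element $a$ to a nonzero element of the domain $\mathcal{O}_{\S,x}$. Consequently multiplication by $a$ on $\mathcal{O}_{\S,x}$ is multiplication by the nonzero element $\pi^{\#}\left(a\right)$ of a domain, hence injective, so $\mathcal{O}_{\S,x}$ has no $\mathcal{O}_{\C,y}$-torsion. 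By the criterion above $\mathcal{O}_{\S,x}$ is flat over $\mathcal{O}_{\C,y}$, and letting $x$ range over $\S$ yields flatness of $\pi$.

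The argument is almost entirely formal; the one step that genuinely uses the hypotheses is the torsion-freeness claim, where I need both that $\S$ is smooth (so that each $\mathcal{O}_{\S,x}$ is a domain) and that $\pi$ is surjective (so that $\pi^{\#}$ is injective). So the main thing to be careful about is not flatness per se but setting up these two inputs correctly. An essentially equivalent alternative would be to cite the global criterion that a coherent sheaf on a scheme over a regular integral curve is flat precisely when all of its associated points dominate the base, applied to the integral scheme $\S$, whose unique associated point is its generic point and which dominates $\C$ because $\pi$ is surjective.
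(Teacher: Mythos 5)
Your proof is correct and follows essentially the same route as the paper: localize, observe that $\mathcal{O}_{\C,\pi\left(x\right)}$ is a discrete valuation ring because $\C$ is a smooth curve, and that $\mathcal{O}_{\S,x}$ is a torsion-free module over it because it is an integral domain and $\pi$ is dominant, then invoke the equivalence of torsion-free and flat over a DVR. If anything you are slightly more careful than the paper, which leaves implicit the injectivity of $\pi^{\#}$ needed for the torsion-freeness claim.
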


\begin{proof}
Let $x\in\S$ be a point and let us consider
\[
\pi^{\ast}:\mathcal{O}_{\C,\pi\left(x\right)}\to\mathcal{O}_{\S,x}.
\]
Since $\S$ is irreducible, the local ring $\mathcal{O}_{\S,x}$ is an integral
domain an thus it is a torsion-free $\mathcal{O}_{\C,\pi\left(x\right)}$-module.
We conclude since $\pi\left(x\right)\in\C$ is a smooth point and
hence $\mathcal{O}_{\C,\pi\left(x\right)}$ is a Dedekind domain.
\end{proof}
\begin{lem}
\label{lem:2.1Hartshorne}Let $\pi:\pc\left(\mathcal{E}\right)=\S\to\C$ be
a $\Pu$-bundle over a smooth curve, let $D\in\D\left(\S\right)$
such that $D\cdot F=n\ge0$ for every fibre $F$ of $\pi$. Then $\pi_{\ast}\os\left(D\right)$
is a locally free sheaf of rank $n+1$ on $\C$. In particular $\pi_{\ast}\os=\oc$.
\end{lem}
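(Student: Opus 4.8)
The plan is to prove local freeness by combining the flatness of $\pi$ (Lemma \ref{lem:flatness}) with a cohomology-and-base-change argument, reducing the rank computation to the fibres where everything is explicit. First I would note that, since $\pi$ is flat and $\S\to\C$ is a projective morphism (a $\Pu$-bundle), the sheaf $\pi_{\ast}\os\left(D\right)$ is coherent on $\C$. The key is to compute its fibres. For a point $x\in\C$ with fibre $F=\pi^{-1}\left(x\right)\iso\Pu$, the restriction $\os\left(D\right)_{|F}$ is an invertible sheaf on $\Pu$ of degree $D\cdot F=n$, hence $\os\left(D\right)_{|F}\iso\mathcal{O}_{\Pu}\left(n\right)$. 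Since $n\ge0$, we have $h^{0}\left(F,\mathcal{O}_{\Pu}\left(n\right)\right)=n+1$ and $h^{1}\left(F,\mathcal{O}_{\Pu}\left(n\right)\right)=0$, so the relevant fibrewise cohomology dimension is the constant $n+1$ across all $x\in\C$.

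Next I would invoke cohomology and base change (Grauert's theorem), which I may take as available background from \cite{HA} or \cite{Ke}: because $\pi$ is flat, $\os\left(D\right)$ is flat over $\C$, and the function $x\mapsto h^{0}\left(F_{x},\os\left(D\right)_{|F_{x}}\right)=n+1$ is constant, the direct image $\pi_{\ast}\os\left(D\right)$ is locally free of rank $n+1$ and its formation commutes with base change. The vanishing $h^{1}\left(F_{x},\os\left(D\right)_{|F_{x}}\right)=0$ guarantees that the base-change map $\left(\pi_{\ast}\os\left(D\right)\right)\ot\cc\left(x\right)\to H^{0}\left(F_{x},\os\left(D\right)_{|F_{x}}\right)$ is an isomorphism, pinning down both the rank and the local freeness. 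The special case $D=0$ gives $D\cdot F=0$, so $n=0$ and $\pi_{\ast}\os$ is locally free of rank $1$; since $\S$ and $\C$ are both integral with $\pi$ surjective, the natural map $\oc\to\pi_{\ast}\os$ is injective, and a rank-$1$ torsion-free sheaf on a smooth curve admitting $\oc$ as a subsheaf of the same rank forces the equality $\pi_{\ast}\os=\oc$ (equivalently, $\pi$ has connected fibres $\iso\Pu$, so global functions on fibres are constant).

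The main obstacle I anticipate is justifying that the fibrewise degree $D\cdot F=n$ really computes $\deg\left(\os\left(D\right)_{|F}\right)$, and making the identification $F\iso\Pu$ rigorous for \emph{every} fibre rather than the generic one. The first point follows from Proposition \ref{prop:prodottogrado}, which states precisely that $\os\left(D\right)\cdot F=\deg\left(\os\left(D\right)_{|F}\right)$ for a smooth irreducible curve $F$; since each fibre of the $\Pu$-bundle is isomorphic to $\Pu$ by construction (Remark \ref{rem:projectivebundle}), this applies uniformly. The second point is handled by the bundle structure itself: a $\Pu$-bundle is, by Remark \ref{rem:projectivebundle}, locally over $\C$ isomorphic to $\C\times\Pu$, so every fibre is genuinely $\iso\Pu$, not merely the generic one. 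Thus the constancy of the fibre cohomology is automatic and the base-change machinery applies without the usual semicontinuity worries. The only remaining care is to ensure the hypotheses of cohomology and base change are met, which they are once flatness (Lemma \ref{lem:flatness}) and properness of the $\Pu$-bundle are in hand.
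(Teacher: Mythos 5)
Your proof follows essentially the same route as the paper's: flatness of $\pi$ via Lemma \ref{lem:flatness}, the computation $h^{0}\left(F,\os\left(D\right)_{|F}\right)=h^{0}\left(\Pu,\mathcal{O}_{\Pu}\left(n\right)\right)=n+1$ on every fibre, and cohomology-and-base-change (the paper invokes \cite[Proposition 9.5.2]{Ke}, which plays exactly the role of Grauert's theorem in your write-up). One caution on the final step: the assertion that a rank-one torsion-free sheaf on a smooth curve containing $\oc$ as a subsheaf of the same rank must equal $\oc$ is false as stated (consider $\oc\su\oc\left(P\right)$); what actually closes the argument is your parenthetical remark, which is also how the paper proceeds --- base change gives $\pi_{\ast}\os\ot\cc\left(x\right)\iso H^{0}\left(F,\mathcal{O}_{F}\right)\iso\cc$ for every fibre, so the natural map $\oc\to\pi_{\ast}\os$ is surjective on each fibre and hence, by Nakayama and a comparison of ranks, an isomorphism.
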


\begin{proof}
In view of Corollary \ref{cor:fibersnumerically}, we see that $n$
does not depend on the choice of the fibre $F$. In view of Lemma
\ref{lem:flatness} the morphism $\pi$ is flat. Under our assumptions
we have
\[
h^{0}\left(F,\os\left(D\right)_{|F}\right)=h^{0}\left(\Pu,\mathcal{O}_{\Pu}\left(n\right)\right)=n+1.
\]
In view of \cite[Proposition 9.5.2]{Ke} we conclude that $\pi_{\ast}\os\left(D\right)$
is a rank $n+1$ locally free sheaf on $\C$. To prove the last statement,
let us suppose $\os\left(D\right)\iso\os$, then $\pi_{\ast}\os$
is a locally free sheaf of rank 1. It follows from \cite[Proposition 9.5.2]{Ke}
that for every fibre $F$ we have an isomorphism
\[
R^{0}\pi_{\ast}\os\ot\cc\iso H^{0}\left(F,\mathcal{O}_{F}\right)\iso\cc.
\]
Therefore the natural morphism $\oc\to\pi_{\ast}\os$ is an isomorphism.
\end{proof}
\begin{cor}
\label{cor:surfaceisoprojectivebundle}Let $\S$ be a surface and
$\pi:S\to\C$ a surjective morphism onto a smooth curve such that
$\pi^{-1}\left(x\right)\iso\Pu$ for every $x\in\C$. Then, for every
section $H$ of $\pi$, there exists a $\C$-isomorphism $\S\iso\pc\left(\pi_{\ast}\os\left(H\right)\right)$.
\end{cor}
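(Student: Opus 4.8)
The plan is to apply the universal property of Theorem \ref{thm:universalpropertyproj} to the bundle $\mathcal{E}:=\pi_{\ast}\os\left(H\right)$ and then to verify that the resulting $\C$-morphism is an isomorphism fibre by fibre. First I would check that $\mathcal{E}$ is a rank $2$ vector bundle on $\C$. Since $H$ is a section of $\pi$, it meets every fibre $F$ in a single reduced point, so $\os\left(H\right)\cdot F=H\cdot F=1$; Lemma \ref{lem:2.1Hartshorne} then shows that $\pi_{\ast}\os\left(H\right)$ is locally free of rank $n+1=2$, and hence $\pc\left(\mathcal{E}\right)$ is a genuine $\Pu$-bundle over $\C$.

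Next I would produce the epimorphism required by the universal property, taking $\L=\os\left(H\right)$ and $\varphi\colon\pi^{\ast}\mathcal{E}\to\os\left(H\right)$ to be the counit of adjunction $\pi^{\ast}\pi_{\ast}\os\left(H\right)\to\os\left(H\right)$. By Nakayama's lemma it suffices to prove that $\varphi$ is surjective on each fibre $F=\pi^{-1}\left(x\right)$. By Proposition \ref{prop:prodottogrado} we have $\os\left(H\right)_{|F}\iso\mathcal{O}_{\Pu}\left(1\right)$, and since $h^{0}\left(F,\os\left(H\right)_{|F}\right)=2$ is independent of $x$, cohomology and base change (\cite[Proposition 9.5.2]{Ke}) identifies $\mathcal{E}\ot\cc\left(x\right)$ with $H^{0}\left(F,\os\left(H\right)_{|F}\right)$ in such a way that $\varphi_{|F}$ becomes the evaluation map $H^{0}\left(\Pu,\mathcal{O}_{\Pu}\left(1\right)\right)\ot\mathcal{O}_{F}\to\mathcal{O}_{\Pu}\left(1\right)$. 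As $\mathcal{O}_{\Pu}\left(1\right)$ is globally generated, this is surjective, so $\varphi$ is an epimorphism.

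Theorem \ref{thm:universalpropertyproj} then yields a $\C$-morphism $g\colon\S\to\pc\left(\mathcal{E}\right)$ with $g^{\ast}\mathcal{O}_{\pc\left(\mathcal{E}\right)}\left(1\right)\iso\os\left(H\right)$, and it remains to see that $g$ is an isomorphism. I would first argue that $g$ is an isomorphism on each fibre: over $x\in\C$ the map $g$ restricts to $F_{x}\iso\Pu\to\mathbb{P}\left(\mathcal{E}_{x}\right)\iso\Pu$, sending $s\in F_{x}$ to the line $\ke\left(\varphi_{s}\right)\su\mathcal{E}_{x}$ of sections vanishing at $s$; this is the morphism defined by the complete linear system $|\mathcal{O}_{\Pu}\left(1\right)|$ of degree $1$, hence an isomorphism. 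In particular $g$ is bijective.

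Finally I would upgrade this to a global statement. Since $g$ is a morphism of projective varieties with finite fibres it is finite, and as a bijective morphism of (smooth, hence normal) irreducible surfaces over the characteristic-zero field $\cc$ it is birational. A finite birational morphism onto a normal variety is an isomorphism by Zariski's Main Theorem, so $g$ is the desired $\C$-isomorphism $\S\iso\pc\left(\pi_{\ast}\os\left(H\right)\right)$. I expect this last step to be the main obstacle: the fibrewise computation only gives a bijection, and it is normality of the target $\pc\left(\mathcal{E}\right)$ together with finiteness and birationality that forces $g$ to be an isomorphism rather than merely a bijective morphism; some care is likewise needed in the surjectivity step, where cohomology and base change is exactly what guarantees that $\varphi_{|F}$ is the full evaluation map of $\mathcal{O}_{\Pu}\left(1\right)$.
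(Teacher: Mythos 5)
Your proposal is correct and follows essentially the same route as the paper: identify $\pi_{\ast}\os\left(H\right)$ as a rank $2$ locally free sheaf via Lemma \ref{lem:2.1Hartshorne}, prove the counit $\pi^{\ast}\pi_{\ast}\os\left(H\right)\to\os\left(H\right)$ is an epimorphism by checking fibres with cohomology and base change plus global generation of $\mathcal{O}_{\Pu}\left(1\right)$, and then apply Theorem \ref{thm:universalpropertyproj} and the fibrewise very-ampleness of $\os\left(H\right)$. The only divergence is the final globalization, where you invoke finiteness, bijectivity, birationality and Zariski's Main Theorem, while the paper simply trivializes $\pc\left(\pi_{\ast}\os\left(H\right)\right)$ over small affine opens of $\C$; your version spells out a step the paper leaves terse.
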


\begin{proof}
Let us set $\mathcal{E}=\pi_{\ast}\os\left(H\right)$. By definition of $H$
we have $H\cdot\pi^{-1}\left(x\right)=1$ for every point $x\in\C$,
then, in view of Lemma \ref{lem:2.1Hartshorne} we conclude that $\mathcal{E}$
is a rank 2 locally free sheaf on $\C$. We see that the natural morphism
\[
\pi^{\ast}\mathcal{E}=\pi^{\ast}\pi_{\ast}\os\left(H\right)\to\os\left(H\right)
\]
is an epimorphism: in view of Nakayama's lemma, it is sufficent to
check it on fibres. Let us consider, for every $x\in\C$, the commuting
diagram
\[
\xymatrix{\mathcal{E}\ot\cc\left(x\right)\ar[rr]\ar[dr]_{\alpha} &  & \os\left(H\right)_{|\pi^{-1}\left(x\right)}\iso\mathcal{O}_{\Pu}\left(1\right)\\
 & H^{0}\left(\pi^{-1}\left(x\right),\os\left(H\right)_{|\pi^{-1}\left(x\right)}\right)\ar[ur]_{\beta}
}
\]
The surjectivity of $\alpha$ follows again from \cite[Proposition 9.5.2]{Ke},
while the surjectivity of $\beta$ follows from the fact that $\mathcal{O}_{\Pu}\left(1\right)$
is very ample, thus, in particular, it is generated by global sections.
In view of Theorem \ref{thm:universalpropertyproj} we conclude that
there exists a unique $\C$-morphism $f:\S\to\pc\left(\mathcal{E}\right)$
such that $f^{\ast}\mathcal{O}_{\pc\left(\mathcal{E}\right)}\left(1\right)\iso\os\left(H\right)$.
Since $\os\left(H\right)$ is very ample on each fibre, $f$ is an
isomorphism on each fibre. Taking affine open sets in $\C$ small
enough to trivialize $\pc\left(\mathcal{E}\right)$, we see that $f$ is an
isomorphism.
\end{proof}
\begin{rem}
\label{rem:tautologicalnumericallyequivalent}If $p:\pc\left(\mathcal{E}\right)=\S\to\C$
is a $\Pu$-bundle and $H$ is a section, taking the identity $\S\to\pc\left(\mathcal{E}\right)$,
in view of Corollary \ref{cor:surfaceisoprojectivebundle}, we have
an isomorphism $\mathcal{O}_{\pc\left(\mathcal{E}\right)}\left(1\right)\iso\os\left(H\right)$.
\end{rem}

\begin{prop}
\label{prop:coomologiacommutapushforward}Let $\pi:\S=\pc\left(\mathcal{E}\right)\to\C$
be a $\Pu$-bundle over a smooth curve $\C$, and let $D\in\D\left(\S\right)$
such that for a fibre $F$ one has $D\cdot F\ge0$. Then, for every
$i\ge0$
\[
H^{i}\left(\S,\os\left(D\right)\right)\iso H^{i}\left(\C,\pi_{\ast}\os\left(D\right)\right).
\]
In particular, for every $\tilde{D}\in\D\left(\C\right)$ and for
every $i\ge0$, we have
\[
h^{i}\left(\S,\pi^{\ast}\oc\left(\tilde{D}\right)\right)=h^{i}\left(\C,\oc\left(\tilde{D}\right)\right).
\]
\end{prop}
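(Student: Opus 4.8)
The plan is to pass from $\S$ to $\C$ through the Leray spectral sequence
\[
E_2^{p,q} = H^p\left(\C, R^q\pi_*\os\left(D\right)\right) \Rightarrow H^{p+q}\left(\S, \os\left(D\right)\right).
\]
Its only nontrivial input will be the vanishing of the higher direct images $R^q\pi_*\os\left(D\right)$ for $q>0$. Granting this, every row $q>0$ of the $E_2$ page is zero, so the spectral sequence degenerates and the edge map identifies $H^i\left(\S, \os\left(D\right)\right)$ with $E_2^{i,0} = H^i\left(\C, \pi_*\os\left(D\right)\right)$, which is exactly the first claim.

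To establish the vanishing I would argue fibrewise. By Corollary \ref{cor:fibersnumerically} the number $D\cdot F$ is the same for every fibre $F\iso\Pu$, and by Theorem \ref{thm:bilinearity} together with Proposition \ref{prop:prodottogrado} we have $\deg\left(\os\left(D\right)_{|F}\right) = D\cdot F \ge 0$, so $\os\left(D\right)_{|F} \iso \mathcal{O}_{\Pu}\left(D\cdot F\right)$. For $q\ge2$ the sheaf $R^q\pi_*\os\left(D\right)$ vanishes because each fibre is a curve, hence of cohomological dimension one. For $q=1$, the fibrewise cohomology $H^1\left(F, \os\left(D\right)_{|F}\right) = H^1\left(\Pu, \mathcal{O}_{\Pu}\left(D\cdot F\right)\right)$ vanishes since $D\cdot F\ge0$; as $\pi$ is flat by Lemma \ref{lem:flatness} and this dimension is constantly zero, the cohomology-and-base-change theorem \cite[Proposition 9.5.2]{Ke} shows that $R^1\pi_*\os\left(D\right)$ is locally free of rank $0$, i.e. it vanishes.

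For the final assertion I would apply the first part to $D = \pi^*\tilde{D}$. A fibre $F$ is collapsed to a point by $\pi$, so $\pi^*\tilde{D}\cdot F = 0 \ge 0$ and the hypothesis holds. The projection formula gives
\[
\pi_*\os\left(\pi^*\tilde{D}\right) = \pi_*\pi^*\oc\left(\tilde{D}\right) \iso \oc\left(\tilde{D}\right)\ot\pi_*\os,
\]
and $\pi_*\os\iso\oc$ by Lemma \ref{lem:2.1Hartshorne}, so $\pi_*\os\left(\pi^*\tilde{D}\right) \iso \oc\left(\tilde{D}\right)$. Feeding this into the isomorphism already proved yields $h^i\left(\S, \pi^*\oc\left(\tilde{D}\right)\right) = h^i\left(\C, \oc\left(\tilde{D}\right)\right)$ for every $i$.

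The one delicate step is the vanishing of $R^1\pi_*\os\left(D\right)$: everything else is formal, but here I must correctly invoke cohomology and base change to upgrade the pointwise vanishing $H^1\left(F, \os\left(D\right)_{|F}\right) = 0$ to the global statement $R^1\pi_*\os\left(D\right) = 0$, using that the relevant fibre dimension is constant so that semicontinuity does not obstruct the conclusion.
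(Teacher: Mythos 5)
Your argument is correct and follows essentially the same route as the paper: both proofs reduce everything to the vanishing of $R^{i}\pi_{\ast}\os\left(D\right)$ for $i>0$, obtained from the fibrewise vanishing $h^{i}\left(\Pu,\mathcal{O}_{\Pu}\left(D\cdot F\right)\right)=0$ together with \cite[Proposition 9.5.2]{Ke}, and then treat the last assertion identically via the projection formula and $\pi_{\ast}\os\iso\oc$. The only difference is cosmetic: you package the final transfer step as degeneration of the Leray spectral sequence, while the paper carries it out by hand by pushing forward an injective resolution.
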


\begin{proof}
Let $n=D\cdot F\ge0$, then, for every $x\in\C$ and for every $i>0$,
we have
\[
h^{i}\left(\pi^{-1}\left(x\right),\os\left(D\right)_{|\pi^{-1}\left(x\right)}\right)=h^{i}\left(\Pu,\mathcal{O}_{\Pu}\left(n\right)\right)=0.
\]
In view of \cite[Proposition 9.5.2]{Ke} we see that $R^{i}\pi_{\ast}\os\left(D\right)=0$
for every $i>0$. Given an injective resolution
\[
0\to\os\left(D\right)\to\mathcal{I}^{\bullet},
\]
shifting syzygies we see that
\[
0\to\pi_{\ast}\os\left(D\right)\to\pi_{\ast}\mathcal{I}^{\bullet}
\]
is an exact sequence. Since, in view of Lemma \ref{lem:flatness},
$\pi_{\ast}$ is a right adjoint of a left exact functor, it preserves
injective objects and then $\pi_{\ast}\mathcal{I}^{\bullet}$ yields
an injective resolution of $\pi_{\ast}\os\left(D\right)$, in particular
\[
H^{i}\left(\S,\os\left(D\right)\right)\iso H^{i}\left(\C,\pi_{\ast}\os\left(D\right)\right).
\]
To prove the last statement we note that $\pi^{\ast}\tilde{D}\cdot F=0$
and therefore for every $i\ge0$
\[
h^{i}\left(\S,\pi^{\ast}\oc\left(\tilde{D}\right)\right)=h^{i}\left(\C,\pi_{\ast}\pi^{\ast}\oc\left(\tilde{D}\right)\right).
\]
In view of Lemma \ref{lem:2.1Hartshorne} and projection formula we
have
\begin{eqnarray*}
\pi_{\ast}\pi^{\ast}\oc\left(\tilde{D}\right) & = & \pi_{\ast}\left(\os\ot\pi^{\ast}\oc\left(\tilde{D}\right)\right)\\
 & = & \pi_{\ast}\os\ot\oc\left(\tilde{D}\right)=\oc\left(\tilde{D}\right).
\end{eqnarray*}
\end{proof}
\begin{defn}
Let $\mathcal{E}$ be a rank 2 locally free sheaf on a smooth curve $\C$,
we say that $\mathcal{E}$ is $\emph{normalized}$ if $h^{0}\left(\C,\mathcal{E}\right)>0$
and $h^{0}\left(\C,\mathcal{E}\ot\L\right)=0$ for every $\L\in\pic\left(\C\right)$
with $\deg\left(\L\right)<0$.
\end{defn}

\begin{rem}
\label{rem:normalization}Let $\C$ be a smooth curve and $\mathcal{E}$ be
a rank 2 locally free sheaf on $\C$. In view of Lemma \ref{lem:hasezioneglobale},
there exists $n\in\N$ such that, for every $\L\in\pic\left(\C\right)$
with $\deg\left(\L\right)=n$, we have $h^{0}\left(\C,\mathcal{E}\ot\L\right)>0$.
Moreover, taking $\deg\left(\L\right)\ll0$ we get $h^{0}\left(\C,\mathcal{E}\ot\L\right)=0$,
and hence there exists $\M\in\pic\left(\C\right)$ such that $h^{0}\left(\C,\mathcal{E}\ot\M\right)>0$
and $h^{0}\left(\C,\mathcal{E}\ot\M\ot\L\right)=0$ for every $\L\in\pic\left(\C\right)$
with $\deg\left(\L\right)<0$. Therefore, given a geometrically ruled
surface
\[
\pi:\S=\pc\left(\mathcal{E}\right)\to\C,
\]
in view of Theorem \ref{thm:isomorphicPunobundles} we can always
assume $\mathcal{E}$ to be normalized. In this case we set $e\left(\S\right)=-\deg\left(\det\mathcal{E}\right)$.
\end{rem}

\begin{example}
Let $n\in\N$ and let us consider the sheaf $\mathcal{E}=\mathcal{O}_{\Pu}\op\mathcal{O}_{\Pu}\left(-n\right)$
over $\Pu$. We have
\[
h^{0}\left(\Pu,\mathcal{E}\right)=h^{0}\left(\Pu,\mathcal{O}_{\Pu}\right)+h^{0}\left(\Pu,\mathcal{O}_{\Pu}\left(-n\right)\right)=1,
\]
moreover, if $k>0$,
\[
h^{0}\left(\Pu,\mathcal{E}\left(-k\right)\right)=h^{0}\left(\Pu,\mathcal{O}_{\Pu}\left(-k\right)\right)+h^{0}\left(\Pu,\mathcal{O}_{\Pu}\left(-n-k\right)\right)=0,
\]
therefore $\mathcal{E}$ is normalized. It follows from Theorem \ref{thm:isomorphicPunobundles}
that $e\left(\ff\right)=n$ (see Example \ref{example: Hirzebruchsurfaces}).
\end{example}

\begin{thm}
\label{thm:possiblevalueseS}Let $\C$ be a smooth curve and $\mathcal{E}$
be a normalized rank 2 locally free sheaf on $\C$, then

\begin{enumerate}
\item if $\mathcal{E}$ is decomposable, then there exists $\L\in\pic\left(\C\right)$
with $\deg\left(\L\right)\le0$ such that $\mathcal{E}\iso\oc\op\L$. In particular
$e\left(\pc\left(\mathcal{E}\right)\right)\ge0$ and every positive value of
$e\left(\pc\left(\mathcal{E}\right)\right)$ is possible;
\item if $\mathcal{E}$ is indecomposable, then $-2g\left(\C\right)\le e\left(\pc\left(\mathcal{E}\right)\right)\le2g\left(\C\right)-2$.
\end{enumerate}
\end{thm}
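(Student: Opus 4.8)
The plan is to treat the decomposable and indecomposable cases separately, writing $g=g\left(\C\right)$ for the genus and recalling that $e\left(\pc\left(\mathcal{E}\right)\right)=-\deg\left(\det\mathcal{E}\right)$. For the decomposable case I would write $\mathcal{E}\iso\L_{1}\op\L_{2}$ with $\deg\L_{1}\ge\deg\L_{2}$ and extract the inequality from normalization alone: since $h^{0}\left(\C,\mathcal{E}\right)>0$ one summand is effective, forcing $\deg\L_{1}\ge0$, whereas twisting by $\L_{1}^{-1}$ produces a section of $\mathcal{E}\ot\L_{1}^{-1}$ and so, by normality, $\deg\L_{1}\le0$. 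Thus $\deg\L_{1}=0$, the effective summand is isomorphic to $\oc$, and $\mathcal{E}\iso\oc\op\M$ with $\deg\M\le0$, giving $e\ge0$. To see every positive value occurs, I would exhibit $\mathcal{E}=\oc\op\oc\left(-D\right)$ for an effective divisor $D$ of the prescribed degree and check directly that it is normalized.

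For the indecomposable case my first step is to fix a convenient presentation of $\mathcal{E}$. Since $h^{0}\left(\C,\mathcal{E}\right)>0$, Lemma \ref{lem:rango2sequenzaesatta} gives an exact sequence $0\to\oc\left(D\right)\to\mathcal{E}\to\M\to0$ with $D\ge0$; if $D>0$ the inclusion $\oc\hookrightarrow\mathcal{E}\ot\oc\left(-D\right)$ would contradict normality, so I may assume $D=0$ and obtain
\[
0\to\oc\to\mathcal{E}\to\M\to0,\qquad\deg\M=-e\left(\pc\left(\mathcal{E}\right)\right).
\]
The upper bound should then follow quickly: indecomposability means this sequence does not split, so Proposition \ref{prop:splittingcriterio} forces $H^{1}\left(\C,\M^{-1}\right)\neq0$, and Serre duality $H^{1}\left(\C,\M^{-1}\right)\iso H^{0}\left(\C,\M\ot\om_{\C}\right)^{\ast}$ shows that $\M\ot\om_{\C}$ is effective, hence $\deg\M+2g-2\ge0$ and $e\le2g-2$.

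The lower bound is where I expect the real work to lie. The idea is to argue by contradiction: assuming $e<-2g$, that is $\deg\M>2g$, I would twist the sequence above by $\mathcal{P}=\oc\left(-p\right)$ for a point $p\in\C$ to get $0\to\mathcal{P}\to\mathcal{E}\ot\mathcal{P}\to\M\ot\mathcal{P}\to0$ and then run a dimension count in the associated long exact sequence. Riemann--Roch on $\C$ yields $h^{0}\left(\C,\M\ot\mathcal{P}\right)\ge\deg\M-g>g$, while $h^{1}\left(\C,\mathcal{P}\right)=g$ since $\deg\mathcal{P}=-1$; consequently the connecting map $H^{0}\left(\C,\M\ot\mathcal{P}\right)\to H^{1}\left(\C,\mathcal{P}\right)$ cannot be injective, so $H^{0}\left(\C,\mathcal{E}\ot\mathcal{P}\right)\neq0$. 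As $\deg\mathcal{P}<0$, this violates normality and yields the contradiction, giving $e\ge-2g$. The delicate point throughout is keeping the interplay between normality (no line subbundle of positive degree) and the cohomological bounds correctly calibrated, and in particular arranging the strict inequality $h^{0}\left(\C,\M\ot\mathcal{P}\right)>h^{1}\left(\C,\mathcal{P}\right)$ to kick in exactly at the threshold $\deg\M=2g$.
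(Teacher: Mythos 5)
Your proposal is correct and follows essentially the same route as the paper: the decomposable case is handled by the identical normalization argument on the summands, the upper bound comes from non-splitting plus Proposition \ref{prop:splittingcriterio} and Serre duality exactly as in the text, and your lower-bound argument (twisting by a degree $-1$ line bundle and comparing $h^{0}\left(\C,\M\ot\mathcal{P}\right)$ with $h^{1}\left(\C,\mathcal{P}\right)=g$ in the long exact sequence) is just the contrapositive phrasing of the paper's inequality $h^{0}\left(\C,\L\ot\M\right)\le h^{1}\left(\C,\M\right)$. No gaps; the dimension count at the threshold $\deg\M=2g$ is calibrated correctly.
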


\begin{proof}
$\,$

\begin{enumerate}
\item Let $\mathcal{E}\iso\L_{1}\op\L_{2}$ and suppose $\deg\left(\L_{1}\right)>0$,
then
\[
h^{0}\left(\C,\mathcal{E}\ot\L_{1}^{-1}\right)=h^{0}\left(\C,\oc\right)+h^{0}\left(\C,\L_{2}\ot\L_{1}^{-1}\right)\ge1
\]
which is a contradiction and therefore $\deg\left(\L_{1}\right)$,
$\deg\left(\L_{2}\right)\le0$. Since $h^{0}\left(\C,\mathcal{E}\right)>0$,
then, up to reordering, we have $\L_{1}\iso\oc$ and hence
\[
\mathcal{E}\iso\oc\op\L\,\,\,\,\mbox{with}\,\,\,\,\deg\left(\L\right)<0.
\]
The second part of the statement follows trivially.
\item Since $h^{0}\left(\C,\mathcal{E}\right)>0$ there exists $\L\in\pic\left(\C\right)$
and an exact sequence
\[
0\to\oc\to\mathcal{E}\to\L\to0.
\]
In view of Proposition \ref{prop:splittingcriterio} we have $H^{1}\left(\C,\L^{-1}\right)\neq0$
and hence, in view of \cite[Corollary 8.4.3]{Ke}, we deduce
\[
\deg\left(\L^{-1}\right)\le2g\left(\C\right)-2.
\]
Viceversa, if $\M\in\pic\left(\C\right)$ has $\deg\left(\M\right)=-1$,
then the exact sequence
\[
0\to\M\to\mathcal{E}\ot\M\to\L\ot\M\to0
\]
yields the exact sequence
\[
0\to H^{0}\left(\C,\M\right)\to H^{0}\left(\C,\mathcal{E}\ot\M\right)\to H^{0}\left(\C,\L\ot\M\right)\to H^{1}\left(\C,\M\right).
\]
Since $h^{0}\left(\C,\mathcal{E}\ot\M\right)=0$, we deduce that $h^{0}\left(\C,\M\right)=0$
and that $h^{0}\left(\C,\L\ot\M\right)\le h^{1}\left(\C,\M\right)$.
In view of riemann-Roch for curves applied to $\M$ we conclude that
$h^{1}\left(\C,\M\right)=g\left(\C\right)$ and that
\[
h^{0}\left(\C,\L\ot\M\right)\ge\deg\left(\L\right)-g,
\]
which gives $\deg\left(\L\right)\le2g\left(\C\right)$ and finally
\[
e\left(\pc\left(\mathcal{E}\right)\right)=-\deg\left(\L\right)\ge-2g\left(\C\right).
\]
\end{enumerate}
\end{proof}

\section{The classification problem}

\subsection{Numerical invariants}

In this section we introduce the principal algebraic and topological
numerical invariants of a smooth projective surface over the complex
numbers.
\begin{defn}[Algebraic invariants]
For a smooth projective surface $\S$ we set

\begin{itemize}
\item $q\left(\S\right)=h^{1}\left(\S,\os\right)$;
\item $p_{g}\left(\S\right)=h^{0}\left(\S,\omega_{\S}\right)=h^{2}\left(\S,\os\right)$;
\item $P_{n}\left(\S\right)=h^{0}\left(\S,\omega_{\S}^{n}\right)$ for $n\ge1$.
\end{itemize}
They are called, respectively, \emph{the irregularity, the geometric
genus} and the \emph{$n$-th plurigenus} of $\S$.
\end{defn}

\begin{rem}
\label{rem:potenze}Let $\ensuremath{s\in H^{0}\left(\S,\omega_{\S}^{n}\right)}$
be a non-zero section for some $n\ge1$, then, for every $k\in\N$,
we have a non-zero section $s^{k}\in H^{0}\left(\S,\omega_{\S}^{nk}\right)$.
This shows that, if $P_{n}\left(\S\right)\neq0$, then $P_{m}\left(\S\right)\neq0$
for every multiple of $n$. In particular, if $p_{g}\left(\S\right)\neq0$,
then $P_{n}\left(\S\right)\neq0$ for every $n\ge1$.
\end{rem}

\begin{defn}[Topological invariants]
For a smooth projective surface $\S$ defined over $\cc$$ $, we
set

\begin{itemize}
\item $b_{i}\left(\S\right)=\dim_{\mathbb{R}}H^{i}\left(\S,\mathbb{R}\right)$
for every $i=0,\dots,4$;
\item $\xt\left(\S\right)=\sum_{i=0}^{4}\left(-1\right)^{i}b_{i}\left(\S\right)$.
\end{itemize}
\end{defn}

\begin{rem}
Since our varieties are irreducible, in particular they are connected
and, in view of Poincaré duality, we have
\[
b_{0}\left(\S\right)=b_{4}\left(\S\right)=1\,\,\,\,\mbox{and}\,\,\,\,b_{1}\left(\S\right)=b_{3}\left(\S\right),
\]
thus $\xt\left(\S\right)=2-2b_{1}\left(\S\right)+b_{2}\left(\S\right)$.
\end{rem}

\begin{fact}
\label{fact:hodgeshows}For a smooth projective surface $\S$ defined
over $\cc$, Hodge theory shows that
\begin{eqnarray*}
b_{1}\left(\S\right) & = & 2h^{0}\left(\S,\Omega_{\S}^{1}\right)=2q\left(\S\right)\\
b_{2}\left(\S\right) & = & 2p_{g}\left(\S\right)+h^{1}\left(\S,\Omega_{\S}^{1}\right).
\end{eqnarray*}
Moreover the formula
\[
12\x\left(\os\right)=\ks^{2}+\xt\left(\S\right),
\]
called $\emph{Noether's formula}$, holds. See \cite{GH}.
\end{fact}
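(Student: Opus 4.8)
The statement bundles together three assertions of rather different depth, so the plan is to treat them separately. The two cohomological identities for $b_1$ and $b_2$ are a direct consequence of Hodge theory for the compact Kähler manifold $\S$. The input I would take as a black box (this is the content of \cite{GH}) is the Hodge decomposition
\[
H^{k}\left(\S,\cc\right)\iso\bigoplus_{p+q=k}H^{q}\left(\S,\Omega_{\S}^{p}\right),
\]
together with the conjugation symmetry $h^{p,q}=h^{q,p}$, where $h^{p,q}=\dim_{\cc}H^{q}(\S,\Omega_{\S}^{p})$. Since $\dim_{\cc}H^{k}(\S,\cc)=b_{k}(\S)$, for a surface this gives $b_{1}=h^{1,0}+h^{0,1}$ and $b_{2}=h^{2,0}+h^{1,1}+h^{0,2}$. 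First I would identify $h^{0,1}=h^{1}(\S,\os)=q(\S)$ and $h^{1,0}=h^{0}(\S,\Omega_{\S}^{1})$, so that the symmetry $h^{1,0}=h^{0,1}$ yields $b_{1}=2q(\S)=2h^{0}(\S,\Omega_{\S}^{1})$. Likewise $h^{2,0}=h^{0}(\S,\omega_{\S})=p_{g}(\S)$ and $h^{0,2}=h^{2}(\S,\os)=p_{g}(\S)$, so $b_{2}=2p_{g}(\S)+h^{1}(\S,\Omega_{\S}^{1})$. This part is pure bookkeeping once the decomposition is granted.

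For Noether's formula the cleanest route is Hirzebruch–Riemann–Roch applied to the structure sheaf: $\x(\os)=\int_{\S}\mathrm{td}(T_{\S})$, where the Todd class of a surface is $\mathrm{td}(T_{\S})=1+\tfrac{1}{2}c_{1}(T_{\S})+\tfrac{1}{12}\bigl(c_{1}(T_{\S})^{2}+c_{2}(T_{\S})\bigr)$. Extracting the degree-two component gives $\x(\os)=\tfrac{1}{12}\bigl(c_{1}(T_{\S})^{2}+c_{2}(T_{\S})\bigr)$. Now $\det T_{\S}=\omega_{\S}^{-1}$ forces $c_{1}(T_{\S})=-\ks$, hence $c_{1}(T_{\S})^{2}=\ks^{2}$, while the Gauss–Bonnet theorem identifies the top Chern number with the topological Euler characteristic, $\int_{\S}c_{2}(T_{\S})=\xt(\S)$. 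Substituting yields $12\x(\os)=\ks^{2}+\xt(\S)$.

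Alternatively, and in a way that touches a tool already developed in this survey, one can reach Noether's formula through the signature. The Hodge–Riemann bilinear relations (the refinement of the Hodge index theorem, Theorem \ref{thm:Hodge}, with respect to an ample class) show that the intersection form on $H^{2}(\S,\mathbb{R})$ is positive definite on the $2p_{g}$-dimensional transcendental part $(H^{2,0}\op H^{0,2})\cap H^{2}(\S,\mathbb{R})$ and has signature $(1,h^{1,1}-1)$ on the $(1,1)$-part, so that $\sigma(\S)=2p_{g}(\S)+2-h^{1,1}(\S)$. Combining this with the expansion $\xt(\S)=2-4q(\S)+2p_{g}(\S)+h^{1,1}(\S)$ read off from the $b_{i}$ above, the Hirzebruch signature formula $\sigma=\tfrac{1}{3}(\ks^{2}-2\xt)$, and eliminating $h^{1,1}$, one again obtains $12\x(\os)=\ks^{2}+\xt(\S)$. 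Either way, the genuine obstacle—and the reason this is stated as a Fact—is that both the Hodge decomposition and an index theorem (Hirzebruch–Riemann–Roch, or equivalently the signature theorem) are deep analytic inputs that I would cite rather than prove; granting them, the rest is routine manipulation of Chern numbers and Hodge numbers.
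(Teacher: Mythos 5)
The paper offers no argument here at all: the statement is labelled a \emph{Fact} and is simply referred to \cite{GH}, so there is nothing internal to compare your derivation against. Your proposal is correct, and what it buys is a precise accounting of which deep inputs are actually being invoked. The bookkeeping for $b_{1}$ and $b_{2}$ from the Hodge decomposition $H^{k}\left(\S,\cc\right)\iso\bigoplus_{p+q=k}H^{q}\left(\S,\Omega_{\S}^{p}\right)$ together with $h^{p,q}=h^{q,p}$ is exactly right, and both of your routes to Noether's formula check out: the Todd-class computation $\x\left(\os\right)=\frac{1}{12}\left(c_{1}^{2}+c_{2}\right)$ with $c_{1}\left(T_{\S}\right)=-\ks$ and $\int_{\S}c_{2}=\xt\left(\S\right)$ gives it in one line, and the signature route is consistent (eliminating $h^{1,1}$ from $\sigma=2p_{g}+2-h^{1,1}$, $\xt=2-4q+2p_{g}+h^{1,1}$ and $\sigma=\frac{1}{3}\left(\ks^{2}-2\xt\right)$ indeed returns $12\left(1-q+p_{g}\right)$). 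One small caveat on the second route: Theorem \ref{thm:Hodge} in this survey is the index theorem at the level of divisor classes, which is strictly weaker than the Hodge--Riemann bilinear relations on all of $H^{1,1}$ that you need to compute the signature, so that step is an additional analytic input rather than something already available in the text. Since the paper's own stance is to black-box the whole statement, your reduction to Hodge theory plus Hirzebruch--Riemann--Roch (or the signature theorem) is a legitimate and more informative alternative, at the cost of importing machinery the survey never develops.
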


\begin{rem}
It is clear that the integers $P_{n}\left(\S\right)$ are birational
invariants for $n\ge1$. It follows from Fact \ref{fact:hodgeshows}
that also $q\left(\S\right)$ is a birational invariant.
\end{rem}

\subsection{Relatively minimal models}

From now on, all surfaces are assumed to be projective, smooth and
irreducible.
\begin{defn}
Let $\S$ be a surface. We say that $\S$ is a \emph{relatively minimal
model} if every birational morphism $\varphi:\S\to\S'$ is an isomorphism.
\end{defn}

The interest in relatively minimal models relies in the following
Proposition:
\begin{prop}
\label{prop:everysurfacesrelativelyminimalmodel}Every surface dominates
a relatively minimal model.
\end{prop}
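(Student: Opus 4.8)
The plan is to reduce the statement to repeatedly contracting $\left(-1\right)$-curves, using Castelnuovo's contractibility Theorem as the engine and the Picard number as a strictly decreasing, terminating invariant.

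First I would reformulate relative minimality in terms of $\left(-1\right)$-curves. If $\S$ contains a $\left(-1\right)$-curve $E$ (that is, $E\iso\Pu$ with $E^{2}=-1$, equivalently by the genus formula an irreducible rational curve with $E\cdot\ks=-1$), then Castelnuovo's contractibility Theorem furnishes a birational morphism $\sigma:\S\to\S_{1}$ onto a smooth surface that contracts $E$ to a point and is an isomorphism elsewhere; since $E$ is contracted, $\sigma$ is not an isomorphism, so $\S$ fails to be relatively minimal. Conversely, if $\S$ is not relatively minimal there is a birational morphism $f:\S\to\S'$ that is not an isomorphism; by the factorization of birational morphisms between smooth surfaces into a composition of blow-downs of $\left(-1\right)$-curves (again part of Castelnuovo's theory), the first morphism in such a factorization contracts a $\left(-1\right)$-curve lying on $\S$. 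Thus $\S$ is relatively minimal if and only if it contains no $\left(-1\right)$-curve.

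Next I would exhibit the terminating invariant. When $\sigma:\S\to\S_{1}$ contracts a single $\left(-1\right)$-curve, $\S$ is the blow-up $\bl_{x}\S_{1}$ at the image point, and the standard decomposition $\num\left(\S\right)\iso\sigma^{\ast}\num\left(\S_{1}\right)\op\Z\cdot E$ gives $\rg\,\num\left(\S\right)=\rg\,\num\left(\S_{1}\right)+1$. The integer $\rho\left(\S\right):=\rg\,\num\left(\S\right)$ is finite, since $\num\left(\S\right)$ injects into the finitely generated group $H^{2}\left(\S,\Z\right)/\mathrm{Tors}$, and it is at least $1$, because an ample divisor $H$ has $H^{2}>0$ and hence defines a nonzero class. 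So each contraction of a $\left(-1\right)$-curve strictly decreases a positive integer.

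Finally I would iterate: starting from $\S$, as long as the current surface carries a $\left(-1\right)$-curve I contract it, obtaining a chain of birational morphisms $\S\to\S_{1}\to\S_{2}\to\cdots$ along which $\rho$ drops by exactly $1$ at each step. Since $\rho\ge1$ throughout, the chain must stop after finitely many steps at a smooth surface $\S_{n}$ with no $\left(-1\right)$-curve, which by the first paragraph is relatively minimal; the composite $\S\to\S_{n}$ is then the desired birational morphism. I expect the only genuinely substantial step to be the one I am importing wholesale, namely the content of Castelnuovo's contractibility Theorem guaranteeing both that a $\left(-1\right)$-curve can be blown down to a smooth surface and that a nontrivial birational morphism necessarily produces such a curve; everything after that is the bookkeeping of the decreasing Picard number.
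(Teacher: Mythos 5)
Your argument is correct, and its engine is the same as the paper's: termination via the strictly decreasing, positive rank of the Néron--Severi (equivalently numerical) group under non-trivial birational morphisms. Where you diverge is in how the descending chain is produced. The paper works directly from the definition: if $\S$ is not relatively minimal, a birational morphism $\varphi_{1}:\S\to\S_{1}$ that is not an isomorphism exists \emph{by definition}, and the only input needed is that such a morphism factors into finitely many blow-ups, so $\rg\ns\left(\S\right)>\rg\ns\left(\S_{1}\right)$; Castelnuovo's contractibility theorem is never invoked. You instead first establish the characterization ``relatively minimal if and only if no $\left(-1\right)$-curves'' and then contract $\left(-1\right)$-curves one at a time via Castelnuovo. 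This is logically sound (the paper's proof of Castelnuovo's theorem does not depend on this proposition, so there is no circularity), and it buys a more constructive picture in which each step of the chain is a single blow-down; but it front-loads two results --- Castelnuovo's contractibility criterion and the equivalence of Corollary \ref{cor:characterizationrelminimal} --- that the paper only proves \emph{after} this proposition, whereas the paper's route needs only the factorization of birational morphisms into blow-ups.
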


\begin{proof}
Let $\S$ be a surface. If $\S$ is minimal, then we are done. Otherwise
there exists a birational morphism $\varphi_{1}:\S\to\S_{1}$ which
is not an isomorphism. If $\S_{1}$ is minimal, then the proposition
is proved, otherwise there exists a birational morphism $\varphi_{2}:\S_{1}\to\S_{2}$
which is not an isomorphism and so on. This process must end since
every map $\varphi_{i}$ can be factored into a finite number of blow-ups
and thus we have
\[
\rg\ns\left(\S\right)\gvertneqq\rg\ns\left(\S_{1}\right)\gvertneqq\dots\gvertneqq\rg\ns\left(\S_{i}\right)\gvertneqq\dots
\]
We conclude as $\rg\ns\left(\S\right)<\infty$.
\end{proof}
We need to introduce a definition:
\begin{defn}
A curve $\C$ on a surface $\S$ is called a $\left(-1\right)\emph{-curve}$
if $\C\iso\Pu$ and $\C^{2}=-1$.
\end{defn}

First we need a Lemma from topology:
\begin{lem}
\label{lem:lemmafromtopology}Let $X,Y$ be Hausdorff topological
spaces, $K\su X$ a compact subset and let $f:X\to Y$ be a continuous
map such that $f_{|K}$ is a homeomorphism and suppose that for every
$k\in K$ there exists a open neighborhood of $k$ over which $f$
is a homeomorphism. Then there exist $U\su X$ and $V\su Y$ open
neighborhoods of $K$ and $f\left(K\right)$ such that $f_{|U}$ gives
a homeomorphism $U\iso V$.
\end{lem}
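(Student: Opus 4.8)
The plan is to shrink the given local data down to a single neighborhood of $K$ on which $f$ is \emph{globally} injective; once that is in hand, the conclusion is immediate. I read the hypothesis as saying that each $k\in K$ has an open neighborhood $W_k$ on which $f$ restricts to an open embedding, i.e.\ $f|_{W_k}$ is a homeomorphism onto an open subset $f(W_k)\su Y$ (this reading is forced, since the desired $V$ must be open in $Y$). Setting $W=\bigcup_{k\in K}W_k$, this is an open neighborhood of $K$ on which $f$ is a local homeomorphism: it is locally injective, and it is an open map, since any open $O\su W$ satisfies $f(O)=\bigcup_k f(O\cap W_k)$, a union of open subsets of $Y$. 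The only missing ingredient is injectivity on a full neighborhood of $K$, and the whole proof reduces to producing it.

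To that end I would introduce the failure-of-injectivity locus
\[
Z=\{(x,y)\in W\times W\,:\,x\neq y,\ f(x)=f(y)\}\su X\times X,
\]
and prove the key claim $\overline{Z}\cap(K\times K)=\emptyset$; this is the heart of the matter and the step I expect to be the main obstacle. I would argue by contradiction, taking $(x_0,y_0)\in\overline{Z}$ with $x_0,y_0\in K$ and splitting into two cases. If $x_0\neq y_0$, then $f(x_0)\neq f(y_0)$ because $f|_K$ is injective; as $Y$ is Hausdorff I can choose disjoint open sets $V_1\ni f(x_0)$ and $V_2\ni f(y_0)$, whereupon $f^{-1}(V_1)\times f^{-1}(V_2)$ is a neighborhood of $(x_0,y_0)$ meeting no point of $Z$, a contradiction. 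If instead $x_0=y_0=:p\in K$, then $W_p\times W_p$ is a neighborhood of $(p,p)$, and the injectivity of $f|_{W_p}$ forces any $(x,y)$ in it with $f(x)=f(y)$ to have $x=y$, so this neighborhood again misses $Z$. This is precisely where all three hypotheses enter: injectivity of $f|_K$ and Hausdorffness of $Y$ in the first case, the pointwise local-homeomorphism property in the second.

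Finally I would convert this separation into an honest box neighborhood. The complement $\mathcal{O}=(X\times X)\setminus\overline{Z}$ is open and contains the compact set $K\times K$, so by the generalized tube lemma (Wallace's theorem) there exist open sets $U_1,U_2\supseteq K$ with $U_1\times U_2\su\mathcal{O}$; putting $U=U_1\cap U_2\cap W$ yields an open neighborhood of $K$ with $U\times U\su\mathcal{O}$, that is, $U\times U$ disjoint from $Z$, which says exactly that $f|_U$ is injective. Since $U\su W$, the map $f|_U$ is continuous and open, hence a homeomorphism onto its image $V=f(U)$, and $V$ is open in $Y$ (openness of $f|_U$) and contains $f(K)$. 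This produces the required $U$ and $V$. The routine points I would not belabor are the openness of $f|_W$ and the standard proof of Wallace's theorem (the ordinary tube lemma applied along $\{a\}\times K$, followed by a finite subcover).
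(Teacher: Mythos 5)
Your argument is correct, and in fact the paper states this lemma without any proof at all (it is introduced as ``a Lemma from topology'' and immediately applied in Castelnuovo's contractibility criterion), so there is nothing to compare against; your write-up fills a genuine gap in the text. The two load-bearing choices are both sound: first, the reading of ``$f$ is a homeomorphism over a neighborhood of $k$'' as ``open embedding'' is indeed forced if the conclusion is to produce an \emph{open} $V\subseteq Y$ (and it is what holds in the intended application, where $f$ is a local biholomorphism near the exceptional curve), and it is what makes $f|_{W}$ an open map; second, the separation of the non-injectivity locus $Z$ from $K\times K$, with the case split according to whether $x_{0}\neq y_{0}$ (using injectivity of $f|_{K}$ plus Hausdorffness of $Y$) or $x_{0}=y_{0}$ (using local injectivity), followed by Wallace's theorem to extract a box $U_{1}\times U_{2}$ and the diagonal trick $U=U_{1}\cap U_{2}\cap W$, correctly upgrades local injectivity plus injectivity on $K$ to injectivity on a full open neighborhood of $K$. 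The remaining steps (an injective, continuous, open map is a homeomorphism onto its open image) are immediate. Note that Hausdorffness of $X$ is never used, only that of $Y$.
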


\begin{thm}[Castelnuovo's contractibility criterion]
\label{thm:castelnuovocontractibility}Let $\S$ be a surface, and
let $E\su\S$ be a $\left(-1\right)$-curve. Then there exist a smooth
surface $\S'$ and a point $p\in\S'$ such that $\S\iso\bl_{p}\left(\S'\right)$
with exceptional curve $E$ (i.e. $\S$ is a blown-up surface).
\end{thm}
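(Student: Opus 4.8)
The plan is to contract $E$ by an explicit linear system. Fix a very ample divisor $H$ on $\S$ (replacing it, via Theorem~\ref{prop:amplemultiple} applied to $\os(-E)$, by a large multiple so that $H^{1}(\S,\os(H-E))=0$), set $k=H\cdot E>0$ and $M=H+kE$. Since $E^{2}=-1$ we have $M\cdot E=0$, and more generally $\os(H+hE)_{|E}$ has degree $k-h$ on $E\iso\Pu$ by Proposition~\ref{prop:prodottogrado}. The first step is the vanishing
\[
H^{1}(\S,\os(H+hE))=0\qquad(-1\le h\le k),
\]
proved by induction on $h$ from the base $h=-1$: the restriction sequence
\[
0\to\os(H+(h-1)E)\to\os(H+hE)\to\mathcal{O}_{E}(H+hE)\to0
\]
has $H^{1}(\mathcal{O}_{E}(H+hE))=H^{1}(\Pu,\mathcal{O}(k-h))=0$ for $h\le k$, so the vanishing propagates upward.

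From this I would read off that $|M|$ is base-point-free. Off $E$ the sections $s_{E}^{k}t$, with $t\in H^{0}(\os(H))$ and $s_{E}$ the canonical section of $\os(E)$, already suffice because $H$ is very ample and $s_{E}$ is invertible there; along $E$ the surjectivity of $H^{0}(\os(M))\to H^{0}(\mathcal{O}_{E}(M))=H^{0}(\mathcal{O}_{E})=\cc$ (which uses $H^{1}(\os(M-E))=0$) produces a section nonvanishing on $E$. Thus $|M|$ defines a morphism $\varphi\colon\S\to\PN$. Because $M_{|E}\iso\mathcal{O}_{E}$ is trivial, every section is constant on $E$, so $\varphi$ contracts $E$ to a single point $p$; and since on $\S\setminus E$ the sections $s_{E}^{k}t$ reproduce $\varphi_{|H|}$ up to the invertible factor $s_{E}^{k}$, the map $\varphi$ restricts to an isomorphism of $\S\setminus E$ onto $\S'\setminus\{p\}$, where $\S'=\varphi(\S)$ and $\varphi^{-1}(p)=E$.

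The heart of the proof—and the step I expect to be the main obstacle—is to show that $\S'$ is smooth at $p$ and that $\varphi$ is precisely a blow-up. First I would exhibit the contraction as a blow-up of an ideal: choosing $\sigma_{1},\sigma_{2}\in H^{0}(\os(M-E))=H^{0}(\os(H+(k-1)E))$ restricting to a basis of $H^{0}(\mathcal{O}_{E}(M-E))=H^{0}(\Pu,\mathcal{O}(1))$ (possible since $H^{1}(\os(M-2E))=0$), the sections $x=s_{E}\sigma_{1}$ and $y=s_{E}\sigma_{2}$ vanish on $E$ and satisfy $(x,y)=\os(-E)$ locally near $E$, because $\sigma_{1},\sigma_{2}$ have no common zero there. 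As $x,y$ are among the coordinate sections of $\varphi$ vanishing at $p$, this yields $\varphi^{-1}\mm_{p}\cdot\os=\os(-E)$, an invertible ideal sheaf, so by the universal property of blowing up $\varphi$ factors through $\bl_{p}\S'$.

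It remains to control the local ring at $p$. Passing to the Stein factorization $\S\to\S_{0}\to\S'$ I may assume $\varphi_{*}\os=\mathcal{O}_{\S_{0}}$; then the theorem on formal functions identifies the completion of the local ring at the image point with $\varprojlim_{n}H^{0}(nE,\mathcal{O}_{nE})$, whose successive quotients are $\os(-nE)_{|E}\iso\mathcal{O}_{E}(-nE)\iso\mathcal{O}_{\Pu}(n)$; these assemble, through $\bigoplus_{n}H^{0}(\Pu,\mathcal{O}(n))\iso\cc[u,v]$, into a polynomial ring in two variables, so the completed local ring is $\cc[[u,v]]$ and $\S_{0}$ is smooth at that point. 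Taking this smooth surface as $\S'$ and the image point as $p$, the induced birational morphism $\S\to\bl_{p}\S'$ is an isomorphism over $\S'\setminus\{p\}$ and carries $E$ onto the exceptional $(-1)$-curve of the smooth point; being a bijective birational morphism of smooth surfaces it is an isomorphism, giving $\S\iso\bl_{p}\S'$ with exceptional curve $E$. The delicate point throughout is exactly this passage from ``$\varphi$ contracts $E$'' to ``$\S'$ is smooth and $\varphi$ is a single blow-up'': the vanishing theorems are engineered precisely so that the graded pieces along $E$ match those of a smooth point.
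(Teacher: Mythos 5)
Your first half — the vanishing $H^{1}\left(\S,\os\left(H+hE\right)\right)=0$ by induction on $h$, base-point-freeness of $|H+kE|$, and the morphism $\varphi$ embedding $\S\setminus E$ and contracting $E$ to a point $p$ — is exactly the paper's. Where you genuinely diverge is the crucial step that $p$ is smooth and $\varphi$ is a single blow-up. The paper (following Beauville) argues by hand: it extracts from its adapted basis the sections $x=a_{k-1,0}/a_{k,0}$ and $y=a_{k-1,1}/a_{k,0}$, builds an explicit map $h:U\to\bl_{0}\left(\A^{2}\right)$ which is a local isomorphism along $E$ (checked on local coordinates), and then invokes the topological Lemma \ref{lem:lemmafromtopology} in the complex topology plus the universal property of blowing down to identify $\varphi\left(U\right)$ with an open neighbourhood of the origin in $\A^{2}$. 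You instead show $\varphi^{-1}\mm_{p}\cdot\os=\os\left(-E\right)$, pass to the Stein factorization (Theorem \ref{thm:steinfactorization}), and compute the completed local ring via the theorem on formal functions, using the graded pieces $\os\left(-nE\right)_{|E}\iso\mathcal{O}_{\Pu}\left(n\right)$ to obtain $\cc\left[\left[u,v\right]\right]$; this is Hartshorne's proof of the same theorem. Your route is purely algebraic and avoids the analytic detour, at the price of a black box (formal functions) that the paper never introduces; the paper's route uses only elementary inputs but must leave the Zariski topology. Your identity $\varphi^{-1}\mm_{p}^{n}\cdot\os=\os\left(-nE\right)$ conveniently disposes of the usual cofinality issue between the two filtrations. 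Two points deserve a line each: (i) $\varprojlim_{n}H^{0}\left(nE,\mathcal{O}_{nE}\right)\iso\cc\left[\left[u,v\right]\right]$ as \emph{rings}, not just as filtered vector spaces — lift a basis $u,v$ of the degree-one piece and use $\dim\mm^{n}/\mm^{n+1}=n+1$ to force regularity; (ii) the final birational morphism $\S\to\bl_{p}\S'$ is bijective on $E$ because $\sigma_{1|E},\sigma_{2|E}$ give a basis of $H^{0}\left(\Pu,\mathcal{O}_{\Pu}\left(1\right)\right)$, so $E$ maps isomorphically onto the exceptional curve. Both are routine; the argument is correct.
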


\begin{proof}
Let $H\in H^{0}\left(\S,\os\left(1\right)\right)$, let us note that,
in view of Theorem \ref{prop:amplemultiple}, we can suppose $H^{1}\left(\S,\os\left(H\right)\right)=0$.
Set $k=H\cdot E$ and $H'=H+kE$. Since $E\iso\Pu$, the invertible
sheaves on $E$ are characterized by their degree, so
\[
\begin{array}{c}
\os\left(E\right)_{|E}\iso\mathcal{O}_{E}\left(-1\right)\\
\os\left(H\right)_{|E}\iso\mathcal{O}_{E}\left(k\right)\\
\os\left(H'\right)_{|E}\iso\mathcal{O}_{E}
\end{array}
\]
For every $i=1,\dots,k$, let us tensorize
\[
0\to\os\left(-E\right)\to\os\to\mathcal{O}_{E}\to0
\]
with $\os\left(H+iE\right)$ to get the exact sequence
\begin{equation}
0\to\os\left(H+\left(i-1\right)E\right)\to\os\left(H+iE\right)\to\mathcal{O}_{E}\left(k-i\right)\to0.\label{eq:castelnuovorationality}
\end{equation}
Under our assumptions, we have an exact sequence
\[
0\to H^{0}\left(\S,\os\left(H\right)\right)\to H^{0}\left(\S,\os\left(H+E\right)\right)\to H^{0}\left(E,\mathcal{O}_{E}\left(k-1\right)\right)\to0.
\]
Let us suppose $H^{1}\left(\S,\os\left(H+\left(i-1\right)E\right)\right)=0$
for $i\le k-1$. As $E\iso\Pu$, looking at the cohomology sequence
associated to (\ref{eq:castelnuovorationality}), we deduce that $H^{1}\left(\S,\os\left(H+iE\right)\right)$
vanishes and hence, by induction, the sequence
\[
0\to H^{0}\left(\S,\os\left(H+\left(i-1\right)E\right)\right)\to H^{0}\left(\S,\os\left(H+iE\right)\right)\to H^{0}\left(E,\mathcal{O}_{E}\left(k-i\right)\right)\to0
\]
is exact for every $i=1,\dots,k$. Moreover, this shows that there
is a decomposition
\begin{equation}
H^{0}\left(\S,\os\left(H'\right)\right)\iso H^{0}\left(\S,\os\left(H\right)\right)\op\bigoplus_{i=0}^{k-1}H^{0}\left(E,\mathcal{O}_{E}\left(i\right)\right).\label{eq:castelnuovocontrdecomposition}
\end{equation}
Let $t\in H^{0}\left(\S,\os\left(E\right)\right)$ be such that $\left(t\right)_{0}=E$,
pick a base $\left\{ s_{0},\dots,s_{n}\right\} $ of $H^{0}\left(\S,\os\left(H\right)\right)$
and, for every $i$ with $1\le i\le k$, let $\left\{ a_{i,0},\dots,a_{i,k-i}\right\} $
be elements of $H^{0}\left(\S,\os\left(H+iE\right)\right)$ which
restrict to a base of $H^{0}\left(E,\mathcal{O}_{E}\left(k-i\right)\right)$.
In view of (\ref{eq:castelnuovocontrdecomposition}), the set
\[
\left\{ t^{k}s_{0},\dots,t^{k}s_{n},t^{k-1}a_{1,0},\dots,t^{k-1}a_{1,k-1},\dots,ta_{k-1,1},a_{k,0}\right\}
\]
gives a base of $H^{0}\left(\S,\os\left(H'\right)\right)$.

Let $\varphi:\S\dashrightarrow\PN$ be the associated rational map.
Since $H$ was a very ample divisor, the restriction $\varphi_{|\S-E}$
is a closed embedding and the curve $E$ is contracted to the point
$p=\left[0:\dots:0:1\right]$. To conclude, it suffices to check that
the point $p$ is smooth on the surface $\S'=\varphi\left(\S\right)$.
Let $U\su\S$ be the open subset defined by $a_{k,0}\neq0$ and set
\[
x=\frac{a_{k-1,0}}{a_{k,0}}\,\,\,\,\,\,\,\,\,\,\,\,y=\frac{a_{k-1,1}}{a_{k,0}},
\]
then $x,y$ are sections of $\mathcal{O}_{U}\left(-E\right)$ which, taking
$U$ eventually smaller, at no point of $U$ both vanish. Moreover
they restrict to a base of $H^{0}\left(E,\mathcal{O}_{E}\left(1\right)\right)$.
We can define morphisms $f:U\to\Pu$ and $g:U\to\A^{2}$ as
\[
\begin{array}{c}
f\left(P\right)=\left[x\left(P\right):y\left(P\right)\right]\\
g\left(P\right)=\left(t\left(P\right)x\left(P\right),t\left(P\right)y\left(P\right)\right),
\end{array}
\]
then $\left(g,f\right):U\to\A^{2}\times\Pu$ factors through $\bl_{0}\left(\A^{2}\right)$
and we have a natural commutative square
\[
\xymatrix{U\ar[d]_{\varphi_{|U}}\ar[r]^{h} & \bl_{0}\left(\A^{2}\right)\ar[d]^{\eta}\\
\varphi\left(U\right)\ar[r]_{\overline{h}} & \A^{2}
}
\]
where $h=\left(g,f\right)^{|\bl_{0}\left(\A^{2}\right)}$ and $\overline{h}$
is defined by the condition $\overline{h}\left(p\right)=\left(0,0\right)$.
Clearly $h$ induces an isomorphism from $E$ to the exceptional curve
of $\bl_{0}\left(\A^{2}\right)$. Note also that, for every $q\in E$,
the inverse image under $h$ of a system of local coordinates at $h\left(q\right)$
is a system of local coordinates at $q$, in fact let $z_{1},z_{2}$
be coordinates on $\A^{2}$ and $Z_{1},Z_{2}$ be coordinates on $\Pu$.
Then $\bl_{0}\left(\A^{2}\right)\su\A^{2}\times\Pu$ is defined by
$z_{1}Z_{2}=z_{2}Z_{1}$. Let us suppose that $x\left(q\right)=0$
and $y\left(q\right)=1$, thus we can take $z_{2}$ and $\frac{Z_{1}}{Z_{2}}$
as local coordinates at $h\left(q\right)$. Pulling back we obtain
\[
h^{\ast}\left(z_{2}\right)=ty\,\,\,\,\,\,\,\,\,\,\,h^{\ast}\left(\frac{Z_{1}}{Z_{2}}\right)=\frac{x}{y}.
\]
From the choice of $t,x$ and $y$, it follows that $h^{\ast}\left(z_{2}\right)$
vanishes on $E$ with order 1, while $h^{\ast}\left(\frac{Z_{1}}{Z_{2}}\right)$
is a local coordinate on $q$ when restricted to $E$.

Let us move to the complex topology. We claim that, after replacing
$U$ with a smaller open set, the map $\overline{h}_{|U}$ gives an
isomorphism onto its image. We see that the claim implies the theorem
because the image of $\overline{h}_{|U}$ is an open set in $\A^{2}$
which contains $\left(0,0\right)$, thus $p$ turns out to be a smooth
point.

In view of Lemma \ref{lem:lemmafromtopology}, there exist an open
neighborhood $V$ of $E$ and an open neighborhood $W$ of $h\left(E\right)$
such that $h$ gives an homeomorphism $V\iso W$. Let us consider
$\varphi\circ h_{|W}^{-1}$: this function contracts the exceptional
curve of $\bl_{0}\left(\A^{2}\right)$ to $p$, therefore, in view
of the universal property of blowing-down, it factors as $\psi\circ\eta$,
where $\psi$ is a morphism which must be the inverse of $\overline{h}$
. So $\varphi\left(U\right)$ is isomorphic to the open neighbourhood
$\eta(W)$ of the origin in $\A^{2}$ .
\end{proof}
\begin{cor}
\label{cor:characterizationrelminimal}A surface $\S$ is a relatively
minimal model if and only if it does not contain $\left(-1\right)$-curves.
\end{cor}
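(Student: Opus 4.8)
The plan is to prove the two implications separately, relying on Castelnuovo's contractibility criterion (Theorem~\ref{thm:castelnuovocontractibility}) for one direction and on the factorization of birational morphisms of smooth surfaces into blow-ups for the other.

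First I would show that a relatively minimal model contains no $\left(-1\right)$-curves, arguing by contraposition. Suppose $\S$ contains a $\left(-1\right)$-curve $E$. By Theorem~\ref{thm:castelnuovocontractibility} there exist a smooth surface $\S'$ and a point $p\in\S'$ with $\S\iso\bl_{p}\left(\S'\right)$ and exceptional curve $E$. The associated blow-down $\sigma:\S\to\S'$ is a birational morphism, and it cannot be an isomorphism: it contracts the curve $E$ to the single point $p$, whereas an isomorphism is injective and so cannot collapse a positive-dimensional subvariety. Hence $\S$ is not a relatively minimal model.

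For the converse I would again argue by contraposition. Assume $\S$ is not relatively minimal, so there is a birational morphism $\varphi:\S\to\S'$ which is not an isomorphism. The key input is that every birational morphism between smooth projective surfaces factors as a finite composition of blow-ups of points (the same fact invoked in the proof of Proposition~\ref{prop:everysurfacesrelativelyminimalmodel}); write this factorization as
\[
\S=\S_{n}\xrightarrow{\pi_{n}}\S_{n-1}\to\cdots\to\S_{0}=\S',
\]
where each $\pi_{i}$ is the blow-up of a point. Since $\varphi$ is not an isomorphism we must have $n\ge1$, so $\pi_{n}:\S\to\S_{n-1}$ is a genuine blow-up of a smooth point $p_{n}\in\S_{n-1}$. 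Its exceptional curve $E_{n}$ lies inside $\S$ itself, the top of the tower, and by the standard description of blowing up a smooth point of a surface it satisfies $E_{n}\iso\Pu$ and $E_{n}^{2}=-1$; that is, $E_{n}$ is a $\left(-1\right)$-curve on $\S$.

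Both contrapositives together give the stated equivalence. The routine part is genuinely short once the two structural inputs are in hand, so I expect the only delicate point to be the appeal to the factorization theorem, which carries the real weight and is not itself proved in the excerpt. Within that appeal, the one thing to check is that the exceptional curve produced by the outermost blow-up $\pi_{n}$ is not disturbed by the lower blow-ups; this is automatic precisely because $\pi_{n}$ is performed last, equivalently because $E_{n}$ sits on the source $\S_{n}=\S$ and is never itself a center of a subsequent blow-up.
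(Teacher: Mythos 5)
Your proposal is correct and follows essentially the same route as the paper: Castelnuovo's contractibility criterion (Theorem \ref{thm:castelnuovocontractibility}) for the forward direction, and the factorization of a non-isomorphic birational morphism into blow-ups for the converse, with the exceptional curve of the last (topmost) blow-up furnishing the $\left(-1\right)$-curve on $\S$. The additional remark that this curve is not affected by the lower blow-ups is a harmless elaboration of the same argument.
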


\begin{proof}
If $E\su\S$ is a $\left(-1\right)$-curve, in view of Castelnuovo's
Theorem \ref{thm:castelnuovocontractibility}, there exists a birational
morphism $\S\to\S'$ which contracts $E$, thus it is not an isomorphism
and hence $\S$ is not a relatively minimal model.

If $\S$ is not a relatively minimal model, then there exist a surface
$\S'$ and a birational morphism $f:\S\to\S'$ which is not an isomorphism.
Then
\[
f=\epsilon_{1}\circ\dots\circ\epsilon_{n}
\]
 is a finite composition of blowing ups and the exceptional curve
of the last one gives a $\left(-1\right)$-curve on $\S$.
\end{proof}

\subsection{Ruled surfaces}
\begin{defn}
A surface $\S$ is said to be $\emph{ruled}$ if there exist a smooth
curve $\C$ such that $\S$ is birationally equivalent to $\C\times\Pu$.
\end{defn}

\begin{example}
It is clear that for every smooth curve $\C$, the product $\C\times\Pu$
is a ruled surface. Moreover rational surfaces are ruled, since $\Pd$
is birationally equivalent to $\Pu\times\Pu$.
\end{example}

$\,$
\begin{example}
For every rank 2 vector bundle over a smooth curve $\C$, the $\Pu$-bundle
$\pc\left(\mathcal{E}\right)$ is a ruled surface as it is birationally equivalent
to $\C\times\Pu$.
\end{example}

\begin{defn}
Let $\C$ be a smooth curve. We say that a surface $\S$ is $\emph{geometrically ruled over \ensuremath{\C}}$
if there exists a surjective morphism $\pi:\S\to\C$ whose fibres
are smooth rational curves.
\end{defn}

\begin{thm}[Noether-Enriques]
\label{thm:noetherenriques}Let $\S$ be a surface and $\pi:\S\to\C$
a surjective morphism onto a smooth curve. If there exists $x\in\C$
such that $\pi^{-1}\left(x\right)\iso\Pu$, then there exists an open
subset $U\su\C$ containing $x$ and a $U$-isomorphism $\pi^{-1}\left(U\right)\to U\times\Pu$.
\end{thm}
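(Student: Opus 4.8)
The plan is to reduce the statement to the universal property of a projective bundle by producing a line bundle on $\S$ whose restriction to the good fibre has degree one. Write $F=\pi^{-1}(x)\iso\Pu$. The first step is to find $H\in\D(\S)$ with $H\cdot F=1$. By Example \ref{ex:fibresquaredzero} we have $F^{2}=0$, and the genus formula (Corollary \ref{cor:genusformula}) applied to $F\iso\Pu$ gives $F\cdot\ks=-2$. These are exactly the numerical inputs used in the proof of Proposition \ref{prop:existssection}; moreover that argument needs only the vanishing $H^{2}(\S,\os)=0$, which Lemma \ref{lem:genuszero} supplies from the single hypothesis $\pi^{-1}(x)\iso\Pu$ (rather than from all fibres being rational). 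Running the same lattice–parity argument—surjectivity of $\pic(\S)\to H^{2}(\S,\Z)$, the subgroup $\{a\cdot f\mid a\in H^{2}(\S,\Z)\}=d\Z$, and the parity of $a^{2}+a\cdot\ks$—forces $d=1$, so there exists $H\in\D(\S)$ with $H\cdot F=1$.

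Since $H\cdot F=1$, restriction to $F\iso\Pu$ gives $\os(H)_{|F}\iso\mathcal{O}_{\Pu}(1)$, so $h^{0}(F,\os(H)_{|F})=2$ and $h^{1}(F,\os(H)_{|F})=0$. By Lemma \ref{lem:flatness} the morphism $\pi$ is flat, so cohomology and base change \cite[Proposition 9.5.2]{Ke} applies: by upper semicontinuity there is an open $U\ni x$ on which $R^{1}\pi_{\ast}\os(H)=0$, the sheaf $\mathcal{E}:=\pi_{\ast}\os(H)$ is locally free of rank $2$, and its formation commutes with base change, so $\mathcal{E}\ot\cc(x)\iso H^{0}(F,\os(H)_{|F})$. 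On $F$ the evaluation $\mathcal{E}\ot\cc(x)\to\os(H)_{|F}\iso\mathcal{O}_{\Pu}(1)$ is surjective because $\mathcal{O}_{\Pu}(1)$ is globally generated; by Nakayama the natural map $\pi^{\ast}\mathcal{E}\to\os(H)$ is an epimorphism on an open set containing $F$, and since its cokernel has closed image in $U$ avoiding $x$ we may shrink $U$ so that it is an epimorphism over $\pi^{-1}(U)$. Theorem \ref{thm:universalpropertyproj}, applied over the curve $U$, then yields a $U$-morphism $g:\pi^{-1}(U)\to\pc(\mathcal{E})$ (over $U$) with $g^{\ast}\mathcal{O}_{\pc(\mathcal{E})}(1)\iso\os(H)$. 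Shrinking $U$ further so that $\mathcal{E}_{|U}$ is free, we have $\pc(\mathcal{E})\iso U\times\Pu$, and it remains to prove that $g$ is an isomorphism near $F$.

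This last step is the main obstacle, precisely because we do not know a priori that the fibres of $\pi$ near $x$ are isomorphic to $\Pu$; I would therefore avoid any fibrewise argument and instead show directly that $g$ is a local isomorphism at each point of $F$, then invoke the topological Lemma \ref{lem:lemmafromtopology}. Over $F$ the morphism $g$ induces the map $\Pu\to\pc(\mathcal{E})_{x}\iso\Pu$ attached to the complete linear system of $\mathcal{O}_{\Pu}(1)$, hence an isomorphism; in particular $g_{|F}$ is a homeomorphism onto $\pc(\mathcal{E})_{x}$. At a point $p\in F$ both $\pi^{-1}(U)$ and $\pc(\mathcal{E})$ are smooth and submersive over $U$ with one-dimensional fibres, and $g$ commutes with the two projections; since $dg_{p}$ restricts to an isomorphism on the fibre tangent direction (because $g_{|F}$ is an isomorphism) and induces the identity on the base direction $T_{x}U$, the five lemma applied to the tangent sequences forces $dg_{p}$ to be an isomorphism. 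Thus $g$ is a local biholomorphism at each $p\in F$. Passing to the complex topology, Lemma \ref{lem:lemmafromtopology} with $K=F$ provides open neighbourhoods of $F$ and of $\pc(\mathcal{E})_{x}$ between which $g$ is a homeomorphism, and being a local biholomorphism it is biholomorphic there.

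Finally, since $\pc(\mathcal{E})\to U$ is proper and the fibre $\pc(\mathcal{E})_{x}$ is compact, the tube lemma produces a smaller neighbourhood $U'\ni x$ with $\pc(\mathcal{E})_{|U'}$ and $\pi^{-1}(U')$ contained in the open sets on which $g$ is biholomorphic and satisfying $g^{-1}\!\left(\pc(\mathcal{E})_{|U'}\right)=\pi^{-1}(U')$. Then $g:\pi^{-1}(U')\to\pc(\mathcal{E})_{|U'}$ is a bijective $U'$-morphism that is biholomorphic, hence biregular, and composing with the trivialisation $\pc(\mathcal{E})_{|U'}\iso U'\times\Pu$ yields the desired $U'$-isomorphism $\pi^{-1}(U')\iso U'\times\Pu$.
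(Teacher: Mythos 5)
Your proof is correct, but it takes a genuinely different route from the paper's. The paper argues fibrewise: flatness (Lemma \ref{lem:flatness}), semicontinuity of $h^{0}$ and $h^{1}$ of the structure sheaves of the fibres (\cite[Proposition 9.5.1]{Ke}) and generic smoothness over $\cc$ show that \emph{every} fibre over some open $U\ni x$ is a smooth connected curve of arithmetic genus $0$, hence isomorphic to $\Pu$, and it then concludes by citing Proposition \ref{prop:existssection} and Corollary \ref{cor:surfaceisoprojectivebundle}. You deliberately avoid establishing in advance that the nearby fibres are rational: you extract from the proof of Proposition \ref{prop:existssection} only the lattice--parity argument producing $H$ with $H\cdot F=1$ (correctly observing that Lemma \ref{lem:genuszero} and the inputs $F^{2}=0$, $F\cdot\ks=-2$ need just the one rational fibre), build $\mathcal{E}=\pi_{\ast}\os\left(H\right)$ by cohomology and base change near $x$, map to $\pc\left(\mathcal{E}\right)$ by Theorem \ref{thm:universalpropertyproj}, and verify that the map is an isomorphism along $F$ by a tangent-space computation followed by Lemma \ref{lem:lemmafromtopology} and the tube lemma --- the same analytic endgame as in the proof of Theorem \ref{thm:castelnuovocontractibility}. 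Your route has a real advantage: the global topological input (exponential sequence, Poincar\'e duality) is only ever applied to the compact surface $\S$, whereas the paper's one-line conclusion quietly applies Proposition \ref{prop:existssection} and Corollary \ref{cor:surfaceisoprojectivebundle}, which are stated and proved for projective surfaces over complete curves, to the open piece $\pi^{-1}\left(U\right)\to U$; your argument supplies exactly the global divisor class and the local trivialisation needed to make that step honest, and the rationality of the nearby fibres falls out at the end as a byproduct. The price is the heavier machinery in the last step (base change with a priori non-constant fibre dimensions, the differential argument, the topological lemma), where the paper's semicontinuity argument is shorter once the cited results from \cite{Ke} are granted.
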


\begin{proof}
In view of Lemma \ref{lem:flatness} we deduce that $\pi$ is a flat
morphism, moreover, in view of \cite[Proposition 9.5.1]{Ke} and
the fact that $h^{0}\left(\pi^{-1}\left(x\right),\mathcal{O}_{\S|\pi^{-1}\left(x\right)}\right)=1$
we deduce that the general fibre is connected. Since we work over
the field $\cc$ of complex numbers, the general fibre is also smooth
and hence irreducible. Moreover we have $h^{1}\left(\pi^{-1}\left(x\right),\mathcal{O}_{\S|\pi^{-1}\left(x\right)}\right)=0$
and hence, in view again of \cite[Proposition 9.5.1]{Ke} and Remark
\ref{rem:geometricarithmeticgenus}, we deduce that there exists $U\su\C$
open set such that $\pi^{-1}\left(y\right)\iso\Pu$ for every $y\in U$.
We conclude in view of Proposition \ref{prop:existssection} and Corollary
\ref{cor:surfaceisoprojectivebundle}.
\end{proof}
\begin{rem}
It follows in particular from Theorem \ref{thm:noetherenriques} that
geometrically ruled surfaces are actually ruled surfaces.
\end{rem}

\begin{lem}
\label{lem:lemmaIIInove}Let $\S$ be a surface and $p:\S\to\C$ a
surjective morphism onto a smooth curve with connected fibres. If
$F=\sum n_{i}C_{i}$ is a reducible fibre, then $C_{i}^{2}<0$ for
every $i$.
\end{lem}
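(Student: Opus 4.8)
The plan is to reduce everything to the single relation $F\cdot C_i=0$ and then let connectedness do the work. The starting observation is that each component meets the whole fibre in degree zero: by Example \ref{ex:fibresquaredzero} the fibre $F=p^{-1}(x)$ is linearly equivalent to $p^{\ast}D$, a union of fibres disjoint from $F$, so for each $i$ I would compute
\[
F\cdot C_i=(p^{\ast}D)\cdot C_i=0,
\]
the last equality holding because $p^{\ast}D$ is supported on fibres disjoint from $C_i\subseteq F$. (Equivalently one may invoke Corollary \ref{cor:fibersnumerically}, choosing a fibre $F'$ over a point $y\neq x$: then $F\sim F'$ and $F'\cap C_i=\emptyset$, so $F\cdot C_i=F'\cdot C_i=0$.)

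Next I would expand $F=\sum_j n_j C_j$ and use the bilinearity of the intersection product from Theorem \ref{thm:bilinearity}. For each fixed $i$ this yields
\[
0=F\cdot C_i=n_i C_i^2+\sum_{j\neq i}n_j\,(C_i\cdot C_j),
\]
hence
\[
n_i C_i^2=-\sum_{j\neq i}n_j\,(C_i\cdot C_j).
\]
Since the $C_j$ are pairwise distinct irreducible curves, every term $C_i\cdot C_j$ with $j\neq i$ is a sum of non-negative local intersection multiplicities (Definition \ref{def:intprod}), so $C_i\cdot C_j\ge 0$; together with $n_j>0$ this already gives $C_i^2\le 0$.

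The crux, and the only place both hypotheses are needed, is to upgrade this to a strict inequality. Because $F$ is \emph{reducible}, its support is a union of at least two distinct components; because $p$ has \emph{connected} fibres, $\supp(F)$ is connected. The key step is the topological observation that if $C_i$ were disjoint from every other component, then $C_i$ and $\bigcup_{j\neq i}C_j$ would be a separation of $\supp(F)$ into two nonempty disjoint closed sets, contradicting connectedness. Thus there is some $j\neq i$ with $C_i\cap C_j\neq\emptyset$, forcing $C_i\cdot C_j\ge 1$, so the right-hand side above is strictly positive and $n_i C_i^2<0$. Dividing by $n_i>0$ gives $C_i^2<0$. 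I do not expect a serious obstacle here: the computation is immediate once $F\cdot C_i=0$ is in hand, and the only point deserving care is the connectedness argument, which is exactly what rules out the degenerate case of an irreducible fibre (where one would instead only get $C_i^2=0$).
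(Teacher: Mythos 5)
Your proof is correct and follows essentially the same route as the paper: both derive $F\cdot C_i=0$ from the numerical equivalence of fibres, expand to get $n_iC_i^2=-\sum_{j\neq i}n_j\,C_i\cdot C_j$, and use connectedness together with reducibility to produce some $j$ with $C_i\cdot C_j>0$. Your write-up merely makes the connectedness step more explicit than the paper does.
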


\begin{proof}
We have, in view of Corollary \ref{cor:fibersnumerically}, that
\[
n_{i}C_{i}^{2}=C_{i}\cdot\left(F-\sum_{j\neq i}n_{j}C_{j}\right)=-\sum_{j\neq i}n_{j}C_{i}\cdot C_{j}.
\]
Since $F$ is connected we deduce that $C_{i}\cdot C_{j}\ge0$ for
every $j\neq i$ and there exists, at least, one index $j$ such that
$C_{i}\cdot C_{j}>0$. It follows that $n_{i}C_{i}^{2}<0$ and thus
$C_{i}^{2}<0$.
\end{proof}
\begin{lem}
\label{lem:generalefibregeometricallyruled}Let $\S$ be relatively
minimal surface. If there exists a smooth curve $\C$ and a surjective
morphism $\pi:\S\to\C$ with general fibre isomorphic to $\Pu$, then
$\S$ is geometrically ruled by $\pi$.
\end{lem}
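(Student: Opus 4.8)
The goal is to prove that \emph{every} fibre $F=\pi^{-1}(x)$ is isomorphic to $\Pu$, which is precisely the definition of $\S$ being geometrically ruled by $\pi$.

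First I would arrange that $\pi$ has connected fibres. Since the general fibre is $\Pu$, it is in particular irreducible, so in the Stein factorization $\pi=g\circ h$, with $h:\S\to\C'$ having connected fibres and $g:\C'\to\C$ finite, the map $g$ is generically injective; being a finite morphism between smooth curves it is then an isomorphism, and $\pi=h$ has connected fibres. This makes Lemma \ref{lem:lemmaIIInove} applicable to the reducible fibres treated below. I would then record the numerical data common to all fibres: by Corollary \ref{cor:fibersnumerically} each fibre is numerically equivalent to the general one, so $F^{2}=0$ for every fibre by Example \ref{ex:fibresquaredzero}, and applying the genus formula (Corollary \ref{cor:genusformula}) to the smooth general fibre gives $F\cdot\ks=-2$, an equality that then persists for all fibres by numerical equivalence.

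Writing $F=\sum n_{i}C_{i}$, I would first rule out reducibility. If $F$ has at least two components, Lemma \ref{lem:lemmaIIInove} yields $C_{i}^{2}<0$ for each $i$; since $\sum n_{i}(C_{i}\cdot\ks)=-2$ with all $n_{i}\ge1$, some component $C_{j}$ must satisfy $C_{j}\cdot\ks<0$. For this component $C_{j}^{2}\le-1$ and $C_{j}\cdot\ks\le-1$, so the genus formula forces
\[
0\le p_{a}(C_{j})=1+\tfrac{1}{2}\bigl(C_{j}^{2}+C_{j}\cdot\ks\bigr)\le0,
\]
whence $C_{j}^{2}=C_{j}\cdot\ks=-1$ and $C_{j}$ is a $(-1)$-curve. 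This contradicts relative minimality by Corollary \ref{cor:characterizationrelminimal}, so $F=nC$ is irreducible.

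In the irreducible case $F^{2}=0$ gives $C^{2}=0$ and $F\cdot\ks=-2$ gives $n(C\cdot\ks)=-2$. The only integer possibilities are $(n,C\cdot\ks)=(1,-2)$ or $(2,-1)$; the second is impossible because it would make $p_{a}(C)=1+\tfrac{1}{2}(0-1)=\tfrac{1}{2}$ a non-integer. Hence $n=1$ and $C\cdot\ks=-2$, so $p_{a}(C)=0$ and $C\iso\Pu$ by Remark \ref{rem:geometricarithmeticgenus}, completing the proof. I expect the main obstacle to be the connectedness reduction, which is what legitimizes invoking Lemma \ref{lem:lemmaIIInove}; once connectedness and the two numerical constraints $F^{2}=0$ and $F\cdot\ks=-2$ are secured, the remaining case analysis is essentially forced.
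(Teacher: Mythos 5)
Your proof is correct, and its skeleton coincides with the paper's: establish $F^{2}=0$ and $F\cdot\ks=-2$ for every fibre via Corollary \ref{cor:fibersnumerically}, conclude that irreducible reduced fibres are $\Pu$ from the genus formula, and kill reducible fibres by combining Lemma \ref{lem:lemmaIIInove} with the genus formula and Corollary \ref{cor:characterizationrelminimal} to produce a forbidden $\left(-1\right)$-curve. Where you genuinely diverge is in securing the hypotheses that make this machinery applicable, namely connectedness and multiplicity one of the fibres. The paper gets both at once by invoking Noether--Enriques (Theorem \ref{thm:noetherenriques}) to produce a local section, extending it to a global section $H$ over the smooth base, and reading off $H\cdot F=1$ for every fibre; you instead get connectedness from the Stein factorization (the general fibre being $\Pu$ forces the finite part to be birational, hence an isomorphism onto the smooth curve $\C$), and you dispose of the multiple-fibre case $F=nC$ with irreducible support --- which Lemma \ref{lem:lemmaIIInove} does not cover --- by the parity observation that $\left(n,C\cdot\ks\right)=\left(2,-1\right)$ with $C^{2}=0$ would make $p_{a}\left(C\right)=\tfrac{1}{2}$. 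Your route is slightly more self-contained: it avoids the somewhat delicate claim that a local section lifts to a global one, and it makes explicit the case of non-reduced fibres, which the paper's case analysis passes over quickly. The paper's route has the advantage that the section $H$ it constructs is exactly what Corollary \ref{cor:surfaceisoprojectivebundle} needs to exhibit $\S$ as a projective bundle, though, as you correctly note, the bare definition of geometrically ruled only asks that every fibre be a smooth rational curve, so this extra step is not required for the statement as written.
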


\begin{proof}
In view of Corollary \ref{cor:surfaceisoprojectivebundle}, it is
enough to show that $\emph{every}$ fibre of $\pi$ is a smooth rational
curve. First we show that every fibre is reduced and connected: in
view of Theorem \ref{thm:noetherenriques}, $p$ has a local section
$H$. Since $\C$ is a smooth curve, this section lifts to a global
section and hence $H\cdot F=1$ for every fibre.

In view of Corollary \ref{cor:fibersnumerically} and Example \ref{ex:fibresquaredzero}
we have $F^{2}=0$ and $F\cdot\ks=-2$ for every fibre $F$ of $\pi$.
In view of Corollary \ref{cor:genusformula} we conclude that, for
every irreducible fibre $F$, we have $p_{a}\left(F\right)=0$ and
thus, in view of Remark \ref{rem:geometricarithmeticgenus}, $F\iso\Pu$
for every irreducible fibre.

Let $F=\sum n_{i}C_{i}$ be a reducible fibre, in view of Corollary
\ref{cor:fibersnumerically} we deduce that $F\cdot\ks=-2$. From
Corollary \ref{cor:genusformula} and Lemma \ref{lem:lemmaIIInove},
we have that $C_{i}\cdot\ks\ge-1$ for every $i$, with equality if
and only if $p_{a}\left(C_{i}\right)=0$ and $C_{i}^{2}=-1$, i.e.
if $C_{i}$ is a $\left(-1\right)$-curve. Under our hypothesis, in
view of Corollary \ref{cor:characterizationrelminimal}, this is not
possible, thus $C_{i}\cdot\ks\ge0$, but this is a contradiction with
the fact that $F\cdot\ks=-2$ .
\end{proof}
\begin{thm}
\label{thm:minimalmodelirrational}Let $\C$ be a smooth $\emph{irrational}$
curve. Then the relatively minimal models for $\C\times\Pu$ are $\Pu$-bundles
$\pc\left(\mathcal{E}\right)$.
\end{thm}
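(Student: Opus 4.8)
The plan is to prove the two inclusions separately: every relatively minimal model of $\C\times\Pu$ is a $\Pu$-bundle $\pc\left(\mathcal{E}\right)$ over $\C$, and conversely every such $\Pu$-bundle is a relatively minimal model. For the forward direction, let $\S$ be a relatively minimal model, so that there is a birational map $u:\S\ra\C\times\Pu$ and, by Corollary \ref{cor:characterizationrelminimal}, $\S$ contains no $\left(-1\right)$-curves. Composing $u$ with the first projection gives a dominant rational map $\phi=\mathrm{pr}_{1}\circ u:\S\ra\C$. The crucial first step is to upgrade $\phi$ to an honest morphism $\pi:\S\to\C$. Since $\S$ is a smooth surface, the indeterminacy locus of $\phi$ is finite; resolving it by a sequence of blow-ups $\sigma:\tilde{\S}\to\S$ produces a morphism $\tilde\phi:\tilde\S\to\C$. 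If $\phi$ failed to be defined at some point $p$, then $\tilde\phi$ could not contract the whole exceptional fibre $\sigma^{-1}\left(p\right)$ to a point—otherwise it would descend to a morphism near $p$—so $\tilde\phi$ would restrict to a non-constant map from some exceptional $E\iso\Pu$ onto $\C$. As $\C$ is irrational this is impossible, since a dominant map $\Pu\to\C$ would force $p_{a}\left(\C\right)=0$ against Remark \ref{rem:geometricarithmeticgenus}. Hence $\phi=\pi$ is a morphism.

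Next I would identify the general fibre of $\pi$. Because $\pi$ agrees with $\mathrm{pr}_{1}\circ u$ as a rational map, the isomorphism locus of $u$ meets a general fibre $\pi^{-1}\left(x\right)$ in a dense open set mapping into $\left\{ x\right\}\times\Pu$; equivalently, the generic fibre of $\pi$ is the smooth projective curve over $\cc\left(\C\right)$ with function field $\cc\left(\S\right)\iso\cc\left(\C\right)\left(t\right)$. In either formulation the general fibre is a smooth projective curve of genus $0$, hence isomorphic to $\Pu$ by Remark \ref{rem:geometricarithmeticgenus}. Now $\S$ is relatively minimal and $\pi$ has general fibre $\iso\Pu$, so Lemma \ref{lem:generalefibregeometricallyruled} shows that $\S$ is geometrically ruled by $\pi$, i.e. $\pi^{-1}\left(x\right)\iso\Pu$ for every $x\in\C$. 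Proposition \ref{prop:existssection} then furnishes a section $H$ of $\pi$, and Corollary \ref{cor:surfaceisoprojectivebundle} yields a $\C$-isomorphism $\S\iso\pc\left(\pi_{\ast}\os\left(H\right)\right)$, exhibiting $\S$ as a $\Pu$-bundle.

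For the converse, any $\Pu$-bundle $\pi:\pc\left(\mathcal{E}\right)\to\C$ is birationally equivalent to $\C\times\Pu$, so by Corollary \ref{cor:characterizationrelminimal} it suffices to show it contains no $\left(-1\right)$-curve. If $E\iso\Pu$ were such a curve, then $\pi_{|E}:E\to\C$ would be constant by irrationality of $\C$, so $E$ would lie in a single fibre $\pi^{-1}\left(x\right)\iso\Pu$; irreducibility would force $E=\pi^{-1}\left(x\right)$, which is absurd since $E^{2}=-1$ whereas every fibre has self-intersection $0$ by Example \ref{ex:fibresquaredzero}.

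The main obstacle is the first step, the extension of $\phi$ to a genuine morphism $\pi$: this is exactly where the irrationality hypothesis is used, through the non-existence of rational curves dominating $\C$. Everything afterwards is a direct application of the machinery already developed for geometrically ruled surfaces, and the irrationality of $\C$ reappears only to rule out $\left(-1\right)$-curves in the converse.
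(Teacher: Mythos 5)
Your proof is correct and follows essentially the same route as the paper: resolve the indeterminacy of the composite of the birational map with the projection to $\C$, use the irrationality of $\C$ to show that no exceptional curve can dominate $\C$ (you phrase this as a descent of the resolved morphism, the paper as a contradiction with the minimality of the number of blow-ups — the same idea), and then conclude via Lemma \ref{lem:generalefibregeometricallyruled}. You additionally verify the converse inclusion, that every $\Pu$-bundle over an irrational curve is itself relatively minimal; this check is correct (and the paper leaves it implicit), so it is a worthwhile addition.
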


\begin{proof}
Let $\S$ be a relatively minimal model for $\C\times\Pu$ and let
$\varphi:\S\ra\C\times\Pu$ be a birational map. We set $f=q\circ\varphi$,
where $q:\C\times\Pu\to\C$ is the first projection. We can solve
the singularities of $f$ and get a commutative diagram
\[
\xymatrix{ & \tilde{\S}\ar[dr]^{g}\ar[dl]_{\sigma}\\
\S\ar@{-->}[rr]_{f} &  & \C
}
\]
where $\sigma=\epsilon_{1}\circ\dots\circ\epsilon_{n}$ is the composition
of a finite number of blow ups, moreover we can assume that the number
$n$ is the minimal number of blow ups for a diagram of this type
to exist. Let us suppose that $n>0$, and let $E$ be the exceptional
curve of the last blow up. If $g\left(E\right)=\C$, then $\C$ is
dominated by a smooth rational curve, and hence $\C$ is rational,
which contradicts our assumptions. It follows that $g\left(E\right)$
is a point and thus we can factor $g=g'\circ\epsilon_{n}$, which
is impossible for the minimality of $n$. Thus $n=0$ and $f$ is
a morphism with general fibre isomorphic to $\Pu$. We conclude in
view of Lemma \ref{lem:generalefibregeometricallyruled}.
\end{proof}
We conclude this section computing some invariant for ruled and geometrically
ruled surfaces.
\begin{prop}
\label{prop:picgeometricallyruled}Let $p:\pc\left(\mathcal{E}\right)=\S\to\C$
be a geometrically ruled surface with $\mathcal{E}$ normalized, let $\C_{0}$
be a section and $F$ be a fibre. Then

\begin{itemize}
\item $\pic\left(\S\right)\iso p^{\ast}\pic\left(\C\right)\op\Z\cdot\left[\C_{0}\right]$;
\item $\num\left(\S\right)\iso\Z\cdot\left[F\right]\op\Z\cdot\left[\C_{0}\right]$
with the relations
\[
\C_{0}\cdot F=1,\,\,\,\,\C_{0}^{2}=\deg\left(\det\mathcal{E}\right)\,\,\,\,\mbox{and}\,\,\,\,F^{2}=0.
\]
\end{itemize}
\end{prop}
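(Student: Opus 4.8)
The plan is to analyse the degree-on-a-fibre homomorphism
\[
\delta:\pic\left(\S\right)\to\Z,\qquad \L\mapsto\deg\left(\L_{|F}\right),
\]
which is well defined because all fibres are numerically equivalent by Corollary \ref{cor:fibersnumerically}. Throughout I take $\C_0$ to be the section attached to the normalization, namely the one determined by the sub-bundle $\oc\hookrightarrow\mathcal{E}$ supplied by $h^0\left(\mathcal{E}\right)>0$; the first two claims hold for \emph{any} section, whereas the value $\C_0^2=\deg\left(\det\mathcal{E}\right)$ genuinely uses this choice. Since $p_{|\C_0}:\C_0\to\C$ is an isomorphism and $F=p^{\ast}\left[x\right]$ as a divisor, Proposition \ref{prop:prodottogrado} gives $\C_0\cdot F=\deg\left(\left(p_{|\C_0}\right)^{\ast}\oc\left(\left[x\right]\right)\right)=1$, so $\delta\left(\left[\C_0\right]\right)=1$ and $\delta$ is surjective with $\left[\C_0\right]$ splitting it. The inclusion $p^{\ast}\pic\left(\C\right)\su\ker\delta$ is clear since $p^{\ast}\M$ restricts trivially to every fibre, and $p^{\ast}$ is injective because the projection formula together with $p_{\ast}\os=\oc$ (Lemma \ref{lem:2.1Hartshorne}) turns $p^{\ast}\M\iso\os$ into $\M\iso p_{\ast}p^{\ast}\M\iso p_{\ast}\os\iso\oc$.

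The heart of the first bullet is the reverse inclusion $\ker\delta\su p^{\ast}\pic\left(\C\right)$. Given $\L=\os\left(D\right)$ with $D\cdot F=0$, I would apply Lemma \ref{lem:2.1Hartshorne} with $n=0$ to see that $\M:=p_{\ast}\os\left(D\right)$ is a line bundle on $\C$, and then show the adjunction map $p^{\ast}\M\to\os\left(D\right)$ is an isomorphism. This is checked fibrewise exactly as in Corollary \ref{cor:surfaceisoprojectivebundle}: on $F\iso\Pu$ one has $\os\left(D\right)_{|F}\iso\mathcal{O}_{F}$, base change identifies $\M\ot\cc\left(x\right)$ with the one-dimensional space $H^{0}\left(F,\mathcal{O}_{F}\right)$, and the induced map $\mathcal{O}_{F}\to\mathcal{O}_{F}$ is a nonzero scalar; Nakayama upgrades fibrewise surjectivity to an isomorphism of line bundles. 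Hence $\ker\delta=p^{\ast}\pic\left(\C\right)$ and the split exact sequence gives $\pic\left(\S\right)=p^{\ast}\pic\left(\C\right)\op\Z\cdot\left[\C_0\right]$.

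For the numerical statement, I would first note that $p^{\ast}$ of a degree-zero class on $\C$ is numerically trivial: writing it as a difference of points and pulling back, Corollary \ref{cor:fibersnumerically} gives $p^{\ast}\left[x\right]\sim p^{\ast}\left[y\right]$, so $p^{\ast}\M\sim\left(\deg\M\right)\left[F\right]$ in $\num\left(\S\right)$. Combined with the first bullet, $\num\left(\S\right)$ is generated by $\left[F\right]$ and $\left[\C_0\right]$. With $F^{2}=0$ (Example \ref{ex:fibresquaredzero}) and $\C_0\cdot F=1$, the Gram matrix of $\left[\C_0\right],\left[F\right]$ has determinant $-1$, so intersecting any relation $a\left[F\right]+b\left[\C_0\right]\sim0$ with $\left[F\right]$ and with $\left[\C_0\right]$ forces $a=b=0$. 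Thus $\left[F\right],\left[\C_0\right]$ form a $\Z$-basis and $\num\left(\S\right)\iso\Z\cdot\left[F\right]\op\Z\cdot\left[\C_0\right]$ is free of rank two.

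It remains to compute $\C_0^{2}$, which I expect to be the main obstacle, as it is where the normalization enters. Because $\mathcal{E}$ is normalized there is an exact sequence $0\to\oc\to\mathcal{E}\to\L\to0$ with $\L\in\pic\left(\C\right)$ (as in the proof of Theorem \ref{thm:possiblevalueseS}), so $\det\mathcal{E}\iso\L$; the sub-bundle $\oc\hookrightarrow\mathcal{E}$ determines the section $s:\C\to\C_0\su\S$ with $s^{\ast}\M\iso\oc$ and $s^{\ast}\os\left(1\right)\iso\L$ in the tautological sequence (\ref{eq:deftautological}). Composing the pullback of the global section $\oc\to\mathcal{E}$ with $u:p^{\ast}\mathcal{E}\to\os\left(1\right)$ produces a section of $\os\left(1\right)$ whose divisor of zeros is exactly $\C_0$ (it vanishes at a point over $x$ precisely when the tautological line there equals the line spanned by $\oc_{x}$), so $\os\left(\C_0\right)\iso\os\left(1\right)$. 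Proposition \ref{prop:prodottogrado} then gives
\[
\C_0^{2}=\os\left(\C_0\right)\cdot\os\left(\C_0\right)=\deg\left(\os\left(1\right)_{|\C_0}\right)=\deg\left(s^{\ast}\os\left(1\right)\right)=\deg\L=\deg\left(\det\mathcal{E}\right),
\]
which finishes the proof.
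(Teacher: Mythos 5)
Your proof is correct, and it reaches the two nontrivial points of the statement by routes genuinely different from the paper's. For the inclusion $\ker\delta\su p^{\ast}\pic\left(\C\right)$ the paper argues globally: it adds enough fibres to $D'$ to kill $h^{2}$, invokes Riemann--Roch to produce an effective member of $|D'+nF|$, and observes that an effective divisor meeting $F$ trivially must be a sum of fibres. You instead argue relatively over $\C$: $p_{\ast}\os\left(D'\right)$ is a line bundle by Lemma \ref{lem:2.1Hartshorne} with $n=0$, and the counit $p^{\ast}p_{\ast}\os\left(D'\right)\to\os\left(D'\right)$ is an isomorphism by the fibrewise base-change plus Nakayama argument of Corollary \ref{cor:surfaceisoprojectivebundle}. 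Your version avoids Riemann--Roch and Serre duality altogether, while the paper's is a quick application of machinery already in place. For $\C_{0}^{2}$ the paper computes $c_{2}\left(p^{\ast}\mathcal{E}\right)=0$ and uses $p^{\ast}\det\mathcal{E}\iso\M\ot\os\left(1\right)$ together with Remark \ref{rem:tautologicalnumericallyequivalent}, whereas you exhibit an explicit section of $\os\left(1\right)$ (the composite of the pullback of $\oc\hookrightarrow\mathcal{E}$ with $u$) whose zero divisor is $\C_{0}$, so that $\os\left(\C_{0}\right)\iso\os\left(1\right)$ and the self-intersection is read off by restricting to $\C_{0}$; the one point you leave tacit is that this zero divisor has multiplicity one, which follows by restricting the section to a fibre and using $\os\left(1\right)\cdot F=1$. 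You are also more careful than the paper in noting that $\C_{0}^{2}=\deg\left(\det\mathcal{E}\right)$ holds only for the section cut out by the sub-bundle $\oc\hookrightarrow\mathcal{E}$ supplied by the normalization, not for an arbitrary section (on $\ff$ with $n>0$ there are sections of positive self-intersection); this makes your treatment the more precise of the two. The remaining pieces --- the splitting via the fibre-degree homomorphism, $\C_{0}\cdot F=1$, $F^{2}=0$, and the identification of $\num\left(\S\right)$ via Corollary \ref{cor:fibersnumerically} and the unimodularity of the intersection matrix of $\left[\C_{0}\right]$ and $\left[F\right]$ --- agree in substance with the paper.
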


\begin{proof}
By definition of $H$ and in view of Example \ref{ex:fibresquaredzero}
we have
\[
\C_{0}\cdot F=1\,\,\,\,\mbox{and}\,\,\,\,F^{2}=0.
\]
In view of Lemma \ref{lem:rango2sequenzaesatta}, there exists an
exact sequence
\[
0\to\L\to\mathcal{E}\to\mathcal{N}\to0
\]
where $\L,\mathcal{N}\in\pic\left(\C\right)$ which gives, in view
of Lemma \ref{lem:flatness}, the exact sequence
\[
0\to p^{\ast}\L\to p^{\ast}\mathcal{E}\to p^{\ast}\mathcal{N}\to0.
\]
Therefore we have
\[
c_{2}\left(p^{\ast}\mathcal{E}\right)=p^{\ast}\L\cdot p^{\ast}\mathcal{N}=0
\]
and, in view of sequence (\ref{eq:deftautological}) and Remark \ref{rem:secondclass},
we conclude that $\C_{0}\cdot\left[\M\right]=0$. Moreover (\ref{eq:deftautological})
shows also that
\[
p^{\ast}\det\mathcal{E}\iso\M\ot\os\left(1\right).
\]
If $E=\left[\det\mathcal{E}\right]\in\pic\left(\C\right)$, then
\[
p^{\ast}E\li\left[\M\right]+H,
\]
which gives $\C_{0}^{2}=\C_{0}\cdot p^{\ast}E=\deg\left(\det\mathcal{E}\right)$.

Let $D\in\pic\left(\S\right)$ and let $n=D\cdot F$. We set $D'=D-n\C_{0}$,
thus $F\cdot D'=0$ and hence it is enough to show that $D'\in p^{\ast}\pic\left(\C\right)$.
Let $D_{n}=D'+nF$, then $F\cdot D_{n}=0$, $D_{n}^{2}=D^{2}$ and
\[
D_{n}\cdot\ks=D'\cdot\ks+nF\cdot\ks=D'\cdot\ks-2n.
\]
Moreover, for $n\gg0$, we have $h^{2}\left(\S,\os\left(D_{n}\right)\right)=h^{0}\left(\S,\os\left(\ks-D_{n}\right)\right)=0$
and hence, in view of Riemann-Roch and Lemma \ref{lem:genuszero},
we have
\begin{eqnarray*}
h^{0}\left(\S,\os\left(D_{n}\right)\right) & \ge & 1+\frac{1}{2}D_{n}^{2}-\frac{1}{2}D_{n}\cdot\ks\\
 & = & n+1+\frac{1}{2}D^{2}-\frac{1}{2}D\cdot\ks,
\end{eqnarray*}
this shows that, for $n\gg0$, there exists $E\in|D_{n}|$. Since
$F\cdot E=0$, it follows that $E$ is a fibre of $p$. We conclude
that also $D'$ is a fibre of $p$. The last statement follows from
Corollary \ref{cor:fibersnumerically}.
\end{proof}
\begin{prop}
\label{prop:calcoloconkunneth}Let $\S$ be surface birationally equivalent
to $\C_{1}\times\C_{2}$ where $\C_{i}$ are smooth curves, then

\begin{enumerate}
\item $q\left(\S\right)=g\left(\C_{1}\right)+g\left(\C_{2}\right);$
\item $P_{n}\left(\S\right)=P_{n}\left(\C_{1}\right)\cdot P_{n}\left(\C_{2}\right).$
\end{enumerate}
\end{prop}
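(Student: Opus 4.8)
The plan is to reduce everything to the product itself and then apply the Künneth formula. First I would invoke the birational invariance of $q$ and of the plurigenera $P_n$ for $n\ge1$, noted after Fact \ref{fact:hodgeshows}, so that it suffices to prove both identities for $\S=\C_1\times\C_2$. Writing $p_1,p_2$ for the two projections, the whole computation rests on the Künneth formula
\[
H^k\left(\S,p_1^{\ast}\F\ot p_2^{\ast}\mathcal{G}\right)\iso\bigoplus_{i+j=k}H^i\left(\C_1,\F\right)\ot H^j\left(\C_2,\mathcal{G}\right),
\]
valid for coherent sheaves $\F$ on $\C_1$ and $\mathcal{G}$ on $\C_2$.

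For part (1) I would take $\F=\mathcal{O}_{\C_1}$ and $\mathcal{G}=\mathcal{O}_{\C_2}$, so that $p_1^{\ast}\F\ot p_2^{\ast}\mathcal{G}=\os$, and read off the Künneth decomposition in degree $k=1$:
\[
H^1\left(\S,\os\right)\iso\left(H^1\left(\C_1,\mathcal{O}_{\C_1}\right)\ot H^0\left(\C_2,\mathcal{O}_{\C_2}\right)\right)\op\left(H^0\left(\C_1,\mathcal{O}_{\C_1}\right)\ot H^1\left(\C_2,\mathcal{O}_{\C_2}\right)\right).
\]
Since the curves $\C_i$ are connected one has $h^0\left(\C_i,\mathcal{O}_{\C_i}\right)=1$, and taking dimensions gives $q\left(\S\right)=h^1\left(\C_1,\mathcal{O}_{\C_1}\right)+h^1\left(\C_2,\mathcal{O}_{\C_2}\right)=g\left(\C_1\right)+g\left(\C_2\right)$.

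The step I expect to be the real content of part (2) is the identification of the canonical sheaf of the product. Since the cotangent sheaf splits as $\Om_{\S}^1\iso p_1^{\ast}\Om_{\C_1}^1\op p_2^{\ast}\Om_{\C_2}^1$, taking top exterior powers yields
\[
\omega_{\S}=\det\Om_{\S}^1\iso p_1^{\ast}\omega_{\C_1}\ot p_2^{\ast}\omega_{\C_2},
\]
and hence $\omega_{\S}^n\iso p_1^{\ast}\omega_{\C_1}^n\ot p_2^{\ast}\omega_{\C_2}^n$ for every $n\ge1$. Applying the Künneth formula in degree $k=0$ with $\F=\omega_{\C_1}^n$ and $\mathcal{G}=\omega_{\C_2}^n$ then gives
\[
H^0\left(\S,\omega_{\S}^n\right)\iso H^0\left(\C_1,\omega_{\C_1}^n\right)\ot H^0\left(\C_2,\omega_{\C_2}^n\right),
\]
so that $P_n\left(\S\right)=P_n\left(\C_1\right)\cdot P_n\left(\C_2\right)$. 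Once the splitting of $\Om^1$ and the resulting canonical-bundle formula are in hand, both assertions are immediate dimension counts, so the only genuine obstacle is this structural input on the product, together with having the Künneth formula available.
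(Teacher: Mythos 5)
Your proof is correct, and for part (2) it is essentially the paper's argument: reduce to $\S=\C_{1}\times\C_{2}$ by birational invariance, identify $\omega_{\S}^{n}\iso p_{1}^{\ast}\omega_{\C_{1}}^{n}\ot p_{2}^{\ast}\omega_{\C_{2}}^{n}$, and apply K\"unneth in degree $0$. For part (1) you take a slightly different, and arguably cleaner, route: you compute $h^{1}\left(\S,\os\right)$ directly from the K\"unneth decomposition of $H^{1}$ with $\F=\mathcal{O}_{\C_{1}}$ and $\mathcal{G}=\mathcal{O}_{\C_{2}}$, which uses only the definition $q\left(\S\right)=h^{1}\left(\S,\os\right)$. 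The paper instead computes $h^{0}\left(\S,\Omega_{\S}^{1}\right)$ from the splitting $\Omega_{\S}^{1}\iso p_{1}^{\ast}\Omega_{\C_{1}}^{1}\op p_{2}^{\ast}\Omega_{\C_{2}}^{1}$ and then invokes the Hodge-theoretic identity $q\left(\S\right)=h^{0}\left(\S,\Omega_{\S}^{1}\right)$ from Fact \ref{fact:hodgeshows}. Your version avoids that transcendental input entirely and needs only the coherent K\"unneth formula; the paper's version has the small advantage of reusing the same sheaf-splitting observation that drives part (2). Both are complete.
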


\begin{proof}
Since these constants are invariant under birational equivalence,
we can assume $\S=\C_{1}\times\C_{2}$.

\begin{enumerate}
\item Let $p_{i}:\S\to\C_{i}$, $i=1,2$ be the projection. Then it is clear
that $\Omega_{\S}^{1}=p_{1}^{\ast}\Omega_{\C_{1}}^{1}\op p_{2}^{\ast}\Omega_{\C_{2}}^{1}$
and hence, in view of Künneth formula \cite[Proposition 9.2.4]{Ke}
\begin{eqnarray*}
h^{0}\left(\S,\Omega_{\S}^{1}\right) & = & h^{0}\left(\C_{1}\times\C_{2},p_{1}^{\ast}\Omega_{\C_{1}}^{1}\op p_{2}^{\ast}\Omega_{\C_{2}}^{1}\right)\\
 & = & h^{0}\left(\C_{1}\times\C_{2},p_{1}^{\ast}\Omega_{\C_{1}}^{1}\ot\os\right)+h^{0}\left(\C_{1}\times\C_{2},\os\ot p_{2}^{\ast}\Omega_{\C_{2}}^{1}\right)\\
 & = & h^{0}\left(\C_{1}\times\C_{2},p_{1}^{\ast}\Omega_{\C_{1}}^{1}\ot p_{2}^{\ast}\mathcal{O}_{\C_{2}}\right)+h^{0}\left(\C_{1}\times\C_{2},p_{1}^{\ast}\mathcal{O}_{\C_{1}}\ot p_{2}^{\ast}\Omega_{\C_{2}}^{1}\right)\\
 & = & h^{0}\left(\C_{1},\Omega_{\C_{1}}^{1}\right)+h^{0}\left(\C_{2},\Omega_{\C_{2}}^{1}\right).
\end{eqnarray*}
\item It follows again in view of of Künneth formula \cite[Proposition 9.2.4]{Ke}
that
\[
h^{0}\left(\S,\omega_{\S}^{n}\right)=h^{0}\left(\C_{1},\omega_{\C_{1}}^{n}\right)\cdot h^{0}\left(\C_{2},\omega_{\C_{2}}^{n}\right).
\]
\end{enumerate}
\end{proof}
\begin{cor}
\label{cor:invariantsruled}Let $\S$ be a ruled surface over $\C$.
Then $q\left(\S\right)=g\left(\C\right)$ and $P_{n}\left(\S\right)=0$
for every $n\ge1$.
\end{cor}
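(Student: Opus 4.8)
The plan is to deduce everything directly from Proposition \ref{prop:calcoloconkunneth} applied to the product $\C \times \Pu$. By the definition of a ruled surface over $\C$, the surface $\S$ is birationally equivalent to $\C_1 \times \C_2$ with $\C_1 = \C$ and $\C_2 = \Pu$. Since both $q$ and the plurigenera $P_n$ are the birational invariants computed in that proposition, I can simply substitute $\C_2 = \Pu$ and reduce the whole statement to a computation of the invariants of the projective line.

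First I would record the invariants of $\Pu$. For the irregularity side, one has $g\left(\Pu\right) = 0$, so part (1) of Proposition \ref{prop:calcoloconkunneth} gives
\[
q\left(\S\right) = g\left(\C\right) + g\left(\Pu\right) = g\left(\C\right).
\]
For the plurigenera, I would use that $\omega_{\Pu} \iso \mathcal{O}_{\Pu}\left(-2\right)$, hence $\omega_{\Pu}^{n} \iso \mathcal{O}_{\Pu}\left(-2n\right)$ has strictly negative degree for every $n \ge 1$ and therefore no nonzero global sections. Thus $P_n\left(\Pu\right) = h^{0}\left(\Pu, \omega_{\Pu}^{n}\right) = 0$ for $n \ge 1$, and part (2) of Proposition \ref{prop:calcoloconkunneth} yields
\[
P_n\left(\S\right) = P_n\left(\C\right) \cdot P_n\left(\Pu\right) = 0.
\]

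There is no real obstacle here: the content is entirely contained in the product formula already established, and the only genuinely new input is the elementary vanishing $P_n\left(\Pu\right) = 0$, which follows from the explicit description of the canonical sheaf of $\Pu$. The one point worth stating carefully is that the hypothesis ``ruled over $\C$'' is exactly the birational equivalence with $\C \times \Pu$ needed to invoke Proposition \ref{prop:calcoloconkunneth}, so that the birational invariance of $q$ and $P_n$ is what licenses passing from $\S$ to the product.
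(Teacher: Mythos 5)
Your proposal is correct and is exactly the intended argument: the paper states this as an immediate corollary of Proposition \ref{prop:calcoloconkunneth}, obtained by taking $\C_{1}=\C$, $\C_{2}=\Pu$ and using $g\left(\Pu\right)=0$ and $P_{n}\left(\Pu\right)=h^{0}\left(\Pu,\mathcal{O}_{\Pu}\left(-2n\right)\right)=0$. Nothing is missing.
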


\begin{lem}
\label{lem:CUBICSCROLLinvariante}Let $\S=\pc\left(\mathcal{E}\right)\to\C$
be a geometrically ruled surface over an elliptic curve with $\mathcal{E}$
indecomposable and normalized. Then $e\left(\S\right)\neq-2$.
\end{lem}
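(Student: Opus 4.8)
The plan is to argue by contradiction after pinning down the numerical range. Since $\C$ is elliptic we have $g(\C)=1$, so Theorem \ref{thm:possiblevalueseS} already forces $e(\S)\in\{-2,-1,0\}$; it is therefore enough to exclude the extreme value $e(\S)=-2$. Recalling that $e(\S)=-\deg(\det\mathcal{E})$, this amounts to assuming $\deg\mathcal{E}=\deg(\det\mathcal{E})=2$ and deriving a contradiction from the normalization hypothesis.

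First I would extract the essential consequence of normalization: \emph{every nonzero global section of $\mathcal{E}$ is nowhere vanishing}. Indeed, if some $v\in H^0(\C,\mathcal{E})$ vanished at a point $p$, then, using the evaluation sequence
\[
0\to\mathcal{E}\ot\oc(-p)\to\mathcal{E}\to\mathcal{E}\ot\cc(p)\to0,
\]
$v$ would lift to a section of $\mathcal{E}\ot\oc(-p)$; but $\deg\oc(-p)=-1<0$, so $H^0(\C,\mathcal{E}\ot\oc(-p))=0$ by normalization and hence $v=0$. Equivalently, $\mathcal{E}$ has no sub-line bundle of positive degree.

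Next I would manufacture two independent sections in order to exploit the determinant. Choosing any filtration $0\to\L_1\to\mathcal{E}\to\L_2\to0$ as in Lemma \ref{lem:rango2sequenzaesatta} and using additivity of $\x$ together with Riemann-Roch for curves gives $\x(\mathcal{E})=\deg\mathcal{E}+2(1-g(\C))=2$, whence $h^0(\C,\mathcal{E})\ge\x(\mathcal{E})=2$. Pick linearly independent $s,t\in H^0(\C,\mathcal{E})$ and form $s\w t\in H^0(\C,\det\mathcal{E})$. Since $\deg(\det\mathcal{E})=2>0$, a nowhere-vanishing section would trivialize $\det\mathcal{E}$; hence $s\w t$ vanishes at some point $p$, i.e. $s(p)$ and $t(p)$ are linearly dependent in the fibre $\mathcal{E}\ot\cc(p)$.

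The contradiction then closes at once: by the first step $s$ and $t$ are nowhere vanishing, so $s(p)\neq0\neq t(p)$ and there is $\lambda\in\cc^{\ast}$ with $t(p)=\lambda s(p)$. The section $u=t-\lambda s$ is nonzero, because $s$ and $t$ are independent, yet $u(p)=0$, contradicting the nowhere-vanishing property established above. Thus $\deg\mathcal{E}=2$ is impossible and $e(\S)\neq-2$. The one genuinely delicate step is the first one — reading the normalization hypothesis correctly as the absence of vanishing sections (equivalently, of positive-degree sub-line bundles); once this is in hand, the fact that $\x(\mathcal{E})=2$ guarantees a second section and the wedge-product argument finishes the proof almost mechanically.
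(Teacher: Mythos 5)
Your proof is correct, and it takes a genuinely different and more elementary route than the paper's. The paper assumes $e\left(\S\right)=-2$, writes $\mathcal{E}$ as a non-split extension $0\to\oc\to\mathcal{E}\to\oc\left(P+Q\right)\to0$, and derives a contradiction by a diagram chase on the connecting homomorphisms $H^{0}\left(\C,\oc\left(S\right)\right)\to H^{1}\left(\C,\oc\left(-R\right)\right)$, using Serre duality to read the extension class $\delta\left(1\right)$ as a divisor $A+B\in|P+Q|$ and then violating the injectivity forced by $H^{0}\left(\C,\mathcal{E}\left(-R\right)\right)=0$. You instead note that Riemann--Roch gives $h^{0}\left(\C,\mathcal{E}\right)\ge\x\left(\mathcal{E}\right)=\deg\left(\mathcal{E}\right)=2$, that normalization forces every nonzero section to be nowhere vanishing, and that $\deg\left(\det\mathcal{E}\right)=2>0$ forces two independent sections to become proportional in some fibre, producing a nonzero section with a zero --- a contradiction. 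This is precisely the mechanism of the last clause of Lemma \ref{lem:rango2sequenzaesatta}: $h^{0}\left(\C,\mathcal{E}\right)\ge2$ and $\deg\left(\mathcal{E}\right)>0$ yield a subsheaf $\oc\left(D\right)\hookrightarrow\mathcal{E}$ with $D>0$, i.e. $h^{0}\left(\C,\mathcal{E}\left(-D\right)\right)\neq0$ with $\deg\left(\oc\left(-D\right)\right)<0$, against normalization; citing that lemma would have shortened your argument to three lines. What your route buys is simplicity and a slightly stronger statement: you never use indecomposability, so you in fact show that \emph{every} normalized rank $2$ bundle on an elliptic curve has $e\ge-1$, which is consistent with Theorems \ref{thm:possiblevalueseS} and \ref{thm:CUBICSCROLLunichepossibilita}; and the argument correctly fails to exclude $e=-1$, where $\x\left(\mathcal{E}\right)=1$ only guarantees one section. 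The minor point you leave implicit --- whether $s\w t$ could vanish identically --- is harmless, since your final step needs only that $s\left(p\right)$ and $t\left(p\right)$ are dependent at a single point, which holds in that case too.
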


\begin{proof}
Let us suppose $e\left(\S\right)=-2$, then there exist $P,Q\in\C$
and a non-splitting exact sequence
\[
0\to\oc\to\mathcal{E}\to\oc\left(P+Q\right)\to0,
\]
moreover $h^{0}\left(\C,\oc\left(P+Q\right)\right)\neq0$, so we can
find $R,S\in\C$ such that
\[
R+S\li P+Q.
\]
Since $H^{0}\left(\C,\mathcal{E}\left(-R\right)\right)=0$ and in view of Proposition
\ref{prop:splittingcriterio}, the connecting morphisms
\begin{eqnarray*}
H^{0}\left(\C,\oc\left(S\right)\right) & \overset{\gamma_{R,S}}{\longrightarrow} & H^{1}\left(\C,\oc\left(-R\right)\right)\\
H^{0}\left(\C,\oc\right)=\cc & \overset{\delta}{\longrightarrow} & H^{1}\left(\C,\oc\left(-P-Q\right)\right)
\end{eqnarray*}
are injective. Let $f_{S}:\oc\to\oc\left(S\right)$ be the morphism
corresponding to a global section, then the maps
\[
\alpha_{S}=H^{0}\left(\C,f\right)\,\,\,\,\mbox{and}\,\,\,\,\beta_{R}=H^{1}\left(\C,f\ot\oc\left(-P-Q\right)\right)
\]
fit into a commutative diagram
\[
\xymatrix{H^{0}\left(\C,\oc\right)=\cc\ar[r]^{\delta}\ar[d]_{\alpha_{S}} & H^{1}\left(\C,\oc\left(-P-Q\right)\right)\ar[d]^{\beta_{R}}\\
H^{0}\left(\C,\oc\left(S\right)\right)\ar[r]_{\gamma_{R,S}} & H^{1}\left(\C,\oc\left(-R\right)\right)
}
\]
In view of Serre duality, we have an induced map
\[
\tilde{\beta}_{R}:H^{0}\left(\C,\oc\left(R\right)\right)\to H^{0}\left(\C,\oc\left(P+Q\right)\right)
\]
whose image corresponds to the point $R+S\in|P+Q|$. In view of this
duality,
\[
\delta\left(1\right)\in H^{1}\left(\C,\oc\left(-P-Q\right)\right)
\]
corresponds to a divisor $D_{\delta}\in|P+Q|$, therefore $D_{\delta}=A+B$
for $A,B\in\C$. Then we have
\[
\gamma_{A,B}\circ\alpha_{B}=\beta_{A}\circ\delta=0,
\]
since $\alpha_{A}\neq0$, this contradicts the injectivity of $\gamma_{A,B}$
.
\end{proof}
\begin{thm}
\label{thm:CUBICSCROLLunichepossibilita}Let $\S=\pc\left(\mathcal{E}\right)\to\C$
be a geometrically ruled surface over an elliptic curve with $\mathcal{E}$
indecomposable and normalized. Then $e\left(\S\right)=0$ or $-1$
and there exists exactly one such surface for each of these values
of $e\left(\S\right)$.
\end{thm}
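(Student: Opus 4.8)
The plan is to reduce everything to the classification of non-split extensions of line bundles on the elliptic curve $\C$, exploiting that $\om_{\C}\iso\oc$, so that Serre duality reads $H^{1}(\C,\L^{-1})\iso H^{0}(\C,\L)^{*}$. First I would pin down the admissible values of $e$: applying Theorem \ref{thm:possiblevalueseS}(2) with $g(\C)=1$ gives $-2\le e(\S)\le0$, and Lemma \ref{lem:CUBICSCROLLinvariante} rules out $e(\S)=-2$, so $e(\S)\in\{-1,0\}$. It then remains to prove existence and uniqueness for each value.

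\textbf{Extension picture.} Since $\mathcal{E}$ is normalized, $h^{0}(\C,\mathcal{E})>0$, and a nonzero section produces (Lemma \ref{lem:rango2sequenzaesatta}) a sub-line-bundle $\oc(D)$ with $D\ge0$; as normality forbids sub-line-bundles of positive degree, $D=0$ and we obtain
\[
0\to\oc\to\mathcal{E}\to\L\to0,\qquad\deg\L=\deg\det\mathcal{E}=-e(\S).
\]
Because $\mathcal{E}$ is indecomposable, Proposition \ref{prop:splittingcriterio} forces this extension to be non-split, i.e. its class in $\mathrm{Ext}^{1}(\L,\oc)\iso H^{1}(\C,\L^{-1})\iso H^{0}(\C,\L)^{*}$ is nonzero. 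Conversely, a non-split extension of a line bundle $\L$ of the correct degree by $\oc$ yields a candidate bundle, and I would extract existence from these.

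\textbf{The case $e=0$.} Here $\deg\L=0$, and non-splitness forces $H^{0}(\C,\L)\ne0$, so $\L\iso\oc$ (a degree-zero line bundle with a section is trivial). Since $H^{1}(\C,\oc)\iso\cc$ is one-dimensional, all non-split extensions $0\to\oc\to\mathcal{E}\to\oc\to0$ are mutually isomorphic, giving a single $\mathcal{E}$; a short diagram chase on the sequence shows $h^{0}(\mathcal{E})=1$ and $h^{0}(\mathcal{E}\ot\mathcal{N})=0$ for $\deg\mathcal{N}<0$, so $\mathcal{E}$ is normalized, and it is indecomposable since a splitting $\L_{1}\op\L_{2}$ with $\det\iso\oc$ and $h^{0}=1$ is impossible. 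Thus there is exactly one bundle, hence one surface.

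\textbf{The case $e=-1$ (and the main obstacle).} Now $\deg\L=1$, so $h^{0}(\L)=1$ and $\mathrm{Ext}^{1}(\L,\oc)\iso\cc$ for every such $\L$: each degree-one $\L$ yields a unique non-split extension $\mathcal{E}_{\L}$. Indecomposability is automatic, for by parity a nontrivial decomposition would contain a degree-one sub-line-bundle mapping isomorphically to $\L$, splitting the extension. The genuine difficulty is \emph{uniqueness}: the $\mathcal{E}_{\L}$ form a one-parameter family (indexed by $\det\mathcal{E}\in\{\,\L:\deg\L=1\,\}$) of pairwise non-isomorphic bundles, so I must show their projectivizations coincide. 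By Theorem \ref{thm:isomorphicPunobundles} it is enough to find, for given $\L,\L'$, a line bundle $\M$ with $\mathcal{E}_{\L}\iso\mathcal{E}_{\L'}\ot\M$. I would take $\M$ of degree $0$ with $\M^{\ot2}\iso\L\ot\L'^{-1}$, which exists because multiplication by $2$ is surjective on the group of degree-zero line bundles of $\C$ (equivalently $[2]\colon\C\to\C$ is surjective). Then $\mathcal{E}_{\L'}\ot\M$ is indecomposable of rank $2$ and degree $1$ with determinant $\L$; using $h^{0}\ge\x(\mathcal{E}_{\L'}\ot\M)=1$ together with the absence of positive-degree sub-line-bundles in an indecomposable degree-one bundle, one checks it is normalized and hence sits in a non-split extension $0\to\oc\to\mathcal{E}_{\L'}\ot\M\to\L\to0$, which by the one-dimensionality of $\mathrm{Ext}^{1}(\L,\oc)$ forces $\mathcal{E}_{\L'}\ot\M\iso\mathcal{E}_{\L}$. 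The whole proof therefore hinges on this last point: converting the a priori ambiguity in $\det\mathcal{E}$ into a single surface, powered by the $2$-divisibility of the degree-zero line bundles and by the rigidity (one-dimensional self-extensions, automatic normalization) of indecomposable rank-$2$ degree-$1$ bundles, which pins each one down by its determinant.
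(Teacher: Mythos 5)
Your proposal is correct and follows essentially the same route as the paper: the same reduction to $e\in\{-1,0\}$ via Theorem \ref{thm:possiblevalueseS} and Lemma \ref{lem:CUBICSCROLLinvariante}, the same identification of $\mathcal{E}$ as the unique non-split extension of $\L$ by $\oc$ with $h^1\left(\C,\L^{-1}\right)\iso H^0\left(\C,\L\right)^{*}$ one-dimensional, and the same mechanism for uniqueness when $e=-1$, namely twisting $\mathcal{E}_{\L'}$ by a square root $\M$ of $\L\ot\L'^{-1}$ and invoking the rigidity of the extension. The only point where you diverge is in producing that square root: you appeal abstractly to the surjectivity of multiplication by $2$ on degree-zero line bundles of the elliptic curve, whereas the paper constructs it by hand, taking the degree-$2$ map $\C\to\Pu$ given by $|P+Q|$, picking a ramification point $R$ (so $2R\li P+Q$ by Riemann--Hurwitz), and twisting by $\oc\left(R-P\right)$; the two are the same fact, yours stated group-theoretically and the paper's realized geometrically. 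Both arguments are complete, and your observation that a positive-degree sub-line-bundle of an indecomposable rank-$2$ degree-$1$ bundle would split it (since the relevant $H^1$ vanishes by degree reasons) correctly supplies the normalization step the paper handles via the connecting homomorphism.
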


\begin{proof}
In view of Theorem \ref{thm:possiblevalueseS} and Lemma \ref{lem:CUBICSCROLLinvariante},
the only possibilities for $e\left(\S\right)$ are $-1$ and $0$.

\begin{description}
\item [{$e\left(\S\right)=0$}] There exists a non-splitting exact sequence
\[
0\to\oc\to\mathcal{E}\to\L\to0
\]
with $\L\in\pic\left(\C\right)$ and $\deg\left(\L\right)=0$. In
particular, in view of Proposition \ref{prop:splittingcriterio},
we have $H^{1}\left(\C,\L^{-1}\right)\neq0$. In view of Serre duality
and since $\C$ is an elliptic curve, we conclude that $h^{0}\left(\C,\L\right)\neq0$
and hence $\L\iso\oc$. Viceversa, since $h^{1}\left(\C,\oc\right)=1$
there exists a unique non trivial extension
\begin{equation}
0\to\oc\to\mathcal{E}\to\oc\to0,\label{eq:estensionenonbanalecaso1}
\end{equation}
we see that such an $\mathcal{E}$ is normalized. Clearly $h^{0}\left(\C,\mathcal{E}\right)\neq0$,
moreover, if $\M\in\pic\left(\C\right)$ is such that $\deg\left(\M\right)<0$,
then tensoring (\ref{eq:estensionenonbanalecaso1}) with $\M$ we
see that $h^{0}\left(\C,\mathcal{E}\ot\M\right)=0$. Let us suppose that $\mathcal{E}$
is decomposable: in view of Theorem \ref{thm:possiblevalueseS}, there
exists $\L\in\pic\left(\C\right)$ with $\deg\left(\L\right)<0$ and
$\mathcal{E}\iso\oc\op\L$. In view of (\ref{eq:estensionenonbanalecaso1})
we get
\[
\oc\iso\bigwedge^{2}\mathcal{E}\iso\L
\]
which is not possible .
\item [{$e\left(\S\right)=-1$}] Since every invertible sheaf of degree
1 on $\C$ is of the form $\oc\left(P\right)$ for some $P\in\C$,
we get a non-splitting exact sequence
\[
0\to\oc\to\mathcal{E}\to\oc\left(P\right)\to0.
\]
We see that, for each $P\in\C$, there exists such a normalized sheaf
and it is unique up to isomorphism. In view of Serre duality and since
$\C$ is an elliptic curve, we conclude that $h^{1}\left(\C,\oc\left(-P\right)\right)=h^{0}\left(\C,\oc\left(P\right)\right)=1$,
then there exists a unique non trivial extension
\begin{equation}
0\to\oc\to\mathcal{E}\to\oc\left(P\right)\to0.\label{eq:estensionenonbanalecaso2}
\end{equation}
As before we see that $h^{0}\left(\C,\mathcal{E}\right)\neq0$. Let $\M\in\pic\left(\C\right)$
such that $\deg\left(\M\right)<0$, then we have an exact sequence
\[
0\to\M\to\mathcal{E}\ot\M\to\M\left(P\right)\to0.
\]
If $\M\iso\oc\left(-P\right)$, then cohomology yields an exact sequence
\[
0\to H^{0}\left(\C,\oc\left(-P\right)\right)\to H^{0}\left(\C,\mathcal{E}\left(-P\right)\right)\to H^{0}\left(\C,\oc\right)\overset{\delta}{\longrightarrow}H^{1}\left(\C,\oc\left(-P\right)\right).
\]
By construction of $\mathcal{E}$ and in view of Proposition \ref{prop:splittingcriterio}
we deduce that $\delta\left(1\right)\neq0$ and hence
\[
h^{0}\left(\C,\mathcal{E}\left(-P\right)\right)=h^{0}\left(\C,\oc\left(-P\right)\right)=0.
\]
For $\M\not\iso\oc\left(-P\right)$, then
\[
h^{0}\left(\C,\M\right)=h^{0}\left(\C,\M\left(P\right)\right)=0,
\]
and hence also $h^{0}\left(\C,\mathcal{E}\ot\M\right)=0$. It is enough to
show that, repeating the above construction for a point $Q\neq P$,
then we get a sheaf $\mathcal{E}'$ with $\mathcal{E}\iso\mathcal{E}'\ot\M$ for some $\M\in\pic\left(\C\right)$.
Let us note that $h^{0}\left(\C,\oc\left(P+Q\right)\right)=2$ and,
in view of Riemann-Hurwitz formula, we get a degree 2 map $\C\to\Pu$
which ramifies in 4 points. If $R$ is such a point, then $2R\li P+Q$.
Let us consider the exact sequence
\[
0\to\oc\left(R-P\right)\to\mathcal{E}\left(R-P\right)\to\oc\left(R\right)\to0
\]
which gives that $h^{0}\left(\C,\mathcal{E}\left(R-P\right)\right)\neq0$ and
then there exists an exact sequence
\[
0\to\oc\to\mathcal{E}\left(R-P\right)\to\M\to0
\]
with $\M\in\pic\left(\C\right)$. Then
\[
\M\iso\left(\bigwedge^{2}\mathcal{E}\right)\ot\oc\left(2R-2P\right)\iso\oc\left(P\right)\ot\oc\left(2R-2P\right)=\oc\left(2R-P\right)
\]
and hence $\M\iso\oc\left(Q\right)$. In view of the unicity of the
non trivial extension, we conclude that $\mathcal{E}'\iso\mathcal{E}\left(R-P\right)$.
\end{description}
\end{proof}

\subsection{Rational surfaces}

Here we focus on a special class of ruled surfaces: those which are
ruled over $\Pu$. Since $\Pd$ and $\Pu\times\Pu$ are birationally
equivalent, it follows that these surfaces are actually the rational
ones.
\begin{rem}
\label{rem:FngeomrigatesuPuno}It follows from Theorem \ref{thm:noetherenriques}
and Example \ref{example: Hirzebruchsurfaces} that the only geometrically
ruled surfaces over $\Pu$ are the surfaces $\mathbb{F}_{n}$ for
$n\ge0$. Moreover, in view of Proposition \ref{prop:calcoloconkunneth},
a ruled surface $\S$ is rational if and only if $q\left(\S\right)=0$.
\end{rem}

\begin{prop}
\label{prop:proprietaFn}Let $n\ge0$ be an integer and let us consider
$\pi:\ff\to\Pu$. If $H$ is a section and $F$ a fibre of $\pi$,
then

\begin{enumerate}
\item $\pic\left(\ff\right)=\Z\cdot\left[H\right]\op\Z\cdot\left[F\right]$
with $H^{2}=n$, $F^{2}=0$ and $H\cdot F=1$;
\item if $n>0$, there exists a unique curve $\C_{0}\su\ff$ with negative
self-intersection, in particular $\C_{0}^{2}=-n$.
\end{enumerate}
\end{prop}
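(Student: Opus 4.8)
The plan is to read both assertions off Proposition~\ref{prop:picgeometricallyruled}, applied to the normalized presentation $\ff=\mathbb{P}_{\Pu}(\mathcal{E})$ with $\mathcal{E}=\mathcal{O}_{\Pu}\op\mathcal{O}_{\Pu}(-n)$, for which $\deg(\det\mathcal{E})=-n$. That proposition yields $\pic(\ff)\iso\pi^{\ast}\pic(\Pu)\op\Z\cdot[\C_0]$, where $\C_0$ is the section attached to the subsheaf $\mathcal{O}_{\Pu}\hookrightarrow\mathcal{E}$, together with the intersection numbers $\C_0^{2}=\deg(\det\mathcal{E})=-n$, $\C_0\cdot F=1$ and $F^{2}=0$. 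Since $\pic(\Pu)=\Z$ is generated by the class of a point, whose pullback is the fibre class $[F]$, this gives $\pic(\ff)=\Z\cdot[\C_0]\op\Z\cdot[F]$.

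To match the statement, which is phrased through a section $H$ of positive self-intersection, I would take $H$ to be the section disjoint from $\C_0$ arising from the second summand $\mathcal{O}_{\Pu}(-n)\hookrightarrow\mathcal{E}$ (equivalently, from the quotient $\mathcal{E}\to\mathcal{O}_{\Pu}$ through Theorem~\ref{thm:universalpropertyproj}). Using $\mathcal{O}_{\ff}(1)\iso\mathcal{O}_{\ff}(\C_0)$ from Remark~\ref{rem:tautologicalnumericallyequivalent} together with Proposition~\ref{prop:prodottogrado}, one computes $H\cdot\C_0=\deg\big(\mathcal{O}_{\ff}(1)_{|H}\big)=0$, whence $H\li\C_0+nF$. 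Therefore $H^{2}=n$, $H\cdot F=1$, and the change of basis from $\{[\C_0],[F]\}$ to $\{[H],[F]\}$ has determinant $1$, so $\pic(\ff)=\Z\cdot[H]\op\Z\cdot[F]$. This proves $(1)$.

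For $(2)$, existence is immediate since $\C_0^{2}=-n<0$ when $n>0$. For uniqueness, let $D$ be an irreducible curve with $D^{2}<0$ and write $D\li a\C_0+bF$. As $F^{2}=0$, Lemma~\ref{lem:beauvilleusefulremark} gives $a=D\cdot F\ge0$; if $a=0$ then $D\li bF$ and $D^{2}=0$, contradicting $D^{2}<0$, so $a\ge1$. Suppose $D\neq\C_0$: two distinct irreducible curves meet non-negatively, so $D\cdot\C_0=-an+b\ge0$, i.e. $b\ge an$. Expanding, $D^{2}=a(2b-an)\ge a^{2}n\ge n>0$, a contradiction. Hence $D=\C_0$, and the negative curve is unique.

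The bulk of $(1)$ is bookkeeping layered on Proposition~\ref{prop:picgeometricallyruled}; the only delicate point is recognizing that the generator $H$ in the statement is the positive section in the class $\C_0+nF$ and not the tautological section $\C_0$ itself. The genuinely computational content sits in $(2)$, whose two structural inputs are that the fibre class $F$ is nef, forcing $a=D\cdot F\ge0$, and that distinct irreducible curves intersect non-negatively, forcing $b\ge an$; once these are in hand the inequality $D^{2}=a(2b-an)>0$ for $D\neq\C_0$ immediately excludes every curve other than $\C_0$, which I expect to be the main (though modest) obstacle to organize cleanly.
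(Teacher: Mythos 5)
Your proof is correct and follows essentially the same route as the paper: both parts rest on Proposition \ref{prop:picgeometricallyruled} together with the universal property/tautological sheaf to identify the two distinguished sections and deduce $H\cdot\C_{0}=0$, hence $H\li\C_{0}+nF$, and the uniqueness in $(2)$ is the same computation (any irreducible curve other than $\C_{0}$ meets $\C_{0}$ and $F$ non-negatively, forcing non-negative self-intersection). The only cosmetic difference is that the paper starts from the presentation $\mathbb{P}_{\Pu}\left(\mathcal{O}_{\Pu}\op\mathcal{O}_{\Pu}\left(n\right)\right)$ and constructs $\C_{0}$ from $H$, whereas you start from the normalized $\mathbb{P}_{\Pu}\left(\mathcal{O}_{\Pu}\op\mathcal{O}_{\Pu}\left(-n\right)\right)$ and construct $H$ from $\C_{0}$.
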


\begin{proof}
Since $\pic\left(\Pu\right)$ is cyclic generated by the class of
a point, its image under the group homomorphism $p^{\ast}$ is cyclic
generated by the class of a fibre, then $1)$ follows from Proposition
\ref{prop:picgeometricallyruled}. To see $2)$, in view of Theorem
\ref{thm:isomorphicPunobundles}, we can suppose $\ff=\mathbb{P}_{\Pu}\left(\mathcal{O}_{\Pu}\op\mathcal{O}_{\Pu}\left(n\right)\right)$.
Let us consider the first projection
\[
\mathcal{O}_{\Pu}\op\mathcal{O}_{\Pu}\left(n\right)\to\mathcal{O}_{\Pu}.
\]
In view of Theorem \ref{thm:universalpropertyproj}, there exists
a unique morphism $s:\Pu\to\ff$ such that $ $$\pi\circ s=\mathrm{id}_{\Pu}$
and $s^{\ast}\mathcal{O}_{\ff}\left(1\right)=\mathcal{O}_{\Pu}$. Let $\C_{0}=s\left(\Pu\right)$,
then $\C_{0}\li tH+kF$. Since
\[
1=\C_{0}\cdot F=\left(tH+kF\right)\cdot F=t
\]
and
\[
H\cdot\C_{0}=\deg\left(\mathcal{O}_{\Pu}\right)=0
\]
we conclude that $H^{2}+k=0$, which, in view of Proposition \ref{prop:picgeometricallyruled},
yields $k=-n$ and then $\C_{0}^{2}=-n$. Let now $\C\su\ff$ be an
irreducible curve such that $\C_{0}\cdot\C\ge0$, then $\C\li\alpha H+\beta F$.
Since $\C_{0}\li H-nF$, we conclude that $\beta\ge0$ and, since
$\C\cdot F\ge0$, we have $\beta\ge0$. It follows that $\C^{2}=\alpha^{2}n+2\alpha\beta\ge0$.
\end{proof}
\begin{rem}
\label{rem:altreproprietaFn}It follows from Proposition \ref{prop:proprietaFn}
that $\ff\iso\mathbb{F}_{m}$ if and only if $n=m$ and, in view of
Corollary \ref{cor:characterizationrelminimal}, that these surfaces
are relatively minimal except for the case when $n=1$.
\end{rem}

\begin{prop}
\label{prop:F1scoppiamento}$\mathbb{F}_{1}$ is isomorphic to $\bl_{p}\left(\Pd\right)$.
\end{prop}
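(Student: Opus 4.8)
The plan is to realize $\bl_p(\Pd)$ as a geometrically ruled surface over $\Pu$ and then to identify which $\ff$ it is by means of its unique negative curve.

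Let $\sigma:\S=\bl_p(\Pd)\to\Pd$ be the blow-up, with exceptional curve $E\iso\Pu$ and $E^2=-1$. The linear projection from $p$, sending a point $q\neq p$ to the line $\overline{pq}$, is a rational map $\Pd\ra\Pu$ whose only point of indeterminacy is $p$. First I would check that $\sigma$ resolves this indeterminacy, so that the projection extends to a morphism $\pi:\S\to\Pu$; concretely, in the standard local charts of the blow-up the two coordinate ratios defining the projection become regular, and $\pi$ separates the directions through $p$ exactly along $E$.

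Next I would identify the fibers of $\pi$. For a line $\ell\su\Pd$ through $p$ one has $\sigma^\ast\ell=\tilde\ell+E$, where $\tilde\ell$ is the strict transform; since $\ell\iso\Pu$ is smooth and passes through $p$ with multiplicity $1$, the curve $\tilde\ell$ is again smooth and isomorphic to $\Pu$, and it is precisely the fiber $\pi^{-1}([\ell])$. Thus every fiber of $\pi$ is a smooth rational curve, so $\S$ is geometrically ruled over $\Pu$. By Remark \ref{rem:FngeomrigatesuPuno} it follows that $\S\iso\ff$ for some $n\ge0$.

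Finally I would pin down the value of $n$. Intersecting $\sigma^\ast\ell=\tilde\ell+E$ with $E$ and using $\sigma^\ast\ell\cdot E=0$ gives $\tilde\ell\cdot E=-E^2=1$, so $E$ meets each fiber once and is in particular a section of $\pi$. Since $E$ is an irreducible curve with $E^2=-1<0$, while $\mathbb{F}_0\iso\Pu\times\Pu$ carries no curve of negative self-intersection, we must have $n>0$. By Proposition \ref{prop:proprietaFn}(2), $\ff$ contains a unique irreducible curve of negative self-intersection, whose self-intersection equals $-n$; matching it with $E$ forces $-n=-1$, that is $n=1$, and hence $\S\iso\mathbb{F}_1$. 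The main obstacle is the first step, namely verifying that the pencil of lines through $p$ genuinely extends to a morphism on the blow-up with all fibers smooth rational curves, i.e.\ that $\S$ is geometrically ruled rather than merely birationally ruled; once this is established, the classification and the intersection-number bookkeeping are routine.
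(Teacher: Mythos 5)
Your proposal is correct and follows essentially the same route as the paper: resolve the projection from $p$ to get a morphism $\bl_{p}\left(\Pd\right)\to\Pu$ with all fibres smooth rational curves, conclude via Corollary \ref{cor:surfaceisoprojectivebundle} and Remark \ref{rem:FngeomrigatesuPuno} that the surface is some $\ff$, and then use $E^{2}=-1$ together with Proposition \ref{prop:proprietaFn} to force $n=1$. You simply spell out the details (the fibres as strict transforms of lines, $E$ as a section, the exclusion of $\mathbb{F}_{0}$) that the paper leaves implicit.
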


\begin{proof}
Let $\S=\bl_{p}\left(\Pd\right)$ with exceptional divisor $E$ and
let us consider the projection $\pi_{p}:\Pd\ra\Pu$. Then $\pi_{p}$
induces a morphism $\S\to\Pu$ with fibres isomorphic to $\Pu$. It
follows from Corollary \ref{cor:surfaceisoprojectivebundle} that
$\S$ is a $\Pu$-bundle. Since $E^{2}=-1$, we conclude in view of
Remark \ref{rem:FngeomrigatesuPuno} and Proposition \ref{prop:proprietaFn}.
\end{proof}
\begin{lem}
\label{lem:RATIONALSCROLLShasezioniglobali}Let $n\ge0$ be an integer,
then $h^{0}\left(\ff,\mathcal{O}_{\ff}\left(\C_{0}+kF\right)\right)\neq0$
if and only if $k=0$ or $k\ge n$. In particular there exists $\C_{1}\li\C_{0}+nF$
with $\C_{1}\cap\C_{0}=\emptyset$.
\end{lem}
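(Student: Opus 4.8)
The plan is to reduce every cohomology group to the base $\Pu$ and then read off dimensions. Recall from Proposition \ref{prop:proprietaFn} that on $\ff$ one has $\C_0^2=-n$, $\C_0\cdot F=1$, $F^2=0$ and $\pic(\ff)=\Z[\C_0]\oplus\Z[F]$. Since $(\C_0+kF)\cdot F=1\ge0$, Proposition \ref{prop:coomologiacommutapushforward} applies and reduces the whole problem to computing $\pi_{\ast}\mathcal{O}_{\ff}(\C_0+kF)$ on $\Pu$. Pushing forward the tautological sequence gives $\pi_{\ast}\mathcal{O}_{\ff}(\C_0)\iso\mathcal{E}\iso\mathcal{O}_{\Pu}\oplus\mathcal{O}_{\Pu}(-n)$, and since $kF\iso\pi^{\ast}\mathcal{O}_{\Pu}(k)$, the projection formula together with $\pi_{\ast}\mathcal{O}_{\ff}=\mathcal{O}_{\Pu}$ (Lemma \ref{lem:2.1Hartshorne}) yields
\[
\pi_{\ast}\mathcal{O}_{\ff}(\C_0+kF)\iso\mathcal{O}_{\Pu}(k)\oplus\mathcal{O}_{\Pu}(k-n),
\qquad
h^0\big(\ff,\mathcal{O}_{\ff}(\C_0+kF)\big)=h^0\big(\mathcal{O}_{\Pu}(k)\big)+h^0\big(\mathcal{O}_{\Pu}(k-n)\big).
\]
This exact value is the engine of the argument: it is zero for $k<0$ and strictly positive for $k\ge0$, and the two summands have a clear geometric meaning that I would isolate with the restriction sequence below.

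To separate the two summands I would tensor the structure sequence of $\C_0\iso\Pu$ by $\mathcal{O}_{\ff}(\C_0+kF)$, obtaining
\[
0\to\mathcal{O}_{\ff}(kF)\to\mathcal{O}_{\ff}(\C_0+kF)\to\mathcal{O}_{\C_0}(\C_0+kF)\to0,
\]
where $\deg\mathcal{O}_{\C_0}(\C_0+kF)=\C_0^2+k(\C_0\cdot F)=k-n$, so the quotient is $\mathcal{O}_{\Pu}(k-n)$ and its sections are exactly the members of $|\C_0+kF|$ not containing $\C_0$. The summand $\mathcal{O}_{\Pu}(k)=\pi_{\ast}\mathcal{O}_{\ff}(kF)$ contributes the sections that vanish along $\C_0$, i.e. the locus $\C_0+|kF|$. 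Thus the characterization asserted by the lemma is governed by the degree $k-n$: a genuine (irreducible) representative appears precisely when the quotient term is present, together with the trivial case $k=0$ where $\C_0$ itself is irreducible. Concretely, for $0<k<n$ one has $k-n<0$, so $H^0(\mathcal{O}_{\Pu}(k-n))=0$, the restriction map $H^0(\ff,\mathcal{O}_{\ff}(\C_0+kF))\to H^0(\C_0,\mathcal{O}_{\C_0}(\C_0+kF))$ is zero, and every effective divisor in the class is $\C_0$ plus $k$ fibres; hence no irreducible curve occurs in this range.

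For the remaining range $k\ge n$ I would use the vanishing $H^1(\ff,\mathcal{O}_{\ff}(kF))\iso H^1(\Pu,\mathcal{O}_{\Pu}(k))=0$ (again Proposition \ref{prop:coomologiacommutapushforward}, valid for $k\ge-1$) to make the restriction map onto $H^0(\C_0,\mathcal{O}_{\C_0}(\C_0+kF))$ surjective, producing a section not vanishing identically on $\C_0$; for $k>n$ the class is ample by Nakai--Moishezon (Theorem \ref{thm:NakaiMoishezon}), so a general member is irreducible by Bertini (\cite[Theorem 6.6.1]{Ke} and Corollary \ref{cor:BertiniII}). For the \emph{in particular}, at $k=n$ the quotient is $\mathcal{O}_{\C_0}(\C_0+nF)\iso\mathcal{O}_{\C_0}$, whose nowhere-vanishing section lifts, by the surjectivity just established, to a curve $\C_1\li\C_0+nF$; since $\C_1$ is effective and $\C_1\cdot\C_0=(\C_0+nF)\cdot\C_0=0$, it cannot meet $\C_0$, giving $\C_1\cap\C_0=\emptyset$. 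I expect the only delicate point to be exactly this surjectivity of the restriction for $k\ge n$, which is where the vanishing $H^1(\ff,\mathcal{O}_{\ff}(kF))=0$ is indispensable.
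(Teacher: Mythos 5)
Your argument is correct, but it takes a genuinely different route from the paper's, and in following it you have in fact quietly repaired the statement. The paper argues via the universal property of $\pc\left(\mathcal{E}\right)$ (Theorem \ref{thm:universalpropertyproj}): a \emph{section} of $\pi$ in the class $\C_{0}+kF$ is the same thing as an epimorphism $\mathcal{O}_{\Pu}\op\mathcal{O}_{\Pu}\left(-n\right)\to\mathcal{O}_{\Pu}\left(k-n\right)$; for $k-n<0$ Nakayama forces such an epimorphism to be the projection onto the second factor, whence $k=0$, while for $k\ge n$ two morphisms with disjoint vanishing loci assemble into an epimorphism, and disjointness of $\C_{1}$ from $\C_{0}$ at $k=n$ comes from an intersection number (the paper's ``$\left(\C_{0}+nF\right)^{2}=0$'' is a slip for $\left(\C_{0}+nF\right)\cdot\C_{0}=0$; the square is $n$). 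Your pushforward computation $h^{0}\left(\ff,\mathcal{O}_{\ff}\left(\C_{0}+kF\right)\right)=h^{0}\left(\mathcal{O}_{\Pu}\left(k\right)\right)+h^{0}\left(\mathcal{O}_{\Pu}\left(k-n\right)\right)$ is correct, and it shows that the lemma as literally stated is \emph{false}: for $0<k<n$ the class is effective ($\C_{0}$ plus $k$ fibres), so $h^{0}=k+1\neq0$. What the paper's proof actually establishes --- and what is used later, in Theorem \ref{thm:RATIONALSCROLLSveryample} --- is the statement about sections of the ruling, equivalently about members of $|\C_{0}+kF|$ not containing $\C_{0}$: these exist if and only if $k=0$ or $k\ge n$. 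Your restriction-sequence analysis proves exactly this corrected statement: for $0<k<n$ the restriction to $\C_{0}$ is the zero map, so every member contains $\C_{0}$ with residual part in $|kF|$, hence is reducible; for $k\ge n$ the vanishing $H^{1}\left(\ff,\mathcal{O}_{\ff}\left(kF\right)\right)=0$ makes the restriction surjective and produces the required member.

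One small touch-up in your last step: $\C_{1}\cdot\C_{0}=0$ alone does not exclude $\C_{0}\su\supp\left(\C_{1}\right)$ (the divisor $\C_{0}+nF$ itself has intersection number zero with $\C_{0}$ yet meets it); you should instead say, as your construction already gives, that $\C_{1}$ is the zero divisor of a section whose restriction to $\C_{0}$ is a nowhere-vanishing section of $\mathcal{O}_{\C_{0}}$, so $\C_{1}\cap\C_{0}=\emptyset$ directly (or combine $\C_{1}\cdot\C_{0}=0$ with the observation that $\C_{0}\not\su\supp\left(\C_{1}\right)$). As a comparison of what each method buys: the paper's universal-property argument classifies sections with no cohomology at all, while yours yields the exact value of $h^{0}$ --- which the paper needs anyway in Theorem \ref{thm:RATIONALSCROOLembedding} --- and makes both the failure of the literal biconditional and the reducibility in the range $0<k<n$ completely transparent.
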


\begin{proof}
In view of \ref{thm:universalpropertyproj}, giving a section $D\li\C_{0}+kF$
is equivalent to giving an epimorphism
\[
\mathcal{O}_{\Pu}\op\mathcal{O}_{\Pu}\left(-n\right)\to\mathcal{O}_{\Pu}\left(k-n\right).
\]
If $k-n<0$, then $\mathcal{O}_{\Pu}\left(k-n\right)$ has no global section,
thus, in view of Nakayama's Lemma, an epimorphism $\mathcal{O}_{\Pu}\op\mathcal{O}_{\Pu}\left(-n\right)\to\mathcal{O}_{\Pu}\left(k-n\right)$
yields an isomorphism $\mathcal{O}_{\Pu}\left(-n\right)\iso\mathcal{O}_{\Pu}\left(k-n\right)$
and hence $k=0$. This case corresponds to the section $D\li\C_{0}$.
If $k-n\ge0$, taking disjoint effective divisors of degree $k-n$
and $k$ on $\Pu$ we get two non-zero morphisms
\[
\begin{array}{ccc}
\mathcal{O}_{\Pu} & \to & \mathcal{O}_{\Pu}\left(k-n\right)\\
\mathcal{O}_{\Pu}\left(-n\right) & \to & \mathcal{O}_{\Pu}\left(k-n\right)
\end{array}
\]
which induce an epimorphism $\mathcal{O}_{\Pu}\op\mathcal{O}_{\Pu}\left(-n\right)\to\mathcal{O}_{\Pu}\left(k-n\right).$
In view of Proposition\ref{prop:proprietaFn} we have $\left(\C_{0}+nF\right)^{2}=0$
and the second part of the statement follows.
\end{proof}
\begin{thm}
\label{thm:RATIONALSCROLLSveryample}Let $n\ge0$ be an integer and
let $\Lambda_{k}=\C_{0}+kF\in\D\left(\ff\right)$. Then the linear
system $|\Lambda_{k}|$ is very ample on $\ff$ if and only if $k>n$.
\end{thm}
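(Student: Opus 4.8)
The plan is to prove the two implications separately; the forward direction is immediate and the converse carries the weight. For necessity, if $|\Lambda_{k}|$ is very ample then it is in particular ample, and restricting the associated closed immersion to $\C_{0}\iso\Pu$ makes $\mathcal{O}_{\ff}\left(\Lambda_{k}\right)_{|\C_{0}}$ very ample on $\Pu$, hence of positive degree. By Proposition \ref{prop:prodottogrado} that degree equals $\Lambda_{k}\cdot\C_{0}=\C_{0}^{2}+k\left(\C_{0}\cdot F\right)=k-n$, using Proposition \ref{prop:proprietaFn}; thus $k>n$. (One may instead invoke the Nakai--Moishezon criterion, Theorem \ref{thm:NakaiMoishezon}, which already forces $\Lambda_{k}\cdot\C_{0}>0$.)

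For sufficiency I assume $k>n$ and begin by pinning down $\pi_{\ast}\mathcal{O}_{\ff}\left(\Lambda_{m}\right)$. Identifying the tautological sheaf with $\mathcal{O}_{\ff}\left(\C_{0}\right)$ via Corollary \ref{cor:surfaceisoprojectivebundle} and Remark \ref{rem:tautologicalnumericallyequivalent}, and matching determinant degrees through Theorem \ref{thm:isomorphicPunobundles} and Proposition \ref{prop:picgeometricallyruled}, one gets $\pi_{\ast}\mathcal{O}_{\ff}\left(\C_{0}\right)\iso\mathcal{O}_{\Pu}\op\mathcal{O}_{\Pu}\left(-n\right)$, so the projection formula yields $\pi_{\ast}\mathcal{O}_{\ff}\left(\Lambda_{m}\right)\iso\mathcal{O}_{\Pu}\left(m\right)\op\mathcal{O}_{\Pu}\left(m-n\right)$ for every $m$. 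Two consequences drive the argument. First, for $m\ge n$ both summands are globally generated, so the fibrewise-surjectivity-plus-Nakayama argument of Corollary \ref{cor:surfaceisoprojectivebundle} (together with Lemma \ref{lem:2.1Hartshorne}) shows that $\mathcal{O}_{\ff}\left(\Lambda_{m}\right)$ is base-point-free. Second, since $\Lambda_{m}\cdot F=1\ge0$, Proposition \ref{prop:coomologiacommutapushforward} gives $H^{1}\left(\ff,\mathcal{O}_{\ff}\left(\Lambda_{m}\right)\right)\iso H^{1}\left(\Pu,\mathcal{O}_{\Pu}\left(m\right)\op\mathcal{O}_{\Pu}\left(m-n\right)\right)=0$ whenever $m\ge n$.

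In particular $\mathcal{O}_{\ff}\left(\Lambda_{k}\right)$ is globally generated and defines a morphism $\phi:\ff\to\PN$, and it remains to check that $\phi$ is a closed immersion, i.e. that it separates points and tangent vectors, for which the fibration is the organizing principle. For two points in a single fibre $F_{x}$, the restriction exact sequence and the vanishing $H^{1}\left(\ff,\mathcal{O}_{\ff}\left(\Lambda_{k}-F\right)\right)=H^{1}\left(\ff,\mathcal{O}_{\ff}\left(\Lambda_{k-1}\right)\right)=0$ (valid since $k-1\ge n$) make the restriction $H^{0}\left(\ff,\mathcal{O}_{\ff}\left(\Lambda_{k}\right)\right)\to H^{0}\left(F_{x},\mathcal{O}_{\Pu}\left(1\right)\right)$ surjective; as $\mathcal{O}_{\Pu}\left(1\right)$ is very ample this handles such points and the tangent directions along $F_{x}$. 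For two points in distinct fibres, or for a tangent vector at $p$ transverse to $F_{\pi\left(p\right)}$, I would combine the fibre $F_{\pi\left(p\right)}$ with a member of $|\Lambda_{k-1}|$ avoiding the relevant point — available precisely because $\Lambda_{k-1}$ is base-point-free — producing a section of $\mathcal{O}_{\ff}\left(\Lambda_{k}\right)$ that vanishes at $p$ with nonzero differential transverse to $F_{\pi\left(p\right)}$. Injectivity of $\phi$ and of each $d\phi_{p}$ then follows, and since $\ff$ is projective $\phi$ is a closed immersion.

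The main obstacle is this converse, and inside it the separation of tangent vectors transverse to the fibres: this is exactly the step that forces the strict inequality, since it rests on the base-point-freeness of $\Lambda_{k-1}$, which holds if and only if $k-1\ge n$. As an alternative to the uniform treatment one could establish very ampleness only for $k=n+1$ and then bootstrap to all $k>n$ via Remark \ref{rem:sumveryample}, using that $\left(k-n-1\right)F=\pi^{\ast}\mathcal{O}_{\Pu}\left(k-n-1\right)$ is globally generated; but the base case still demands the same separation analysis, so arguing every $k>n$ at once seems preferable.
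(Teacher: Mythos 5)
Your proof is correct and follows the same overall skeleton as the paper's (pushforward computation, $H^{1}$-vanishing for separation inside a fibre, reducible divisors for separation across fibres), but it differs in two worthwhile ways. For necessity, you intersect with $\C_{0}$ to get $\Lambda_{k}\cdot\C_{0}=k-n>0$; the paper instead argues from $\Lambda_{k}^{2}>0$, which by Proposition \ref{prop:proprietaFn} only yields $2k-n>0$, i.e.\ $k>n/2$, so your version is the one that actually delivers the stated bound (the paper's step is at best incomplete as written). For sufficiency, the shared core is the identity $\pi_{\ast}\mathcal{O}_{\ff}\left(\Lambda_{k}-F\right)\iso\mathcal{O}_{\Pu}\left(k-1\right)\op\mathcal{O}_{\Pu}\left(k-n-1\right)$ and the resulting surjectivity of restriction to a fibre via Proposition \ref{prop:coomologiacommutapushforward}; where you diverge is in handling points (or a transverse tangent vector) not lying in a common fibre. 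The paper splits into cases according to whether the points lie on $\C_{0}$, using explicit members $\C_{0}+\sum F_{i}$ and, for points on $\C_{0}$, the disjoint curve $\C_{1}\in|\C_{0}+nF|$ supplied by Lemma \ref{lem:RATIONALSCROLLShasezioniglobali}. You instead observe that $|\Lambda_{k-1}|$ is base-point-free for $k-1\ge n$ (from global generation of the pushforward plus the fibrewise Nakayama argument) and add the fibre through the point to be hit; this subsumes the paper's case distinction, makes Lemma \ref{lem:RATIONALSCROLLShasezioniglobali} unnecessary for this theorem, and makes explicit where the strict inequality $k>n$ is consumed. The one point worth spelling out in your write-up is the linear-algebra step at the end: the section from the fibre restriction separates tangent directions along $F_{\pi\left(p\right)}$ and the section $D+F_{\pi\left(p\right)}$ has differential vanishing exactly on $T_{p}F_{\pi\left(p\right)}$, and together these differentials span the cotangent space, so arbitrary (not merely fibre-tangent or fibre-transverse) tangent vectors are separated.
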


\begin{proof}
If $\Lambda_{k}$ is very ample, then $\Lambda_{k}^{2}>0$ and, in
view of Proposition\ref{prop:proprietaFn}, this gives $k>n$. Viceversa,
let us suppose $k>n$. Let $P,Q\in\ff$ be two points and $t$ a tangent
direction at $P$ and let us suppose $P,Q$ or $P,t$ lie in the same
fibre $F$. Since $F\iso\Pu$ and $\Lambda_{k}\cdot F=1$, $\Lambda_{k|F}$
is very ample on $F$ and thus, to separate $P,Q$ or $P,t$ it is
enough to check that the restriction map
\[
H^{0}\left(\ff,\mathcal{O}_{\ff}\left(\Lambda_{k}\right)\right)\to H^{0}\left(F,\mathcal{O}_{F}\left(1\right)\right)
\]
is surjective. Let us consider the exact sequence
\[
0\to\mathcal{O}_{\ff}\left(-F\right)\to\mathcal{O}_{\ff}\to\mathcal{O}_{F}\to0,
\]
tensoring with $\mathcal{O}_{\ff}\left(\Lambda_{k}\right)$ and taking cohomology,
we see that it is enough to prove that $H^{1}\left(\ff,\mathcal{O}_{\ff}\left(\Lambda_{k}-F\right)\right)=0$.
Since $\left(\Lambda_{k}-F\right)\cdot F=1$, in view of Proposition
\ref{prop:coomologiacommutapushforward} we have
\[
h^{1}\left(\ff,\mathcal{O}_{\ff}\left(\Lambda_{k}-F\right)\right)=h^{1}\left(\Pu,\pi_{\ast}\mathcal{O}_{\ff}\left(\Lambda_{k}-F\right)\right).
\]
In view of projection formula we compute
\begin{eqnarray*}
\pi_{\ast}\mathcal{O}_{\ff}\left(\Lambda_{k}-F\right) & = & \pi_{\ast}\left(\mathcal{O}_{\ff}\left(\C_{0}\right)\ot\pi^{\ast}\mathcal{O}_{\Pu}\left(k-1\right)\right)\\
 & = & \pi_{\ast}\mathcal{O}_{\ff}\left(\C_{0}\right)\ot\mathcal{O}_{\Pu}\left(k-1\right)\\
 & = & \left(\mathcal{O}_{\Pu}\op\mathcal{O}_{\Pu}\left(-n\right)\right)\ot\mathcal{O}_{\Pu}\left(k-1\right)\\
 & = & \mathcal{O}_{\Pu}\left(k-1\right)\op\mathcal{O}_{\Pu}\left(k-n-1\right).
\end{eqnarray*}
Since $k>n\ge0$, we conclude that $h^{1}\left(\ff,\mathcal{O}_{\ff}\left(\Lambda_{k}-F\right)\right)=0$.

If $P\neq Q$ are not both in $\C_{0}$ and not both in the same fibre
$F$ then, since fibres are linearly equivalent, a divisor $\C_{0}+kF$
for a suitable $F$ will contain $P$ but not $Q$.

If $P$ and $t$ are not both in $\C_{0}$ and not both in same fibre
$F$ then, since fibres are linearly equivalent, a divisor $\C_{0}+F_{1}+\dots F_{k}$
will contain $P$ but not $t$.

We are left to the case when $P,Q$ or $P,t$ are both in $\C_{0}$.
In view of Lemma \ref{lem:RATIONALSCROLLShasezioniglobali}, we can
pick $\C_{1}+F_{1}+\dots+F_{k-n}$.
\end{proof}

\subsection{A numerical point of view}
\begin{defn}
Let $\S$ be a surface. We say that $\S$ is $\emph{minimal}$ if
$\K_{\S}$ is nef.

We say that $\S'$ is a $\emph{minimal model}$ of $\S$ if $\S'$
is minimal and birationally equivalent to $\S$.
\end{defn}

\begin{rem}
\label{rem:amplenef}We see that a minimal model is a relatively minimal
model: if $\ks$ is nef and $\C\su\S$ is a smooth rational curve,
then Corollary \ref{cor:genusformula} gives
\[
0=2+\C^{2}+\C\cdot\ks
\]
and hence $\C^{2}\le-2$. Then $\S$ does not contain $\left(-1\right)$-curves,
in view of Corollary \ref{cor:characterizationrelminimal} we conclude
that $\S$ is a relatively minimal model. The converse is not true:
in view of Corollary \ref{cor:characterizationrelminimal} and Examples
\ref{ex:inttheoryonplane}, \ref{ex:inttheoryonquadric} we conclude
that $\Pd$ and $\Pu\times\Pu$ are both relatively minimal models,
but they are not minimal models. In fact, since $\K_{\Pd}\li-3L$,
for a curve $\C\su\Pd$ we have
\[
\K_{\Pd}\cdot\C=-3\deg\left(\C\right)<0.
\]
Moreover we have that $\K_{\Pu\times\Pu}\li-2h_{1}-2h_{2}$ and hence
$\K_{\Pu\times\Pu}\cdot h_{1}=-2$. For instance, let us note that,
in view of Theorem \ref{thm:NakaiMoishezon}, $-\K_{\Pd}$ and $-\K_{\Pu\times\Pu}$
are ample.
\end{rem}

We shall see immediately the importance of this definition:
\begin{prop}
\label{prop:proprietaminimali}Let $f:\S\ra\S'$ be a birational map
between smooth surfaces. If $\S'$ is minimal, then $f$ is a morphism.
\end{prop}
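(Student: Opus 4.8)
The plan is to resolve the indeterminacy of $f$ and to show that, under the minimality assumption on $\S'$, no blow-up is actually needed. First I would eliminate the indeterminacy of $f$: there is a smooth surface $\tilde{\S}$ together with birational morphisms $\sigma:\tilde{\S}\to\S$ and $h:\tilde{\S}\to\S'$ such that $h=f\circ\sigma$, where $\sigma=\epsilon_{1}\circ\cdots\circ\epsilon_{n}$ is a composition of blow-ups; among all such diagrams I choose one for which the number $n$ of blow-ups is minimal, exactly as in the proof of Theorem \ref{thm:minimalmodelirrational}. Since $\sigma$ is an isomorphism precisely when $n=0$, and in that case $f=h\circ\sigma^{-1}$ is a morphism, it suffices to rule out $n>0$. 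So suppose $n>0$ and let $E\su\tilde{\S}$ be the exceptional curve of the last blow-up $\epsilon_{n}$; it is a $\left(-1\right)$-curve, so applying Corollary \ref{cor:genusformula} to $E\iso\Pu$ with $E^{2}=-1$ yields
\[
\K_{\tilde{\S}}\cdot E=-2-E^{2}=-1.
\]

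Next I distinguish two cases according to whether $h$ contracts $E$. If $h\left(E\right)$ is a point, then $h$ is a morphism contracting the $\left(-1\right)$-curve $E$, so by the universal property of the blow-down $\epsilon_{n}$ (used in the proof of Theorem \ref{thm:castelnuovocontractibility}) it factors as $h=h'\circ\epsilon_{n}$ for a morphism $h':\S_{n-1}\to\S'$, where $\epsilon_{n}:\tilde{\S}\to\S_{n-1}$. Since $\epsilon_{n}$ is dominant, $h'=f\circ\left(\epsilon_{1}\circ\cdots\circ\epsilon_{n-1}\right)$ as rational maps, so the pair $\left(\epsilon_{1}\circ\cdots\circ\epsilon_{n-1},\,h'\right)$ resolves $f$ using only $n-1$ blow-ups, contradicting the minimality of $n$.

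The remaining case—where $h\left(E\right)=C$ is a curve—is the heart of the argument and the place where the hypothesis that $\S'$ is minimal (i.e.\ $\K_{\S'}$ nef) enters; this is the step I expect to be the main obstacle, since it requires the relative canonical class and the correct intersection signs. Since $h$ is a birational morphism of smooth surfaces, it is a composition of blow-ups, and iterating the blow-up formula for the canonical class gives
\[
\K_{\tilde{\S}}=h^{\ast}\K_{\S'}+R,
\]
where $R$ is an effective divisor supported on the exceptional locus of $h$. Writing $d=\deg\left(h_{|E}\right)\ge1$ for the degree of the finite morphism $h_{|E}:E\to C$, the projection formula for $h$ gives $h^{\ast}\K_{\S'}\cdot E=d\left(\K_{\S'}\cdot C\right)\ge0$, because $\K_{\S'}$ is nef and $C$ is a curve. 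Moreover, since $h$ does not contract $E$, the curve $E$ is not a component of $R$, and as $R$ is effective we obtain $R\cdot E\ge0$ (distinct irreducible curves meet nonnegatively, cf.\ the reasoning of Lemma \ref{lem:beauvilleusefulremark}).

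Combining these, $\K_{\tilde{\S}}\cdot E=h^{\ast}\K_{\S'}\cdot E+R\cdot E\ge0$, which contradicts $\K_{\tilde{\S}}\cdot E=-1$. Hence this case cannot occur either. Both cases being impossible, the assumption $n>0$ is untenable, so $\sigma$ is an isomorphism and $f=h\circ\sigma^{-1}$ is a morphism.
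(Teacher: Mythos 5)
Your proof is correct and follows essentially the same route as the paper: resolve the indeterminacy, focus on the exceptional curve $E$ of the last blow-up, and use $\K_{\tilde{\S}}=h^{\ast}\K_{\S'}+R$ with $R$ effective together with the nefness of $\K_{\S'}$ to contradict $\K_{\tilde{\S}}\cdot E=-1$. You are somewhat more explicit than the paper in ruling out the case where $E$ is contracted (the paper simply asserts that $E$ maps onto a curve) and in justifying $R\cdot E\ge0$, but the core computation is identical.
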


\begin{proof}
We can solve the indeterminacy of $f$ by a finite number of blowing
ups and consider only the last one, thus we can suppose $f$ has an
indeterminacy only in $x\in\S$ and fits into a commutative diagram
\[
\xymatrix{ & \tilde{\S}\ar[dr]^{\mu}\ar[dl]_{\sigma}\\
\S\ar@{-->}[rr]_{f} &  & \S'
}
\]
where $\sigma$ is the blowing up of $\S$ in $x$ with exceptional
curve $E\su\tilde{\S}$ and $\mu$ is a finite composition of blowing
ups. Since $f$ is not defined in $x$, there exists a curve $\C\su\S'$
such that $\mu\left(E\right)=\C$. Under our hypothesis we must have
$\K_{\S'}\cdot C\ge0$, moreover there exists a curve $\tilde{E}\su\tilde{\S}$
such that $\K_{\tilde{S}}=\mu^{\ast}\K_{\S'}+\tilde{E}$. We conclude
that
\[
-1=\K_{\tilde{S}}\cdot E=\mu^{\ast}\K_{\S'}\cdot E+\tilde{E}\cdot E.
\]
Since $\tilde{E}\cdot E\ge0$, we have
\[
0>\mu^{\ast}\K_{\S'}\cdot E=\K_{\S'}\cdot\mu_{\ast}E
\]
which is a contradiction in view of the fact that $\mu_{\ast}E=d\C$
for some $d>0$ .
\end{proof}
\begin{cor}
\label{cor:uniquenessminimalmodel}Two minimal models are isomorphic
if and only if they are birationally equivalent.
\end{cor}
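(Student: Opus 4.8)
The forward implication is immediate, since isomorphic surfaces are in particular birationally equivalent. For the converse, the plan is to exploit Proposition \ref{prop:proprietaminimali} symmetrically. Let $\S$ and $\S'$ be two minimal models which are birationally equivalent, and fix a birational map $f:\S\ra\S'$.

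First I would apply Proposition \ref{prop:proprietaminimali} to $f$: since the target $\S'$ is minimal, $f$ is in fact a morphism. Next I would consider the inverse birational map $g=f^{-1}:\S'\ra\S$ and apply the same Proposition with the roles of $\S$ and $\S'$ exchanged; using that $\S$ is minimal, I deduce that $g$ is also a morphism.

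It then remains to conclude that a birational morphism $f$ whose birational inverse $g$ is again a morphism must be an isomorphism. The composite $g\circ f:\S\to\S$ is a morphism agreeing with $\mathrm{id}_{\S}$ on the dense open set where $f$ restricts to an isomorphism onto its image; since $\S$ is a variety (in particular reduced and separated), two morphisms agreeing on a dense open subset coincide, so $g\circ f=\mathrm{id}_{\S}$. Symmetrically $f\circ g=\mathrm{id}_{\S'}$, and hence $f$ is an isomorphism.

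The essential content has already been packaged into Proposition \ref{prop:proprietaminimali}, so the corollary is little more than an application of it in both directions. The only point requiring care is the final step, where separatedness of the surfaces is used to upgrade the birational inverse into a genuine two-sided inverse morphism; I expect this to be the sole mild subtlety, as everything else is formal.
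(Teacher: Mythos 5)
Your proof is correct and is precisely the argument the paper intends: the corollary is stated without proof immediately after Proposition \ref{prop:proprietaminimali}, the point being exactly the symmetric application of that proposition to $f$ and to $f^{-1}$, followed by the standard observation that two morphisms agreeing on a dense open subset of a separated reduced scheme coincide. Your explicit treatment of that last step is a reasonable addition, not a deviation.
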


We need to assume
\begin{thm}[Stein factorization]
\label{thm:steinfactorization}Let $f:X\to Y$ be a morphism between
projective varieties. There there exists a commutative diagram
\[
\xymatrix{X\ar[rr]^{f}\ar[dr]_{\alpha} &  & Y\\
 & Z\ar[ur]_{\beta}
}
\]
where $Z$ is a normal variety, $\alpha$ is a morphism with connected
fibres and $\beta$ is a finite morphism.
\end{thm}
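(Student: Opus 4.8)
The plan is to construct $Z$ as the \emph{relative spectrum} $\mathbf{Spec}_Y\left(f_*\mathcal{O}_X\right)$ of the sheaf of $\mathcal{O}_Y$-algebras $\mathcal{A}:=f_*\mathcal{O}_X$, and to read off the entire factorization from the universal property of this construction. Everything then reduces to one genuinely deep cohomological input.

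First I would verify that $\mathcal{A}=f_*\mathcal{O}_X$ is a coherent sheaf of $\mathcal{O}_Y$-algebras. This is where the projectivity (hence properness) of $f$ enters in an essential way: for a finite morphism, coherence of the pushforward is Proposition \ref{prop:pushforwardcoherent}, but here $f$ need not be finite, so one must invoke Grothendieck's finiteness theorem for proper morphisms (see \cite{HA}), which guarantees that $f_*\mathcal{F}$ is coherent for every coherent $\mathcal{F}$. Since $\mathcal{A}$ is a coherent $\mathcal{O}_Y$-module as well as an $\mathcal{O}_Y$-algebra, the relative spectrum $\beta:Z=\mathbf{Spec}_Y\left(\mathcal{A}\right)\to Y$ is an affine morphism which is moreover \emph{finite}, precisely because $\mathcal{A}$ is finite as an $\mathcal{O}_Y$-module. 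As $X$ is integral and normal (which holds in all our applications, where $X$ is a smooth surface), each section ring $\mathcal{A}\left(U\right)=\mathcal{O}_X\left(f^{-1}U\right)$ is an integrally closed domain inside the function field of $X$; hence $Z$ is an integral normal variety.

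Next I would produce $\alpha$. The counit of the adjunction between $f^*$ and $f_*$ yields a canonical morphism of $\mathcal{O}_X$-algebras $f^*\mathcal{A}=f^*f_*\mathcal{O}_X\to\mathcal{O}_X$, and by the universal property of $\mathbf{Spec}_Y$ this corresponds to a $Y$-morphism $\alpha:X\to Z$ satisfying $f=\beta\circ\alpha$, giving the required commutative triangle. By construction one has $\beta_*\alpha_*\mathcal{O}_X=f_*\mathcal{O}_X=\mathcal{A}=\beta_*\mathcal{O}_Z$, and since $\beta$ is affine this forces the canonical map $\mathcal{O}_Z\to\alpha_*\mathcal{O}_X$ to be an isomorphism.

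The main obstacle is the connectedness of the fibres of $\alpha$. Having arranged $\alpha_*\mathcal{O}_X=\mathcal{O}_Z$ with $\alpha$ proper, I would invoke the theorem on formal functions (equivalently, the proper base change and Zariski connectedness theorem, see \cite{HA}) to conclude that for each $z\in Z$ the fibre $\alpha^{-1}\left(z\right)$ satisfies $H^0\left(\alpha^{-1}\left(z\right),\mathcal{O}\right)=\cc$, and is therefore connected. This cohomological step is the deep ingredient; everything else is formal once the relative spectrum and the adjunction are in place. With $\beta$ finite by construction, $\alpha$ having connected fibres, and $Z$ normal, the factorization is complete.
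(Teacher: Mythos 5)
The paper does not actually prove this statement: it is introduced with ``We need to assume'' and used as a black box, so there is no internal argument to compare yours against. Your proposal is the standard proof of Stein factorization (relative spectrum of $f_{\ast}\mathcal{O}_{X}$, finiteness of $\beta$ from coherence of the pushforward, $\alpha$ from the adjunction counit, connectedness of the fibres of $\alpha$ from the theorem on formal functions), and it is essentially correct. Two small points deserve care. First, normality of $Z$ is \emph{not} a consequence of the bare hypotheses of the theorem, which only assume $X$ and $Y$ projective: if $X$ fails to be normal, then $\mathcal{O}_{X}\left(f^{-1}U\right)$ need not be integrally closed and $Z$ need not be normal. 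You flag this correctly by restricting to the case where $X$ is normal (true in all the paper's applications, where $X$ is a smooth surface); alternatively one can first normalize $X$, or replace $Z$ by its normalization and check that $\alpha$ lifts, but some such remark is needed to match the statement as written. Second, the theorem on formal functions does not literally give $H^{0}\left(\alpha^{-1}\left(z\right),\mathcal{O}\right)=\cc$; it identifies the completion of $\mathcal{O}_{Z,z}=\left(\alpha_{\ast}\mathcal{O}_{X}\right)_{z}$ with the inverse limit of $H^{0}$ over the infinitesimal neighbourhoods of the fibre, and connectedness follows because a disconnected fibre would produce a nontrivial idempotent in this limit, contradicting the fact that the completed stalk is a local ring. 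With these two adjustments the argument is complete and is exactly the proof one finds in the standard references the paper cites.
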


$\,$
\begin{thm}
\label{thm:existstype}Let $\S$ be a surface. Then there exists a
unique a constant $t\in\left\{ 0,1,2,3\right\} $ depending only on
$\S$ such that

\begin{itemize}
\item $\ks+tH$ is nef for every ample divisor $H$;
\item if $t>0$, then there exists an ample divisor $H_{0}$ such that $\ks+tH_{0}$
is not ample.
\end{itemize}
\end{thm}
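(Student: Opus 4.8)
The plan is to dispatch uniqueness first, which is purely formal, and then to produce $t$ as the least admissible integer, the real work being the bound $t\le 3$. For uniqueness, suppose $t<t'$ both satisfy the two conditions. Since $\ks+tH$ is nef for every ample $H$, Remark~\ref{rem:sommanefampio} shows that $\ks+t'H=(\ks+tH)+(t'-t)H$ is nef $+$ ample, hence ample, for \emph{every} ample $H$. As $t'>t\ge 0$ we have $t'>0$, so the second condition for $t'$ would produce an ample $H_0$ with $\ks+t'H_0$ not ample, a contradiction. For existence I would set
\[
t=\min\{\,n\in\N\ :\ \ks+nH\text{ is nef for every ample }H\,\},
\]
noting that $\ks+(n+1)H=(\ks+nH)+H$ is again nef by Remark~\ref{rem:sommanefampio}, so the admissible set is upward closed; hence to get both nonemptiness and $t\le 3$ it suffices to prove that $\ks+3H$ is nef for every ample $H$.

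Next I would cut down the curves that must be tested using adjunction. If $\C$ is an irreducible curve with $(\ks+3H)\cdot\C<0$, then $\ks\cdot\C<-3\,(H\cdot\C)\le -3$, and the genus formula (Corollary~\ref{cor:genusformula}) gives
\[
\C^{2}=2p_a(\C)-2-\ks\cdot\C>-2+3=1 ,
\]
so $\C^{2}\ge 2$. Such a $\C$ meets every other irreducible curve nonnegatively and has $\C^{2}>0$, hence it is nef (and big). Since a pseudo-effective class pairs nonnegatively with every nef class (being a limit of pairings against ample classes), it follows that $\ks+3H$ can be negative on a nef curve only if it fails to be pseudo-effective. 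Thus the whole bound reduces to the assertion that $\ks+3H$ is pseudo-effective for every ample $H$.

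To establish that, I would try to show $\ks+3H$ is in fact effective. As $3H$ is ample, hence nef and big, Ramanujam–Kawamata–Viehweg vanishing (Theorem~\ref{thm:RKW}) gives $h^{1}(\ks+3H)=h^{2}(\ks+3H)=0$, so Riemann–Roch yields
\[
h^{0}(\S,\ks+3H)=\x(\os)+\tfrac{3}{2}\,H\cdot(\ks+3H).
\]
If this is positive, write $\ks+3H\sim\sum a_i\Gamma_i$ with $a_i>0$; then for the nef curve $\C$ above, $(\ks+3H)\cdot\C=\sum a_i(\Gamma_i\cdot\C)\ge 0$ (using $\Gamma_i\cdot\C\ge 0$ for $\Gamma_i\neq\C$ and $\C^{2}>0$), contradicting $(\ks+3H)\cdot\C<0$. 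Establishing this positivity is where I expect the \emph{main obstacle} to lie: it amounts to controlling how negative $\ks$ can be along a nef and big curve, i.e. bounding $-\ks\cdot\C$ by $3\,H\cdot\C$. I would combine the Hodge Index Theorem (Theorem~\ref{thm:Hodge}), which bounds $\C^{2}\le(H\cdot\C)^{2}/H^{2}$ and already settles $H\cdot\C\le 2$, with Lemma~\ref{lem:sufficienttobebig} to pass from $(\ks+3H)^2>0$ to effectivity; the borderline case $h^{0}(\ks+3H)=0$ I would handle by perturbing $H$ to $H+A$ (clearing denominators) so that $\ks+3(H+A)$ becomes big, hence effective, and then invoking closedness of the pseudo-effective cone to conclude for $H$ itself.

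For property (b), assume $t>0$. By minimality $\ks+(t-1)H_1$ is not nef for some ample $H_1$, so there is an irreducible curve $\C$ with $(\ks+(t-1)H_1)\cdot\C<0\le(\ks+tH_1)\cdot\C$, the last inequality by the first condition. The bound $t\le 3$ forces $-\ks\cdot\C$ to be controlled in terms of $H_1\cdot\C$, so only finitely many numerical classes of such extremal curves can occur; this should let me select $\C$ together with an ample $H_0$ realizing the threshold exactly, i.e. with $(\ks+tH_0)\cdot\C=0$. Then $\ks+tH_0$ is nef by the first condition, but having a curve of intersection number $0$ it is not ample by the Nakai–Moishezon criterion (Theorem~\ref{thm:NakaiMoishezon}), as required. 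The delicate point here is the attainment of the threshold by an actual curve together with an \emph{integral} ample $H_0$, which I would justify precisely through the finiteness of extremal classes just described.
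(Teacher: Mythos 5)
Your uniqueness argument and the definition of $t$ as the least admissible integer are fine, but the heart of the theorem --- that $\ks+3H$ is nef for every ample $H$ --- is never actually established. You correctly reduce it (via adjunction, which forces a hypothetical bad curve $\C$ with $\left(\ks+3H\right)\cdot\C<0$ to satisfy $\C^{2}\ge2$, hence to be nef) to the pseudo-effectivity of $\ks+3H$, but then you explicitly defer ``the main obstacle'', and the sketch you offer does not close it. Combining your genus-formula bound $\C^{2}\ge3\left(H\cdot\C\right)-1$ with the Hodge-index bound $\C^{2}\le\left(H\cdot\C\right)^{2}/H^{2}$ only excludes $H\cdot\C\le2$ and leaves the case $H\cdot\C\ge3$ completely open; and the fallback via ``closedness of the pseudo-effective cone'' invokes machinery the paper has not developed. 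Moreover the single quantity $h^{0}\left(\ks+3H\right)=\x\left(\os\right)+\frac{3}{2}H\cdot\left(\ks+3H\right)$ cannot be forced positive in isolation: on $\Pd$ with $H=L$ one has $\x\left(\os\left(\ks+L\right)\right)=\x\left(\os\left(\ks+2L\right)\right)=0$, which shows individual values of this polynomial do vanish. The idea your proposal is missing is to use all three values at once: $P\left(n\right)=\x\left(\os\left(\ks+nH\right)\right)$ is a polynomial of degree at most $2$ with leading coefficient $\frac{1}{2}H^{2}>0$, hence nonzero, hence cannot vanish at $n=1,2,3$ simultaneously; by Theorem \ref{thm:RKW} each $P\left(n\right)$ equals $h^{0}\left(\ks+nH\right)$, so $\ks+i_{0}H$ is effective for some $i_{0}\in\left\{ 1,2,3\right\}$. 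A curve with $\left(\ks+3H\right)\cdot\C<0$ then also satisfies $\left(\ks+i_{0}H\right)\cdot\C<0$, so it is a component of that effective divisor and $\C^{2}<0$; together with $\ks\cdot\C\le-4$ this contradicts Corollary \ref{cor:genusformula}. (Once $\ks+i_{0}H$ is effective, your own pef-based reduction would also go through, since $\ks+3H$ is then effective plus ample.)

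Property (b) is also gapped in your write-up: producing an \emph{integral} ample $H_{0}$ with $\left(\ks+tH_{0}\right)\cdot\C=0$ via ``finiteness of extremal classes'' is essentially the Cone Theorem and is nowhere justified here. The paper argues by contradiction instead: if $\ks+tH$ were ample for every ample $H$, iterating $H_{n+1}=\ks+tH_{n}$ produces ample divisors proportional to $\ks+c_{n}H_{0}$ with $c_{n}$ decreasing to $t-1$, whence $\ks+\left(t-1\right)H_{0}$ is nef for every ample $H_{0}$, contradicting the minimality of $t$. This is entirely elementary and avoids any finiteness of extremal rays.
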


\begin{proof}
Let us see first that, for every ample divisor $H$, the divisor $\ks+3H$
is nef: since $H$ is both big and nef, it follows from Theorem \ref{thm:RKW}
that, for $n\ge1$,
\[
\x\left(\os\left(\ks+nH\right)\right)=h^{0}\left(\ks+nH\right).
\]
In view of Lemma \ref{lem:hodgeindexpossibilita} and Proposition
\ref{prop:pernakaidue}, we conclude that $\x\left(\os\left(\ks+nH\right)\right)$
is a non-zero polynomial of degree at most 2, then there exists $i_{0}\in\left\{ 1,2,3\right\} $
such that $h^{0}\left(\ks+i_{0}H\right)>0$.

If there exists an irreducible curve $\C$ such that $\left(\ks+3H\right)\cdot\C<0$,
then also $\left(\ks+i_{0}H\right)\cdot\C<0$. Thus $\C^{2}<0$ and
$\ks\cdot\C<-3$, which is a contradiction from Corollary \ref{cor:genusformula}.
We conclude that $\ks+3H$ is nef.

Let us set
\[
t=\mathrm{min}\left\{ i\,\,|\,\,\ks+iH\,\,\mbox{is nef for every \ensuremath{H}ample}\right\} ,
\]
then it is clear that $t$ exists and it is unique, moreover it follows
from the previous discussion that $t\le3$.

Let us suppose that $\ks+tH$ is ample for every ample divisor $H$,
we consider 2 cases:

\begin{description}
\item [{$t=1$}] Let $H$ be an ample divisor, then we have a sequence
of ample divisors
\[
H_{0}=H,\,H_{1}=\ks+H,\,\dots,\,H_{n+1}=\ks+H_{n}=n\left(\ks+\frac{1}{n}H\right).
\]
If there exists a curve $\C$ such that $\ks\cdot\C<0$, for $n\gg0$
we have $\left(\ks+\frac{1}{n}H\right)\cdot\C<0$, which is a contradiction
.
\item [{$t\ge2$}] Let $H$ be an ample divisor, then we have a sequence
of ample divisors
\[
H_{0}=H,\,H_{1}=\ks+H,\,\dots,\,H_{n+1}=\ks+tH_{n}.
\]
Then
\begin{eqnarray*}
H_{n+1} & = & \left(t^{n}+\dots+t+1\right)\ks+t^{n+1}H_{0}\\
 & = & \alpha\left(\ks+\frac{t^{n+1}}{t^{n+1}-1}\left(t-1\right)H_{0}\right)
\end{eqnarray*}
for $\alpha>0$. As we did before, one shows that $\ks+\left(t-1\right)H_{0}$
is nef, which is again a contradiction in view of the minimality of
$t$ .
\end{description}
\end{proof}
\begin{defn}
In the same setting and notations as in Theorem \ref{thm:existstype},
we call the constant $t=t\left(\S\right)$ the $\emph{type}$ of $\S$,
and the divisor $H_{0}$ is called \emph{adapted polarization} for
$\S$.
\end{defn}

\begin{rem}
\label{rem:minimaltypezero}It is clear that a surface $\S$ is a
minimal model if and only if $t\left(\S\right)=0$.
\end{rem}

Our next goal is to give a classification of surfaces using the notion
of type.
\begin{prop}
\label{prop:classifthereexistsmorphism}Let $\S$ be a surface, $D$
a nef divisor such that $D^{2}=0$ and such that $h^{0}\left(\S,\os\left(mD\right)\right)>0$
for $m\gg0$. Then there exists a smooth curve $B$ and a morphism
$\varphi:\S\to B$ with connected fibres such that, for the general
fibre $F$, one has $F\cdot D=0$.
\end{prop}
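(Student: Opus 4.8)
The plan is to realise the desired fibration as the Stein factorization of the morphism attached to a large multiple of $D$, controlling the geometry throughout by means of the Hodge Index Theorem (Theorem \ref{thm:Hodge}).

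First I would fix $m\gg0$ for which $|mD|$ is nonempty and of positive dimension, and decompose $|mD|=|M|+V$, where $V$ is the fixed part and $|M|$ the moving part, which by definition has no fixed component. Since $|mD|$ moves, $mD$ is a nonzero effective divisor, so $D\not\sim0$; as $D$ is nef, Theorem \ref{thm:Hodge} then forces $D\cdot H>0$ for every ample $H$. The members of $|M|$ are effective and $D$ is nef, so $M\cdot D\ge0$; on the other hand $M\cdot D=mD^{2}-V\cdot D=-V\cdot D\le0$, because $V$ is effective. Therefore $M\cdot D=0$ (and likewise $V\cdot D=0$).

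Next I would pin down $M^{2}$. Fixing an ample $H$ and setting $M'=(D\cdot H)M-(M\cdot H)D$, one computes $M'\cdot H=0$, so Theorem \ref{thm:Hodge} gives $M'^{2}\le0$; expanding and using $D^{2}=M\cdot D=0$ yields $M'^{2}=(D\cdot H)^{2}M^{2}$, whence $M^{2}\le0$. Conversely, as $|M|$ has no fixed component, two general members share no curve and meet in finitely many points, so $M^{2}\ge0$; thus $M^{2}=0$. Two general members are then disjoint, so $\bs|M|=\emptyset$ and $\varphi_{|M|}\colon\S\to\PN$ is a genuine morphism. Its image $\S'$ is not a surface (that would give $M^{2}>0$) and not a point (since $\dim|M|\ge1$), hence a curve; applying Stein factorization (Theorem \ref{thm:steinfactorization}) to $\S\to\S'$ produces $\S\xrightarrow{\varphi}B\xrightarrow{\beta}\S'$ with $B$ a normal, hence smooth, curve, $\varphi$ of connected fibres and $\beta$ finite. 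This $\varphi$ is the sought morphism.

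It remains to check $F\cdot D=0$ for the general fibre $F$. Since $\varphi_{|M|}=\beta\circ\varphi$ contracts $F$ to a point and $M=\varphi_{|M|}^{\ast}\mathcal{O}(1)$, we get $M\cdot F=0$. Now $M'^{2}=(D\cdot H)^{2}M^{2}=0$ together with $M'\cdot H=0$ and the equality case of Theorem \ref{thm:Hodge} force $M'\sim0$, i.e. $(D\cdot H)M\sim(M\cdot H)D$; intersecting with $F$ and using $M\cdot F=0$ gives $(M\cdot H)(D\cdot F)=0$, and since $M\cdot H>0$ (as $M$ is effective and nonzero and $H$ is ample) we conclude $D\cdot F=0$. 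I expect the main obstacle to be the middle step—deducing $M^{2}=0$, and therewith base-point-freeness, from the two-sided estimate—since this is exactly what lets me take the Stein factorization on $\S$ itself without first resolving base points; the one place the hypothesis must be used with care is in guaranteeing that $|mD|$ genuinely moves for $m\gg0$, which is what makes $\S'$ a curve rather than a point.
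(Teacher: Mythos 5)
Your proof is correct, and its skeleton is the paper's: decompose $|mD|$ into fixed part plus moving part $|M|$, show $M\cdot D=0$, $M^{2}=0$ and $\bs|M|=\emptyset$, map to a curve via $\varphi_{|M|}$ and take the Stein factorization. The two pivotal sub-steps, however, are carried out by different means. For $M^{2}=0$ the paper notes that $M$ is nef because $|M|$ has finite base locus (Lemma \ref{lem:tricktochecknefeness}), so in $0=mD\cdot M=C\cdot M+M^{2}$ both summands are non-negative and hence vanish; you instead get $M^{2}\le0$ from the Hodge Index Theorem applied to $M'=\left(D\cdot H\right)M-\left(M\cdot H\right)D$ and $M^{2}\ge0$ from the absence of common components, which requires the extra (correctly justified) observation that $D\cdot H>0$. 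For the final step $F\cdot D=0$ the paper shows that $M\cdot C=0$ forces the fibration to contract every component of the fixed part, whence $C\cdot F=0$ and $mD\cdot F=C\cdot F+M\cdot F=0$; you instead invoke the equality case of Theorem \ref{thm:Hodge} to conclude $M'\sim0$, i.e.\ that $D$ and $M$ are proportional in $\num\left(\S\right)$, and intersect with $F$. Both routes are sound; yours leans entirely on the Hodge Index Theorem and yields in passing the slightly stronger fact that $D$ is numerically proportional to the moving part, while the paper's is more elementary. The caveat you raise at the end is real but shared with the paper: both arguments tacitly assume $\dim|mD|\ge1$ for some $m$ (otherwise the image of $\varphi_{|M|}$ could be a point, and the statement as literally phrased can fail, e.g.\ for $D=0$ on a surface with no fibration); in the one place the Proposition is applied (Corollary \ref{cor:classifiDnefsquaredzero}) one has $D\cdot\ks<0$, so Riemann--Roch makes $h^{0}\left(\S,\os\left(mD\right)\right)$ grow and the moving part is genuinely a pencil.
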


\begin{proof}
Under our assumptions, for $m\gg0$ we can write $|mD|=C+|M|$ where
$C$ is the fixed part of the linear system. It follows from Lemma
\ref{lem:tricktochecknefeness} that $M$ is nef. Moreover $0=D^{2}=D\cdot C+D\cdot M$,
then it follows that $D\cdot C=D\cdot M=0$. Also $C\cdot M+M^{2}=0$
gives $C\cdot M=M^{2}=0$, then $\bs|M|=\emptyset$ and thus it defines
a morphism $\varphi:\S\to B$, where $B\su\Pn$ is a curve. In view
of Theorem \ref{thm:steinfactorization} there exists a smooth curve
$\tilde{B}$ together with a morphism $\psi:\S\to\tilde{B}$ with
connected fibres and a finite morphism $\xi:\tilde{B}\to B$ such
that $\varphi=\xi\circ\psi$. It follows that, for the general fibre
$F$ of $\psi$ has $F\cdot M=0$. Since $M\cdot C=0$, $\psi$ contracts
$C$, then $C\cdot F=0$ and hence $D\cdot F=0$.
\end{proof}
\begin{cor}
\label{cor:classifiDnefsquaredzero}Let $\S$ be a surface and $D$
be a divisor linearly equivalent to $\ks+H$ where $H$ is ample.
If $D$ is nef and $D^{2}=0$, then $D\li0$ or there exists a smooth
curve $B$ together with a morphism $\varphi:\S\to B$ with connected
fibres such that the general fibre $F$ is a smooth rational curve
with $F\cdot H=2$.
\end{cor}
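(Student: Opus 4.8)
The plan is to apply Proposition \ref{prop:classifthereexistsmorphism} once I know that $h^0\left(\S,\os\left(mD\right)\right)$ grows with $m$, and to detect the degenerate alternative through the Hodge Index Theorem. First I would record that, since $D$ is nef and $H$ is ample, $D\cdot H\ge0$, so exactly one of $D\cdot H>0$ and $D\cdot H=0$ occurs. To estimate $h^0\left(mD\right)$, I would write $mD\li\ks+\left(\left(m-1\right)D+H\right)$ and note that $\left(m-1\right)D+H$ is ample for every $m\ge1$ (nef plus ample, Remark \ref{rem:sommanefampio}), hence nef and big; Theorem \ref{thm:RKW} then gives $h^1\left(mD\right)=h^2\left(mD\right)=0$, so by Riemann-Roch $h^0\left(mD\right)=\x\left(mD\right)=\x\left(\os\right)+\tfrac12 mD\cdot H$, using $D^2=0$ together with $D\cdot\ks=D\cdot\left(D-H\right)=-D\cdot H$.

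In the case $D\cdot H>0$ this expression tends to $\infty$, so Proposition \ref{prop:classifthereexistsmorphism} supplies a smooth curve $B$ and a morphism $\varphi:\S\to B$ with connected fibres whose general fibre $F$ satisfies $F\cdot D=0$. Taking $F$ general, hence smooth and (being connected) irreducible, I would feed $F^2=0$ (Example \ref{ex:fibresquaredzero}) and $F\cdot\ks=-F\cdot H$ (from $F\cdot D=0$) into the genus formula (Corollary \ref{cor:genusformula}): $p_a\left(F\right)=1+\tfrac12 F\cdot\ks$. Since $H$ is ample and $F$ is an irreducible curve, $F\cdot H>0$, so $F\cdot\ks<0$; as $p_a\left(F\right)\ge0$ forces $F\cdot\ks\ge-2$ and $F\cdot\ks$ is even (because $p_a\left(F\right)\in\Z$), I obtain $F\cdot\ks=-2$. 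Hence $p_a\left(F\right)=0$, so $F\iso\Pu$ (Remark \ref{rem:geometricarithmeticgenus}) and $F\cdot H=-F\cdot\ks=2$, which is the second alternative.

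In the case $D\cdot H=0$ I have $D^2=0$ and $D\cdot H=0$ with $H$ ample, so the Hodge Index Theorem (Theorem \ref{thm:Hodge}) gives $D\sim0$, i.e. $D$ is numerically trivial. The remaining, and genuinely delicate, point is to upgrade this to linear triviality $D\li0$. For this I would observe that $-\ks\sim H$ is numerically equivalent to an ample divisor, hence ample by the numerical invariance of ampleness (Corollary \ref{cor:amplenessnumericalproperty}); applying Theorem \ref{thm:RKW} with the nef and big divisor $-\ks$ yields $h^1\left(\S,\os\right)=h^2\left(\S,\os\right)=0$, whence $\x\left(\os\right)=1$. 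Then $h^0\left(D\right)=\x\left(D\right)=\x\left(\os\right)=1$ (here $D\cdot\ks=-D\cdot H=0$), so $\left|D\right|\neq\emptyset$; an effective $D_0\li D$ has $D_0\cdot H=D\cdot H=0$ with $H$ ample, which forces $D_0=0$ and therefore $D\li0$.

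The numerology of the case $D\cdot H>0$ is routine once the growth of $h^0\left(mD\right)$ is in hand. The step I expect to be the real obstacle is exactly distinguishing linear from numerical triviality in the case $D\cdot H=0$: a priori $D\sim0$ only gives $h^0\left(D\right)=\x\left(\os\right)$, and one must rule out $\x\left(\os\right)=0$, which I do by noticing that $-\ks$ is then forced to be ample and invoking the vanishing of Theorem \ref{thm:RKW} to pin down $\x\left(\os\right)=1$.
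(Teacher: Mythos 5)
Your proposal is correct and follows essentially the same route as the paper: both split on $D\cdot H=0$ versus $D\cdot H>0$, use the Hodge Index Theorem plus Corollary \ref{cor:amplenessnumericalproperty} and Theorem \ref{thm:RKW} to pin down $\x\left(\os\right)=1$ and upgrade $D\sim0$ to $D\li0$ in the first case, and use the growth of $h^{0}\left(\S,\os\left(mD\right)\right)$ to invoke Proposition \ref{prop:classifthereexistsmorphism} and the genus formula in the second. The only cosmetic difference is that you obtain the exact value $h^{0}\left(mD\right)=\x\left(mD\right)$ by applying Theorem \ref{thm:RKW} to $mD\li\ks+\left(\left(m-1\right)D+H\right)$, whereas the paper settles for the inequality $h^{0}\left(mD\right)\ge\x\left(mD\right)$ via Serre duality killing $h^{2}$; both suffice.
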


\begin{proof}
In view of Theorem \ref{thm:RKW} we have $\x\left(\os\left(D\right)\right)=h^{0}\left(\S,\os\left(D\right)\right)$
and, since $D$ is nef, $D\cdot H\ge0$. We have two possibilities:

\begin{description}
\item [{$D\cdot H=0$}] It follows from Theorem \ref{thm:Hodge} that $D^{2}\le0$
then, in view of Prosposition \ref{prop:nefpositivesquare}, we deduce
that $D^{2}=0$ and hence, in view again of Theorem \ref{thm:Hodge},
$D\sim0$. In view of Corollary \ref{cor:amplenessnumericalproperty},
$-\ks$ is ample and then, from Theorem \ref{thm:RKW},
\[
h^{1}\left(\S,\os\right)=h^{2}\left(\S,\os\right)=0,
\]
in particular $\x\left(\os\right)=1$. In view of Riemann-Roch we
conclude that
\[
h^{0}\left(\S,\os\left(D\right)\right)=\x\left(\os\left(D\right)\right)=\x\left(\os\right)=1,
\]
then $D$ is effective and hence $D\li0$.
\item [{$D\cdot H>0$}] Since $D^{2}=0$, then $D\cdot\ks=-D\cdot H<0$.
In view of Serre duality, we have
\[
h^{2}\left(\S,\os\left(mD\right)\right)=h^{0}\left(\S,\os\left(\ks-mD\right)\right)=0
\]
since, for $m\gg0$, $\left(\ks-mD\right)\cdot H<0$. Then
\[
h^{0}\left(\S,\os\left(mD\right)\right)\ge\x\left(\os\left(mD\right)\right)=\x\left(\os\right)-\frac{m}{2}D\cdot\ks
\]
and we conclude that $h^{0}\left(\S,\os\left(mD\right)\right)>0$
for $m\gg0$. In view of Proposition \ref{prop:classifthereexistsmorphism},
there exists a smooth curve $B$ and a morphism $\varphi:\S\to B$
with connected fibres such that, for the general fibre $F$, one has
$F\cdot D=0$. Since $F\cdot H>0$, then $F\cdot\ks<0$ and, in view
of Example \ref{ex:fibresquaredzero}, $F^{2}=0$. Since we work over
the field $\cc$ of complex numbers and the general fibre is connected,
it is also smooth and hence irreducible. It follows from Corollary
\ref{cor:genusformula} that
\[
2p_{a}\left(F\right)-2=F^{2}+F\cdot\ks<0,
\]
and hence $F\iso\Pu$ with $F\cdot H=-F\cdot\ks=2$.
\end{description}
\end{proof}
\begin{prop}
\label{prop:classificationnumerical}Let $\S$ be a surface and $H$
be an ample divisor.

\begin{enumerate}
\item If $\ks+3H\sim0$, then $\S\iso\Pd$ with $H\in|\mathcal{O}_{\Pd}\left(1\right)|$;
\item if $\ks+2H\sim0$, then $\S\iso\Pu\times\Pu$ with $H\in|\mathcal{O}_{\Pu\times\Pu}\left(1,1\right)|$.
\end{enumerate}
\end{prop}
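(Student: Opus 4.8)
The plan is to treat both parts uniformly by writing $\ks\sim-cH$ with $c=3$ in part (1) and $c=2$ in part (2), and to show that $|H|$ is base-point-free and embeds $\S$ as a surface of minimal degree. First the numerics. Since $-\ks\sim cH$ and ampleness depends only on the numerical class (Corollary \ref{cor:amplenessnumericalproperty}), $-\ks$ is ample, in particular nef and big; applying Theorem \ref{thm:RKW} to $-\ks$ gives $h^{1}(\S,\os)=h^{2}(\S,\os)=0$, so $q(\S)=p_{g}(\S)=0$ and $\x(\os)=1$. By Noether's formula (Fact \ref{fact:hodgeshows}) together with $b_{1}=2q=0$ we obtain $12=c^{2}H^{2}+2+b_{2}(\S)$. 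As $b_{2}\geq1$ and $H^{2}\geq1$ (Nakai--Moishezon, Theorem \ref{thm:NakaiMoishezon}), in part (1) this forces $H^{2}=1$; in part (2) it gives $H^{2}\leq2$, and since $\x(\os(H))=1+\tfrac{1}{2}(H^{2}+cH^{2})$ must be an integer, the parity constraint leaves $H^{2}=2$. In both cases $h^{2}(\os(H))=h^{0}(\ks-H)=0$ and, applying Theorem \ref{thm:RKW} to the nef and big divisor $H-\ks\sim(c+1)H$, also $h^{1}(\os(H))=0$; hence $h^{0}(\S,\os(H))=\x(\os(H))$ equals $3$ in part (1) and $4$ in part (2), so $\dim|H|=2$, resp. $3$.

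The heart of the argument is that $|H|$ is base-point-free. Fix $x\in\S$, let $\sigma\colon\tilde{\S}=\bl_{x}(\S)\to\S$ be the blow-up with exceptional curve $E$, and recall $\ks_{\tilde{\S}}=\sigma^{\ast}\ks+E$. Showing $x\notin\bs|H|$ reduces to the surjectivity of $H^{0}(\tilde{\S},\sigma^{\ast}H)\to H^{0}(E,\mathcal{O}_{E})$, for which it suffices that $h^{1}(\tilde{\S},\sigma^{\ast}H-E)=0$. Writing $\sigma^{\ast}H-E=\ks_{\tilde{\S}}+A$ with $A=\sigma^{\ast}(H-\ks)-2E\sim(c+1)\sigma^{\ast}H-2E$, I want to apply Theorem \ref{thm:RKW}, so I must verify that $A$ is nef and big. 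Bigness is immediate since $A^{2}=(c+1)^{2}H^{2}-4>0$ (Lemma \ref{lem:sufficienttobebig}). For nefness the only issue is against the strict transform $\tilde{C}=\sigma^{\ast}C-\mu(x,C)E$ of an irreducible curve $C$ through $x$, where $A\cdot\tilde{C}=(c+1)(H\cdot C)-2\mu(x,C)$; here I bound $\mu(x,C)$ by combining the genus formula (Corollary \ref{cor:genusformula}), which yields $\binom{\mu(x,C)}{2}\leq p_{a}(C)$, with the Hodge index inequality $C^{2}\leq(H\cdot C)^{2}/H^{2}$ (Theorem \ref{thm:Hodge}). A short computation then gives $A\cdot\tilde{C}\geq0$ in both cases, so $A$ is nef and Theorem \ref{thm:RKW} delivers the vanishing.

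This is the step I expect to be the main obstacle: the whole argument hinges on the multiplicity bound being sharp enough to keep $A$ nef against \emph{every} curve through $x$, and this is exactly where the special numerical values $c=2,3$ and $H^{2}=1,2$ are used. Everything else is either standard numerology or formal birational geometry.

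Finally, identification. The complete system $|H|$ defines a morphism $\varphi=\varphi_{|H|}\colon\S\to\Pd$ in part (1) and $\S\to\Pt$ in part (2); since $H$ is ample, $\varphi$ contracts no curve and is therefore finite, with image a surface. In part (1) the image is necessarily all of $\Pd$, and $(\deg\varphi)\cdot\deg\Pd=H^{2}=1$ gives $\deg\varphi=1$; thus $\varphi$ is a finite birational morphism onto the smooth, hence normal, surface $\Pd$, so it is an isomorphism and $H\in|\mathcal{O}_{\Pd}(1)|$. In part (2), $(\deg\varphi)\cdot\deg(\im\varphi)=H^{2}=2$, and because $|H|$ is complete its image is nondegenerate, which excludes a plane; hence $\im\varphi$ is an irreducible quadric $Q\subset\Pt$ and $\deg\varphi=1$. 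As $Q$ is normal, the finite birational map $\varphi$ is an isomorphism onto $Q$; since $\S$ is smooth, $Q$ must be a smooth quadric, and therefore $\S\iso Q\iso\Pu\times\Pu$ with $H\in|\mathcal{O}_{\Pu\times\Pu}(1,1)|$.
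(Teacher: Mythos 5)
Your proof is correct, but it follows a genuinely different route from the paper's, most visibly in part (2). The paper gets the numerics for (1) from the Hilbert polynomial $P(t)=\x\left(\os\left(tH\right)\right)$ and the positivity of $P\left(-1\right)=h^{0}\left(\ks+H\right)$, rules out a fixed part of $|H|$ from $H^{2}=1$, and concludes rather quickly; for (2) it does not touch base-point-freeness at all: from $h^{0}\left(\ks+2H\right)=\x\left(\os\right)=1$ it invokes Proposition \ref{prop:classifthereexistsmorphism} to produce a fibration $\S\to B$ with connected fibres, shows via the genus formula that $\S$ carries no curve of negative self-intersection (so is relatively minimal) and that every fibre is irreducible and rational, and then identifies $\S$ as a $\Pu$-bundle over $\Pu$ with all self-intersections nonnegative, i.e.\ $\mathbb{F}_{0}$. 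You instead treat both cases uniformly: Noether's formula plus a parity constraint pins down $H^{2}$, a Reider-type vanishing on the blow-up (nefness and bigness of $\left(c+1\right)\sigma^{\ast}H-2E$, checked against strict transforms via $\binom{\mu\left(x,C\right)}{2}\le p_{a}\left(C\right)$ and the Hodge index inequality $C^{2}H^{2}\le\left(H\cdot C\right)^{2}$) gives base-point-freeness of $|H|$, and the image is then a surface of minimal degree in $\Pd$ resp.\ $\Pt$. I checked the multiplicity estimate you flag as the crux and it does close in both cases ($A\cdot\tilde{C}=\left(c+1\right)\left(H\cdot C\right)-2\mu\ge0$ follows from the two bounds for all $H\cdot C\ge1$), so there is no gap there; do note, though, that you leave that computation implicit, and that your endgame quietly uses two facts outside the paper's stated toolkit — that a finite birational morphism onto a normal variety is an isomorphism (Zariski's main theorem) and that an irreducible quadric in $\Pt$ is normal. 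What each approach buys: the paper's part-(2) argument reuses the fibration machinery it has already built and avoids any linear-system analysis, while yours is more self-contained and uniform, recovers the stronger statement that $|H|$ itself gives the embedding, and is noticeably more careful than the paper's terse treatment of part (1).
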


\begin{proof}
In view Corollary \ref{cor:amplenessnumericalproperty}, we conclude
that in both cases $-\ks$ is ample, therefore, in view of Serre duality
and Theorem \ref{thm:RKW}, $\x\left(\os\right)=1$.

\begin{enumerate}
\item Let us set $P\left(t\right)=\x\left(\os\left(tH\right)\right)$, in
view of Riemann-Roch we have
\[
P\left(t\right)=\frac{1}{2}\left(t^{2}H^{2}-tH\cdot\ks\right)+1.
\]
Then, again in view of Serre duality and Theorem \ref{thm:RKW},
\[
P\left(-1\right)=\x\left(\os\left(-H\right)\right)=\x\left(\os\left(\ks+H\right)\right)=h^{0}\left(\S,\os\left(\ks+H\right)\right)\ge0.
\]
Since $\ks\sim-3H$, then
\[
P\left(t\right)=\frac{tH^{2}}{2}\left(t+3\right)+1.
\]
Since $H$ is ample and $1-H^{2}=P\left(-1\right)\ge0$, we conclude
that $H^{2}=1$ and then $P\left(1\right)=3$. Let $|H|=F+|M|$, where
$F,M\ge0$ and $F$ is the fixed part of $|H|$. We have $1=H^{2}=H\cdot F+H\cdot M$,
from which we deduce that $F=0$. Therefore $|H|$ defines a birational
map $\varphi_{|H|}:\S\ra\Pd$ which is indeed everywhere defined in
view of Example \ref{ex:inttheoryonplane}, and then it is an isomorphism.
\item Since $2H$ is ample, in view of Theorem \ref{thm:RKW} we have
\[
h^{0}\left(\S,\os\left(\ks+2H\right)\right)=\x\left(\os\left(\ks+2H\right)\right)=\x\left(\os\right)=1,
\]
then we can apply Proposition \ref{prop:classifthereexistsmorphism}
to conclude that there exists a surjective morphism $\varphi:\S\to B$
onto a smooth curve with connected fibres. Let $C\su\S$ be an irreducible
curve with $C^{2}<0$, then, from Corollary \ref{cor:genusformula}
\[
2p_{a}\left(C\right)-2=C^{2}+C\cdot\ks\le-1-2C\cdot H\le-3
\]
which is not possible. We conclude that $\S$ has no curve with negative
self-intersections and thus, in view of Corollary \ref{cor:characterizationrelminimal},
$\S$ is a relatively minimal model. Moreover, in view of Lemma \ref{lem:lemmaIIInove},
we conclude that every fibre $F$ of $\varphi$ is irreducible and
then, again in view of Corollary \ref{cor:genusformula}
\[
2p_{a}\left(F\right)-2=F\cdot\ks=-2F\cdot\ks\le-2.
\]
Therefore $p_{a}\left(F\right)=0$ and, from Remark \ref{rem:geometricarithmeticgenus},
we conclude that $F\iso\Pu$ for every fibre $F$ and then, in view
of Corollary \ref{cor:surfaceisoprojectivebundle}, $\varphi:\S\to B$
is a $\Pu$-bundle over $B$. Since $q\left(\S\right)=0$, in view
of Corollary \ref{cor:invariantsruled} we conclude that $B\iso\Pu$,
but then $\S$ is a relatively minimal model with $C^{2}\ge0$ for
every curve $C\su\S$, so, in view of Remarks \ref{rem:altreproprietaFn}
and \ref{rem:FngeomrigatesuPuno}, we have that $\S\iso\mathbb{F}_{0}\iso\Pu\times\Pu$.
\end{enumerate}
\end{proof}
\begin{prop}
\label{thm:classificazionepreliminare}Let $\S$ be a surface of type
$t$ with adapted polarization $H_{0}$. Then

\begin{itemize}
\item $t=3$ if and only if $\S\iso\Pd$ with $H_{0}\in|\mathcal{O}_{\Pd}\left(1\right)|$;
\item if $t=2$, then $\S$ is a $\Pu$-bundle over a smooth curve and,
for each fibre $F$ we have $H_{0|F}\in\mathcal{O}_{\Pu}\left(1\right)$;
\item if $t=1$, then $\S$ is a $\Pu$-bundle over a smooth curve and,
for each fibre $F$ we have $H_{0|F}\in\mathcal{O}_{\Pu}\left(2\right)$ or
there exists an exceptional curve $E$ with $E\cdot H_{0}=1$;
\item $t=0$ if and only if $\S$ is a minimal model.
\end{itemize}
\end{prop}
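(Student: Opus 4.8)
The plan is to organize the proof by the value of $t$, writing $D=\ks+tH_0$ throughout. By Theorem \ref{thm:existstype} and the definition of adapted polarization, $D$ is nef (take $H=H_0$ in the first bullet) but not ample whenever $t\ge1$, so Proposition \ref{prop:nefpositivesquare} gives $D^2\ge0$. The equivalence $t=0\iff\S$ minimal is exactly Remark \ref{rem:minimaltypezero}, and the backward half of the $t=3$ statement is the direct check that on $\Pd$ with $H_0\li L$ one has $\ks+3H_0\li0$ (nef, not ample) while $\ks+2L\li-L$ is not nef, forcing $t=3$ with $L$ adapted. So the work is in the forward implications for $t\in\{1,2,3\}$.

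First I would dispose of $D^2>0$. Since $D$ is nef but not ample, the Nakai--Moishezon criterion (Theorem \ref{thm:NakaiMoishezon}) yields a curve $\C$ with $D\cdot\C=0$, and because $D^2>0$ the Hodge Index Theorem (Theorem \ref{thm:Hodge}, giving signature $(1,\rho-1)$) forces $\C^2<0$. From $D\cdot\C=0$ we get $\ks\cdot\C=-t(H_0\cdot\C)$, so the genus formula (Corollary \ref{cor:genusformula}) gives $\C^2\ge t(H_0\cdot\C)-2$; combined with $\C^2<0$ and $H_0\cdot\C\ge1$ this yields $t(H_0\cdot\C)\le1$. Hence $D^2>0$ is impossible for $t\ge2$, i.e. $D^2=0$ whenever $t\ge2$; and when $t=1$ it produces $H_0\cdot\C=1$, $\C^2=-1$, $p_a(\C)=0$, so $\C$ is a $(-1)$-curve with $\C\cdot H_0=1$ — precisely the exceptional-curve alternative for $t=1$.

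It remains to treat $D^2=0$, where I would apply Corollary \ref{cor:classifiDnefsquaredzero} to $D\li\ks+(tH_0)$ with the ample divisor $tH_0$: either $D\li0$, or there is a fibration $\varphi:\S\to B$ onto a smooth curve with connected fibres whose general fibre $F$ is smooth rational with $F\cdot(tH_0)=2$. For $t=3$ the fibration is absurd, since $F\cdot3H_0=3(F\cdot H_0)$ is divisible by $3$ and cannot equal $2$; thus $D\li0$ and Proposition \ref{prop:classificationnumerical}(1) gives $\S\iso\Pd$ with $H_0\in|\mathcal{O}_{\Pd}(1)|$. For $t=2$ the fibration gives $F\cdot H_0=1$; as fibres are numerically equivalent (Corollary \ref{cor:fibersnumerically}) and $H_0$ is ample, every fibre $F_0$ has $H_0\cdot F_0=1$, which forces $F_0$ irreducible and reduced, hence $F_0\iso\Pu$ by the genus formula and Remark \ref{rem:geometricarithmeticgenus}; so $\S$ is geometrically ruled and, by Proposition \ref{prop:existssection} and Corollary \ref{cor:surfaceisoprojectivebundle}, a $\Pu$-bundle with $H_{0|F}\in\mathcal{O}_{\Pu}(1)$ (the branch $D\li0$ instead gives $\S\iso\Pu\times\Pu$ by Proposition \ref{prop:classificationnumerical}(2), again such a bundle). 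For $t=1$ the fibration gives $F\cdot H_0=2$ and $D\cdot F=0$; since $D^2=F^2=D\cdot F=0$, Theorem \ref{thm:Hodge} makes $D$ and $F$ proportional in $\num(\S)$, say $D\sim\lambda F$. If every fibre is smooth, $\S$ is a $\Pu$-bundle with $H_{0|F}\in\mathcal{O}_{\Pu}(2)$; otherwise a reducible fibre contains a $(-1)$-curve $E$ (by Lemma \ref{lem:lemmaIIInove} and the genus formula, as in the proof of Lemma \ref{lem:generalefibregeometricallyruled}), and $E\cdot F=0$ gives $D\cdot E=\lambda(F\cdot E)=0$, whence $H_0\cdot E=1$.

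The main obstacle I expect is the leftover branch $D\li0$ when $t=1$: here $\ks\sim-H_0$, so $-\ks$ is ample. One must show such an $\S$ either carries a $(-1)$-curve $E$ — necessarily with $H_0\cdot E=-\ks\cdot E=1$ — or is one of $\Pd,\Pu\times\Pu$, which are excluded since the cases already settled show they have type $3$ and $2$. Concretely this reduces to verifying that a relatively minimal surface with $-\ks$ ample is $\Pd$ or $\Pu\times\Pu$, which is the delicate point. A second, more routine, subtlety used for $t=1,2$ is ruling out multiple fibres in the fibration (a genus/parity computation via the genus formula), so that the numerical conditions genuinely upgrade ``general fibre $\Pu$'' to ``every fibre $\iso\Pu$''.
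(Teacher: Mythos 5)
Your overall architecture coincides with the paper's: reduce to $D=\ks+tH_0$ nef but not ample, split on $D^2>0$ versus $D^2=0$, kill $D^2>0$ for $t\ge2$ via Nakai--Moishezon plus the genus formula, extract the $(-1)$-curve with $H_0\cdot E=1$ when $t=1$ and $D^2>0$, and feed $D^2=0$ into Corollary \ref{cor:classifiDnefsquaredzero} and Proposition \ref{prop:classificationnumerical}. All of that matches the paper step for step and is sound.

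The genuine gap is exactly the branch you flag and then leave open: $t=1$ with $\ks+H_0\li0$. Your proposed reduction --- ``either $\S$ carries a $(-1)$-curve, or $\S$ is relatively minimal with $-\ks$ ample and must be $\Pd$ or $\Pu\times\Pu$'' --- is logically coherent but cannot be completed with the tools you cite, and inside this paper it would be circular: the classification of relatively minimal rational surfaces (Proposition \ref{prop:relativelyminimalruled}, Theorem \ref{thm:DelPezzoclassification}) sits downstream of Theorem \ref{thm:classificazione}, which rests on the very proposition you are proving. The paper instead shows this branch is \emph{vacuous}. Assuming $\ks+H_0\li0$, one has $-\ks$ ample; rerunning the iteration from the proof of Theorem \ref{thm:existstype} shows that for every ample $H$ some $n\ks+H\li0$, and since by Lemma \ref{lem:divisordifferencesmooth} every divisor is a difference of (very) ample ones, $\pic\left(\S\right)\iso\Z\cdot\left[-\ks\right]$. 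Then $q=p_g=0$, the exponential sequence gives $H^{1}\left(\S,\Z\right)=0$ and $H^{2}\left(\S,\Z\right)\iso\pic\left(\S\right)\iso\Z$, so $b_1=0$, $b_2=1$, $\xt\left(\S\right)=3$, and Noether's formula forces $\ks^2=9$ --- impossible because the intersection form on $H^{2}\left(\S,\Z\right)\iso\Z$ is unimodular, so the square of a generator is $\pm1$. Without some argument of this kind (or an independent, non-circular proof that a relatively minimal surface with $-\ks$ ample is $\Pd$ or $\Pu\times\Pu$), the $t=1$ statement is not established. The rest of your sketch, including the parity exclusion of multiple fibres and the extraction of $(-1)$-components from reducible fibres with $H_0\cdot E=1$, agrees with the paper and needs no repair.
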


\begin{proof}
We see that the statement for $t=0$ is simply Remark \ref{rem:minimaltypezero},
so we only need to consider the remaining cases.

\begin{description}
\item [{$t=3$}] Since $\ks+3H_{0}$ is nef, in view of Proposition \ref{prop:nefpositivesquare}
we have $\left(\ks+3H_{0}\right)^{2}\ge0$. If $\left(\ks+3H_{0}\right)^{2}>0$,
since it is not ample, in view of Theorem \ref{thm:NakaiMoishezon}
we conclude that there exists a curve $\C\su\S$ such that $\left(\ks+3H_{0}\right)\cdot\C=0$.
Therefore $\C^{2}<0$ with $\ks\cdot\C\le-3$, which is a contradiction
with Corollary \ref{cor:genusformula}, then we conclude that
\[
\left(\ks+3H_{0}\right)^{2}=0.
\]
In view of Corollary \ref{cor:classifiDnefsquaredzero}, either $\ks+3H_{0}\li0$
or there exists a smooth curve $B$ together with a surjective morphism
$\varphi:\S\to B$ with smooth rational fibres $F$ such that $3F\cdot H_{0}=2$.
Since the latter equality is not possibile, we get that $\ks+3H_{0}\li0$
and we conclude with Proposition \ref{prop:classificationnumerical}.
Viceversa, if $L\su\Pd$ is a line, then $L$ determines a very ample
divisor on $\Pd$ and $\mathcal{K}_{\Pd}\li-3L$. Then, for $0\le n\le2$,
we have
\[
\mathcal{K}_{\Pd}+nL\li\left(n-3\right)L
\]
which is not nef. We conclude that $t\left(\Pd\right)=3$.
\item [{$t=2$}] As before, in view of Corollary \ref{cor:genusformula},
we have $\left(\ks+2H_{0}\right)^{2}=0$. In view of Corollary \ref{cor:classifiDnefsquaredzero},
either $\ks+2H_{0}\li0$ or there exists a smooth curve $B$ together
with a surjective morphism $\varphi:\S\to B$ with smooth rational
fibres $F$ such that $\left(\ks+2H_{0}\right)\cdot F=0$ and $F\cdot H_{0}=1$.
In the first case we conclude in view of Proposition \ref{prop:classificationnumerical}
that $\S\iso\Pu\times\Pu$, in the second we conclude in view of Corollary
\ref{cor:surfaceisoprojectivebundle}.
\item [{$t=1$}] Let $\ks+H_{0}$ be nef but not ample. If $\left(\ks+H_{0}\right)^{2}>0$,
in view of Theorem \ref{thm:NakaiMoishezon} there exists an irreducible
curve $E$ such that $\left(\ks+H_{0}\right)\cdot E=0$. Since $\left(\ks+H_{0}\right)\cdot H_{0}>0$,
in view of Lemma \ref{lem:hodgeindexpossibilita} we conclude that
$E^{2}<0$. Moreover $\ks\cdot E=-H_{0}\cdot E<0$ and hence, in view
of Corollary \ref{cor:genusformula} we conclude that
\[
E^{2}=\ks\cdot E=-1\,\,\,\,\mbox{and}\,\,\,\,p_{a}\left(E\right)=0.
\]
In view of Remark \ref{rem:geometricarithmeticgenus}, $E$ is a $\left(-1\right)$-curve.
Let us consider the case when $\left(\ks+H_{0}\right)^{2}=0$: in
view of Corollary \ref{cor:classifiDnefsquaredzero}, either $\ks+H_{0}\li0$
or there exists a surjective morphism $\varphi:\S\to B$ where $B$
is a curve such that all fibres are connected and the general fibre
$F$ is smooth rational with $F\cdot H_{0}=2$. In the latter case,
let $\overline{F}$ be a degenerate fibre. If $\overline{F}=2F$,
then $F\cdot H_{0}=1$ and $F\cdot\ks=-1$, but we get a contradiction
with Corollary \ref{cor:genusformula}, since $F^{2}+F\cdot\ks$ must
be even. Therefore $\overline{F}=F_{1}+F_{2}$ and we see that
\[
0=\overline{F}\cdot F_{1}=F_{1}^{2}+F_{1}\cdot F_{2},
\]
since $\overline{F}$ is connected, we conclude that $F_{1}^{2}=F_{2}^{2}<0$.
In view of Corollary \ref{cor:fibersnumerically} we also have $\ks\cdot F_{1}+\ks\cdot F_{2}=-2$,
then
\[
\ks\cdot F_{i}=F_{i}^{2}=-1\,\,\,\,\mbox{for \ensuremath{i=1,2}}.
\]
Thus the degenerate fibres are union of $\left(-1\right)$-curves.
We are left to the case when $\ks+H_{0}\li0$. Let $H$ be an ample
divisor and let us suppose there exists no $n\in\N$ such that $n\ks+H\li0$.
As in the proof of Theorem \ref{thm:existstype}, we have a sequence
of nef divisors $\left(\ks+\frac{1}{n}H\right)_{n>0}$, but, since
$-\ks$ is ample, for every curve $C$ we have $\ks\cdot C=-\left(-\ks\right)\cdot C<0$,
and hence, for $n\gg0$
\[
\left(\ks+\frac{1}{n}H\right)\cdot C=\ks\cdot C+\frac{1}{n}\left(H\cdot C\right)<0,
\]
which is a contradiction. This shows that there exists $n>0$ such
that $n\ks+H\li0$ and, in view of Lemma \ref{lem:divisordifferencesmooth},
every divisor can be written as a difference of two ample divisors,
in particular we have shown that
\[
\pic\left(\S\right)\iso\Z\cdot\left[-\ks\right].
\]
Under our assumptions, $-\ks$ is ample, therefore, in view of Serre
duality and Theorem \ref{thm:RKW}
\[
h^{1}\left(\S,\os\right)=h^{2}\left(\S,\os\right)=0,
\]
in particular $\x\left(\os\right)=1$. Let us consider the exact sequence
\[
0\to\Z\to\os^{an}\to\left(\os^{*}\right)^{an}\to0.
\]
Looking at the cohomology sequence we deduce that $H^{1}\left(\S,\Z\right)=0$
and that $H^{2}\left(\S,\Z\right)\iso\pic\left(\S\right)$, therefore
we have $b_{1}\left(\S\right)=0$ and $b_{2}\left(\S\right)=1$. In
view of Noether's Formula, we have $12=\ks^{2}+\xt\left(\S\right)=\ks^{2}+3$,
thus $\ks^{2}=9$ which is not possibile since the intersection form
on $H^{2}\left(\S,\Z\right)$ is unimodular .
\end{description}
\end{proof}
\begin{thm}
\label{thm:classificazione}Let $\S$ be a surface of type $t\left(\S\right)$.
Then

\begin{itemize}
\item $t=3$ if and only if $\S\iso\Pd$;
\item $t=2$ if and only if $\S$ is a $\Pu$-bundle over a smooth curve;
\item $t=1$ if and only if $\S$ contains a $\left(-1\right)$-curve and
$\S\not\iso\bl_{p}\left(\Pd\right)$;
\item $t=0$ if and only if $\S$ is a minimal model.
\end{itemize}
\end{thm}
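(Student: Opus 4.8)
The cases $t=0$ and $t=3$ require essentially no new work: the equivalence $t=0\Leftrightarrow\S$ minimal is Remark \ref{rem:minimaltypezero}, and both implications of $t=3\Leftrightarrow\S\iso\Pd$ are already contained in Proposition \ref{thm:classificazionepreliminare}. The whole content of the theorem is therefore to upgrade the descriptions of the cases $t=1$ and $t=2$ given there into biconditionals, and the plan is to isolate first the following claim, on which everything hinges: \emph{every $\Pu$-bundle over a smooth curve has type exactly $2$}.

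To prove the claim I would let $\pi:\S\to\C$ be a $\Pu$-bundle with fibre $F$. Since $F\iso\Pu$ we have $F^{2}=0$ and $\ks\cdot F=-2<0$ (Example \ref{ex:fibresquaredzero} and Corollary \ref{cor:genusformula}), so $\ks$ is not nef and $t(\S)\ge1$. Next I would produce an ample divisor $H$ with $H\cdot F=1$: choosing a section $\C_{0}$ (Proposition \ref{prop:picgeometricallyruled}) we have $(\C_{0}+mF)\cdot F=1$ for every $m$, and for $m\gg0$ the divisor $\C_{0}+mF$ is ample by the Nakai–Moishezon criterion (Theorem \ref{thm:NakaiMoishezon}), the only irreducible curve meeting $\C_{0}$ negatively being $\C_{0}$ itself while every curve meets $F$ non-negatively. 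For such an $H$ one gets $(\ks+H)\cdot F=-1<0$, so $\ks+H$ is not nef and hence $t(\S)\ge2$. Finally $t(\S)\ne3$, since $t=3$ forces $\S\iso\Pd$ by Proposition \ref{thm:classificazionepreliminare} whereas a $\Pu$-bundle has $\num(\S)$ of rank $2$ (Proposition \ref{prop:picgeometricallyruled}); therefore $t(\S)=2$.

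With the claim established the remaining equivalences follow. For $t=2$, the implication $t=2\Rightarrow\S$ a $\Pu$-bundle is Proposition \ref{thm:classificazionepreliminare} and the converse is the claim. For $t=1$ I argue in both directions. Proposition \ref{thm:classificazionepreliminare} says a type-$1$ surface is either a $\Pu$-bundle with $H_{0|F}\in\mathcal{O}_{\Pu}\left(2\right)$ or carries a $\left(-1\right)$-curve; the first alternative is now excluded by the claim, so a type-$1$ surface always contains a $\left(-1\right)$-curve, and moreover $\S\not\iso\bl_{p}\left(\Pd\right)=\mathbb{F}_{1}$ because the latter is a $\Pu$-bundle, hence of type $2$. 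Conversely, suppose $\S$ carries a $\left(-1\right)$-curve and $\S\not\iso\bl_{p}\left(\Pd\right)$. Then $\S$ is not minimal (Remark \ref{rem:amplenef}), so $t\ge1$; we have $t\ne3$ since $\Pd$ contains no $\left(-1\right)$-curve; and $t\ne2$, for otherwise $\S$ would be a $\Pu$-bundle carrying a $\left(-1\right)$-curve, forcing $\S\iso\mathbb{F}_{1}=\bl_{p}\left(\Pd\right)$ by the argument below, a contradiction. Hence $t=1$.

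It remains to check that a $\Pu$-bundle $\pi:\S\to\C$ carrying a $\left(-1\right)$-curve $E$ must be $\bl_{p}\left(\Pd\right)$. The composite $E\to\C$ is either constant, in which case $E$ lies in a fibre and, the fibres being irreducible with $F^{2}=0$, one gets $E=F$ and $E^{2}=0\ne-1$, a contradiction; or it is surjective, whence $\C$ is dominated by $E\iso\Pu$ and is thus rational, so $\S\iso\mathbb{F}_{n}$ for some $n$ (Remark \ref{rem:FngeomrigatesuPuno}). By Remark \ref{rem:altreproprietaFn} such an $\mathbb{F}_{n}$ has a $\left(-1\right)$-curve only when $n=1$, and $\mathbb{F}_{1}\iso\bl_{p}\left(\Pd\right)$ by Proposition \ref{prop:F1scoppiamento}. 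I expect the main obstacle to be the isolated claim: concretely, the delicate point is producing an ample divisor meeting a fibre in a single point, and conceptually it is exactly this claim that collapses the two alternatives of the type-$1$ case of Proposition \ref{thm:classificazionepreliminare} into the single clean condition of possessing a $\left(-1\right)$-curve.
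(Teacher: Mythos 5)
Your proposal is correct and follows essentially the same route as the paper: both reduce to Proposition \ref{thm:classificazionepreliminare}, both establish that a $\Pu$-bundle has type exactly $2$ by exhibiting an ample divisor $H$ with $H\cdot F=1$ via Nakai--Moishezon and then observing that $\ks+2H$ cannot be ample (you phrase this slightly more directly as $\left(\ks+H\right)\cdot F=-1<0$, so $\ks+H$ is not nef), and both close the $t=1$ case by showing a $\Pu$-bundle carrying a $\left(-1\right)$-curve must be $\mathbb{F}_{1}\iso\bl_{p}\left(\Pd\right)$. The only difference is organizational: you spell out the assembly of the biconditionals for $t=1$ and $t=2$, which the paper leaves implicit.
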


\begin{proof}
In view of Proposition \ref{thm:classificazionepreliminare} we need
only to consider the cases when $t\left(\S\right)=2$ and $t\left(\S\right)=1$.
If $\pi:\S\to B$ is a $\Pu$-bundle over a smooth curve $B$, in
view of Proposition \ref{prop:existssection}, there exists a section
$H$ of $\pi$. Let $F$ be a fibre of $\pi$ and let $n\in\N$ such
that $H^{2}+n>0$. Then
\begin{eqnarray*}
\left(H+nF\right)^{2} & = & H^{2}+2n>0\\
\left(H+nF\right)\cdot H & = & H^{2}+n>0\\
\left(H+nF\right)\cdot F & = & 1.
\end{eqnarray*}
It follows from Theorem \ref{thm:NakaiMoishezon} and Proposition
\ref{prop:picgeometricallyruled} that $\tilde{H}=H+nF$ is ample.
Since $F\iso\Pu$, then $\ks\cdot F=-2$ and thus
\[
\left(\ks+2\tilde{H}\right)\cdot F=0,
\]
showing that $\ks+2\tilde{H}$ is not ample and thus $t\left(\S\right)\ge2$.
In view of Example \ref{ex:inttheoryonplane}, we conclude that $\S\not\iso\Pd$,
and hence, in view of Theorem \ref{thm:classificazionepreliminare}
we conclude that $t\left(\S\right)=2$.

Let us suppose now that $\pi:\S\to B$ is a $\Pu$-bundle over a smooth
curve $B$ and let $E\su\S$ be a $\left(-1\right)$-curve. Clearly
$E$ is not contained in any fibre, than $E$ dominates $B$ via $\pi$
and then $B\iso\Pu$. In view of Proposition \ref{prop:proprietaFn}
we conclude that $\S\iso\mathbb{F}_{1}\iso\bl_{p}\left(\Pd\right)$
and hence we are in case $t\left(\S\right)=2$.
\end{proof}

\subsection{Consequences}
\begin{cor}
\label{cor:ruledess}Let $\S$ be a relatively minimal surface with
$\ks^{2}<0$. Then $t\left(\S\right)\ge2$, in particular $\S$ is
ruled.
\end{cor}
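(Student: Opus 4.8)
The plan is to lean entirely on the classification by type (Theorem \ref{thm:classificazione}) and to use the two hypotheses—relative minimality and $\ks^{2}<0$—to eliminate the values $t=0$ and $t=1$, leaving only $t\in\{2,3\}$, from which ruledness follows at once. So the skeleton is: identify the type, rule out the two small values, and then read off the geometry.

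First I would rule out $t=0$. By Remark \ref{rem:minimaltypezero} (equivalently the last item of Theorem \ref{thm:classificazione}), $t\left(\S\right)=0$ holds exactly when $\S$ is a minimal model, i.e.\ when $\ks$ is nef. But Proposition \ref{prop:nefpositivesquare} guarantees that a nef divisor has non-negative self-intersection, so $t=0$ would force $\ks^{2}\ge0$, contradicting the hypothesis $\ks^{2}<0$. Next I would rule out $t=1$: by Theorem \ref{thm:classificazione}, $t\left(\S\right)=1$ requires $\S$ to contain a $\left(-1\right)$-curve, whereas Corollary \ref{cor:characterizationrelminimal} says a relatively minimal surface contains none. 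Hence neither $t=0$ nor $t=1$ can occur, and we conclude $t\left(\S\right)\ge2$.

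It then remains only to deduce that $\S$ is ruled. Again by Theorem \ref{thm:classificazione}, the case $t=3$ forces $\S\iso\Pd$, which is rational and therefore ruled, while the case $t=2$ forces $\S$ to be a $\Pu$-bundle over a smooth curve, which is geometrically ruled and hence ruled by the remark following the Noether--Enriques Theorem \ref{thm:noetherenriques}. In both cases $\S$ is ruled, completing the argument. I do not expect any genuine obstacle here: once the classification is available the proof is pure bookkeeping, and the only point requiring a little care is extracting the two correct equivalences from Theorem \ref{thm:classificazione}—the $\left(-1\right)$-curve characterization used to discard $t=1$ and the minimality characterization used to discard $t=0$. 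The "hard part," such as it is, lives entirely upstream in the classification theorem rather than in this corollary.
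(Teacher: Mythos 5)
Your proposal is correct and follows essentially the same route as the paper: rule out $t=0$ via Proposition \ref{prop:nefpositivesquare} (since $\ks^{2}<0$ forces $\ks$ not nef) and rule out $t=1$ via the absence of $\left(-1\right)$-curves on a relatively minimal surface, then conclude $t\left(\S\right)\ge2$. The only difference is that you spell out the final deduction that $t\ge2$ implies ruled, which the paper leaves implicit.
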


\begin{proof}
Since $\S$ is relatively minimal, in view of Theorem \ref{thm:classificazionepreliminare},
we have $t\left(\S\right)\neq1$. Since $\ks^{2}<0$, it follows from
Proposition \ref{prop:nefpositivesquare} that $\ks$ is not nef.
Then $t\left(\S\right)\ge2$.
\end{proof}
\begin{prop}
\label{prop:minimalmodelnonruled}Let $\S$ be a non ruled surface.
Then $\S$ dominates a unique minimal model.
\end{prop}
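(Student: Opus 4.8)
The plan is to deduce both the existence and the uniqueness statements from the type classification together with the facts already established about relatively minimal models, so that no new geometric input is needed. For existence, I would first apply Proposition \ref{prop:everysurfacesrelativelyminimalmodel} to produce a birational morphism $\S\to\S'$ onto a relatively minimal model $\S'$. Since this morphism is birational, $\S'$ is birationally equivalent to $\S$, and as ruledness is by definition a birational notion, $\S'$ is again non-ruled. By Corollary \ref{cor:characterizationrelminimal}, the relative minimality of $\S'$ is equivalent to $\S'$ containing no $\left(-1\right)$-curves.

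Next I would run through the possibilities for the type $t\left(\S'\right)$ furnished by Theorem \ref{thm:classificazione}. The case $t\left(\S'\right)=1$ is excluded at once, since it forces the existence of a $\left(-1\right)$-curve on $\S'$. The cases $t\left(\S'\right)=3$ and $t\left(\S'\right)=2$ are both excluded by non-ruledness: in the former $\S'\iso\Pd$, which is rational and hence ruled, while in the latter $\S'$ is a $\Pu$-bundle over a smooth curve, which is again ruled. The only surviving possibility is $t\left(\S'\right)=0$, which by Remark \ref{rem:minimaltypezero} means precisely that $\S'$ is a minimal model. Thus $\S$ dominates the minimal model $\S'$.

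For uniqueness, I would suppose that $\S$ dominates two minimal models $\S_{1}$ and $\S_{2}$ via birational morphisms. Then $\S_{1}$ and $\S_{2}$ are both birationally equivalent to $\S$, and hence to one another, so Corollary \ref{cor:uniquenessminimalmodel} immediately yields $\S_{1}\iso\S_{2}$.

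The argument is essentially bookkeeping, so I do not expect any serious analytic obstacle; the only points requiring care are the clean transfer of non-ruledness along the birational morphism $\S\to\S'$ and the observation that a relatively minimal model of type $2$ or $3$ is genuinely ruled, both of which follow directly from the definitions and from Theorem \ref{thm:classificazione}. The genuinely non-trivial content, namely the existence of the type invariant and the resulting classification, has already been established in Theorem \ref{thm:existstype} and Theorem \ref{thm:classificazione}, and here it is simply applied.
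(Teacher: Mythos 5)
Your proposal is correct and follows essentially the same route as the paper: pass to a relatively minimal model via Proposition \ref{prop:everysurfacesrelativelyminimalmodel}, use Theorem \ref{thm:classificazione} to exclude types $1$, $2$, $3$ for a non-ruled relatively minimal surface, and deduce uniqueness from Corollary \ref{cor:uniquenessminimalmodel}. The only cosmetic difference is that the paper first splits on whether $t(\S)$ is $0$ or $1$ before passing to the relatively minimal model, whereas you pass to it immediately; the content is identical.
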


\begin{proof}
Since $\S$ is non ruled, $t\left(\S\right)\le1$. If $t\left(\S\right)=0$,
then we are done. If $t\left(\S\right)=1$, in view of \ref{prop:everysurfacesrelativelyminimalmodel},
there exists a relatively minimal model $\S_{0}$ and a birational
morphism $\sigma:\S\to\S_{0}$. Moreover $t\left(\S_{0}\right)=0$
and we conclude in view of Theorem \ref{thm:classificazione} and
Corollary \ref{cor:uniquenessminimalmodel}.
\end{proof}
\begin{defn}
We say that a surface $\S$ is \emph{uniruled} if there exists a rational
dominant map $\S'\ra\S$ with $\S'$ ruled, while we say that $\S$
is \emph{covered by rational curves }if, given $s\in\S$ a general
point, there exists a rational curve on $\S$ passing through $s$.
\end{defn}

\begin{lem}
\label{lem:pefbastavederenef}Let $\S$ be a surface, $P$ a divisor.
Then $P$ is pef if and only if $P\cdot N\ge0$ for every $N$ nef.
\end{lem}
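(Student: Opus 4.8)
The plan is to prove the two implications separately, noting at the outset that the real content lies entirely in one direction. The implication from right to left is immediate: it was observed just after the definition of nef and pef that every ample divisor is nef. Hence if $P\cdot N\ge0$ holds for every nef divisor $N$, then in particular it holds for every ample divisor, which is precisely the statement that $P$ is pef. No further argument is needed here.

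For the converse, the idea is to exploit the fact that every nef divisor can be perturbed into an ample one and then pass to a limit. Concretely, I would fix a nef divisor $N$ together with an auxiliary ample divisor $H$. For every positive integer $m$ the divisor $mN$ is again nef (intersection numbers against curves simply scale by $m$), so Remark \ref{rem:sommanefampio} guarantees that $mN+H$ is ample. Applying the pef hypothesis on $P$ to this ample divisor, and expanding via bilinearity of the intersection product, gives
\[
0\le P\cdot\left(mN+H\right)=m\left(P\cdot N\right)+P\cdot H.
\]

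Dividing through by $m$ yields $P\cdot N\ge-\tfrac{1}{m}\left(P\cdot H\right)$, and letting $m\to\infty$ forces $P\cdot N\ge0$, since the correction term tends to $0$. Conceptually this is a closure argument: nef classes lie in the closure of the ample cone, and the condition of being pef is defined by non-strict inequalities, so it propagates from ample classes to their limits. I do not expect any serious obstacle; the only step requiring care is the assertion that $mN+H$ is genuinely ample for \emph{all} $m$, and this is exactly what Remark \ref{rem:sommanefampio} supplies once one notes that positive integer multiples of a nef divisor remain nef.
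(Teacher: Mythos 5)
Your proof is correct and follows essentially the same route as the paper: the forward direction is immediate from ample $\Rightarrow$ nef, and the converse perturbs the nef divisor by an ample one via Remark \ref{rem:sommanefampio} and passes to the limit. The only (cosmetic) difference is that you work with the integral divisors $mN+H$ while the paper writes the same estimate with $N+\tfrac{1}{n}H$; your formulation is marginally cleaner since it stays within honest divisors.
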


\begin{proof}
Since ample divisors are nef, if $P\cdot N\ge0$ for every nef divisor,
then $P$ is pef. To prove the converse, let us suppose that $P$
is pef and $P\cdot N<0$ for a nef divisor $N$. Let $H$ be an ample
divisor, then, in view of Remark \ref{rem:sommanefampio}, $N+\frac{1}{n}H$
is ample for every $n>0$ and hence, for $n\gg0$
\[
0\le P\cdot\left(N+\frac{1}{n}H\right)=P\cdot N+\frac{1}{n}P\cdot H<0,
\]
which is a contradiction .
\end{proof}
\begin{prop}
\label{prop:ruledKSnonpef}Let $\S$ be a ruled surface, then $\ks$
is not pef.
\end{prop}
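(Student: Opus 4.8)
The plan is to invoke Lemma \ref{lem:pefbastavederenef}, which reduces the statement to producing a \emph{single} nef divisor $N$ on $\S$ with $\ks\cdot N<0$. The divisor $N$ will be the pushforward of a general fibre of the ruling, so the first task is to extract a genuine fibration from the hypothesis. Since $\S$ is ruled there is a dominant rational map $\S\ra\C$ obtained by composing a birational map $\S\ra\C\times\Pu$ with the first projection. Exactly as in the proof of Theorem \ref{thm:minimalmodelirrational}, I resolve its indeterminacy by a finite sequence of blow-ups $\sigma:\tilde{\S}\to\S$, obtaining a morphism $g:\tilde{\S}\to\C$. Its general fibre $F$ is a smooth rational curve $F\iso\Pu$: over $\cc$ the general fibre is smooth, and it is birational to the generic $\Pu$-fibre of $\C\times\Pu\to\C$, hence isomorphic to $\Pu$ by Remark \ref{rem:geometricarithmeticgenus}. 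By Example \ref{ex:fibresquaredzero} we have $F^{2}=0$, and then the genus formula (Corollary \ref{cor:genusformula}) gives $F\cdot\K_{\tilde{\S}}=-2$.

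Next I set $D=\sigma_{\ast}F$ and carry out the intersection computation. Writing $\K_{\tilde{\S}}=\sigma^{\ast}\ks+E$ with $E$ an effective $\sigma$-exceptional divisor (as in the proof of Proposition \ref{prop:proprietaminimali}), and using the projection formula $\sigma^{\ast}\ks\cdot F=\ks\cdot\sigma_{\ast}F$ in the form already exploited in Lemma \ref{lem:disuguaglianzamolteplicita} and Proposition \ref{prop:proprietaminimali}, I obtain
\[
\ks\cdot D=\sigma^{\ast}\ks\cdot F=\K_{\tilde{\S}}\cdot F-E\cdot F=-2-E\cdot F\le-2<0,
\]
where $E\cdot F\ge0$ because $E$ is effective and $F$ is nef on $\tilde{\S}$.

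It then remains to check that $D$ is nef on $\S$, which is the crux of the argument. The idea is that $D$ is a member of a covering family of mutually numerically equivalent curves: by Corollary \ref{cor:fibersnumerically} the fibres $g^{-1}(x)$ are all numerically equivalent on $\tilde{\S}$, distinct general fibres are disjoint, and a general fibre is not contracted by $\sigma$, so each $D_{x}=\sigma_{\ast}g^{-1}(x)$ is an irreducible curve with $D_{x}\sim D_{y}$ for general $x,y$. For an irreducible curve $\Gamma\su\S$ with $\Gamma\neq D$ one has $D\cdot\Gamma\ge0$ since two distinct prime divisors meet non-negatively (Definition \ref{def:intprod}, Theorem \ref{thm:bilinearity}); and $D^{2}=D_{x}\cdot D_{y}\ge0$ upon choosing a second, distinct fibre. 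Hence $D\cdot\Gamma\ge0$ for every curve $\Gamma$, so $D$ is nef, and together with the displayed computation and Lemma \ref{lem:pefbastavederenef} this shows $\ks$ is not pef.

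The hard part will be the nefness of $D$: one must verify that the image of a general fibre is an honest (possibly singular) irreducible curve which $\sigma$ does not contract, and that two distinct general fibres push forward to distinct, numerically equivalent curves, so that $D^{2}=D_{x}\cdot D_{y}\ge0$. Everything else—the resolution of the ruling, the value $\K_{\tilde{\S}}\cdot F=-2$, and the projection-formula computation—is routine given the results already at hand. (An alternative route is to transfer the conclusion down a relatively minimal model via the same projection-formula identity and then treat $\Pd$ and $\Pu$-bundles separately, but that requires ruling out the minimal case $t=0$ for ruled surfaces, which is precisely the subtlety the fibration argument avoids.)
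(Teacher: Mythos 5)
Your proof is correct. It shares the paper's essential ingredients---a resolution of the ruling producing a fibration over $\C$ with general fibre $F\iso\Pu$, the genus formula giving $\K_{\tilde{\S}}\cdot F=-2$, and the reduction of pef-ness to pairing with nef classes (Lemma \ref{lem:pefbastavederenef})---but implements them in the opposite direction. The paper blows up $\C\times\Pu$ to obtain $\varphi:\tilde{\S}\to\S$ together with $\psi:\tilde{\S}\to\C$ and pulls $\ks$ \emph{up}: if $\ks$ were pef then $\varphi^{\ast}\ks$, hence $\mathcal{K}_{\tilde{\S}}=\varphi^{\ast}\ks+R_{\varphi}$ with $R_{\varphi}\ge0$, would be pef, contradicting the fact that $F$ is nef with $\mathcal{K}_{\tilde{\S}}\cdot F<0$ (a step that itself quietly invokes Lemma \ref{lem:pefbastavederenef} once more, since $\varphi_{\ast}H$ is only nef, not ample). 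You instead push the fibre \emph{down} to $D=\sigma_{\ast}F$ and test $\ks$ against it on $\S$ itself, which obliges you to prove that $D$ is nef; your covering-family argument---the curves $D_{x}$ are pairwise distinct, irreducible and mutually numerically equivalent, so $D^{2}=D_{x}\cdot D_{y}\ge0$ and $D\cdot\Gamma=D_{x}\cdot\Gamma\ge0$ upon choosing $D_{x}\neq\Gamma$---is sound and isolates a genuinely useful general fact (a member of a covering family of irreducible curves is nef) that the paper never states. The trade-off is that your route pays for the extra nefness verification but keeps the contradiction on $\S$ and avoids the implicit ascent of pef-ness along $\varphi$, while the paper's is shorter at the cost of leaving that ascent unjustified in print. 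You were also right to discard the alternative via relatively minimal models: Proposition \ref{prop:relativelyminimalruled} is deduced \emph{from} the present Proposition, so that route would be circular.
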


\begin{proof}
Let $\C$ be a curve such that $\S$ is birationally equivalent to
$\C\times\Pu$, then we have a commutative diagram\[
\xymatrix{
\tilde{\S}\ar[dr]^{\varphi}\ar[d]_{\mu}\ar@/_2pc/[dd]_{\psi}\\
\C\times\Pu\ar@{-->}[r]\ar[d]^{\pi_{1}} & \S\\
\C
}
\]where $\mu$ is given by a finite number of blow ups. In view of Riemann-Hurwitz
formula, $\mathcal{K}_{\tilde{\S}}\li\varphi^{\ast}\ks+R_{\varphi}$
with $R_{\varphi}\ge0$. Let us suppose $\ks$ is pef, then $\mathcal{K}_{\tilde{\S}}$
is pef, moreover, since the general fibre $F$ of $\psi$ is smooth
and rational, in view of Corollary \ref{cor:genusformula} we have
$F\cdot\mathcal{K}_{\tilde{\S}}<0$, which is not possibile in view
of Lemma \ref{lem:pefbastavederenef} .
\end{proof}
\begin{prop}
\label{prop:relativelyminimalruled}The relatively minimal ruled surfaces
are $\Pd$ or $\Pu$-bundles $\pc\left(\mathcal{E}\right)\to\C$.
\end{prop}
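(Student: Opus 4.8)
The plan is to read the result off the type classification, Theorem \ref{thm:classificazione}, once we have pinned down which values of $t(\S)$ a relatively minimal ruled surface can take. So the whole argument reduces to excluding types $0$ and $1$.

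First I would observe that a ruled surface can never be a minimal model. Indeed, by Proposition \ref{prop:ruledKSnonpef} the canonical class $\ks$ of a ruled surface is not pef, and since every nef divisor is pef, $\ks$ fails to be nef. By Remark \ref{rem:minimaltypezero} this means precisely that $t(\S)\neq 0$.

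Next I would use relative minimality to rule out type $1$. By Corollary \ref{cor:characterizationrelminimal} a relatively minimal surface contains no $\left(-1\right)$-curve. Since the type-$1$ clause of Theorem \ref{thm:classificazione} forces the existence of a $\left(-1\right)$-curve, the contrapositive gives $t(\S)\neq 1$. Combining the two observations leaves $t(\S)\in\{2,3\}$, and the corresponding clauses of Theorem \ref{thm:classificazione} yield $\S\iso\Pd$ when $t(\S)=3$ and $\S$ a $\Pu$-bundle $\pc\left(\mathcal{E}\right)\to\C$ over a smooth curve when $t(\S)=2$, which is exactly the assertion.

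I do not expect any serious obstacle here: the genuine content is already packaged into the type classification and into Proposition \ref{prop:ruledKSnonpef}, so the proposition is essentially a corollary of assembling these two inputs. The one point that deserves a word of care is that the statement must be read as a forward implication only: being a $\Pu$-bundle does not force relative minimality, since $\mathbb{F}_{1}\iso\bl_{p}\left(\Pd\right)$ carries a $\left(-1\right)$-curve (so it is not relatively minimal) while still having type $2$ by Theorem \ref{thm:classificazione}. Hence one should resist upgrading the conclusion to an \emph{iff} statement.
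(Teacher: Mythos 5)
Your argument is correct and follows essentially the same route as the paper: both exclude $t(\S)=0$ via Proposition \ref{prop:ruledKSnonpef} (non-pef implies non-nef), exclude $t(\S)=1$ via Corollary \ref{cor:characterizationrelminimal} and the type classification, and read off the conclusion from Theorem \ref{thm:classificazione}. Your closing caveat about the statement being a forward implication only (because of $\mathbb{F}_{1}$) is a fair observation and consistent with what the paper actually proves.
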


\begin{proof}
Let $\S$ be a ruled surface, in view of Proposition \ref{prop:ruledKSnonpef},
we conclude that $\S$ is not minimal. In view of Proposition \ref{prop:everysurfacesrelativelyminimalmodel},
there exists a birational morphism $\sigma:\S\to\S_{0}$ with $\S_{0}$
relatively minimal. Since $\S_{0}$ is ruled, in view of Proposition
\ref{prop:ruledKSnonpef}, we see that $\S_{0}$ is not minimal. We
conclude in view of Corollary \ref{cor:characterizationrelminimal}
and Theorem \ref{thm:classificazione}.
\end{proof}
\begin{thm}
\label{thm:equivalence1}Let $\S$ be a surface, then the following
are equivalent:

\begin{enumerate}
\item there exists a curve $\C\su\S$ with $\C\iso\Pu$ and $\C^{2}\ge0$;
\item $\ks$ is not pef;
\item $\S$ is ruled;
\item $\S$ is uniruled;
\item $\S$ is covered by rational curves.
\end{enumerate}
\end{thm}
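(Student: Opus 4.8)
The plan is to establish $(1)\Leftrightarrow(2)\Leftrightarrow(3)$ first and then graft $(4)$ and $(5)$ onto this cycle. The deep input — the classification Theorem \ref{thm:classificazione} together with Propositions \ref{prop:ruledKSnonpef} and \ref{prop:minimalmodelnonruled} — is already available, so most arrows are bookkeeping; the single genuinely new ingredient will be the one converting the geometric condition $(5)$ into a numerical one.

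I would begin with $(2)\Leftrightarrow(3)$. The implication $(3)\Rightarrow(2)$ is exactly Proposition \ref{prop:ruledKSnonpef}. For $(2)\Rightarrow(3)$ I argue by contraposition: if $\S$ is not ruled, Proposition \ref{prop:minimalmodelnonruled} yields a birational morphism $\sigma:\S\to\S_{0}$ onto its minimal model, with $\K_{\S_{0}}$ nef. Writing $\ks\li\sigma^{\ast}\K_{\S_{0}}+\sum_{i}a_{i}E_{i}$ with $a_{i}>0$ and $E_{i}$ the exceptional curves, I note that $\sigma^{\ast}\K_{\S_{0}}$ is nef, since $\sigma^{\ast}\K_{\S_{0}}\cdot\C=\K_{\S_{0}}\cdot\sigma_{\ast}\C\ge0$ for every curve $\C$, hence pef; meanwhile $\sum_{i}a_{i}E_{i}$ is effective, hence also pef (an effective divisor meets an ample divisor non-negatively). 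As the pef classes form a cone, $\ks$ is pef, which negates $(2)$.

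I would then close the triangle with $(1)\Rightarrow(2)$ and $(3)\Rightarrow(1)$. For $(1)\Rightarrow(2)$, let $\C\iso\Pu$ with $\C^{2}\ge0$. Distinct prime divisors meet non-negatively (Theorem \ref{thm:bilinearity}) and $\C\cdot\C=\C^{2}\ge0$, so $\C$ is nef; the genus formula (Corollary \ref{cor:genusformula}) gives $\ks\cdot\C=-2-\C^{2}\le-2<0$, whence $\ks$ is not pef by Lemma \ref{lem:pefbastavederenef}. For $(3)\Rightarrow(1)$, a ruled surface dominates, via Proposition \ref{prop:everysurfacesrelativelyminimalmodel}, a relatively minimal ruled model $\S_{0}$, which by Proposition \ref{prop:relativelyminimalruled} is $\Pd$ or a $\Pu$-bundle. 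Choosing a line, respectively a fibre, $F$ on $\S_{0}$ that avoids the finitely many centres of the blow-ups composing $\sigma:\S\to\S_{0}$, its total transform $\sigma^{\ast}F$ is a smooth rational curve on $\S$ with $(\sigma^{\ast}F)^{2}=F^{2}\ge0$, giving $(1)$.

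Finally I would append $(4)$ and $(5)$. Here $(3)\Rightarrow(4)$ is immediate, taking $\S'=\S$ with the identity map, and $(4)\Rightarrow(5)$ follows by resolving the dominant rational map $\S'\ra\S$ and pushing forward the ruling of $\S'$: a general ruling curve is rational and is not contracted by a generically finite morphism, so its image is a rational curve through a general point of $\S$. To close the cycle I would prove $(5)\Rightarrow(2)$. Organizing the covering rational curves into an irreducible algebraic family $\{\C_{t}\}$, a general member $\C$ is numerically equivalent to the other members and meets them non-negatively, so $\C$ is nef with $\C^{2}\ge0$. \emph{The main obstacle is to show $\ks\cdot\C<0$}: the fact that $\C$ moves so as to cover $\S$ must be turned into positivity. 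Via generic smoothness in characteristic $0$, the general member is a free rational curve, so $f^{\ast}T_{\S}$ is globally generated for its normalization $f:\Pu\to\S$; since $T_{\Pu}=\mathcal{O}(2)$ injects into $f^{\ast}T_{\S}$, the degree $-\ks\cdot\C$ of $f^{\ast}T_{\S}$ is at least $2$, whence $\ks\cdot\C\le-2<0$, and $\C$ being nef, $\ks$ is not pef by Lemma \ref{lem:pefbastavederenef}. This deformation-theoretic step, not any of the numerical manipulations, is where the real work lies, as it is the only point requiring an input beyond the results already assembled in the survey.
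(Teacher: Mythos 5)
Your proposal is correct, and several arrows ($1)\Rightarrow2)$, $3)\Rightarrow1)$, $3)\Rightarrow4)$, $4)\Rightarrow5)$, and the use of Proposition \ref{prop:ruledKSnonpef} for $3)\Rightarrow2)$) coincide with the paper's. You diverge in two places. For $2)\Rightarrow3)$ you argue by contraposition through the minimal model: Proposition \ref{prop:minimalmodelnonruled} gives $\sigma:\S\to\S_{0}$ with $\K_{\S_{0}}$ nef, and $\ks\li\sigma^{\ast}\K_{\S_{0}}+\sum a_{i}E_{i}$ is then a sum of pef classes, hence pef; the paper instead passes to a relatively minimal model, observes via the ramification formula that its canonical class is still not pef, and reads off $t\ge2$ from Theorem \ref{thm:classificazione}. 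These are essentially equivalent in depth (your Proposition \ref{prop:minimalmodelnonruled} itself rests on the classification), and there is no circularity; your version makes the pef computation more explicit. The substantive difference is $5)\Rightarrow2)$. The paper stays inside its own toolkit: from the covering family it produces a ruled surface $\tilde{\S}$ with a surjective (generically finite) morphism $\varphi:\tilde{\S}\to\S$, notes $\K_{\tilde{\S}}=\varphi^{\ast}\ks+R_{\varphi}$ with $R_{\varphi}\ge0$, and derives a contradiction with Proposition \ref{prop:ruledKSnonpef} if $\ks$ were pef --- thereby never needing to control $\ks\cdot\C$ for a possibly singular member $\C$. You instead show directly that a general member is nef with $\ks\cdot\C<0$ via freeness of a rational curve through a general point (generic smoothness of the evaluation map, $T_{\Pu}\hookrightarrow f^{\ast}T_{\S}$ forcing $\deg f^{\ast}T_{\S}\ge2$). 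You are right that the genus formula alone cannot give $\ks\cdot\C<0$ here, since $\C$ need not be smooth, and your deformation-theoretic fix is valid over $\cc$; but it imports machinery (free rational curves) nowhere developed in the survey, whereas the paper's detour through the ruled cover achieves the same end with only Riemann--Hurwitz, the projection formula and Lemma \ref{lem:pefbastavederenef}. If you want a self-contained argument in the spirit of the text, the ruled-cover route is preferable; your route generalizes more readily to higher dimensions.
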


\begin{proof}
$\,$

\begin{description}
\item [{$1)\Longrightarrow2)$}] Let $\C\su\S$ be a smooth rational curve
with $\C^{2}\ge0$. Then $\C$ defines a nef divisor and, in view
of Corollary \ref{cor:genusformula}, we have $\ks\cdot\C<0$, therefore
we conclude in view of Lemma \ref{lem:pefbastavederenef}.
\item [{$2)\Longrightarrow3)$}] In view of Proposition \ref{prop:everysurfacesrelativelyminimalmodel},
let $\sigma:\S\to\S_{0}$ be a birational morphism with $\S_{0}$
relatively minimal. In view of Riemann-Hurwitz formula $\mathcal{K}_{\S_{0}}$
is not pef and therefore $t\left(\S_{0}\right)\ge1$. In view of Corollary
\ref{cor:characterizationrelminimal} and Theorem \ref{thm:classificazione}
we conclude that $t\left(\S_{0}\right)\ge2$ and thus $\S$ is ruled.
\item [{$3)\Longrightarrow1)$}] In view of Proposition \ref{prop:relativelyminimalruled},
let $\sigma:\S\to\S_{0}$ be a birational morphism with $\S_{0}=\Pd$
or $\S_{0}=\mathbb{P}_{B}\left(\mathcal{E}\right)$ is a $\Pu$-bundle over a smooth
curve $B$. If $\S_{0}=\Pd$, then the general line $L$ does not
contain the base points of $\sigma^{-1}$, and therefore we take $\C=\sigma^{-1}\left(L\right)\su\S$.
If $\pi:\S_{0}\to B$ is a $\Pu$-bundle, then we can take $\C$ as
the general fibre of $\pi\circ\sigma$.
\item [{$3)\Longrightarrow4)$}] It is clear.
\item [{$4)\Longrightarrow5)$}] It is clear.
\item [{$5)\Longrightarrow2)$}] By definition of covered by rational curves,
there exists a 1-dimensional algebraic family of curves on $\S$ such
that the general member is an irreducible rational curve, then we
see that there exist a ruled surface $\tilde{\S}$ and a surjective
morphism $\varphi:\tilde{\S}\to\S$. In particular $\mathcal{K}_{\tilde{\S}}$
is not pef, thus, if $\ks$ were pef, then we would get a contradiction
in view of Riemann-Hurwitz formula .
\end{description}
\end{proof}
\begin{defn}
A surface $\S$ is said to be \emph{rationally connected }if, for
every $s_{1},s_{2}\in\S$ general points, there exists a rational
curve $\C\su\S$ such that $s_{1},s_{2}\in\C$.
\end{defn}

\begin{thm}
\label{thm:equivalence2}Let $\S$ be a surface, then the following
are equivalent:

\begin{enumerate}
\item there exists a curve $\C\su\S$ with $\C\iso\Pu$ and $\C^{2}>0$;
\item $q\left(\S\right)=P_{2}\left(\S\right)=0$;
\item $\S$ is rational;
\item $\S$ is unirational;
\item $\S$ is rationally connected.
\end{enumerate}
\end{thm}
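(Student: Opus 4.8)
The plan is to make \emph{rationality}, condition $(3)$, the hub: I prove $(3)\Rightarrow(4)\Rightarrow(5)$, the geometric equivalences $(1)\Leftrightarrow(3)$ and $(5)\Rightarrow(3)$, and $(3)\Rightarrow(2)$ by soft arguments, so that the only serious step is $(2)\Rightarrow(3)$. First the easy implications. A rational surface is dominated by $\Pd$, hence unirational, giving $(3)\Rightarrow(4)$; and if $\S$ is unirational, then for two general points $s_{1},s_{2}\in\S$ I pick general preimages $p_{1},p_{2}\in\Pd$ and push the line through them forward to a rational curve joining $s_{1},s_{2}$, giving $(4)\Rightarrow(5)$. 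For $(3)\Rightarrow(2)$, recall $q$ is a birational invariant with $q(\Pd)=0$, while a rational surface is ruled, so $P_{n}(\S)=0$ for all $n\ge1$ by Corollary~\ref{cor:invariantsruled}; in particular $q=P_{2}=0$.

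The equivalences $(1)\Leftrightarrow(3)$ and $(5)\Rightarrow(3)$ run through the ruled theory. The recurring observation is that if $\S$ is ruled over a curve $B$ of positive genus, with induced morphism $g\colon\S\to B$, then any curve $\C\iso\Pu$ is contracted by $g$ (a map $\Pu\to B$ is constant), hence is a component of a fibre and so has $\C^{2}\le0$ by Example~\ref{ex:fibresquaredzero} and the argument of Lemma~\ref{lem:lemmaIIInove}. For $(1)\Rightarrow(3)$: a curve $\C\iso\Pu$ with $\C^{2}>0$ makes $\S$ ruled by Theorem~\ref{thm:equivalence1}, and since $\C^{2}>0$ the base cannot have positive genus, so $B\iso\Pu$ and $\S$ is rational. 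For $(5)\Rightarrow(3)$: a rationally connected $\S$ is covered by rational curves, hence ruled by Theorem~\ref{thm:equivalence1}; if the base had positive genus, two general points would be forced into a single fibre of $g$, contradicting its dominance, so again $B\iso\Pu$. Finally $(3)\Rightarrow(1)$: by Propositions~\ref{prop:everysurfacesrelativelyminimalmodel} and~\ref{prop:relativelyminimalruled} a rational $\S$ admits a birational morphism $\sigma\colon\S\to\S_{0}$ with $\S_{0}\iso\Pd$ or $\S_{0}\iso\ff$ (the base is $\Pu$ since $q(\S)=0$); I take a general smooth rational curve $D$ of positive self-intersection on $\S_{0}$ — a line on $\Pd$, or a general member of a very ample $|\C_{0}+kF|$ with $k>n$ on $\ff$, which meets each fibre once and is therefore a section $\iso\Pu$ with $D^{2}=2k-n>0$ by Theorem~\ref{thm:RATIONALSCROLLSveryample} — chosen to avoid the finitely many centres of $\sigma$, so that $\sigma^{-1}(D)\iso D$ has the same positive self-intersection.

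The main obstacle is $(2)\Rightarrow(3)$, Castelnuovo's rationality criterion. By Theorem~\ref{thm:equivalence1} it suffices to show $q=P_{2}=0$ forces $\ks$ to be \emph{not} pef: then $\S$ is ruled, and $q=0$ makes the base $\Pu$, so $\S$ is rational. Equivalently, a non-ruled surface has $q>0$ or $P_{2}>0$. Passing to a minimal model $\S_{0}$ (Proposition~\ref{prop:minimalmodelnonruled}), which preserves $q$ and $P_{2}$, I get $\K_{\S_{0}}$ nef, hence $\K_{\S_{0}}^{2}\ge0$. If $\K_{\S_{0}}^{2}>0$ then $\K_{\S_{0}}$ is nef and big, so Theorem~\ref{thm:RKW} yields $P_{2}=\x\!\left(\mathcal{O}_{\S_{0}}\!\left(2\K_{\S_{0}}\right)\right)=\x(\os)+\K_{\S_{0}}^{2}$, which is $\ge2$ once $q=0$ (so $\x(\os)=1$); and if $q>0$ we are already done.

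The delicate case is $\K_{\S_{0}}^{2}=0$, and this is the step I expect to cost the most work. Here Noether's formula only returns numerical data, and the hypothesis $P_{2}=0$ must be exploited directly: one cannot simply invoke Kodaira dimension, since the whole content of the theorem is that $P_{2}$ alone — rather than all plurigenera — already detects rationality (there is no elementary exclusion of a putative $\kappa=1$ surface with $P_{2}=0$ short of this argument). I would follow Beauville: starting from $q=p_{g}=P_{2}=0$, so $\x(\os)=1$, use Riemann--Roch together with a careful choice of an auxiliary curve and the adjunction (genus) formula, Corollary~\ref{cor:genusformula}, to produce a base-point-free pencil whose general member is a smooth rational curve. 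Such a pencil lands us in condition $(1)$ (or gives a ruling $\S\to\Pu$ directly), completing the cycle. This construction of a rational pencil is the technical heart of the proof.
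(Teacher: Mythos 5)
Most of your architecture is sound, and several of your routes genuinely differ from the paper's: you prove $1)\Rightarrow3)$ and $5)\Rightarrow3)$ directly by the observation that a rational curve on a surface ruled over an irrational base $B$ is contracted by the induced morphism to $B$ (hence lies in a fibre and has non-positive self-intersection, resp.\ forces two general points into one fibre), whereas the paper closes the cycle through $2)$, proving $1)\Rightarrow2)$ via Theorem \ref{thm:RKW} applied to $\os\left(-\C\right)$ and $5)\Rightarrow2)$ via Riemann--Hurwitz. Your versions of these peripheral implications are correct (the one point worth a sentence is that the rational map $\S\ra B$ is automatically a morphism when $g\left(B\right)>0$, by the factorization argument of Theorem \ref{thm:minimalmodelirrational}), and your $5)\Rightarrow3)$ is arguably cleaner than the paper's $5)\Rightarrow2)$.

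The genuine gap is in $2)\Rightarrow3)$, the case $\K_{\S_{0}}^{2}=0$, which you correctly identify as the crux but do not prove: ``I would follow Beauville \dots to produce a base-point-free pencil whose general member is a smooth rational curve'' is a plan, not an argument, and the construction of that pencil from $q=p_{g}=P_{2}=0$ is the multi-page technical core of the classical Castelnuovo criterion. Moreover, your premise that ``one cannot simply\dots'' avoid it is false in the context of this paper: the heavy lifting has already been absorbed into the type classification, and on your minimal model $\S_{0}$ the canonical class is \emph{nef}, not merely free of $\left(-1\right)$-curves, which is exactly what makes a short argument available. Indeed, with $q=0$ and $p_{g}=0$ (from $P_{2}=0$ and Remark \ref{rem:potenze}) one has $\x\left(\os\right)=1$, so Riemann--Roch gives
\[
h^{0}\left(\S_{0},\mathcal{O}_{\S_{0}}\left(-\K_{\S_{0}}\right)\right)+h^{2}\left(\S_{0},\mathcal{O}_{\S_{0}}\left(-\K_{\S_{0}}\right)\right)\ge\x\left(\mathcal{O}_{\S_{0}}\left(-\K_{\S_{0}}\right)\right)=1+\K_{\S_{0}}^{2}\ge1,
\]
while $h^{2}=h^{0}\left(2\K_{\S_{0}}\right)=P_{2}=0$ by Serre duality; hence $-\K_{\S_{0}}$ is effective. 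Since $\K_{\S_{0}}$ is also nef, $\K_{\S_{0}}\cdot H=0$ for every ample $H$, so the Hodge index theorem forces $\K_{\S_{0}}\sim0$, and an effective numerically trivial divisor vanishes, giving $\K_{\S_{0}}\li0$ and $p_{g}=1$, contradicting $P_{2}=0$. This is the paper's argument (it handles $\K_{\S_{0}}^{2}>0$ and $\K_{\S_{0}}^{2}=0$ uniformly, making your separate treatment of the big-and-nef case unnecessary), and without it, or the Beauville construction you only gesture at, your proof of the one implication that carries the content of the theorem is incomplete.
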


\begin{proof}
$\,$

\begin{description}
\item [{$1)\Longrightarrow2)$}] Let $\C\su\S$ be a smooth rational curve
with $\C^{2}>0$. Then $\C$ defines a nef divisor and, in view of
Corollary \ref{cor:genusformula}, we have $\ks\cdot\C<0$, therefore,
in view of Lemma \ref{lem:pefbastavederenef}, we conclude that $\ks$
is not pef. It follows from the proof of Lemma \ref{lem:hodgeindexpossibilita}
that we have $P_{2}\left(\S\right)=0$. Let us consider now the exact
sequence
\[
0\to\os\left(-\C\right)\to\os\to\mathcal{O}_{\C}\to0.
\]
 In view of Lemma \ref{lem:sufficienttobebig} we see that $\C$ defines
a big and nef divisor, in view of Theorem \ref{thm:RKW} we have
\[
h^{1}\left(\S,\os\left(-\C\right)\right)=h^{2}\left(\S,\os\left(-\C\right)\right)=0
\]
 and then
\[
h^{1}\left(\S,\os\right)=h^{1}\left(\C,\oc\right)=h^{1}\left(\Pu,\mathcal{O}_{\Pu}\right)=0.
\]
\item [{$2)\Longrightarrow3)$}] Since $q\left(\S\right)$ and $P_{2}\left(\S\right)$
are birational invariants, we can suppose, in view of Proposition
\ref{prop:everysurfacesrelativelyminimalmodel}, that $\S$ is relatively
minimal, thus we assume that $t\left(\S\right)\neq1$. If $t\left(\S\right)=3$,
in view of Theorem \ref{thm:classificazione}, we are done. If $t\left(\S\right)=2$,
in view of Theorem \ref{thm:classificazione} and Remark \ref{rem:FngeomrigatesuPuno},
we conclude that $\S$ is rational. Thus, let us suppose $t\left(\S\right)=0$.
Since $P_{2}\left(\S\right)=0$, then $p_{g}\left(\S\right)=0$ by
Remark \ref{rem:potenze} and hence
\[
\x\left(\os\right)=1-q\left(\S\right)+p_{g}\left(\S\right)=1.
\]
Since we are in case $t\left(\S\right)=0$, in view of Theorem \ref{thm:classificazione}
we have that $\ks$ is nef and thus, in view of Riemann-Roch, Theorem
\ref{thm:RKW} and Proposition \ref{prop:nefpositivesquare}, we conclude
that
\[
h^{0}\left(\S,\os\left(-\ks\right)\right)+h^{2}\left(\S,\os\left(-\ks\right)\right)\ge\x\left(\os\left(-\ks\right)\right)=\ks^{2}+1>0.
\]
In view of Serre duality,
\[
h^{2}\left(\S,\os\left(-\ks\right)\right)=h^{0}\left(\S,\os\left(2\ks\right)\right)=0
\]
and then $h^{0}\left(\S,\os\left(-\ks\right)\right)>0$. We conclude
since then, for every ample divisor $H$, we have
\[
\ks\cdot H\ge0\,\,\,\,\mbox{and}\,\,\,\,\ks\cdot H\le0.
\]
Therefore $\ks\cdot H=0$ and then, in view of Theorem \ref{thm:Hodge}
and Proposition \ref{prop:nefpositivesquare} we conclude that $-\ks\sim0$,
but we have seen that $-\ks$ is effective, thus $\ks\li0$ and $p_{g}\left(\S\right)=1$
which is a contradiction .
\item [{$3)\Longrightarrow1)$}] In view of Proposition \ref{prop:relativelyminimalruled}
and Remarks \ref{rem:FngeomrigatesuPuno} and \ref{rem:altreproprietaFn},
there exists a birational morphism $\sigma:\S\to\S_{0}$ with $\S_{0}=\Pd$
or $\S_{0}=\ff$ with $n\neq1$. If $\S_{0}=\Pd$, then the general
line $L$ does not contain the base points of $\sigma^{-1}$, and
therefore we take $\C=\sigma^{-1}\left(L\right)\su\S$. If $\S_{0}=\ff$,
in view of Proposistion \ref{prop:proprietaFn}, let $C_{0}\su\ff$
be the curve such that $C_{0}^{2}=-n$. In view of Theorem \ref{thm:RATIONALSCROLLSveryample},
for $m\gg0$ the divisor $H=C_{0}+mF$ is very ample. Moreover, in
view of \cite[Theorem 6.6.1]{Ke} and Corollary \ref{cor:BertiniII},
the general element $D\in|H|$ is a smooth irreducible curve. Since
$H\cdot F=1$, then $D$ dominates $\Pu$ and thus $H\iso\Pu$. We
can choose $D$ to avoid the base points of $\sigma^{-1}$, and therefore
we take $\C=\sigma^{-1}\left(D\right)\su\S$.
\item [{$3)\Longrightarrow4)$}] It is clear.
\item [{$4)\Longrightarrow5)$}] It is clear.
\item [{$5)\Longrightarrow2)$}] By definition of rationally connected,
there exists a 1-dimensional algebraic family of curves on $\S$ such
that the general member is an irreducible rational curve and, moreover,
the for every pair of general points, there exists a member of this
family containing them. Then we see that there exist a ruled surface
$\pi:\tilde{\S}\to\C$ and a surjective morphism $\varphi:\tilde{\S}\to\S$.
Since we can pull-back a general curve on $\S$, it follows that $\C\iso\Pu$
and thus, in view of Corollary \ref{cor:invariantsruled},
\[
q\left(\S\right)=q\left(\tilde{\S}\right)=0.
\]
In view of Riemann-Hurwitz, we have
\[
2\mathcal{K}_{\tilde{\S}}=\varphi^{\ast}\left(2\ks\right)+2R_{\varphi}.
\]
Since $2R_{\varphi}\ge0$, if $\varphi^{\ast}\left(2\ks\right)\ge0$,
then $2\mathcal{K}_{\tilde{\S}}\ge0$, which is not possibile since
$\mathcal{K}_{\tilde{\S}}\cdot F<0$ for every fibre $F$ of $\pi$
in view of Corollary \ref{cor:genusformula}. Therefore $|2\ks|=\emptyset$.
\end{description}
\end{proof}
In view of Theorem \ref{thm:equivalence2} we deduce immediately
\begin{thm}[Castelnuovo's criterion for rationality]
Let $\S$ be a surface, then $\S$ is rational if and only if $q\left(\S\right)=P_{2}\left(\S\right)=0$.
\end{thm}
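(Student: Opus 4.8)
The plan is to recognize that this criterion is nothing other than the equivalence of conditions $(2)$ and $(3)$ in Theorem \ref{thm:equivalence2}, which has just been established. Hence the cleanest proof is a single appeal to that theorem: a surface $\S$ is rational precisely when $q\left(\S\right)=P_{2}\left(\S\right)=0$. Since the preceding sentence already announces ``we deduce immediately'', essentially no new work is required, and I would present the argument as a direct citation.

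For the sake of transparency, let me recall the two halves of the statement that are packaged inside Theorem \ref{thm:equivalence2}. The direction ``rational $\Longrightarrow q\left(\S\right)=P_{2}\left(\S\right)=0$'' is the elementary one: because both $q$ and $P_{2}$ are birational invariants, one reduces to a convenient rational model such as $\Pd$ or a Hirzebruch surface $\ff$, for which these invariants vanish in view of Corollary \ref{cor:invariantsruled} and Remark \ref{rem:potenze}.

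The substantive direction is ``$q\left(\S\right)=P_{2}\left(\S\right)=0 \Longrightarrow$ rational''. Here the argument runs through the type classification of Theorem \ref{thm:classificazione}: after passing to a relatively minimal model (legitimate, again, by birational invariance of $q$ and $P_{2}$), one knows the type $t\left(\S\right)$ cannot equal $1$, while the cases $t=3$ and $t=2$ yield rationality at once. The main obstacle is the minimal case $t\left(\S\right)=0$, where one must rule out a non-rational outcome: using $P_{2}\left(\S\right)=0$ one gets $p_{g}\left(\S\right)=0$ and hence $\x\left(\os\right)=1$, then Riemann–Roch combined with Theorem \ref{thm:RKW} and Proposition \ref{prop:nefpositivesquare} forces $-\ks$ to be effective, and a short comparison of intersection numbers produces the contradiction $p_{g}\left(\S\right)=1$.

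Since all of this machinery is already assembled in Theorem \ref{thm:equivalence2}, the criterion follows immediately by reading off the equivalence of its conditions $(2)$ and $(3)$, and I would write the proof as exactly that one-line deduction.
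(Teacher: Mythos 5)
Your proposal is correct and matches the paper exactly: the paper introduces the criterion with ``In view of Theorem \ref{thm:equivalence2} we deduce immediately'' and offers no further proof, so the one-line citation of the equivalence of conditions $(2)$ and $(3)$ there is precisely the intended argument. Your recap of the two directions hidden inside Theorem \ref{thm:equivalence2} is also faithful to how that theorem is proved in the paper.
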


$\,$
\begin{thm}[Lüroth's problem for complex surfaces]
Let $\S$ be a surface. Then $\S$ is rational if and only if it
is unirational.
\end{thm}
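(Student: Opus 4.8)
The plan is to observe that this statement is an immediate consequence of Theorem \ref{thm:equivalence2}, which already establishes the equivalence of the five conditions listed there, among them ``$\S$ is rational'' (condition $3$) and ``$\S$ is unirational'' (condition $4$). Thus essentially no new argument is required: I would simply extract the equivalence $3)\Longleftrightarrow 4)$ from that chain of implications, the statement being recorded separately only to emphasize its classical significance as the affirmative answer to Lüroth's problem in dimension two.

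More concretely, the direction ``rational $\Longrightarrow$ unirational'' is formally trivial, since a birational map is in particular a dominant rational map from a rational surface, which is exactly the implication $3)\Longrightarrow 4)$ noted as clear in Theorem \ref{thm:equivalence2}. For the substantive converse, ``unirational $\Longrightarrow$ rational,'' I would trace the relevant arc of that theorem: unirationality $4)$ gives rational connectedness $5)$ via the evident implication $4)\Longrightarrow 5)$; the implication $5)\Longrightarrow 2)$ then shows $q\left(\S\right)=P_{2}\left(\S\right)=0$; and finally $2)\Longrightarrow 3)$ yields rationality.

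The only genuine input, already absorbed into the proof of $5)\Longrightarrow 2)\Longrightarrow 3)$, is Castelnuovo's numerical criterion: the vanishing of the irregularity $q\left(\S\right)$ and of the second plurigenus $P_{2}\left(\S\right)$ forces rationality through the classification by type in Theorem \ref{thm:classificazione}. Since all of this machinery is in place, I do not expect any real obstacle; the entire content has been discharged upstream, and the proof here amounts to citing Theorem \ref{thm:equivalence2} and reading off the equivalence of conditions $3)$ and $4)$.
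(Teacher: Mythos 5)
Your proposal is correct and coincides with the paper's treatment: the statement is recorded as an immediate corollary of Theorem \ref{thm:equivalence2}, with rationality and unirationality being conditions $3)$ and $4)$ of that theorem, and the nontrivial direction following the chain $4)\Longrightarrow5)\Longrightarrow2)\Longrightarrow3)$ exactly as you describe. No further argument is needed.
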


\begin{cor}
\label{cor:rationalnefandbig}Let $\S$ be a surface such that $-\ks$
is nef and big. Then $\S$ is rational.
\end{cor}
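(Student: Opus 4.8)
The plan is to verify the two numerical vanishings $q(\S)=P_2(\S)=0$ and then invoke the rationality criterion of Theorem \ref{thm:equivalence2}. The input hypothesis, that $-\ks$ is nef and big, is tailor-made for the vanishing theorem \ref{thm:RKW}, so most of the work will be channeling that into the required cohomological conditions.

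First I would obtain $q(\S)=0$ essentially for free from Ramanujam–Kawamata–Viehweg vanishing. Applying Theorem \ref{thm:RKW} with the nef and big divisor $H=-\ks$ gives
\[
h^1\bigl(\S,\os(\ks+(-\ks))\bigr)=h^1(\S,\os)=0,
\]
and likewise $h^2(\S,\os)=0$; here $\ks+(-\ks)$ is the zero divisor, so the twisting sheaf is just $\os$. In particular $q(\S)=h^1(\S,\os)=0$, and as a byproduct $p_g(\S)=h^2(\S,\os)=0$.

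Next I would establish that $(-\ks)\cdot H>0$ for every ample divisor $H$. Since $-\ks$ is nef we already have $(-\ks)\cdot H\ge 0$; the point is to rule out equality. If $(-\ks)\cdot H=0$ for some ample $H$, then the Hodge Index Theorem \ref{thm:Hodge} forces $(-\ks)^2\le 0$, with equality only if $-\ks\sim 0$; but $-\ks$ is nef, so $(-\ks)^2\ge 0$ by Proposition \ref{prop:nefpositivesquare}, and therefore $(-\ks)^2=0$ and $-\ks\sim 0$. A numerically trivial divisor is not big, since every effective member of $|m(-\ks)|$ would meet $H$ trivially and hence vanish, bounding $h^0$ by one. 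This contradicts bigness. This strict positivity is the crux of the whole argument and the step I expect to require the most care, as it is precisely where bigness of $-\ks$, beyond mere nefness, is used.

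With strict positivity in hand, the second vanishing $P_2(\S)=h^0(\S,\os(2\ks))=0$ is then formal: an effective divisor $D\sim 2\ks$ would satisfy
\[
0\le D\cdot H=2\ks\cdot H=-2\,(-\ks)\cdot H<0,
\]
a contradiction, so no such $D$ exists. Having produced $q(\S)=P_2(\S)=0$, I conclude that $\S$ is rational by the equivalence $(2)\Leftrightarrow(3)$ of Theorem \ref{thm:equivalence2}, which finishes the proof.
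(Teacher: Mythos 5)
Your proof is correct and follows essentially the same route as the paper: Theorem \ref{thm:RKW} applied with $H=-\ks$ gives $q\left(\S\right)=0$, bigness of $-\ks$ forces $\ks\cdot H<0$ for ample $H$ and hence $P_{2}\left(\S\right)=0$, and Theorem \ref{thm:equivalence2} concludes. The only cosmetic difference is that you reach $\left(-\ks\right)\cdot H>0$ via the Hodge Index Theorem, whereas the paper cites the dichotomy in the proof of Lemma \ref{lem:hodgeindexpossibilita} (a nonzero effective member of $|{-m\ks}|$ already meets an ample divisor positively); both are valid and the extra detail you supply is sound.
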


\begin{proof}
In view of Theorem \ref{thm:RKW} and Serre duality, we have $q\left(\S\right)=0$
and, as in the proof of Lemma \ref{lem:hodgeindexpossibilita}, since
$-\ks$ is big, we conclude that $P_{2}\left(\S\right)=0$. The results
follows from Theorem \ref{thm:equivalence2}.
\end{proof}
\begin{prop}
Let $\S$ be a surface with $p_{g}\left(\S\right)=0$ and $q\left(\S\right)\ge2$.
Then $\S$ is ruled.
\end{prop}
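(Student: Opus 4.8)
The plan is to deduce ruledness from the numerical criterion already proved, by forcing the self-intersection of the canonical class to be negative on a relatively minimal model. The decisive tool will be Corollary \ref{cor:ruledess}, which states that a relatively minimal surface with $\ks^{2}<0$ is ruled. Since ruledness is by definition a birational property and both $q$ and $p_{g}=P_{1}$ are birational invariants (see the remark following Fact \ref{fact:hodgeshows}), while every surface dominates a relatively minimal model by Proposition \ref{prop:everysurfacesrelativelyminimalmodel}, I may assume from the outset that $\S$ is relatively minimal and still has $p_{g}\left(\S\right)=0$ and $q\left(\S\right)\ge2$.

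The core computation will then express $\ks^{2}$ purely in terms of $q$ and $p_{g}$ via Noether's formula. Writing $q=q\left(\S\right)$, $p_{g}=p_{g}\left(\S\right)$ and $h^{1,1}=h^{1}\left(\S,\Omega_{\S}^{1}\right)$, the Hodge-theoretic identities of Fact \ref{fact:hodgeshows} give $b_{1}=2q$ and $b_{2}=2p_{g}+h^{1,1}$, whence
\[
\xt\left(\S\right)=2-2b_{1}+b_{2}=2-4q+2p_{g}+h^{1,1}.
\]
Combining this with $\x\left(\os\right)=1-q+p_{g}$ and Noether's formula $12\x\left(\os\right)=\ks^{2}+\xt\left(\S\right)$, I obtain
\[
\ks^{2}=12\left(1-q+p_{g}\right)-\left(2-4q+2p_{g}+h^{1,1}\right)=10-8q+10p_{g}-h^{1,1}.
\]

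Substituting the hypotheses $p_{g}=0$ and $q\ge2$, and using only the trivial bound $h^{1,1}\ge0$ (it is the dimension of a cohomology group), yields
\[
\ks^{2}=10-8q-h^{1,1}\le10-16-h^{1,1}=-6-h^{1,1}<0.
\]
Since $\S$ is relatively minimal with $\ks^{2}<0$, Corollary \ref{cor:ruledess} applies and shows that $\S$ is ruled, which completes the argument.

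I expect the only genuinely delicate point to be the initial reduction. Because $\ks^{2}$ drops by one under each blow-up, it is not a birational invariant, so the computation above must be carried out on a relatively minimal model and not on the original $\S$ before the reduction; the hypotheses $q\ge2$ and $p_{g}=0$, being birational invariants, are safely inherited by that model. Once this bookkeeping is respected, the conclusion is a one-line substitution driven entirely by the inequality $q\ge2$.
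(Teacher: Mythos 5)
Your proposal is correct and follows essentially the same route as the paper: reduce to a relatively minimal model using the birational invariance of $q$ and $p_{g}$, compute $\ks^{2}=10-8q+10p_{g}-b_{2}<0$ via Noether's formula and the Hodge identities of Fact \ref{fact:hodgeshows}, and conclude with Corollary \ref{cor:ruledess}. The only cosmetic difference is that you expand $b_{2}=2p_{g}+h^{1}\left(\S,\Omega_{\S}^{1}\right)$ and bound $h^{1,1}\ge0$ where the paper simply keeps $b_{2}\ge0$.
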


\begin{proof}
Since both $p_{g}\left(\S\right)$ and $q\left(\S\right)$ are birational
invariants, we can suppose, in view of Proposition \ref{prop:everysurfacesrelativelyminimalmodel},
$\S$ to be relatively minimal. In view of Noether's formula \ref{fact:hodgeshows},
we have
\begin{eqnarray*}
\ks^{2} & = & 12\x\left(\os\right)-\xt\left(\S\right)\\
 & = & 12\left(1-q\left(\S\right)\right)-2+4q\left(\S\right)-b_{2}\left(\S\right)<0
\end{eqnarray*}
and we conclude in view of Corollary \ref{cor:ruledess}.
\end{proof}
\begin{cor}
Let $\S$ be a surface with $\x\left(\os\right)<0$. Then $\S$ is
ruled.
\end{cor}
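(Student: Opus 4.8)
The plan is to prove the contrapositive: a non-ruled surface satisfies $\x\left(\os\right)\ge0$. The first step is to note that $\x\left(\os\right)=1-q\left(\S\right)+p_{g}\left(\S\right)$, so that $\x\left(\os\right)$ is a birational invariant (both $q$ and $p_{g}=P_{1}$ are, by Fact \ref{fact:hodgeshows} and the remarks following it), as is the property of being ruled. The hypothesis $\x\left(\os\right)<0$ then reads $q\left(\S\right)\ge p_{g}\left(\S\right)+2$; in particular $q\left(\S\right)\ge2$.

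When $p_{g}\left(\S\right)=0$ the statement is immediate from the preceding Proposition, since then $q\left(\S\right)\ge2$. Indeed, passing to a relatively minimal model via Proposition \ref{prop:everysurfacesrelativelyminimalmodel} and combining Noether's formula $12\x\left(\os\right)=\ks^{2}+\xt\left(\S\right)$ with $b_{1}\left(\S\right)=2q\left(\S\right)$ and $p_{g}\left(\S\right)=0$ gives $\ks^{2}=10-8q\left(\S\right)-b_{2}\left(\S\right)<0$, and Corollary \ref{cor:ruledess} applies. For the general case I would instead argue directly by contraposition: if $\S$ is not ruled, then by Proposition \ref{prop:minimalmodelnonruled} it dominates a minimal model $\S_{0}$, with $\x\left(\mathcal{O}_{\S_{0}}\right)=\x\left(\os\right)$ and $\K_{\S_{0}}$ nef, so $\K_{\S_{0}}^{2}\ge0$ by Proposition \ref{prop:nefpositivesquare}. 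Noether's formula on $\S_{0}$ now yields $\xt\left(\S_{0}\right)=12\x\left(\os\right)-\K_{\S_{0}}^{2}\le12\x\left(\os\right)$, so $\x\left(\os\right)<0$ would force the topological Euler characteristic $\xt\left(\S_{0}\right)=c_{2}\left(\S_{0}\right)$ to be strictly negative.

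The step I expect to be the real obstacle is exactly ruling this out: one must show $\xt\left(\S_{0}\right)\ge0$ for a minimal non-ruled surface, equivalently the Castelnuovo--de Franchis bound $q\left(\S_{0}\right)\le p_{g}\left(\S_{0}\right)+1$ once $p_{g}\ge1$. This positivity of $c_{2}$ is not forced by the intersection-theoretic machinery alone, and the Noether estimate of the $p_{g}=0$ case breaks down because of the contribution $2p_{g}$ to $b_{2}$. I would supply it from Hodge theory, in the spirit of Fact \ref{fact:hodgeshows}: from the excess $q\ge p_{g}+2$ of independent holomorphic $1$-forms over the $p_{g}$-dimensional space $H^{0}\left(\S_{0},\omega_{\S_{0}}\right)$ of $2$-forms, the Castelnuovo--de Franchis theorem produces a pencil over a curve of genus $\ge2$, which one can use to constrain the Betti numbers and conclude $\xt\left(\S_{0}\right)\ge0$. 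Granting this, the inequality $\xt\left(\S_{0}\right)\le12\x\left(\os\right)$ forces $\x\left(\os\right)\ge0$, contradicting the hypothesis; hence $\S$ is ruled.
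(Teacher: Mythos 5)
Your argument follows the same skeleton as the paper's proof: apply Noether's formula $12\x\left(\os\right)=\ks^{2}+\xt\left(\S\right)$ on a suitable model and observe that $\x\left(\os\right)<0$ forces $\ks^{2}<0$ or $\xt\left(\S\right)<0$. The paper passes to a relatively minimal model, rules out $t\left(\S\right)=1$, and splits into these two cases, disposing of $\ks^{2}<0$ by Corollary \ref{cor:ruledess}; your contrapositive formulation on the genuine minimal model of a non-ruled surface (via Proposition \ref{prop:minimalmodelnonruled}) is a mild streamlining, since nefness of $\K_{\S_{0}}$ together with Proposition \ref{prop:nefpositivesquare} makes $\K_{\S_{0}}^{2}\ge0$ automatic and collapses everything to the single inequality $\xt\left(\S_{0}\right)<0$. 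That reorganization is fine, and your handling of the $p_{g}=0$ case via the preceding Proposition is also consistent with the text.

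The substantive issue is the step you flag yourself and then take on faith: that a minimal non-ruled surface has $\xt\ge0$. This is precisely the point the paper does not prove either --- it invokes \cite[Theorem X.4]{Beau}, which asserts that a surface with $\xt\left(\S\right)<0$ is ruled. Your plan to obtain it from the Castelnuovo--de Franchis theorem is the standard route, but as written it is only a plan: extracting a fibration $\S_{0}\to B$ with $g\left(B\right)\ge2$ from the excess $q\ge p_{g}+2$ is the easier half, and one must still control the topology of the total space of that fibration --- compare $b_{1}\left(\S_{0}\right)$ with $2g\left(B\right)$ and exhibit enough independent classes in $H^{2}$ (a fibre, a multisection, the Hodge index theorem) --- before concluding $\xt\left(\S_{0}\right)\ge0$. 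None of that analysis appears in your text, and the phrase ``granting this'' concedes as much. Either carry out that argument in full or, as the paper does, cite the result explicitly; with that one reference supplied, your proof closes and is essentially identical to the paper's.
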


\begin{proof}
Since $\x\left(\os\right)$ is a birational invariant, then, in view
of Theorem \ref{thm:classificazione}, we can suppose $t\left(\S\right)\neq1$.
Moreover, in view of \ref{fact:hodgeshows} we have
\[
\ks^{2}+\xt\left(\S\right)<0.
\]
If $\ks^{2}<0$, then we conclude in view of Corollary \ref{cor:ruledess},
if $\xt\left(\S\right)<0$, we conclude in view of \cite[Theorem X.4]{Beau}.
\end{proof}

\section{Embedded surfaces}

\subsection{Del Pezzo surfaces}
\begin{defn}
Let $\S$ be a surface, we say that $\S$ is a \emph{del Pezzo surface}
(also called \emph{Fano surface}) if $-\ks$ is ample. If this is
the case, we denote $d\left(\S\right)=\ks^{2}$, called the \emph{degree
of $\S$}.
\end{defn}

\begin{rem}
\label{rem:DelPezzoarerational}It follows from Corollary \ref{cor:rationalnefandbig}
that a del Pezzo surface is rational.
\end{rem}

\begin{example}
\label{ex:DelPezzoexamples}In view of Remark \ref{rem:amplenef},
we see that $\Pd$ and $\Pu\times\Pu$ are examples of del Pezzo surfaces.
\end{example}

\begin{lem}
\label{lem:DelPezzogradomassimo}Let $\S$ be a del Pezzo surface,
then $d\left(\S\right)\le9$.
\end{lem}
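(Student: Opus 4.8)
The plan is to combine Noether's formula with the fact that del Pezzo surfaces are rational, thereby reducing the desired bound to the trivial observation that the second Betti number is at least one.

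First I would invoke Remark \ref{rem:DelPezzoarerational} to conclude that $\S$ is rational, hence ruled over $\Pu$. Corollary \ref{cor:invariantsruled} then gives $q\left(\S\right)=g\left(\Pu\right)=0$ and $P_{n}\left(\S\right)=0$ for every $n\ge1$; in particular $p_{g}\left(\S\right)=P_{1}\left(\S\right)=0$. Consequently $\x\left(\os\right)=1-q\left(\S\right)+p_{g}\left(\S\right)=1$.

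Next I would feed this into Noether's formula (Fact \ref{fact:hodgeshows}), namely $12\x\left(\os\right)=\ks^{2}+\xt\left(\S\right)$, to obtain $12=\ks^{2}+\xt\left(\S\right)$. Since $b_{1}\left(\S\right)=2q\left(\S\right)=0$, the topological Euler characteristic satisfies $\xt\left(\S\right)=2-2b_{1}\left(\S\right)+b_{2}\left(\S\right)=2+b_{2}\left(\S\right)$, and therefore $\ks^{2}=10-b_{2}\left(\S\right)$.

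It then remains to observe that $b_{2}\left(\S\right)\ge1$: the class of the ample divisor $-\ks$ is a nonzero element of $H^{2}\left(\S,\mathbb{R}\right)$ — nonzero because $\ks^{2}>0$ by Theorem \ref{thm:NakaiMoishezon} — so $H^{2}\left(\S,\mathbb{R}\right)\neq0$. This yields $d\left(\S\right)=\ks^{2}=10-b_{2}\left(\S\right)\le9$. The argument is essentially a bookkeeping computation; the only point requiring genuine care is ensuring that rationality is used correctly to force $q\left(\S\right)=p_{g}\left(\S\right)=0$ (hence $\x\left(\os\right)=1$ and $b_{1}\left(\S\right)=0$), after which the Betti-number inequality is immediate.
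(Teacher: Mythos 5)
Your proof is correct, but it follows a genuinely different route from the paper's. The paper passes to a relatively minimal model: using Proposition \ref{prop:everysurfacesrelativelyminimalmodel}, Remark \ref{rem:DelPezzoarerational}, Proposition \ref{prop:relativelyminimalruled} and Remark \ref{rem:FngeomrigatesuPuno}, it writes $\S$ as an iterated blow-up of $\Pd$ or of $\ff$ ($n\neq1$), and then bounds $d\left(\S\right)=\ks^{2}$ by $\K_{\Pd}^{2}=9$ or $\K_{\ff}^{2}=8$, since each blow-up drops $\K^{2}$ by one. You instead use rationality only to compute $q\left(\S\right)=p_{g}\left(\S\right)=0$, hence $\x\left(\os\right)=1$ and $b_{1}\left(\S\right)=0$, and then read the bound off Noether's formula as $\ks^{2}=10-b_{2}\left(\S\right)\le9$. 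Your argument is more self-contained (it avoids the classification of relatively minimal rational surfaces) and even yields the slightly sharper identity $d\left(\S\right)=10-b_{2}\left(\S\right)$; the one point to be careful about is that the step ``$b_{2}\left(\S\right)\ge1$ because $\ks^{2}>0$'' tacitly identifies the intersection product of divisors with the topological cup product on $H^{2}\left(\S,\Z\right)$, a compatibility the paper does use elsewhere (e.g.\ in Proposition \ref{prop:existssection} and in the $t=1$ case of Proposition \ref{thm:classificazionepreliminare}) but never isolates as a stated fact. The paper's approach, on the other hand, buys extra structural information (which minimal model $\S$ dominates) that is reused in the proof of Theorem \ref{thm:DelPezzoclassification}.
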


\begin{proof}
In view of Proposition \ref{prop:everysurfacesrelativelyminimalmodel},
let $\sigma:\S\to\S_{0}$ be a composition of a finite number of blow
ups with $\S_{0}$ relatively minimal. In view of Remark \ref{rem:DelPezzoarerational}
we see that $\S$ is rational and then, in view of Proposition \ref{prop:relativelyminimalruled}
and Remarks \ref{rem:FngeomrigatesuPuno}, either $\S_{0}=\Pd$ or
$\S_{0}=\ff$ for $n\neq1$. In the first case we have
\[
d\left(\S\right)=\ks^{2}\le\K_{\Pd}^{2}=9,
\]
in the second
\[
d\left(\S\right)\le\K_{\ff}^{2}=\left(-2\C_{0}-\left(2+n\right)F\right)^{2}=8.
\]
\end{proof}
\begin{prop}
\label{prop:DelPezzosolocurveeccezionali}Let $\S$ be a del Pezzo
surface. Then every irreducible curve with negative self-intersection
is a $\left(-1\right)$-curve.
\end{prop}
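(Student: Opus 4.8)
The plan is to extract the two relevant pieces of positivity and feed them into the genus formula. First I would record that, since $-\ks$ is ample, the Nakai--Moishezon criterion (Theorem \ref{thm:NakaiMoishezon})—or merely the definition of ampleness—gives $-\ks\cdot\C>0$, hence $\ks\cdot\C\le-1$, for every irreducible curve $\C\su\S$. On the other hand, the genus formula in the form of Corollary \ref{cor:genusformula} reads $2p_{a}\left(\C\right)-2=\C^{2}+\ks\cdot\C$, and the arithmetic genus is always non-negative (it is $h^{1}\left(\C,\oc\right)$), so $\C^{2}+\ks\cdot\C\ge-2$.

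Now I would suppose $\C$ is an irreducible curve with $\C^{2}<0$, that is $\C^{2}\le-1$. Combining $\C^{2}\le-1$ with $\ks\cdot\C\le-1$ yields $\C^{2}+\ks\cdot\C\le-2$, and together with the lower bound $\C^{2}+\ks\cdot\C\ge-2$ this forces equality throughout. Since both summands are integers at most $-1$ and they sum to exactly $-2$, each must equal $-1$: that is, $\C^{2}=-1$ and $\ks\cdot\C=-1$.

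Finally, the equality $\C^{2}+\ks\cdot\C=-2$ means $p_{a}\left(\C\right)=0$, so by Remark \ref{rem:geometricarithmeticgenus} the curve $\C$ is smooth with $\C\iso\Pu$. Together with $\C^{2}=-1$ this is exactly the definition of a $\left(-1\right)$-curve, and the argument is complete. There is no serious obstacle here: the whole proof is a pigeonholing of the two integer inequalities $\C^{2}\le-1$ and $\ks\cdot\C\le-1$ against the genus bound $\C^{2}+\ks\cdot\C\ge-2$. The only input specific to del Pezzo surfaces is the strict negativity $\ks\cdot\C<0$, which is precisely where ampleness of $-\ks$ enters; everything else is the genus formula and the non-negativity of $p_{a}$.
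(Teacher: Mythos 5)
Your argument is correct and is essentially the paper's own proof: the paper likewise combines ampleness of $-\ks$ (giving $\ks\cdot\C<0$) with Corollary \ref{cor:genusformula} and Remark \ref{rem:geometricarithmeticgenus} to force $\C^{2}=\ks\cdot\C=-1$ and $p_{a}\left(\C\right)=0$. You have simply made explicit the integer pigeonholing that the paper summarizes as ``the only possibility is.''
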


\begin{proof}
Let $\C\su\S$ be an irreducible curve with $\C^{2}<0$. Since $-\ks$
is ample, then $\C\cdot\ks<0$ and hence, in view of Corollary \ref{cor:genusformula},
the only possibility is
\[
\C^{2}=\ks\cdot\C=-1
\]
and $p_{a}\left(\C\right)=0$. In view of Remark \ref{rem:geometricarithmeticgenus}
we conclude that $\C$ is a $\left(-1\right)$-curve.
\end{proof}
\begin{defn}
We say that a finite collection of points on $\Pd$ is \emph{in general
position} if no 3 points lie on a line and no 6 points lie on a conic.
\end{defn}

\begin{thm}
\label{thm:DelPezzoclassification}Let $\S$ be a del Pezzo surface
of degree $d$, then

\begin{itemize}
\item if $d=9$, then $\S\iso\Pd$;
\item if $d=8$, then $\S\iso\Pu\times\Pu$ or $\S\iso\bl_{p}\left(\Pd\right)$;
\item if $1\le d\le7$, then $\S$ is isomorphic to the blow-up of $\Pd$
in $9-d$ general points.
\end{itemize}
\end{thm}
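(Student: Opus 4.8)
The plan is to present $\S$ as an iterated blow-up of a relatively minimal rational surface, identify that surface as $\Pd$ (possibly after rewriting a product), and then read off the general position of the blown-up points from the del Pezzo constraint on negative curves.

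First I would use Remark~\ref{rem:DelPezzoarerational} to know that $\S$ is rational, so by Propositions~\ref{prop:everysurfacesrelativelyminimalmodel} and~\ref{prop:relativelyminimalruled} there is a birational morphism $\sigma\colon\S\to\S_{0}$ onto a relatively minimal model, with $\S_{0}\iso\Pd$ or $\S_{0}\iso\ff$ for some $n\neq1$. The essential reduction is that $\ff$ with $n\ge2$ cannot occur: by Proposition~\ref{prop:proprietaFn} it carries the curve $\C_{0}$ with $\C_{0}^{2}=-n\le-2$, and since $\sigma$ is a composition of blow-ups the strict transform $\tilde{\C_{0}}$ is an irreducible curve on $\S$ with $\tilde{\C_{0}}^{2}\le-2$, contradicting Proposition~\ref{prop:DelPezzosolocurveeccezionali}. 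Hence $\S_{0}\iso\Pd$ or $\S_{0}\iso\Pu\times\Pu=\mathbb{F}_{0}$. Writing $\sigma$ as $r$ successive blow-ups and using $\ks^{2}=\K_{\S_{0}}^{2}-r$ together with $\K_{\Pd}^{2}=9$ and $\K_{\mathbb{F}_{0}}^{2}=8$, the degree $d=\ks^{2}$ determines $r$ in each case.

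For $d=9$ the only option is $\S_{0}\iso\Pd$ with $r=0$, so $\S\iso\Pd$; for $d=8$ we get either $\S_{0}\iso\Pd$ with $r=1$, that is $\S\iso\bl_{p}\left(\Pd\right)$, or $\S_{0}\iso\mathbb{F}_{0}$ with $r=0$, that is $\S\iso\Pu\times\Pu$. For $1\le d\le7$ at least one blow-up occurs, and when $\S_{0}\iso\mathbb{F}_{0}$ I would trade one blow-up of $\mathbb{F}_{0}$ for two blow-ups of $\Pd$: blowing up $\mathbb{F}_{0}$ at a point and contracting (via Theorem~\ref{thm:castelnuovocontractibility}) the strict transform of the fibre through it produces $\mathbb{F}_{1}$, which is $\bl_{p}\left(\Pd\right)$ by Proposition~\ref{prop:F1scoppiamento}; hence $\bl_{x}\left(\mathbb{F}_{0}\right)\iso\bl_{p,q}\left(\Pd\right)$. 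In every case this exhibits $\S$ as $\bl_{p_{1},\dots,p_{r}}\left(\Pd\right)$ with $r=9-d$.

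It remains to see the centers are distinct and in general position. The only tool needed is that every irreducible curve $\C$ on the del Pezzo surface $\S$ satisfies $\C^{2}\ge-1$, which is Proposition~\ref{prop:DelPezzosolocurveeccezionali}. I would apply it three times. If two of the blow-ups were infinitely near, the strict transform of the first exceptional divisor would be an irreducible curve of self-intersection $-2$. If three of the $p_{i}$ lay on a line $L$, its strict transform would have $\tilde{L}^{2}=1-3=-2$. If six lay on a conic $Q$—which we may assume irreducible, since otherwise one component is a line through at least three of the points, already excluded—its strict transform would have $\tilde{Q}^{2}=4-6=-2$. Each of these contradicts $\C^{2}\ge-1$, so the $p_{i}$ are genuine distinct points of $\Pd$ with no three collinear and no six on a conic. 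The step I expect to be the main obstacle is the passage from blow-ups of $\Pu\times\Pu$ to blow-ups of $\Pd$, since elementary transformations of ruled surfaces are not set up in the text and must be justified by hand through $\mathbb{F}_{0}$, $\mathbb{F}_{1}$ and Proposition~\ref{prop:F1scoppiamento}; the remaining arguments are bookkeeping with $\ks^{2}$ under blow-ups and repeated use of the negative-curve obstruction.
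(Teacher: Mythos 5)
Your proposal is correct and follows essentially the same route as the paper: pass to a relatively minimal model, rule out $\ff$ with $n\ge2$ via the strict transform of $\C_{0}$ and Proposition \ref{prop:DelPezzosolocurveeccezionali}, convert blow-ups of $\Pu\times\Pu$ into blow-ups of $\Pd$, and derive distinctness and general position of the centers from the fact that every negative curve is a $\left(-1\right)$-curve. The only (immaterial) difference is that the paper reaches $\Pd$ from $\bl_{x}\left(\Pu\times\Pu\right)$ by contracting the two disjoint strict transforms of the rulings at once, whereas you pass through $\mathbb{F}_{1}$ and Proposition \ref{prop:F1scoppiamento}; both are the same elementary transformation.
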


Viceversa $\mathbb{P}^{2}$, $\Pu\times\Pu$ and every blow-up of $\Pd$ in
$r\le8$ general points are del Pezzo surfaces.
\begin{proof}
As pointed out in Example \ref{ex:DelPezzoexamples}, both $\Pd$
and $\Pu\times\Pu$ are del Pezzo surfaces. Let $1\le r\le8$ be an
integer and let
\[
\mu:\S_{r}\to\Pd
\]
be the blow up of $\Pd$ in $\left\{ p_{1},\dots,p_{r}\right\} $
general points. Then
\[
\K_{\S_{r}}^{2}=\K_{\Pd}^{2}-r=9-r>0.
\]
Let $L\su\Pd$ be a line and
\[
\C=a\mu^{\ast}L+a_{1}E_{1}+\dots+a_{r}E_{r}
\]
be the class of a curve on $\S_{r}$, where $E_{i}$ denotes the exceptional
curve of the $i$-th blow up. Therefore, since points are in general
position, we have
\begin{eqnarray*}
\K_{\S_{r}}\cdot\C & = & \left(-3\mu^{\ast}L+E_{1}+\dots+E_{r}\right)\cdot\left(a\mu^{\ast}L+a_{1}E_{1}+\dots+a_{r}E_{r}\right)\\
 & = & -3a-a_{1}-\dots-a_{r}.
\end{eqnarray*}
In view of Theorem \ref{thm:NakaiMoishezon}, we need to show that
\[
-3a-a_{1}-\dots-a_{r}<0.
\]
Let us assume that $-3a-a_{1}-\dots-a_{r}\ge0$ and let $\C_{0}=\mu_{\ast}\C\su\Pd$
with $a=\deg\left(\C_{0}\right)$. We have
\begin{eqnarray*}
-3a & = & \K_{\Pd}\cdot\mu_{\ast}\C=\mu^{\ast}\K_{\Pd}\cdot\C\\
 & = & \K_{\S_{r}}\cdot\C-E_{1}\cdot\C-\dots-E_{r}\cdot\C,
\end{eqnarray*}
then
\[
\sum_{i=1}^{r}\mu\left(p_{i},\C_{0}\right)=\C\cdot E_{1}+\dots+\C\cdot E_{r}\ge3a.
\]
If $a\le2$, then $\C_{0}$ should be either a line through 3 of the
blown-up points or a conic through 5 of the blown-up points .

Let us suppose $a\ge3$, let
\[
\left\{ p_{r+1},\dots,p_{9}\right\} \su\C_{0}-\left\{ p_{1},\dots,p_{r}\right\}
\]
be distinct points and consider the cubic $\C_{1}$ passing through
$p_{1}\dots,p_{9}$. Then, in view of Example \ref{ex:inttheoryonplane}
and Lemma \ref{lem:disuguaglianzamolteplicita} we have
\[
\C_{0}\cdot\C_{1}=3a\ge\sum_{i=1}^{9}\mu\left(p_{i},\C_{0}\right)\mu\left(p_{i},\C_{1}\right),
\]
therefore, under our assumptions, we get
\[
\sum_{i=1}^{9}\mu\left(p_{i},\C_{0}\right)>\sum_{i=1}^{r}\mu\left(p_{i},\C_{0}\right)\ge3a\ge\sum_{i=1}^{9}\mu\left(p_{i},\C_{0}\right)\mu\left(p_{i},\C_{1}\right)\ge\sum_{i=1}^{9}\mu\left(p_{i},\C_{0}\right),
\]
which is not possible .

Let now $\S$ be a del Pezzo surface, in view of Proposition \ref{prop:everysurfacesrelativelyminimalmodel},
there exists a relatively minimal surface $\S_{0}$ and a finite composition
of blow ups
\[
\sigma:\S=\S_{r}\to\dots\to\S_{1}\to\S_{0},
\]
moreover, in view of Remark \ref{rem:DelPezzoarerational}, Proposition
\ref{prop:relativelyminimalruled} and Remark \ref{rem:FngeomrigatesuPuno}
we can assume $\S_{0}=\Pd$ or $\S_{0}=\ff$ for $n\neq1$.

If $\S_{0}=\Pd$, then, in view of Remark \ref{rem:DelPezzoarerational},
we see that no blown-up point may lie on the exceptional divisor of
a previous blow-up, then $\S$ is the blow-up of $\Pd$ in $r$ distinct
points, so that $d=\ks^{2}=9-r$. In view of Proposition \ref{lem:DelPezzogradomassimo}
we conclude that $r\le8$. If 3 points lie on a line $L$ and if $\tilde{L}$
denotes the strict transform of $L$ via $\sigma$, then $\tilde{L}^{2}=-2$,
which is not possibile in view of Remark \ref{rem:DelPezzoarerational}
. In the same way one sees that no 6 points
lie on a conic.

If $\S=\S_{0}=\Pu\times\Pu$, then $\S$ is a del Pezzo surface with
$d=8$. If $r\ge1$, then $\S_{1}$ contains two non-intersecting
$\left(-1\right)$-curves $C_{1}$ and $C_{2}$ and, contracting them,
we get a birational morphism $\S_{1}\to\Pd$ and we are in the previous
case.

If $\S_{0}=\ff$ with $n\ge2$, in view of \ref{prop:proprietaFn},
let $\C_{0}\su\ff$ be a curve such that $\C_{0}^{2}=-n$. If $\tilde{\C_{0}}$
denotes the strict transform of $\C_{0}$ via $\sigma$, then $\tilde{\C_{0}}^{2}<-1$
and it is not possible in view of Proposition \ref{prop:DelPezzosolocurveeccezionali}.
\end{proof}
\begin{cor}
Let $\S\su\mathbb{P}^{3}$ be a cubic surface, then $\S$ is isomorphic to
$\Pd$ blown up in 6 general points.
\end{cor}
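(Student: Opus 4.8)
The plan is to identify a (smooth) cubic surface $\S\subseteq\Pt$ as a del Pezzo surface of degree $3$ and then invoke the classification of del Pezzo surfaces, Theorem~\ref{thm:DelPezzoclassification}: since $1\le 3\le 7$, that theorem immediately yields that $\S$ is isomorphic to the blow-up of $\Pd$ in $9-3=6$ general points. Thus the whole content reduces to two verifications: that $-\ks$ is ample, and that $\ks^{2}=3$.

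For the first, I would compute $\ks$ by the adjunction formula for a smooth hypersurface. Writing $\S\li\mathcal{O}_{\Pt}(3)$ for the class of the surface and recalling $\K_{\Pt}\li\mathcal{O}_{\Pt}(-4)$, adjunction gives
\[
\ks\iso\bigl(\K_{\Pt}\ot\mathcal{O}_{\Pt}(\S)\bigr)_{|\S}\iso\mathcal{O}_{\Pt}(-1)_{|\S}=\os(-1),
\]
where $\os(1)$ denotes the restriction of the hyperplane sheaf. Hence $-\ks\iso\os(1)$, which is the restriction to the closed subvariety $\S$ of the very ample sheaf $\mathcal{O}_{\Pt}(1)$, and is therefore itself very ample, in particular ample. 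Consequently $\S$ is a del Pezzo surface.

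For the degree, I would use $d(\S)=\ks^{2}=\os(1)^{2}$ and identify $\os(1)^{2}$ with the degree of $\S$ as an embedded surface in $\Pt$: two general hyperplane sections of $\S$ meet in $\deg\S=3$ points, so $\os(1)^{2}=3$ and hence $\ks^{2}=3$. Feeding $d=3$ into Theorem~\ref{thm:DelPezzoclassification} finishes the proof; note that the \emph{general position} of the six points is part of the conclusion of that theorem, so nothing further about the configuration needs to be checked by hand.

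The only step that is not literally established earlier in the text is the adjunction formula $\omega_{\S}\iso(\omega_{\Pt}\ot\mathcal{O}_{\Pt}(\S))_{|\S}$ for the smooth hypersurface $\S\subseteq\Pt$; this is the main (and essentially the only) obstacle, and I would either cite it from \cite{HA} or derive it from the conormal exact sequence of $\S$ in $\Pt$ together with the determinant of the cotangent sequence. Everything else is a direct application of the del Pezzo classification, so the argument is short once that formula is in hand.
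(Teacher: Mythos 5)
Your proposal is correct and follows essentially the same route as the paper: adjunction on the smooth cubic gives $\ks\li-H_{|\S}$, so $-\ks$ is very ample and $\ks^{2}=\deg\S=3$, whence Theorem~\ref{thm:DelPezzoclassification} applies with $d=3$. The adjunction formula you flag as the one unproved ingredient is exactly what the paper also imports by citation (as \cite[Corollary 6.3.16]{Ke}), so there is no gap.
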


\begin{proof}
In view of adjunction formula \cite[Corollary 6.3.16]{Ke}, if
$H\su\mathbb{P}^{3}$ is an hyperplane we have

\[
\ks=\left(\K_{\mathbb{P}^{3}}+\S\right)_{|\S}=\left(-4H+3H\right)_{|\S}=-H_{|\S},
\]
that is $-\ks$ is very ample. Therefore $\S$ is a del Pezzo surface
with $d\left(\S\right)=3$ and we conclude in view of Theorem \ref{thm:DelPezzoclassification}.
\end{proof}

\subsection{Scrolls}
\begin{defn}
An embedded geometrically ruled surface $\S=\pc\left(\mathcal{E}\right)\su\PN$
is called a \emph{scroll }if every fibre $F$ has degree 1.
\end{defn}

\begin{thm}
\label{thm:RATIONALSCROOLembedding}For every $k>n\ge0$ there exists
an embedding of $\ff$ as a rational scroll of degree $2k-n$ in $\mathbb{P}^{2k-n+1}$.
\end{thm}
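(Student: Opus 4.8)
The plan is to build the embedding directly from the very ample linear system $|\Lambda_{k}|$ with $\Lambda_{k}=\C_{0}+kF$ furnished by Theorem \ref{thm:RATIONALSCROLLSveryample}, and then merely to read off the degree and the ambient dimension. Since we assume $k>n$, that theorem produces a closed immersion $\varphi_{|\Lambda_{k}|}:\ff\hookrightarrow\mathbb{P}^{N}$ with $\varphi_{|\Lambda_{k}|}^{\ast}\mathcal{O}\left(1\right)\iso\mathcal{O}_{\ff}\left(\Lambda_{k}\right)$, where $N=h^{0}\left(\ff,\mathcal{O}_{\ff}\left(\Lambda_{k}\right)\right)-1$. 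It then remains to verify that $N=2k-n+1$ and that the image has degree $2k-n$ and is a scroll.

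First I would compute the degree of the image, which equals the self-intersection of the pulled-back hyperplane class $\Lambda_{k}$. Using the relations $\C_{0}^{2}=-n$, $\C_{0}\cdot F=1$ and $F^{2}=0$ from Proposition \ref{prop:proprietaFn}, one finds
\[
\Lambda_{k}^{2}=\left(\C_{0}+kF\right)^{2}=\C_{0}^{2}+2k\left(\C_{0}\cdot F\right)+k^{2}F^{2}=2k-n,
\]
so the embedded surface has degree $2k-n$, as required.

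Next I would pin down $N$ by computing $h^{0}\left(\ff,\mathcal{O}_{\ff}\left(\Lambda_{k}\right)\right)$. Since $\Lambda_{k}\cdot F=1\ge0$, Proposition \ref{prop:coomologiacommutapushforward} reduces the problem to $\Pu$, giving $h^{0}\left(\ff,\mathcal{O}_{\ff}\left(\Lambda_{k}\right)\right)=h^{0}\left(\Pu,\pi_{\ast}\mathcal{O}_{\ff}\left(\Lambda_{k}\right)\right)$. The projection formula together with the identification $\pi_{\ast}\mathcal{O}_{\ff}\left(\C_{0}\right)\iso\mathcal{O}_{\Pu}\op\mathcal{O}_{\Pu}\left(-n\right)$ (the pushforward of the tautological sheaf of the normalized bundle, exactly as computed inside the proof of Theorem \ref{thm:RATIONALSCROLLSveryample}) yields
\[
\pi_{\ast}\mathcal{O}_{\ff}\left(\Lambda_{k}\right)=\pi_{\ast}\mathcal{O}_{\ff}\left(\C_{0}\right)\ot\mathcal{O}_{\Pu}\left(k\right)\iso\mathcal{O}_{\Pu}\left(k\right)\op\mathcal{O}_{\Pu}\left(k-n\right).
\]
Because $k>n\ge0$ forces $k-n\ge1$, both summands have non-negative degree, so
\[
h^{0}\left(\ff,\mathcal{O}_{\ff}\left(\Lambda_{k}\right)\right)=\left(k+1\right)+\left(k-n+1\right)=2k-n+2,
\]
whence $N=2k-n+1$, exactly as claimed.

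Finally, the image is a scroll essentially by construction: $\ff$ is geometrically ruled over $\Pu$, and each fibre satisfies $\Lambda_{k}\cdot F=1$, so every fibre is carried to a line, i.e. has degree $1$. There is no serious obstacle in this argument; the only point demanding care is the identification $\pi_{\ast}\mathcal{O}_{\ff}\left(\C_{0}\right)\iso\mathcal{O}_{\Pu}\op\mathcal{O}_{\Pu}\left(-n\right)$, which hinges on taking $\C_{0}$ to be the negative tautological section of the \emph{normalized} presentation $\ff=\mathbb{P}_{\Pu}\left(\mathcal{O}_{\Pu}\op\mathcal{O}_{\Pu}\left(-n\right)\right)$, so that $\pi_{\ast}$ of its associated sheaf recovers the defining bundle.
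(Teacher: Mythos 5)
Your argument is correct and coincides with the paper's own proof: both rely on the very ampleness of $|\Lambda_{k}|$ from Theorem \ref{thm:RATIONALSCROLLSveryample}, compute $\Lambda_{k}^{2}=2k-n$ from the intersection relations of Proposition \ref{prop:proprietaFn}, and obtain $h^{0}\left(\ff,\mathcal{O}_{\ff}\left(\Lambda_{k}\right)\right)=2k-n+2$ via Proposition \ref{prop:coomologiacommutapushforward} and the projection formula applied to $\pi_{\ast}\mathcal{O}_{\ff}\left(\C_{0}\right)\iso\mathcal{O}_{\Pu}\op\mathcal{O}_{\Pu}\left(-n\right)$. The only difference is that you make explicit the (correct) observation that $\Lambda_{k}\cdot F=1$ forces each fibre to map to a line, which the paper leaves implicit.
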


\begin{proof}
It follows from Theorem \ref{thm:RATIONALSCROLLSveryample} that $|\Lambda_{k}|$
is a very ample linear system on $\ff$, moreover
\[
\Lambda_{k}^{2}=\C_{0}^{2}+2k\C_{0}\cdot F=2k-n.
\]
Since $\Lambda_{k}\cdot F=1$, in view of Proposition \ref{prop:coomologiacommutapushforward}
we have
\[
h^{0}\left(\ff,\mathcal{O}_{\ff}\left(\Lambda_{k}\right)\right)=h^{0}\left(\Pu,\pi_{\ast}\mathcal{O}_{\ff}\left(\Lambda_{k}\right)\right)
\]
and hence, in view of projection formula, we compute
\begin{eqnarray*}
h^{0}\left(\Pu,\pi_{\ast}\mathcal{O}_{\ff}\left(\Lambda_{k}\right)\right) & = & h^{0}\left(\Pu,\pi_{\ast}\left(\mathcal{O}_{\ff}\left(\C_{0}\right)\ot\pi^{\ast}\mathcal{O}_{\Pu}\left(k\right)\right)\right)\\
 & = & h^{0}\left(\Pu,\pi_{\ast}\mathcal{O}_{\ff}\left(\C_{0}\right)\ot\mathcal{O}_{\Pu}\left(k\right)\right)\\
 & = & h^{0}\left(\Pu,\mathcal{O}_{\Pu}\left(k\right)\right)+h^{0}\left(\Pu,\mathcal{O}_{\Pu}\left(k-n\right)\right)\\
 & = & k+1+k-n+1=2k-n+2.
\end{eqnarray*}
\end{proof}
\begin{example}
For $n=0$ and $k=1$ we recover the isomorphism
\[
\mathbb{F}_{0}\iso Q\su\mathbb{P}^{3}
\]
where $Q$ denotes the smooth quadric surface. For $n=1$ and $k=2$,
in view of  Proposition \ref{prop:F1scoppiamento}, we have an embedding
$\bl_{p}\left(\Pd\right)\su\mathbb{P}^{4}$ as a rational scroll of degree
3.
\end{example}

\begin{notation}
Let $\pi :\pc \left(\mathcal{E}\right)\to\C$ be a geometrically ruled surface. For any point $P\in\C$ we denote the fibre $\pi ^{-1} \left( P\right)$ with $F_P$.
\end{notation}
\begin{lem}
\label{lem:ELLIPTICSCROLLbasepointfree}Let $\pi:\S=\pc\left(\mathcal{E}\right)\to\C$
be a geometrically ruled surface over a smooth curve with $\mathcal{E}$ normalized.
Let $D\in\D\left(\C\right)$ and $H\li\C_{0}+\pi^{\ast}D$ where $\C_{0}$
is a section with $\C_{0}^{2}=e\left(\S\right)$. Then $|H|$ is base-point-free
on $F_{P}$ if and only if
\[
h^{0}\left(\S,\os\left(H\right)\right)=h^{0}\left(\S,\os\left(H-F_{p}\right)\right)+2.
\]
In particular in this case the restriction map
\[
H^{0}\left(\S,\os\left(H\right)\right)\overset{r}{\longrightarrow}H^{0}\left(F_{P},\mathcal{O}_{F_{P}}\left(H\right)\right)
\]
is surjective.
\end{lem}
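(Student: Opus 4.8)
The plan is to reduce everything to the restriction sequence along the fibre $F_{P}$ together with a dimension count on $\Pu$. The whole statement hinges on the single numerical fact that $H$ meets each fibre once.

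First I would record this numerical input. Since $\C_{0}$ is a section we have $\C_{0}\cdot F_{P}=1$ by Proposition \ref{prop:picgeometricallyruled}, while $\pi^{\ast}D$ is numerically a combination of fibres, so $\pi^{\ast}D\cdot F_{P}=0$ by Example \ref{ex:fibresquaredzero}. Hence $H\cdot F_{P}=1$, and Proposition \ref{prop:prodottogrado} gives $\deg\left(\os\left(H\right)_{|F_{P}}\right)=1$. As $F_{P}\iso\Pu$, this means $\os\left(H\right)_{|F_{P}}\iso\mathcal{O}_{\Pu}\left(1\right)$, and therefore $h^{0}\left(F_{P},\mathcal{O}_{F_{P}}\left(H\right)\right)=2$.

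Next I would tensor the structure sequence of the fibre $0\to\os\left(-F_{P}\right)\to\os\to\mathcal{O}_{F_{P}}\to0$ with $\os\left(H\right)$ to obtain
\[
0\to\os\left(H-F_{P}\right)\to\os\left(H\right)\overset{r}{\longrightarrow}\mathcal{O}_{F_{P}}\left(H\right)\to0,
\]
and pass to the long exact cohomology sequence. By exactness at the first two terms, the image of the restriction map $r$ on global sections has dimension $h^{0}\left(\S,\os\left(H\right)\right)-h^{0}\left(\S,\os\left(H-F_{P}\right)\right)$. Since the target $H^{0}\left(F_{P},\mathcal{O}_{F_{P}}\left(H\right)\right)$ has dimension $2$, the stated numerical equality is \emph{precisely} the assertion that $r$ is surjective. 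This observation simultaneously delivers the ``in particular'' clause, so it only remains to identify surjectivity of $r$ with base-point-freeness of $|H|$ along $F_{P}$.

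For that identification I would argue on $F_{P}\iso\Pu$. Note first that, since $r$ is evaluation along the fibre, a point $x\in F_{P}$ lies in $\bs|H|$ if and only if every element of $V=\im\left(r\right)$ vanishes at $x$; thus $|H|$ is base-point-free on $F_{P}$ exactly when $V\su H^{0}\left(\Pu,\mathcal{O}_{\Pu}\left(1\right)\right)$ has no common zero on $\Pu$. Now any proper linear subspace of $H^{0}\left(\Pu,\mathcal{O}_{\Pu}\left(1\right)\right)$ does have a common zero (the zero subspace makes every point a base point, while a one-dimensional subspace is spanned by a single degree-one section, which vanishes at one point of $\Pu$). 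Hence $V$ is base-point-free if and only if $V$ is all of $H^{0}\left(\Pu,\mathcal{O}_{\Pu}\left(1\right)\right)$, that is, if and only if $r$ is surjective. The only point requiring care is this translation between ``the ambient system $|H|$ has no base point on $F_{P}$'' and ``the restricted system is the complete $|\mathcal{O}_{\Pu}\left(1\right)|$'', and it goes through cleanly because $\mathcal{O}_{\Pu}\left(1\right)$ is very ample, so every proper subsystem acquires a base point. Combining the two equivalences proves the lemma, the surjectivity of $r$ being the ``in particular'' statement.
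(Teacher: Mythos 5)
Your proof is correct and follows essentially the same route as the paper: restrict along the fibre via the sequence $0\to\os\left(H-F_{P}\right)\to\os\left(H\right)\to\mathcal{O}_{F_{P}}\left(H\right)\to0$, use $H\cdot F_{P}=1$ to get $h^{0}\left(F_{P},\mathcal{O}_{F_{P}}\left(H\right)\right)=2$, and identify the numerical condition with surjectivity of $r$, hence with base-point-freeness since every proper subspace of $H^{0}\left(\Pu,\mathcal{O}_{\Pu}\left(1\right)\right)$ has a common zero. You merely spell out the rank-$0$, rank-$1$, rank-$2$ trichotomy that the paper states more tersely.
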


\begin{proof}
Let us consider the exact sequence
\[
0\to\os\left(H-F_{P}\right)\to\os\left(H\right)\to\mathcal{O}_{F_{P}}\left(H\right)\to0
\]
and the exact sequence in cohomology
\[
0\to H^{0}\left(\S,\os\left(H-F_{P}\right)\right)\to H^{0}\left(\S,\os\left(H\right)\right)\overset{r}{\longrightarrow}H^{0}\left(F_{P},O_{F_{P}}\left(H\right)\right).
\]
Since $H\cdot F_{P}=1$ and $F_{P}\iso\Pu$, then $\mathcal{O}_{F_{P}}\left(H\right)\iso\mathcal{O}_{\Pu}\left(1\right)$
and
\[
h^{0}\left(F_{P},\mathcal{O}_{F_{P}}\left(H\right)\right)=2.
\]
Therefore $\rg\left(r\right)=0$ if and only if $F_{P}$ lie in the
fixed part of $|H|$, and $\rg\left(r\right)=1$ if and only if $|H|$
has a base point on $F_{P}$.
\end{proof}
\begin{prop}
\label{prop:ELLIPTICSCROLLisomorfismotranneK}Let $\pi:\S=\pc\left(\mathcal{E}\right)\to\C$
be a geometrically ruled surface over a smooth curve with $\mathcal{E}$ normalized.
Let $D\in\D\left(\C\right)$ and $H\li\C_{0}+\pi^{\ast}D$ where $\C_{0}$
is a section with $\C_{0}^{2}=e\left(\S\right)$. If $|H|$ is base-point-free,
then $\varphi_{|H|}$ is an isomorphism on $\S-K$ where
\[
K=\left\{ x\in\S\,\,|\,\,\mbox{there exists \ensuremath{P\in\C}such that \ensuremath{x}is a base point of \ensuremath{|H-F_{p}|}}\right\} .
\]
\end{prop}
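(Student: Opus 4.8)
The plan is to verify the two local conditions that together characterize an isomorphism onto the image—separation of points and separation of tangent vectors—among and at the points of $\S-K$; as in the very ampleness criterion used for Theorem \ref{thm:RATIONALSCROLLSveryample}, these two properties yield that $\varphi_{|H|}$ restricts to an isomorphism on $\S-K$. Since $|H|$ is base-point-free by hypothesis, $\varphi_{|H|}$ is already a morphism, so everything reduces to producing appropriate sections of $\os(H)$ that vanish or fail to vanish at prescribed points and directions.

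First I would dispose of the directions tangent to the fibres. As $\C_0\cdot F=1$ and $\pi^{\ast}D\cdot F=0$, every fibre satisfies $H\cdot F_P=1$, so $\mathcal{O}_{F_P}(H)\iso\mathcal{O}_{\Pu}(1)$. Global base-point-freeness of $|H|$ forces base-point-freeness on each $F_P$, so Lemma \ref{lem:ELLIPTICSCROLLbasepointfree} applies and the restriction $H^{0}(\S,\os(H))\to H^{0}(F_P,\mathcal{O}_{\Pu}(1))$ is surjective. Since $\mathcal{O}_{\Pu}(1)$ is very ample, lifting its sections separates any two points lying on a common fibre and any tangent vector tangent to a fibre; I note that this works for all points, irrespective of $K$.

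The set $K$ enters precisely when I handle the transverse behaviour. Fix $x\in F_P$ with $x\notin K$; then $x$ is not a base point of $|H-F_P|$, so there exists $s'\in H^{0}(\S,\os(H-F_P))$ with $s'(x)\neq0$. Multiplying by a section $t_P\in H^{0}(\S,\os(F_P))$ cutting out the fibre gives $s=t_P s'\in H^{0}(\S,\os(H))$ vanishing along all of $F_P$ but with $ds|_x=s'(x)\,dt_P\neq0$ transverse to the fibre. Paired with a fibre-direction section from the previous step (whose differential has a nonzero component along $F_P$), these two differentials are linearly independent, so $\varphi_{|H|}$ is an immersion at $x$. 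The same construction, applied with $y\notin K$ so that $s'(y)\neq0$ for a point $y$ on a different fibre $F_Q$, produces a section vanishing on $F_P$ (hence at $x$) but not at $y$, separating $x$ from points lying on other fibres; points on a common fibre were already separated above.

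The genuinely delicate point is the local computation $d(t_P s')|_x=s'(x)\,dt_P$: choosing local coordinates $(u,v)$ at $x$ with $v$ a local equation for $F_P$, one has $t_P s'=v\cdot s'$ up to a unit, whose first-order part at $x$ is $s'(x)\,dv$—purely transverse, which is exactly what guarantees independence from the fibre-direction differential $\alpha\,du+\beta\,dv$ with $\alpha\neq0$. Beyond this, the only care needed is organizing the case split cleanly: two distinct points of $\S-K$ either lie on a common fibre or on distinct fibres, and the tangent space at each point of $\S-K$ splits into the fibre direction and a transverse direction, each handled by one of the two mechanisms above. Once both separation properties hold on $\S-K$, the conclusion that $\varphi_{|H|}$ restricts to an isomorphism there follows exactly as in the very ampleness argument.
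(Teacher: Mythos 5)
Your proposal is correct and follows essentially the same route as the paper's proof: base-point-freeness plus Lemma \ref{lem:ELLIPTICSCROLLbasepointfree} give surjectivity onto $H^{0}\left(F_{P},\mathcal{O}_{\Pu}\left(1\right)\right)$, handling points and tangent directions along a fibre, while the definition of $K$ supplies a section of $\left|H-F_{P}\right|$ nonvanishing at the relevant point, whose product with the equation of $F_{P}$ handles the transverse separation. The only difference is cosmetic: you phrase the tangent-vector step via differentials spanning $\mm_{x}/\mm_{x}^{2}$, where the paper speaks of divisors in $\left|H\right|$ through $x$ whose tangent space avoids the given direction.
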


\begin{proof}
We see that $|H|_{|\S-K}$ separates points and tangent directions.
Let $x,y\in\S-K$ be two distinct points and let $t$ be a tangent
vector on $x$. If $x,y$ lie both on $F_{P}$ for $P\in\C$, in view
of Lemma \ref{lem:ELLIPTICSCROLLbasepointfree}, the restriction map
\[
H^{0}\left(\S,\os\left(H\right)\right)\to H^{0}\left(F_{P},\mathcal{O}_{F_{P}}\left(H\right)\right)
\]
is surjective. Then $|H|$ separates $x$ and $y$ since $\mathcal{O}_{F_{P}}\left(H\right)\iso\mathcal{O}_{\Pu}\left(1\right)$
is very ample on $F_{P}$. Let us suppose $x\in F_{P}$ and $y\in F_{Q}$
for $P\neq Q$. Since $x\not\in K$, then it is not a base point for
$|H-F_{Q}|$ and we conclude since elements of $|H-F_{Q}|$ are divisors
which contain $F_{Q}$, there exists $D\in|H|$ which contains $F_{Q}$
(and hence $y$) but not $x$.

Suppose now both $x$ and $t$ lie on $F_{P}$, as before, in view
of Lemma \ref{lem:ELLIPTICSCROLLbasepointfree}, the restriction map
\[
H^{0}\left(\S,\os\left(H\right)\right)\to H^{0}\left(F_{P},\mathcal{O}_{F_{P}}\left(H\right)\right)
\]
is surjective and $\mathcal{O}_{F_{P}}\left(H\right)\iso\mathcal{O}_{\Pu}\left(1\right)$
is very ample on $F_{P}$. Therefore there exists $D\in|H|$ with
$x\in\supp\left(D\right)$ intersecting $F_{P}$ transversally. Let
us suppose that $t$ does not lie on $F_{P}$. Since $x\not\in K$,
then it is not a base point for $|H-F_{P}|$. Let $D'\in|H-F_{P}|$
with $x\not\in\supp\left(D'\right)$, then $x\in\supp\left(D'+F_{P}\right)$
and $T_{x}\left(D\right)=T_{x}\left(F_{P}\right)$, therefore $t$
does not lie on $D$.
\end{proof}
\begin{thm}
\label{thm:ELLIPTICSCROLLisveryample}Let $\pi:\S=\pc\left(\mathcal{E}\right)\to\C$
be a geometrically ruled surface over a smooth curve with $\mathcal{E}$ normalized.
Let $D\in\D\left(\C\right)$ and $H\li\C_{0}+\pi^{\ast}D$ where $\C_{0}$
is a section with $\C_{0}^{2}=e\left(\S\right)$. If
\[
h^{0}\left(\S,\os\left(H-F_{P}-F_{Q}\right)\right)=h^{0}\left(\S,\os\left(H\right)\right)-4
\]
for any $P,Q\in\C$, then $|H|$ is very ample.
\end{thm}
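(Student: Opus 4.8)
The plan is to derive very ampleness of $|H|$ from the two criteria already in place: the base-point-freeness test of Lemma \ref{lem:ELLIPTICSCROLLbasepointfree}, and the fact, recorded in Proposition \ref{prop:ELLIPTICSCROLLisomorfismotranneK}, that $\varphi_{|H|}$ is an isomorphism on $\S-K$. It therefore suffices to show that the numerical hypothesis forces both that $|H|$ is base-point-free and that $K=\emptyset$; once $\varphi_{|H|}$ is everywhere defined and an isomorphism onto its image, it is a closed immersion, so $|H|$ is very ample.

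First I would set up the fibrewise restriction sequences. For all $P,Q\in\C$ we have $H\cdot F_P=1$, and since every fibre is numerically equivalent with $F^2=0$ (Example \ref{ex:fibresquaredzero} and Corollary \ref{cor:fibersnumerically}), also $(H-F_P)\cdot F_Q=1$. Restricting via
\[
0\to\os(H-F_P)\to\os(H)\to\mathcal{O}_{F_P}(H)\to0
\]
and
\[
0\to\os(H-F_P-F_Q)\to\os(H-F_P)\to\mathcal{O}_{F_Q}(H-F_P)\to0,
\]
and using $\mathcal{O}_{F_P}(H)\iso\mathcal{O}_{F_Q}(H-F_P)\iso\mathcal{O}_{\Pu}(1)$, each of which has two global sections, I obtain the chain
\[
h^0(\S,\os(H))\le h^0(\S,\os(H-F_P))+2\le h^0(\S,\os(H-F_P-F_Q))+4.
\]
The hypothesis equates the two outer terms, so both inequalities are in fact equalities, for every $P,Q$.

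The first equality $h^0(\S,\os(H))=h^0(\S,\os(H-F_P))+2$ is exactly the criterion of Lemma \ref{lem:ELLIPTICSCROLLbasepointfree}, so $|H|$ is base-point-free on each fibre, hence base-point-free. The step I expect to require the most care is the second equality: the point is that $H-F_P\li\C_0+\pi^{\ast}(D-[P])$ is again of the form to which Lemma \ref{lem:ELLIPTICSCROLLbasepointfree} applies, now with $D-[P]$ in place of $D$. Applying that lemma to $H-F_P$, the equality $h^0(\S,\os(H-F_P))=h^0(\S,\os(H-F_P-F_Q))+2$ says precisely that $|H-F_P|$ is base-point-free on $F_Q$ for every $Q$, hence base-point-free. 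As this holds for all $P$, no point of $\S$ is a base point of any $|H-F_P|$, so $K=\emptyset$.

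Finally, with $|H|$ base-point-free and $K=\emptyset$, Proposition \ref{prop:ELLIPTICSCROLLisomorfismotranneK} shows $\varphi_{|H|}$ is an isomorphism on all of $\S$, i.e. a closed immersion; therefore $|H|$ is very ample.
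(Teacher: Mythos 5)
Your proposal is correct and takes essentially the same route as the paper: both proofs apply Lemma \ref{lem:ELLIPTICSCROLLbasepointfree} twice --- once to $H$ to get base-point-freeness and once to $H-F_{P}\li\C_{0}+\pi^{\ast}\left(D-\left[P\right]\right)$ to show no point lies in $K$ --- and then conclude via Proposition \ref{prop:ELLIPTICSCROLLisomorfismotranneK}. The only difference is presentational: you force equalities in the chain $h^{0}\left(H\right)\le h^{0}\left(H-F_{P}\right)+2\le h^{0}\left(H-F_{P}-F_{Q}\right)+4$ directly, whereas the paper argues by contradiction.
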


\begin{proof}
Let us suppose $|H|$ is not base-point-free, in view of Lemma \ref{lem:ELLIPTICSCROLLbasepointfree},
there exists $P\in\C$ such that
\[
h^{0}\left(\S,\os\left(H-F_{P}\right)\right)\ge h^{0}\left(\S,\os\left(H\right)\right)-1
\]
and therefore, again in view of Lemma \ref{lem:ELLIPTICSCROLLbasepointfree},
for any point $Q\in\C$ we have
\[
h^{0}\left(\S,\os\left(H-F_{P}-F_{Q}\right)\right)\ge h^{0}\left(\S,\os\left(H\right)\right)-3,
\]
which is a contradiction. Therefore $\varphi_{|H|}$ is a morphism
$\S\to\PN$. Let us suppose it is not an isomorphism in $x\in\S$.
In view of Proposition \ref{prop:ELLIPTICSCROLLisomorfismotranneK},
there exists $P\in\C$ such that $x$ is a base-point of $|H-F_{P}|$.
In view of Lemma \ref{lem:ELLIPTICSCROLLbasepointfree} we would have
\[
h^{0}\left(\S,\os\left(H-F_{P}-F_{Q}\right)\right)\ge h^{0}\left(\S,\os\left(H-F_{P}\right)\right)-1\ge h^{0}\left(\S,\os\left(H\right)\right)-3,
\]
which is again a contradiction.
\end{proof}
\begin{lem}
\label{lem:ELLIPTICSCROLLcoomologiasomma}Let $\pi:\S=\pc\left(\mathcal{E}\right)\to\C$
be a geometrically ruled surface over a smooth curve with $\mathcal{E}$ normalized.
Let $D\in\D\left(\C\right)$ with $H^{1}\left(\C,\oc\left(D\right)\right)=0$
and let $E\in\D\left(C\right)$ such that $\oc\left(E\right)\iso\det\mathcal{E}$.
Then we have
\[
h^{0}\left(\S,\os\left(\C_{0}+\pi^{\ast}D\right)\right)=h^{0}\left(\C,\oc\left(D\right)\right)+h^{0}\left(\C,\oc\left(D+E\right)\right).
\]
\end{lem}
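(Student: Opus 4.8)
The plan is to push everything down to the base curve $\C$ along $\pi$ and turn the computation into an ordinary cohomology long exact sequence on $\C$. Since $\left(\C_{0}+\pi^{\ast}D\right)\cdot F=\C_{0}\cdot F=1\ge0$ for a fibre $F$, Proposition \ref{prop:coomologiacommutapushforward} gives
\[
h^{0}\left(\S,\os\left(\C_{0}+\pi^{\ast}D\right)\right)=h^{0}\left(\C,\pi_{\ast}\os\left(\C_{0}+\pi^{\ast}D\right)\right),
\]
and the projection formula together with $\pi_{\ast}\os\iso\oc$ (Lemma \ref{lem:2.1Hartshorne}) reduces the pushforward to $\pi_{\ast}\os\left(\C_{0}\right)\ot\oc\left(D\right)$. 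So the whole problem becomes the identification of the rank-$2$ sheaf $\pi_{\ast}\os\left(\C_{0}\right)$ on $\C$.

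First I would show $\pi_{\ast}\os\left(\C_{0}\right)\iso\mathcal{E}$. Because $\C_{0}$ is the section attached to the normalized bundle, $\os\left(\C_{0}\right)$ is the tautological sheaf $\os\left(1\right)$ (Remark \ref{rem:tautologicalnumericallyequivalent}), so I can apply $\pi_{\ast}$ to the defining sequence \eqref{eq:deftautological}, namely $0\to\M\to\pi^{\ast}\mathcal{E}\to\os\left(1\right)\to0$. Restricting $\M$ to a fibre gives $\mathcal{O}_{\Pu}\left(-1\right)$, whence $\pi_{\ast}\M=R^{1}\pi_{\ast}\M=0$; combined with $\pi_{\ast}\pi^{\ast}\mathcal{E}\iso\mathcal{E}$ from the projection formula, the pushed-forward sequence yields $\pi_{\ast}\os\left(\C_{0}\right)\iso\mathcal{E}$. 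Thus $h^{0}\left(\S,\os\left(\C_{0}+\pi^{\ast}D\right)\right)=h^{0}\left(\C,\mathcal{E}\ot\oc\left(D\right)\right)$.

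Next I would exploit the normalization of $\mathcal{E}$ to produce the exact sequence $0\to\oc\to\mathcal{E}\to\oc\left(E\right)\to0$. Indeed, since $h^{0}\left(\C,\mathcal{E}\right)>0$, Lemma \ref{lem:rango2sequenzaesatta} gives a sub-line-bundle $\oc\left(D_{0}\right)\su\mathcal{E}$ with $D_{0}\ge0$; if $\deg D_{0}>0$ then $\mathcal{E}\ot\oc\left(-D_{0}\right)$ would have a non-zero section, contradicting normalization, so $D_{0}=0$, and taking determinants forces the quotient to be $\det\mathcal{E}\iso\oc\left(E\right)$. Tensoring this sequence with $\oc\left(D\right)$ and taking cohomology produces
\[
0\to H^{0}\left(\C,\oc\left(D\right)\right)\to H^{0}\left(\C,\mathcal{E}\ot\oc\left(D\right)\right)\to H^{0}\left(\C,\oc\left(D+E\right)\right)\overset{\delta}{\longrightarrow}H^{1}\left(\C,\oc\left(D\right)\right).
\]
The hypothesis $H^{1}\left(\C,\oc\left(D\right)\right)=0$ kills $\delta$, so the three $H^{0}$ spaces form a short exact sequence and their dimensions add, giving exactly $h^{0}\left(\C,\mathcal{E}\ot\oc\left(D\right)\right)=h^{0}\left(\C,\oc\left(D\right)\right)+h^{0}\left(\C,\oc\left(D+E\right)\right)$, which is the claim.

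The step I expect to be the main obstacle is the clean identification $\pi_{\ast}\os\left(\C_{0}\right)\iso\mathcal{E}$: one must be careful that $\C_{0}$ really is the tautological section of the \emph{normalized} bundle, so that $\os\left(\C_{0}\right)$ is genuinely $\os\left(1\right)$ and not a twist $\os\left(1\right)\ot\pi^{\ast}\mathcal{N}$, since such a twist would replace $\mathcal{E}$ by $\mathcal{E}\ot\mathcal{N}$ and alter the right-hand side. Everything else is a formal diagram chase once the fibre restriction $\M_{|F}\iso\mathcal{O}_{\Pu}\left(-1\right)$ has been checked.
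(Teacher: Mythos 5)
Your proof is correct, but it runs the argument on the base curve rather than on the surface, so it is worth contrasting with the paper's route. The paper restricts to the section: it takes the sequence $0\to\os\left(\pi^{\ast}D\right)\to\os\left(\C_{0}+\pi^{\ast}D\right)\to\mathcal{O}_{\C_{0}}\left(\C_{0}+\pi^{\ast}D\right)\to0$, kills $H^{1}\left(\S,\os\left(\pi^{\ast}D\right)\right)=H^{1}\left(\C,\oc\left(D\right)\right)=0$ via Proposition \ref{prop:coomologiacommutapushforward}, and then the whole weight of the proof falls on identifying the normal bundle $\mathcal{O}_{\C_{0}}\left(\C_{0}\right)\iso\det\mathcal{E}$ through the embedding $\C_{0}=\mathbb{P}_{\C}\left(\L\right)\hookrightarrow\pc\left(\mathcal{E}\right)$ coming from the normalization sequence. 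You instead push everything down: your key lemma is $\pi_{\ast}\os\left(\C_{0}\right)\iso\mathcal{E}$, obtained from $\pi_{\ast}\M=R^{1}\pi_{\ast}\M=0$ (since $\M_{|F}\iso\mathcal{O}_{\Pu}\left(-1\right)$) applied to the tautological sequence, and then the splitting of $h^{0}$ comes from the filtration $0\to\oc\to\mathcal{E}\to\oc\left(E\right)\to0$ of $\mathcal{E}$ itself, which you correctly extract from normalization via Lemma \ref{lem:rango2sequenzaesatta}. The two computations are $\pi_{\ast}$ of one another — your short exact sequence on $\C$ is exactly the pushforward of the paper's restriction sequence on $\S$ — but the lemmas invoked are different: the paper needs the normal-bundle computation, you need the identification of the pushforward of the tautological sheaf, and the latter ($\pi_{\ast}\os\left(1\right)\iso\mathcal{E}$) is arguably the more reusable fact. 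Both proofs hinge equally on the identification $\os\left(\C_{0}\right)\iso\os\left(1\right)$ for the distinguished section of the \emph{normalized} bundle (the content of Remark \ref{rem:tautologicalnumericallyequivalent}), and you are right to single this out as the delicate point: for an arbitrary section the isomorphism only holds up to a twist by a pullback from $\C$, which would change $\mathcal{E}$ and hence the right-hand side. Your justification that the sub-line-bundle produced by a global section of a normalized $\mathcal{E}$ must be $\oc$ itself (degree zero and effective) is exactly what makes the chosen $\C_{0}$ the tautological one, so no gap remains.
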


\begin{proof}
Let us consider the exact sequence
\[
0\to\os\left(\pi^{\ast}D\right)\to\os\left(\C_{0}+\pi^{\ast}D\right)\to\mathcal{O}_{\C_{0}}\left(\C_{0}+\pi^{\ast}D\right)\to0.
\]
Looking at the associated cohomology sequence, since
\[
H^{1}\left(\C,\oc\left(D\right)\right)=0
\]
and in view of Proposition \ref{prop:coomologiacommutapushforward},
we conclude that
\[
h^{0}\left(\S,\os\left(\C_{0}+\pi^{\ast}D\right)\right)=h^{0}\left(\C,\oc\left(D\right)\right)+h^{0}\left(\C_{0},\mathcal{O}_{\C_{0}}\left(\C_{0}+\pi^{\ast}D\right)\right).
\]
Moreover we can identify $\C_{0}$ with $\C$ via $\pi$ and, since
$\mathcal{E}$ is normalized, we have an exact sequence
\[
0\to\oc\to\mathcal{E}\to\L\to0
\]
which yields a commutative diagram
\[
\xymatrix{\C_{0}=\mathbb{P}_{\C}\left(\L\right)\ar[rr]^{i}\ar[dr]_{\pi_{|\C_{0}}} &  & \pc\left(\mathcal{E}\right)=\S\ar[dl]^{\pi}\\
 & \C
}
\]
with $i$ embedding such that $i^{\ast}\os\left(1\right)\iso\pi^{\ast}\L$.
Therefore, using our identification and Remark \ref{rem:tautologicalnumericallyequivalent},
we deduce
\[
\L\iso\os\left(1\right)_{|C_{0}}\iso\os\left(\C_{0}\right)_{|\C_{0}}=\mathcal{O}_{\C_{0}}\left(\C_{0}\right).
\]
We conclude since $\L\iso\det\mathcal{E}$.
\end{proof}
\begin{lem}
Let $\pi:\S=\pc\left(\mathcal{E}\right)\to\C$ be a geometrically ruled surface
over a smooth curve with $\mathcal{E}$ normalized. Let $D\in\D\left(\C\right)$
with $H^{1}\left(\C,\oc\left(D\right)\right)=0$ and let $E\in\D\left(C\right)$
such that $\oc\left(E\right)\iso\det\mathcal{E}$. If $|D|$ and $|D+E|$ are
very ample on $\C$, then $|\C_{0}+\pi^{\ast}D|$ is very ample on
$\S$.
\end{lem}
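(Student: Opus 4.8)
The plan is to reduce very ampleness of $|\C_0+\pi^{\ast}D|$ to the numerical criterion of Theorem~\ref{thm:ELLIPTICSCROLLisveryample}. Setting $H\li\C_0+\pi^{\ast}D$, it suffices to verify that
\[
h^0\left(\S,\os\left(H-F_P-F_Q\right)\right)=h^0\left(\S,\os\left(H\right)\right)-4
\]
for every pair of points $P,Q\in\C$, including the case $P=Q$. Since each fibre satisfies $F_P\li\pi^{\ast}[P]$, one has $H-F_P-F_Q\li\C_0+\pi^{\ast}\left(D-P-Q\right)$, so both sides are governed by Lemma~\ref{lem:ELLIPTICSCROLLcoomologiasomma} applied to the divisors $D$ and $D-P-Q$ on $\C$.

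First I would apply Lemma~\ref{lem:ELLIPTICSCROLLcoomologiasomma} to $D$, which is legitimate since $H^1(\C,\oc(D))=0$ by hypothesis, obtaining $h^0(\S,\os(H))=h^0(\C,\oc(D))+h^0(\C,\oc(D+E))$. To repeat this for $D-P-Q$ I must first establish the vanishing $H^1(\C,\oc(D-P-Q))=0$, and this is where the very ampleness of $|D|$ enters. Writing $Z$ for the length-two subscheme $P+Q$ (a double point when $P=Q$) and using the restriction sequence $0\to\oc(D-Z)\to\oc(D)\to\mathcal{O}_Z\to0$, very ampleness of $|D|$ forces $H^0(\C,\oc(D))\to H^0(Z,\mathcal{O}_Z)\iso\cc^2$ to be surjective, so the connecting map into $H^1(\C,\oc(D-Z))$ is zero and $H^1(\C,\oc(D-Z))\iso H^1(\C,\oc(D))=0$. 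With this in hand, Lemma~\ref{lem:ELLIPTICSCROLLcoomologiasomma} gives $h^0(\S,\os(H-F_P-F_Q))=h^0(\C,\oc(D-Z))+h^0(\C,\oc(D+E-Z))$.

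It then remains to compare the dimensions before and after subtracting $Z$ on the base. The same surjectivity argument, applied now to $|D|$ and to $|D+E|$ (both very ample by hypothesis), shows that $h^0(\C,\oc(D-Z))=h^0(\C,\oc(D))-2$ and $h^0(\C,\oc(D+E-Z))=h^0(\C,\oc(D+E))-2$, because a very ample linear system on a curve restricts surjectively onto every length-two subscheme, i.e.\ it separates points and tangent directions. Subtracting the two displayed identities for $H$ and $H-F_P-F_Q$ yields exactly the drop by $4$ required by Theorem~\ref{thm:ELLIPTICSCROLLisveryample}, whence $|\C_0+\pi^{\ast}D|$ is very ample.

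I expect the only genuinely delicate point to be the auxiliary vanishing $H^1(\C,\oc(D-Z))=0$ needed before Lemma~\ref{lem:ELLIPTICSCROLLcoomologiasomma} can be invoked on the twisted divisor: this is \emph{not} among the stated hypotheses and must be extracted from the surjectivity supplied by very ampleness rather than assumed. One must also keep careful track of the case $P=Q$, where $Z$ is a non-reduced length-two subscheme, so that the relevant input from very ampleness is separation of tangent directions rather than separation of distinct points.
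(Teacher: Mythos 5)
Your proposal is correct and follows essentially the same route as the paper: verify the hypothesis of Theorem~\ref{thm:ELLIPTICSCROLLisveryample} by applying Lemma~\ref{lem:ELLIPTICSCROLLcoomologiasomma} to both $D$ and $D-P-Q$, after first securing $H^{1}\left(\C,\oc\left(D-P-Q\right)\right)=0$ from the very ampleness of $|D|$ and then using very ampleness of $|D|$ and $|D+E|$ to get the two drops by $2$. The only cosmetic difference is that you obtain the auxiliary $H^{1}$-vanishing from surjectivity onto the length-two subscheme via the connecting map, while the paper gets it from the dimension count $h^{0}\left(\C,\oc\left(D-P-Q\right)\right)=h^{0}\left(\C,\oc\left(D\right)\right)-2$ together with Riemann--Roch.
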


\begin{proof}
Let $P,Q\in\C$ be two non necessarily distinct points. Since $|D|$
is very ample we have
\[
h^{0}\left(\C,\oc\left(D-P-Q\right)\right)=h^{0}\left(\C,\oc\left(D\right)\right)-2
\]
and hence, in view of \cite[Theorem 8.4.1]{Ke}, we conclude that
\[
h^{1}\left(\C,\oc\left(D-P-Q\right)\right)=0.
\]
Let us set $H=\C_{0}+\pi^{\ast}D$, in view of Lemma \ref{lem:ELLIPTICSCROLLcoomologiasomma}
and since $|D|$ and $|D+E|$ are very ample on $\C$ we deduce that
\begin{eqnarray*}
h^{0}\left(\S,\os\left(H-F_{P}-F_{Q}\right)\right) & = & h^{0}\left(\C,\oc\left(D\right)\right)+h^{0}\left(\C,\oc\left(D+E\right)\right)-4\\
 & = & h^{0}\left(\S,\os\left(H\right)\right)-4,
\end{eqnarray*}
therefore we conclude with Theorem \ref{thm:ELLIPTICSCROLLisveryample}.
\end{proof}
\begin{lem}
\label{lem:ELLIPTICSCROLLperdimostrarebasepointfree}Let $\C$ be
an elliptic curve and $\pi:\S=\pc\left(\mathcal{E}\right)\to\C$ be a $\mathbb{P}^{1}$-bundle
with $e\left(\S\right)=-1$. If $F$ is a fibre of $\pi$, then the
linear system $|\C_{0}+F|$ is base-point-free.
\end{lem}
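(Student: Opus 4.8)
The plan is to combine the two cohomological criteria established just above: Lemma \ref{lem:ELLIPTICSCROLLbasepointfree}, which reduces base-point-freeness on each fibre to a dimension count, with Lemma \ref{lem:ELLIPTICSCROLLcoomologiasomma}, which computes the dimensions on $\S$ in terms of cohomology on $\C$. First recall the structure of $\mathcal{E}$: since $e(\S)=-1$ and $\C$ is elliptic, Theorem \ref{thm:CUBICSCROLLunichepossibilita} gives that $\mathcal{E}$ is indecomposable and normalized, sitting in a non-split sequence $0\to\oc\to\mathcal{E}\to\oc(P_{0})\to0$ with $\deg\oc(P_{0})=1$; in particular $\det\mathcal{E}\iso\oc(E)$ with $\deg E=1$ and $\deg\mathcal{E}=1>0$. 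Write $F\li\pi^{\ast}[x]$ for the given fibre. Since every point of $\S$ lies on some $F_{P}$, it suffices to show $|\C_{0}+F|$ is base-point-free on $F_{P}$ for every $P\in\C$, which by Lemma \ref{lem:ELLIPTICSCROLLbasepointfree} means $h^{0}(\S,\os(\C_{0}+F-F_{P}))=h^{0}(\S,\os(\C_{0}+F))-2$.

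Next I would compute $h^{0}(\S,\os(\C_{0}+F))$. Taking $D=[x]$, which has degree $1$ and hence $H^{1}(\C,\oc(D))=0$ on the elliptic curve, Lemma \ref{lem:ELLIPTICSCROLLcoomologiasomma} together with the fact that a line bundle of positive degree $d$ on an elliptic curve has $h^{0}=d$ yields $h^{0}(\S,\os(\C_{0}+F))=h^{0}(\C,\oc(D))+h^{0}(\C,\oc(D+E))=1+2=3$. So the value to establish is $h^{0}(\S,\os(\C_{0}+F-F_{P}))=1$ for every $P$. For the generic fibres $P\neq x$ this is immediate: here $\C_{0}+F-F_{P}\li\C_{0}+\pi^{\ast}D'$ with $D'=[x]-[P]$ of degree $0$, and $\oc(D')$ is non-trivial (on an elliptic curve $\oc([x]-[P])$ is trivial only when $x=P$), so by Serre duality $H^{1}(\C,\oc(D'))\iso H^{0}(\C,\oc(-D'))=0$ and Lemma \ref{lem:ELLIPTICSCROLLcoomologiasomma} applies, giving $h^{0}(\C,\oc(D'))+h^{0}(\C,\oc(D'+E))=0+1=1$, since $D'$ is non-trivial of degree $0$ while $D'+E$ has degree $1$.

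The main obstacle is the single exceptional fibre $P=x$, where $\C_{0}+F-F_{P}\li\C_{0}$ and Lemma \ref{lem:ELLIPTICSCROLLcoomologiasomma} is unavailable because $H^{1}(\C,\oc)\neq0$ on an elliptic curve. Here I would argue directly. By Proposition \ref{prop:coomologiacommutapushforward} together with the identification $\pi_{\ast}\os(\C_{0})\iso\mathcal{E}$ coming from Remark \ref{rem:tautologicalnumericallyequivalent} and Lemma \ref{lem:2.1Hartshorne}, one has $h^{0}(\S,\os(\C_{0}))=h^{0}(\C,\mathcal{E})$. Since $\mathcal{E}$ is normalized, $h^{0}(\C,\mathcal{E})\ge1$; and if $h^{0}(\C,\mathcal{E})\ge2$, then, as $\deg\mathcal{E}=1>0$, Lemma \ref{lem:rango2sequenzaesatta} produces a sub-line-bundle $\oc(D'')\hookrightarrow\mathcal{E}$ with $D''>0$, whence $h^{0}(\C,\mathcal{E}\ot\oc(-D''))>0$ with $\deg\oc(-D'')<0$, contradicting normalization. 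Therefore $h^{0}(\C,\mathcal{E})=1$, so $h^{0}(\S,\os(\C_{0}))=1$.

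Combining the three cases, the equality $h^{0}(\S,\os(\C_{0}+F-F_{P}))=1=h^{0}(\S,\os(\C_{0}+F))-2$ holds for every $P\in\C$, so by Lemma \ref{lem:ELLIPTICSCROLLbasepointfree} the system $|\C_{0}+F|$ has no base point on any fibre, and hence none on $\S$. I expect the only genuine subtlety to be the degenerate case $F_{P}=F$, precisely because the splitting formula of Lemma \ref{lem:ELLIPTICSCROLLcoomologiasomma} breaks down there; everything else is a routine Riemann–Roch count on the elliptic base.
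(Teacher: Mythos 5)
Your proof is correct, but it organizes the verification differently from the paper, so it is worth comparing the two. The paper's own proof uses that the base locus of $|\C_{0}+F|$ is contained in $\supp\left(\C_{0}+F\right)=\C_{0}\cup F$ and checks the two pieces separately: base-point-freeness on $\C_{0}$ follows from the surjectivity of the restriction to $\C_{0}$ (via $H^{1}\left(\S,\os\left(F\right)\right)=0$, using that $\mathcal{O}_{\C_{0}}\left(\C_{0}+F\right)$ has degree $2$ on the elliptic curve $\C_{0}$ and is hence base-point-free there), while base-point-freeness on $F$ is reduced to $H^{1}\left(\S,\os\left(\C_{0}\right)\right)=0$, which the paper extracts from $\x\left(\os\left(\C_{0}\right)\right)=1$, the vanishing of $h^{2}$, and $h^{0}\left(\S,\os\left(\C_{0}\right)\right)=h^{0}\left(\C,\mathcal{E}\right)=1$. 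You instead run the numerical criterion of Lemma \ref{lem:ELLIPTICSCROLLbasepointfree} uniformly over every fibre $F_{P}$: for $P\neq x$ the twist $\C_{0}+\pi^{\ast}\left(\left[x\right]-\left[P\right]\right)$ involves a non-trivial degree-zero bundle on the elliptic base, so its $H^{1}$ vanishes and Lemma \ref{lem:ELLIPTICSCROLLcoomologiasomma} gives $h^{0}=1$ directly, and only the degenerate fibre $F_{P}=F$ requires the separate computation $h^{0}\left(\S,\os\left(\C_{0}\right)\right)=h^{0}\left(\C,\mathcal{E}\right)=1$. The crux is therefore the same in both arguments --- $h^{0}\left(\C,\mathcal{E}\right)=1$, forced by normalization together with $\deg\mathcal{E}=1$ --- and your derivation of it through Lemma \ref{lem:rango2sequenzaesatta} is in fact more direct than the paper's detour through $\mathcal{E}^{\ast}$ and Serre duality. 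What your route costs is a case analysis over all $P\in\C$; what it buys is that you never leave the fibres, where the restricted system is $\mathcal{O}_{\Pu}\left(1\right)$ and surjectivity of the restriction automatically yields base-point-freeness, so you avoid the separate (and in the paper only implicit) observation that $\mathcal{O}_{\C_{0}}\left(\C_{0}+F\right)$ is base-point-free on $\C_{0}$.
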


\begin{proof}
Let us consider the exact sequence
\[
0\to\os\left(F\right)\to\os\left(\C_{0}+F\right)\to\mathcal{O}_{\C_{0}}\left(\C_{0}+F\right)\to0.
\]
In view of Proposition \ref{prop:coomologiacommutapushforward} we
deduce that the restriction map
\[
H^{0}\left(\S,\os\left(\C_{0}+F\right)\right)\to H^{0}\left(\C_{0},\mathcal{O}_{\C_{0}}\left(\C_{0}+F\right)\right)
\]
is surjective, therefore $|\C_{0}+F|$ has no base-point on $\C_{0}$.
To see it is base-point-free on $F$, we consider the exact sequence
\[
0\to\os\left(\C_{0}\right)\to\os\left(\C_{0}+F\right)\to\mathcal{O}_{F}\left(\C_{0}+F\right)\to0.
\]
Since $\mathcal{O}_{F}\left(\C_{0}+F\right)\iso\mathcal{O}_{\Pu}\left(1\right)$, it
is enough to see that $H^{1}\left(\S,\os\left(\C_{0}\right)\right)=0$.
In view of Corollary \ref{cor:genusformula} we have
\[
\ks\cdot\C_{0}=2g\left(\C_{0}\right)-2-\C_{0}^{2}=-1,
\]
therefore in view of Riemann-Roch
\[
\x\left(\os\left(\C_{0}\right)\right)=1+\x\left(\os\right).
\]
In view of Corollary \ref{cor:invariantsruled} we conclude that $\x\left(\os\right)=0$,
moreover, in view of Remark \ref{Rem:usefulremhodgeindex} and Serre
duality, if $h^{2}\left(\S,\os\left(\C_{0}\right)\right)>0$, we conclude
that $\dim|\C_{0}|\le\dim|\ks|$ which is not possibile in view of
Corollary \ref{cor:invariantsruled}, therefore
\[
h^{0}\left(\S,\os\left(\C_{0}\right)\right)-h^{1}\left(\S,\os\left(\C_{0}\right)\right)=1.
\]
In view of Remark \ref{rem:tautologicalnumericallyequivalent} we
conclude that $\os\left(\C_{0}\right)\iso\os\left(1\right)$, therefore,
since $\C_{0}\cdot F=1$, in view of Proposition \ref{prop:coomologiacommutapushforward}
we have
\[
h^{0}\left(\S,\os\left(\C_{0}\right)\right)=h^{0}\left(\C,\pi_{\ast}\os\left(1\right)\right)=h^{0}\left(\C,\mathcal{E}\right)>0.
\]
We have the exact sequences
\[
0\to\oc\to\mathcal{E}\to\oc\left(P\right)\to0,
\]
and
\[
0\to\oc\left(-P\right)\to\mathcal{E}^{\ast}\to\oc\to0,
\]
then, in view of \cite[Theorems 8.4.1 and 8.5.4]{Ke}, we deduce
that $\x\left(\mathcal{E}\right)=1$ and
\begin{gather*}
h^{0}\left(\C,\mathcal{E}\right)-h^{0}\left(\C,\mathcal{E}^{\ast}\right)=1,
\end{gather*}
in particular $h^{0}\left(\C,\mathcal{E}\right)=h^{1}\left(\C,\mathcal{E}^{\ast}\right)$.
Let us suppose that $h^{0}\left(\C,\mathcal{E}\right)\ge2$, then $h^{0}\left(\C,\mathcal{E}^{\ast}\right)\ge1$,
therefore there exists a section
\[
0\to\oc\to\mathcal{E}^{\ast},
\]
which is not possible in view of Theorem \ref{thm:possiblevalueseS}
.
\end{proof}
\begin{prop}
\label{prop:ELLIPTICSCROLLmoltoampio-1}Let $\C$ be an elliptic curve
and $\pi:\S=\pc\left(\mathcal{E}\right)\to\C$ be a $\mathbb{P}^{1}$-bundle with $e\left(\S\right)=-1$.
Then the linear system $|\C_{0}+2F|$ is very ample on $\S$.
\end{prop}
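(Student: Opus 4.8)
The plan is to verify the numerical criterion of Theorem \ref{thm:ELLIPTICSCROLLisveryample} for $H = \C_{0} + 2F$. First I would write $H \li \C_{0} + \pi^{\ast}D$ with $D = 2[x]$ of degree $2$ on $\C$, so that $\pi^{\ast}D \li 2F$, and fix $E \in \D(\C)$ with $\oc(E) \iso \det\mathcal{E}$; since $e(\S) = -1$ means $\deg(\det\mathcal{E}) = 1$, we have $\deg E = 1$. Because $\C$ is elliptic and $\deg D = 2 > 2g(\C) - 2 = 0$, Serre duality gives $H^{1}(\C, \oc(D)) = 0$, so Lemma \ref{lem:ELLIPTICSCROLLcoomologiasomma} applies. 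Using Riemann-Roch on $\C$ (where invertible sheaves of degree $2$ and $3$ have $h^{0} = 2$ and $h^{0} = 3$), this yields
\[
h^{0}(\S, \os(H)) = h^{0}(\C, \oc(D)) + h^{0}(\C, \oc(D+E)) = 2 + 3 = 5.
\]

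By Theorem \ref{thm:ELLIPTICSCROLLisveryample} it then remains to show $h^{0}(\S, \os(H - F_{P} - F_{Q})) = 1$ for every pair $P, Q \in \C$. Setting $D' = D - [P] - [Q]$, which has degree $0$, one has $H - F_{P} - F_{Q} \li \C_{0} + \pi^{\ast}D'$, and I would distinguish two cases according to the class of $D'$ in $\pic(\C)$. If $D' \not\li 0$, then $H^{1}(\C, \oc(D')) = 0$ (its Serre dual $H^{0}(\C, \oc(-D'))$ vanishes, since $\deg(-D') = 0$ and $-D' \not\li 0$), so Lemma \ref{lem:ELLIPTICSCROLLcoomologiasomma} applies once more; since $h^{0}(\C, \oc(D')) = 0$ and $h^{0}(\C, \oc(D' + E)) = 1$ (as $\deg(D'+E) = 1$ on the elliptic curve), I obtain $h^{0}(\S, \os(H - F_{P} - F_{Q})) = 0 + 1 = 1$, as wanted.

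The remaining case $D' \li 0$, equivalently $[P] + [Q] \li D$, is where the uniform Lemma \ref{lem:ELLIPTICSCROLLcoomologiasomma} breaks down, since its hypothesis $H^{1}(\C, \oc(D')) = 0$ fails. Here, however, $\pi^{\ast}D' \li 0$, so $\os(H - F_{P} - F_{Q}) \iso \os(\C_{0})$, and I would fall back on the computation carried out inside the proof of Lemma \ref{lem:ELLIPTICSCROLLperdimostrarebasepointfree}: there one shows $\os(\C_{0}) \iso \os(1)$, $\pi_{\ast}\os(1) \iso \mathcal{E}$, and $h^{0}(\C, \mathcal{E}) = 1$, whence $h^{0}(\S, \os(H - F_{P} - F_{Q})) = h^{0}(\S, \os(\C_{0})) = 1$. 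In both cases $h^{0}(\S, \os(H - F_{P} - F_{Q})) = 1 = h^{0}(\S, \os(H)) - 4$, so Theorem \ref{thm:ELLIPTICSCROLLisveryample} gives that $|\C_{0} + 2F|$ is very ample.

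The hard part is precisely this degenerate case $[P] + [Q] \li D$: it occurs for the finitely many pairs summing into the class of $D$, and for them the clean additivity of Lemma \ref{lem:ELLIPTICSCROLLcoomologiasomma} is unavailable, so one is forced to use the explicit value $h^{0}(\S, \os(\C_{0})) = 1$ established for the $e = -1$ bundle in the preceding lemma. The rest is routine Riemann-Roch bookkeeping on an elliptic curve.
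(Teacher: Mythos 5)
Your proof is correct, but it takes a genuinely different route from the paper's. You verify the numerical criterion of Theorem \ref{thm:ELLIPTICSCROLLisveryample} directly: you compute $h^{0}\left(\S,\os\left(H\right)\right)=5$ via Lemma \ref{lem:ELLIPTICSCROLLcoomologiasomma} and then show $h^{0}\left(\S,\os\left(H-F_{P}-F_{Q}\right)\right)=1$ for every pair $P,Q$, splitting into cases according to whether $\left[P\right]+\left[Q\right]\li D$; in the degenerate case, where the hypothesis $H^{1}\left(\C,\oc\left(D'\right)\right)=0$ of Lemma \ref{lem:ELLIPTICSCROLLcoomologiasomma} fails, you correctly fall back on the identity $h^{0}\left(\S,\os\left(\C_{0}\right)\right)=h^{0}\left(\C,\mathcal{E}\right)=1$ extracted from the proof of Lemma \ref{lem:ELLIPTICSCROLLperdimostrarebasepointfree}. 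The paper instead proves very ampleness of $|\C_{0}+2F|$ by hand: it separates points and tangent directions case by case (both on $\C_{0}$, both on a fibre, and the mixed configurations), using surjectivity of the restriction maps to $\C_{0}$ and to $F$ together with the base-point-freeness of $|\C_{0}+F|$ from Lemma \ref{lem:ELLIPTICSCROLLperdimostrarebasepointfree}. Your route is more uniform and actually exercises the general criterion that the paper sets up but then bypasses for this one surface (precisely because the additivity lemma degenerates when $\left[P\right]+\left[Q\right]\li D$), at the price of the ad hoc computation of $h^{0}\left(\S,\os\left(\C_{0}\right)\right)$; the paper's route avoids that degenerate cohomology computation at the cost of a longer geometric case analysis. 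One small inaccuracy in your closing remark: the pairs with $\left[P\right]+\left[Q\right]\li D$ are not finitely many --- for each $P$ there is exactly one $Q$ with $\left[P\right]+\left[Q\right]\li D$ by the group law on the elliptic curve, so these pairs sweep out a curve in $\C\times\C$ --- but this is immaterial, since your argument treats every such pair uniformly.
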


\begin{proof}
We have an exact sequence
\[
0\to\os\left(2F\right)\to\os\left(H\right)\to\mathcal{O}_{\C_{0}}\left(H\right)\to0,
\]
in view of Proposition \ref{prop:coomologiacommutapushforward} we
deduce that $H^{1}\left(\S,\os\left(2F\right)\right)=0$ and then
the restriction map
\[
H^{0}\left(\S,\os\left(H\right)\right)\to H^{0}\left(\C_{0},\mathcal{O}_{\C_{0}}\left(H\right)\right)
\]
is surjective, therefore $|H|$ separates points and tangent vectors
on $\C_{0}$ since $|H|_{|\C_{0}}$ is very ample.

Let us consider now the exact sequence
\[
0\to\os\left(\C_{0}+F\right)\to\os\left(H\right)\to\mathcal{O}_{F}\left(H\right)\to0,
\]
in view of Proposition \ref{prop:coomologiacommutapushforward} we
deduce that $H^{1}\left(\S,\os\left(\C_{0}+F\right)\right)=0$ and
then the restriction map
\[
H^{0}\left(\S,\os\left(H\right)\right)\to H^{0}\left(F,\mathcal{O}_{F}\left(H\right)\right)
\]
is surjective, therefore $|H|$ separates points and tangent vectors
on $F$ since $\mathcal{O}_{F}\left(H\right)\iso\mathcal{O}_{\Pu}\left(1\right)$ is
very ample.

Let $P\in\C_{0}$ and $Q\not\in\C_{0}$, we can suppose that $P,Q$
do not lie on the same fibre. If $Q\in F$, then we can take fibres
$F'$ and $F''$ such that
\[
P\in\supp\left(\C_{0}+F'+F''\right)
\]
but
\[
Q\not\in\supp\left(\C_{0}+F'+F''\right).
\]

We are left to the case when $P,Q\not\in\C_{0}$ and they do not lie
on the same fibre: let us suppose $P\in F$. In view of Lemma \ref{lem:ELLIPTICSCROLLperdimostrarebasepointfree},
there exists $D\in|\C_{0}+F|$ such that $P\not\in D$, therefore
taking a fibre $F'$ with $Q\in F'$ we have $D+F'\in|H|$ with
\[
P\not\in\supp\left(D+F'\right)
\]
and
\[
Q\in\supp\left(D+F'\right).
\]
To see that $|\C_{0}+2F|$ separates tangent vectors, we only need
to check the case when $P\in F$ and $t$ is a tangent direction not
lying on $F$, but, in view of Lemma \ref{lem:ELLIPTICSCROLLperdimostrarebasepointfree},
it follows as in the proof of Proposition \ref{prop:ELLIPTICSCROLLisomorfismotranneK}.
\end{proof}
\begin{thm}[{\cite[Exercise V, 2.13]{HA}}]
\label{thm:embeddingelliptic}For every $e\ge-1$ and $n\ge e+3$,
there exists an elliptic scroll of degree $d=2n-e$ in $\mathbb{P}^{d-1}$.
In particular, there exists an elliptic scroll of degree 5 in $\mathbb{P}^{4}$.
\end{thm}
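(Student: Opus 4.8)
The plan is to realize the desired object as a geometrically ruled surface $\pi:\S=\pc\left(\mathcal{E}\right)\to\C$ over a fixed elliptic curve $\C$, embedded by a linear system $|H|$ with $H\li\C_{0}+\pi^{\ast}D$, where $\C_{0}$ is the section satisfying $\os\left(\C_{0}\right)\iso\os\left(1\right)$, so that $\C_{0}^{2}=\deg\left(\det\mathcal{E}\right)=-e$ by Proposition~\ref{prop:picgeometricallyruled} and Remark~\ref{rem:normalization}, and $D\in\D\left(\C\right)$ is chosen on the base with $\deg D=n$. First I would settle the existence of a normalized $\mathcal{E}$ with $e\left(\S\right)=e$: for $e\ge1$ take the decomposable bundle $\mathcal{E}\iso\oc\op\L$ with $\deg\L=-e$ from Theorem~\ref{thm:possiblevalueseS}, for $e=0$ any such normalized bundle, and for $e=-1$ the unique indecomposable normalized bundle produced by Theorem~\ref{thm:CUBICSCROLLunichepossibilita}. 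This already exhausts every value $e\ge-1$.

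Next I would record the two numerical facts pinning down the embedding. Since $\C_{0}\cdot F=1$ and $F^{2}=0$ one gets $H\cdot F=1$, so every fibre is mapped to a line and the image is genuinely a scroll, while $H^{2}=\C_{0}^{2}+2\deg D=2n-e=d$ gives the degree. For the dimension of the ambient projective space I would invoke Lemma~\ref{lem:ELLIPTICSCROLLcoomologiasomma}, whose hypothesis $H^{1}\left(\C,\oc\left(D\right)\right)=0$ holds because $\deg D=n>0$ on an elliptic curve; writing $\oc\left(E\right)\iso\det\mathcal{E}$ with $\deg E=-e$, it yields
\[
h^{0}\left(\S,\os\left(H\right)\right)=h^{0}\left(\C,\oc\left(D\right)\right)+h^{0}\left(\C,\oc\left(D+E\right)\right)=n+\left(n-e\right)=2n-e=d,
\]
using that a line bundle of positive degree $k$ on an elliptic curve has $h^{0}=k$. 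Hence $\varphi_{|H|}$ maps $\S$ into $\mathbb{P}^{d-1}$.

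The heart of the argument is very ampleness of $|H|$, and here I would split into cases. In the main range $n\ge3$ — automatic when $e\ge0$, since then $n\ge e+3\ge3$, and requiring only $n\ge3$ when $e=-1$ — both $|D|$ and $|D+E|$ are very ample on $\C$: their degrees are $n\ge3$ and $n-e\ge3$ respectively, and on an elliptic curve degree $\ge3$ is exactly the threshold for very ampleness. I would then conclude that $|H|$ is very ample on $\S$ directly from the Lemma immediately following Lemma~\ref{lem:ELLIPTICSCROLLcoomologiasomma}, which reduces very ampleness of $|\C_{0}+\pi^{\ast}D|$ on $\S$ to very ampleness of $|D|$ and $|D+E|$ on $\C$.

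The main obstacle is the single leftover case $e=-1$, $n=2$, i.e. the degree-$5$ scroll in $\mathbb{P}^{4}$ singled out in the statement. Here $\deg D=2$, so $|D|$ is merely base-point-free (a double cover of $\Pu$) rather than very ample, and the clean reduction above breaks down; this is precisely why the delicate Lemma~\ref{lem:ELLIPTICSCROLLperdimostrarebasepointfree} and Proposition~\ref{prop:ELLIPTICSCROLLmoltoampio-1} were isolated for the invariant $e\left(\S\right)=-1$. For this case I would appeal directly to Proposition~\ref{prop:ELLIPTICSCROLLmoltoampio-1}, which shows that $|\C_{0}+2F|$ is very ample; a direct check gives $H^{2}=\C_{0}^{2}+4=5$ and $h^{0}\left(\S,\os\left(H\right)\right)=5$, producing the elliptic scroll of degree $5$ in $\mathbb{P}^{4}$. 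Assembling the two cases yields, for every $e\ge-1$ and $n\ge e+3$, an embedding of the elliptic ruled surface of invariant $e$ as a scroll of degree $2n-e$ in $\mathbb{P}^{2n-e-1}$, which is the assertion.
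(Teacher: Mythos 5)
Your proposal is correct and follows essentially the same route as the paper: construct $\S=\pc\left(\mathcal{E}\right)$ over an elliptic curve with the prescribed invariant $e$, take $H\li\C_{0}+\pi^{\ast}D$ with $\deg D=n$, compute $H^{2}$ and $h^{0}$ via Lemma \ref{lem:ELLIPTICSCROLLcoomologiasomma}, reduce very ampleness to that of $|D|$ and $|D+E|$ on the base in the range where both have degree at least $3$, and handle the exceptional case $e=-1$, $n=2$ separately via Proposition \ref{prop:ELLIPTICSCROLLmoltoampio-1}. The only cosmetic difference is that you organize the cases by the threshold $n\ge3$ versus the single leftover case, while the paper splits first on $e=-1$ versus $e\ge0$; the content is identical.
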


\begin{proof}
Let $\C$ be an elliptic curve and $\pi:\S=\pc\left(\mathcal{E}\right)\to\C$
be a $\mathbb{P}^{1}$-bundle with $\mathcal{E}$ normalized. In view of Theorems \ref{thm:possiblevalueseS}
and \ref{thm:CUBICSCROLLunichepossibilita} we know that all values
$e\left(\S\right)\ge-1$ occur. Let $E\in\D\left(C\right)$ such that
$\oc\left(E\right)\iso\det\mathcal{E}$, we distinguish two cases

\begin{description}
\item [{$e=-1$}] If $e\left(\S\right)=-1$ and $n=2$, let us consider
$H=\C_{0}+2F\in\D\left(\S\right)$, then $H^{2}=5$, moreover, in
view of Lemma \ref{lem:ELLIPTICSCROLLcoomologiasomma}, since $\deg\left(\det\mathcal{E}\right)=1$
we have
\[
h^{0}\left(\S,\os\left(H\right)\right)=5.
\]
We conclude since $H\cdot F=1$ and $|H|$ is very ample in view of
Proposition \ref{prop:ELLIPTICSCROLLmoltoampio-1}. If $n\ge3$, let
$D$ be a divisor on $\C$ with
\[
\deg\left(D\right)=n.
\]
Then $\deg\left(D+E\right)=n-e\ge4$, therefore both $|D|$ and $|D+E|$
are very ample on $\C$, and hence, in view of Lemma \ref{lem:ELLIPTICSCROLLcoomologiasomma},
we have
\begin{eqnarray*}
h^{0}\left(\S,\os\left(\C_{0}+\pi^{\ast}D\right)\right) & = & h^{0}\left(\C,\oc\left(D\right)\right)+h^{0}\left(\C,\oc\left(D+E\right)\right)\\
 & = & 2n+1.
\end{eqnarray*}
We also compute
\[
\left(\C_{0}+\pi^{\ast}D\right)^{2}=\C_{0}^{2}+2\C_{0}\cdot\pi^{\ast}D=2n+1
\]
and $\left(\C_{0}+\pi^{\ast}D\right)\cdot F=1$, therefore $|\C_{0}+\pi^{\ast}D|$
embeds $\S$ in $\mathbb{P}^{2n}$ as an elliptic scroll of degree $2n+1$.
\item [{$e\ge0$}] Let $e\left(\S\right)=e$, $n\ge e+3\ge0$, and $D$
be a divisor on $\C$ with $\deg\left(D\right)=n.$ Then $\deg\left(D+E\right)=n-e\ge3$,
therefore both $|D|$ and $|D+E|$ are very ample on $\C$, and hence,
in view of Lemma \ref{lem:ELLIPTICSCROLLcoomologiasomma}, we have
\begin{eqnarray*}
h^{0}\left(\S,\os\left(\C_{0}+\pi^{\ast}D\right)\right) & = & h^{0}\left(\C,\oc\left(D\right)\right)+h^{0}\left(\C,\oc\left(D+E\right)\right)\\
 & = & 2n-e.
\end{eqnarray*}
We also compute
\[
\left(\C_{0}+\pi^{\ast}D\right)^{2}=\C_{0}^{2}+2\C_{0}\cdot\pi^{\ast}D=2n-e
\]
and $\left(\C_{0}+\pi^{\ast}D\right)\cdot F=1$, therefore $|\C_{0}+\pi^{\ast}D|$
embeds $\S$ in $\mathbb{P}^{2n-e-1}$ as an elliptic scroll of degree $2n-e$.
\end{description}
\end{proof}

\subsection{Surfaces of minimal and almost minimal degree}
\begin{lem}
\label{lem:fondamentalegrado}Let $X\su\mathbb{P}^{d}$ be an irreducible
and non-degenerate variety of dimension $r$. Then
\[
\deg\left(X\right)\ge d-r+1.
\]
\end{lem}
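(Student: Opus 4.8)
The plan is to argue by induction on the dimension $r = \dim X$, reducing at each step to a general hyperplane section.

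The base case is $r = 0$. Here $X$ is a finite reduced set of points which, being non-degenerate, spans $\mathbb{P}^d$; any spanning set of points in $\mathbb{P}^d$ has at least $d+1$ elements, so $\deg(X)$, which equals the number of points, is at least $d+1 = d - 0 + 1$. For the inductive step, assume the statement in all dimensions $< r$ and let $\dim X = r \geq 1$. I would take a general hyperplane $H \subseteq \mathbb{P}^d$ and set $Y = X \cap H \subseteq H \cong \mathbb{P}^{d-1}$. Three facts are needed: (i) $\dim Y = r-1$ and $\deg Y = \deg(X)$, which hold because $H$ is general and does not contain $X$; (ii) $Y$ is irreducible when $r \geq 2$ (Bertini's irreducibility theorem, available since we work over $\mathbb{C}$) and is a reduced set of $\deg(X)$ points when $r = 1$; and (iii) $Y$ is non-degenerate in $H$. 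Granting these, $Y$ is an irreducible non-degenerate subvariety of $\mathbb{P}^{d-1}$ of dimension $r-1$, so the inductive hypothesis gives $\deg Y \geq (d-1)-(r-1)+1 = d-r+1$; since $\deg Y = \deg(X)$, the claim follows.

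The main obstacle is (iii), the non-degeneracy of the general hyperplane section. I would argue by contradiction: suppose that for general $H$ the linear span $W = \langle Y \rangle \subseteq H$ is a proper subspace, so that $W$ has codimension at least $2$ in $\mathbb{P}^d$. Since $W \subseteq H$ one has $X \cap W \subseteq X \cap H = Y$, while $Y \subseteq W$ forces $Y \subseteq X \cap W$; hence $Y = X \cap W$ and therefore $\dim(X \cap W) = r-1$. Thus the codimension-$\geq 2$ linear space $W$ meets $X$ in the excess dimension $r-1$ instead of the expected $r-2$, and this occurs for all $H$ in a dense open family. The contradiction is then extracted from a dimension count on the incidence correspondence $\{(H,W) : W \subseteq H,\ \dim W \leq d-2,\ X \cap H \subseteq W\}$: its projection to the space of hyperplanes is dominant, hence the correspondence has dimension at least $d$, whereas the fibres of its projection to the $W$-factor are the pencils of hyperplanes through a fixed codimension-$2$ space and so are one-dimensional, forcing the family of such excess-intersecting $W$ to have dimension at least $d-1$; this is incompatible with the fact that a general codimension-$2$ linear space meets $X$ only in dimension $r-2$, and that a general linear space of dimension $d-r-1$ misses the $r$-dimensional variety $X$ entirely.

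Equivalently, once (iii) is available one may bypass the one-step induction altogether and intersect $X$ directly with a general linear subspace $\Lambda \cong \mathbb{P}^{d-r}$, obtaining $\deg(X)$ reduced points which, by iterating (iii), span $\Lambda$, whence $\deg(X) \geq (d-r)+1$. I expect the bookkeeping in the dimension count establishing (iii) to be the only genuinely delicate point; the remaining ingredients are the standard behaviour of degree and dimension under general hyperplane sections, the irreducibility supplied by Bertini over $\mathbb{C}$, and the inductive hypothesis.
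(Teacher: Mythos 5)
Your strategy is genuinely different from the paper's: you induct on the dimension $r$ and cut with a general hyperplane, whereas the paper inducts on the ambient dimension $d$ and projects from a smooth point $p\in X$, using $\deg\left(X\right)=\deg\left(\pi_{p}\right)\deg\left(\pi_{p}\left(X\right)\right)+1$ together with the (easy) nondegeneracy of the projected image. The whole point of the projection route is that it sidesteps the one fact your route cannot avoid, namely your step (iii): that the general hyperplane section of an irreducible nondegenerate variety is again nondegenerate. That fact is true over $\cc$, but it is a genuine theorem (the weak form of the general position lemma), and the argument you propose for it does not work.

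Concretely, your dimension count stops exactly where the contradiction would have to begin. You correctly produce a family of dimension at least $d-1$ of codimension-$2$ linear spaces $W$ with $\dim\left(X\cap W\right)=r-1$, i.e.\ exceeding the expected dimension by one. But this is not incompatible with anything: the incidence variety $\left\{ \left(x,W\right)\,|\,x\in X\cap W\right\}$ has dimension $r+2\left(d-2\right)$, so the locus of codimension-$2$ planes meeting $X$ in dimension at least $r-1$ may a priori have dimension as large as $2d-3$, i.e.\ it may be a divisor in the $\left(2d-2\right)$-dimensional Grassmannian. For instance, the lines meeting an irreducible space curve in $\mathbb{P}^{3}$ already form a $3$-dimensional family, comfortably larger than $d-1=2$. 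The statement that a \emph{general} codimension-$2$ plane meets $X$ in dimension $r-2$ (or misses it) only says the bad locus is a proper closed subset; it does not exclude a bad locus of dimension $d-1$. To close the argument you would have to exploit the much stronger property that each such $W$ contains an \emph{entire} hyperplane section $X\cap H$ --- for example via a monodromy/uniform position argument, or by running the paper's projection-from-a-point trick inside your inductive step --- at which point you have essentially reproved the lemma the paper's way. A smaller issue: your base case $r=0$ clashes with the irreducibility hypothesis (an irreducible $0$-dimensional variety is a single point, never nondegenerate for $d\ge1$), so the statement you induct on must be reformulated to allow reducible zero-dimensional sections; this is harmless but should be made explicit.
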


\begin{proof}
We proceed by induction on $d$. We see that the smallest possible
value for $d$ is $r$, but then $\deg\left(X\right)=1$ and the inequality
trivially holds. Let us suppose $d>r$ and let $p\in X$ be a smooth
point. Let $\pi_{p}:X\ra\mathbb{P}^{d-1}$ be the projection from the point
$p$ and let $Y\su\mathbb{P}^{d-1}$ be the closure of its image. If $Y$
were contained in a hypersurface $H$ of $\mathbb{P}^{d-1}$, then the linear
space $\left\langle H,p\right\rangle $ would contain $X$ and this
is not possible in view of our assumptions. Therefore $Y$ is non-degenerate
and, in view of the induction hypothesis, we have
\[
\deg\left(X\right)=\deg\left(\pi_{p}\right)\deg\left(Y\right)+1\ge\deg\left(\pi_{p}\right)\left(d-r\right)+1\ge d-r+1.
\]
\end{proof}
\begin{rem}
\label{rem:Cgradominimo}In view of Lemma \ref{lem:fondamentalegrado},
we see that, if $\C\su\mathbb{P}^{d}$ is a non-degenerate and irreducible
curve, then $\deg\left(\C\right)\ge d.$ If $\deg\left(\C\right)=d$,
as in the proof of Lemma \ref{lem:fondamentalegrado}, we see that
successive projections from smooth points give a birational map onto
an irreducible plane conic, therefore $\C\iso\Pu$.
\end{rem}

\begin{lem}
\label{lem:divisorecurvaellittica}Let $\C$ be a smooth irreducible
curve and $D\in\D\left(\C\right)$ be a divisor such that $\deg\left(D\right)=d\ge3$
and $h^{0}\left(\C,\oc\left(D\right)\right)=d$. Then $g\left(\C\right)=1$.
\end{lem}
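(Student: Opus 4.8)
The plan is to reduce everything to Riemann--Roch for curves together with Clifford's theorem; the genus will drop out of a single numerical identity. First I would apply Riemann--Roch to $\oc(D)$, which reads $h^0(\C,\oc(D)) - h^1(\C,\oc(D)) = d - g(\C) + 1$. Feeding in the hypothesis $h^0(\C,\oc(D)) = d$ immediately gives
\[
h^1(\C,\oc(D)) = g(\C) - 1.
\]
Since $h^1$ is non-negative this already forces $g(\C) \ge 1$, so the case $g(\C) = 0$ (where one would get $h^1 = -1$) is excluded for free. Thus the whole remaining content of the lemma is to rule out $g(\C) \ge 2$.

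Suppose then that $g(\C) \ge 2$. Because $h^0(\C,\oc(D)) = d \ge 3 > 0$, the divisor $D$ is linearly equivalent to an effective one, and replacing $D$ by such a representative changes none of $d$, $h^0$, $h^1$; so I may assume $D \ge 0$. The identity above gives $h^1(\C,\oc(D)) = g(\C) - 1 \ge 1 > 0$, i.e. $D$ is a \emph{special} effective divisor. Clifford's theorem for special effective divisors then yields
\[
h^0(\C,\oc(D)) \le \tfrac{1}{2}\deg(D) + 1 = \tfrac{d}{2} + 1.
\]
Comparing with $h^0(\C,\oc(D)) = d$ gives $d \le \tfrac{d}{2} + 1$, hence $d \le 2$, contradicting $d \ge 3$. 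Therefore $g(\C) \ge 2$ cannot occur, and together with $g(\C) \ge 1$ this forces $g(\C) = 1$.

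The only non-formal ingredient, and hence the main obstacle, is Clifford's theorem; everything else is the bookkeeping above. I would cite it directly (it is Clifford's Theorem in \cite{HA}), but it is worth noting that the inequality I need is elementary: it follows from the superadditivity $\dim|A| + \dim|B| \le \dim|A+B|$ of complete linear systems of effective divisors, applied to $A = D$ and $B = K_{\C} - D$, together with Riemann--Roch; and that superadditivity is itself a strengthening of the injectivity already recorded in Remark \ref{Rem:usefulremhodgeindex}. A purely geometric alternative would run through Lemma \ref{lem:fondamentalegrado}: taking the moving part of $|D|$ one maps $\C$ to a non-degenerate curve in $\mathbb{P}^{d-1}$ of degree at most $d$, which by the degree bound must be birational onto a curve of (almost) minimal degree; but extracting $g(\C) = 1$ from that picture amounts to Castelnuovo's genus bound, so the Riemann--Roch plus Clifford route is decidedly shorter.
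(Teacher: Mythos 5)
Your proposal is correct and follows essentially the same route as the paper: Riemann--Roch gives $h^{1}\left(\C,\oc\left(D\right)\right)=g\left(\C\right)-1$, forcing $g\left(\C\right)\ge1$, and Clifford's inequality for the (special, effective) divisor $D$ rules out $g\left(\C\right)\ge2$ since it would give $d\le2$. Your extra care in replacing $D$ by an effective representative before invoking Clifford is a small but welcome refinement of the paper's argument.
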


\begin{proof}
In view of \cite[Theorem 8.4.1]{Ke} we have
\[
d-h^{1}\left(\C,\oc\left(D\right)\right)=d+1-g\left(\C\right),
\]
therefore $h^{1}\left(\C,\oc\left(D\right)\right)=g-1$ and hence
$g\ge1$. If $g>1$, then $D$ would be a special divisor and hence,
in view of Clifford's inequality,
\[
d-1=h^{0}\left(\C,\oc\left(D\right)\right)-1\le\frac{1}{2}d
\]
which gives $d\le2$, contradiction .
\end{proof}
\begin{lem}
\label{lem:Seesistemoltoampiotaleche}Let $\S$ be a surface and $H$
a very ample divisor such that $H^{2}=d>1$ and $h^{0}\left(\S,\os\left(H\right)\right)=d+2$.
Then $\S$ is rational and
\[
\ks\cdot H=-d-2,
\]
in particular $\ks+H$ is not nef.
\end{lem}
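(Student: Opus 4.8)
The plan is to read the hypotheses geometrically. Since $H$ is very ample with $h^{0}(\S,\os(H))=d+2$, the complete linear system $|H|$ embeds $\S$ as a non-degenerate surface in $\mathbb{P}^{d+1}$, and its degree is exactly $H^{2}=d$. By Lemma \ref{lem:fondamentalegrado} with $r=2$, every non-degenerate irreducible surface in $\mathbb{P}^{d+1}$ has degree at least $(d+1)-2+1=d$, so $\S$ is a surface of \emph{minimal degree}. The strategy is therefore to manufacture a smooth rational curve of positive self-intersection by cutting $\S$ with a hyperplane, and then to invoke the rationality criterion of Theorem \ref{thm:equivalence2}.

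First I would take a general member $\C\in|H|$. As $H$ is very ample, \cite[Theorem 6.6.1]{Ke} makes $\C$ smooth and Corollary \ref{cor:BertiniII} makes $\C$ connected, hence irreducible. I then claim $\C$ is non-degenerate in the hyperplane $\Lambda\iso\mathbb{P}^{d}$ cutting it out. To see this, I use the restriction sequence coming from $\C\li H$,
\[
0\to\os\to\os\left(H\right)\to\mathcal{O}_{\C}\left(H\right)\to0.
\]
Taking cohomology, the kernel of $H^{0}(\S,\os(H))\to H^{0}(\C,\mathcal{O}_{\C}(H))$ is $H^{0}(\S,\os)=\cc$, so the image has dimension $(d+2)-1=d+1$. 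Choosing coordinates with $\Lambda=\{x_{0}=0\}$, the restrictions $x_{1|\C},\dots,x_{d+1|\C}$ span exactly this image (since $x_{0|\C}=0$); being $d+1$ sections spanning a $(d+1)$-dimensional space, they are linearly independent, so no hyperplane of $\Lambda$ contains $\C$. Thus $\C$ is a non-degenerate irreducible curve of degree $H\cdot\C=H^{2}=d$ in $\mathbb{P}^{d}$, and Remark \ref{rem:Cgradominimo} forces $\C\iso\Pu$.

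Having produced a smooth rational curve $\C$ with $\C^{2}=H^{2}=d>1>0$, the rationality of $\S$ is immediate from the equivalence between conditions $(1)$ and $(3)$ in Theorem \ref{thm:equivalence2}. For the numerical assertion, since $p_{a}(\C)=0$, the genus formula of Corollary \ref{cor:genusformula} gives $0=1+\frac{1}{2}(\C^{2}+\C\cdot\ks)=1+\frac{1}{2}(d+\ks\cdot\C)$, whence $\ks\cdot H=\ks\cdot\C=-d-2$. Finally, testing $\ks+H$ against the irreducible curve $\C$ yields
\[
\left(\ks+H\right)\cdot\C=\ks\cdot\C+H\cdot\C=-\left(d+2\right)+d=-2<0,
\]
so $\ks+H$ is not nef.

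The step I expect to demand the most care is the non-degeneracy of the general hyperplane section: one must check that the $d+1$ restricted coordinates are genuinely independent on $\C$, which is precisely what the cohomology count above supplies, and simultaneously that the general section is both smooth and irreducible, for which Bertini together with Corollary \ref{cor:BertiniII} is required. Once $\C\iso\Pu$ with $\C^{2}>0$ is in hand, the remainder is a direct application of the genus formula and of the already-established rationality criterion.
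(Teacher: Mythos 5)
Your proposal is correct and follows essentially the same route as the paper: take a smooth irreducible $\C\in|H|$ via Bertini and Corollary \ref{cor:BertiniII}, use the restriction sequence and the minimal-degree bound to recognize $\C$ as a non-degenerate degree-$d$ curve in $\mathbb{P}^{d}$, conclude $\C\iso\Pu$ by Remark \ref{rem:Cgradominimo}, and finish with Theorem \ref{thm:equivalence2} and the genus formula. The only cosmetic difference is that you verify non-degeneracy of $\C$ in the cutting hyperplane directly by a coordinate count, whereas the paper pins down $h^{0}\left(\C,\oc\left(H\right)\right)=d+1$ by combining the cohomology lower bound with Lemma \ref{lem:fondamentalegrado}; both amount to the same computation.
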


\begin{proof}
In view of \cite[Theorem 6.6.1]{Ke} and Corollary \ref{cor:BertiniII}
there exists a smooth irreducible curve $\C\in|H|$ with $\C^{2}=d>1$.
Let us consider the exact sequence
\[
0\to\os\to\os\left(H\right)\to\oc\left(H\right)\to0,
\]
in view of Theorem \ref{thm:RKW} and Serre duality we have an exact
sequence
\[
0\to\cc\to H^{0}\left(\S,\os\left(H\right)\right)\iso\cc^{d+2}\to H^{0}\left(\C,\oc\left(H\right)\right)
\]
which, under our assumptions, gives $h^{0}\left(\C,\oc\left(H\right)\right)\ge d+1$.
Moreover $\varphi_{|H|}\left(\C\right)\su\mathbb{P}^{d+1}$ is a smooth irreducible
curve of degree $d$, thus, in view of Lemma \ref{lem:fondamentalegrado},
we conclude that $h^{0}\left(\C,\oc\left(H\right)\right)\le d+1$
and hence $h^{0}\left(\C,\oc\left(H\right)\right)=d+1$. Since $\varphi_{|H|}\left(\C\right)$
has degree $d$ in $\mathbb{P}^{d}$, in view of Remark \ref{rem:Cgradominimo},
we conclude that $\C\iso\Pu$ and hence $\S$ is rational in view
of Theorem \ref{thm:equivalence2}. It follows from Corollary \ref{cor:genusformula}
that
\[
-2=d+\ks\cdot H
\]
and then
\[
\left(\ks+H\right)\cdot\C=-2.
\]
\end{proof}
\begin{thm}
\label{thm:minimaldegree}Let $\S\su\mathbb{P}^{d+1}$ be a non-degenerate
surface of degree $d$. Then

\begin{itemize}
\item $d=1$ and $\S=\Pd$;
\item $d=4$ and $\S=\nu_{2}\left(\Pd\right)$;
\item $\S\iso\ff$ is a rational scroll with the embedding given by the
linear system $|\C_{0}+kF|$ where $d=2k-n$.
\end{itemize}
\end{thm}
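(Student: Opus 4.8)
The plan is to read off both the surface and its embedding from the type $t(\S)$ of Theorem \ref{thm:classificazione}, the decisive point being that the very ample hyperplane class already forces $t(\S)\ge 2$. The case $d=1$ is immediate, since a two-dimensional irreducible subvariety of $\mathbb{P}^{2}$ is all of $\mathbb{P}^{2}$, giving $\S=\Pd$. So assume $d\ge 2$, let $H$ be the hyperplane class (thus $H^{2}=d$), and observe that non-degeneracy gives $h^{0}(\S,\os(H))\ge d+2$. First I would take a general $\C\in|H|$, which is a smooth irreducible curve with $\C^{2}=d$ by \cite[Theorem 6.6.1]{Ke} and Corollary \ref{cor:BertiniII}. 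Restricting sections along $0\to\os\to\os(H)\to\oc(H)\to 0$, the image $V$ of $H^{0}(\S,\os(H))\to H^{0}(\C,\oc(H))$ has projective dimension $h^{0}(\S,\os(H))-2\ge d$ and embeds $\C$ as a non-degenerate curve of degree $H\cdot\C=d$ in $\mathbb{P}^{\dim V}$; Remark \ref{rem:Cgradominimo} then forces $d\ge\dim V\ge d$. Hence $h^{0}(\S,\os(H))=d+2$, so $|H|$ is complete, and $\C\iso\Pu$, whence $\S$ is rational by Theorem \ref{thm:equivalence2}. Applying the genus formula (Corollary \ref{cor:genusformula}) to $\C$ gives $\ks\cdot H=-d-2$.

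The key step is the observation that $(\ks+H)\cdot H=d+\ks\cdot H=-2<0$ while $H$ is ample, so $\ks+H$ is not nef and therefore $t(\S)\ge 2$ by the very definition of the type (Theorem \ref{thm:existstype}); of course $t(\S)\le 3$. I would then split according to these two values. If $t(\S)=3$, then $\S\iso\Pd$ by Theorem \ref{thm:classificazione}, so $H\li mL$ for a line $L$ and some $m\ge 1$; completeness of $|H|$ yields $\binom{m+2}{2}=h^{0}(\S,\os(mL))=d+2=m^{2}+2$, i.e. $(m-1)(m-2)=0$. As $d\ge 2$ excludes $m=1$, we get $m=2$ and $\S=\nu_{2}(\Pd)$, of degree $4$ in $\mathbb{P}^{5}$.

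If $t(\S)=2$, then Theorem \ref{thm:classificazione} presents $\S$ as a $\Pu$-bundle over a smooth curve $B$; since $\S$ is rational, $q(\S)=0$, so $g(B)=0$ by Corollary \ref{cor:invariantsruled} and $\S\iso\ff$ for some $n\ge 0$ by Remark \ref{rem:FngeomrigatesuPuno}. Writing $H\li\alpha\C_{0}+\beta F$ in $\pic(\ff)=\Z\C_{0}\op\Z F$ (Proposition \ref{prop:proprietaFn}), with $\C_{0}^{2}=-n$ and $\K_{\ff}\li-2\C_{0}-(n+2)F$ as in Lemma \ref{lem:DelPezzogradomassimo}, the two equations $H^{2}=d$ and $\ks\cdot H=-d-2$ collapse to $(1-\alpha)(\alpha n-2\beta+2)=0$. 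Ampleness of $H$ gives $\alpha=H\cdot F\ge 1$ and $\beta-\alpha n=H\cdot\C_{0}>0$, and a short check shows that vanishing of the second factor is incompatible with $\beta>\alpha n$ unless $n=0$; thus $\alpha=1$ and $H\li\C_{0}+kF$ with $k=\beta>n$ (when $n=0$ the remaining possibility $\beta=1$ is the same embedding after interchanging the two rulings). Consequently $d=2k-n$, and by Theorem \ref{thm:RATIONALSCROLLSveryample} the system $|\C_{0}+kF|$ is exactly the scroll embedding of Theorem \ref{thm:RATIONALSCROOLembedding}.

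The conceptual subtlety—and the reason the type is the right tool—is that these cases are distinguished by the embedding rather than by the abstract surface: the plane and the Veronese are both $\Pd$, and distinct scrolls can realise the same degree $d$ in the same $\mathbb{P}^{d+1}$. The type separates $\Pd$ (where $t=3$) from the Hirzebruch surfaces (where $t=2$), while the fortunate fact that the hyperplane class forces $t(\S)\ge 2$ is precisely what discards the troublesome type-one surfaces, such as blow-ups of $\ff$ at points, which are rational but are neither $\Pd$ nor scrolls. I expect the only genuine bookkeeping to be the $\pic(\ff)$ computation in the $t=2$ case, together with the minor care needed for $\mathbb{F}_{0}$, where either ruling may play the role of $\C_{0}$.
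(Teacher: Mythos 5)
Your proposal is correct and follows essentially the same route as the paper: produce a rational hyperplane-section curve to get rationality and $\ks\cdot H=-d-2$ (the content of Lemma \ref{lem:Seesistemoltoampiotaleche}), deduce $t\left(\S\right)\ge2$, and then run the $t=3$ and $t=2$ cases of Theorem \ref{thm:classificazione} with the same $\pic\left(\ff\right)$ computation. Your explicit verification that $h^{0}\left(\S,\os\left(H\right)\right)=d+2$ from non-degeneracy, and your handling of $d=1$ and of the two rulings on $\mathbb{F}_{0}$, are small points the paper leaves implicit.
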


\begin{proof}
In view of Lemma \ref{lem:Seesistemoltoampiotaleche} we have $t\left(\S\right)\ge2$.

\begin{description}
\item [{$t\left(\S\right)=3$}] In view of Theorem \ref{thm:classificazione}
we have $\S\iso\Pd$ and thus a very ample divisor $H$ on $\Pd$
such that
\[
h^{0}\left(\Pd,\mathcal{O}_{\Pd}\left(H\right)\right)=H^{2}+2.
\]
Let $L\su\Pd$ be a line, then $H\li kL$ for $k>0$ and the above
condition reads
\[
\frac{1}{2}\left(k+1\right)\left(k+2\right)=\binom{k+2}{2}=h^{0}\left(\Pd,\mathcal{O}_{\Pd}\left(k\right)\right)=k^{2}+2.
\]
By elementary algebra we conclude that the only possibilities are
$k=1$ and $k=2$, which correspond to the cases $\S=\Pd$ and $\S=\nu_{2}\left(\Pd\right)$.
\item [{$t\left(\S\right)=2$}] In view of Theorem \ref{thm:classificazione}
and Lemma \ref{lem:Seesistemoltoampiotaleche} we see that $\S$ is
a $\Pu$-bundle over $\Pu$ and then, in view of Remark \ref{rem:FngeomrigatesuPuno},
we have $\S\iso\ff$ for some $n\ge0$. Then $H\li a\C_{0}+bF$ and,
since $H$ is very ample, we have $a>0$ and $b>-an$. In view of
Theorems \ref{thm:RATIONALSCROLLSveryample} and \ref{thm:RATIONALSCROOLembedding}
it is enough to prove that $a=1$. Let us suppose $a\ge2$ then, in
view of Lemma \ref{lem:Seesistemoltoampiotaleche} we have
\begin{eqnarray*}
-an+2ab & = & H^{2}=d>1\\
2an-2b-2-n & = & \ks\cdot H=-2-d.
\end{eqnarray*}
We deduce that
\[
2b+n-2an=2ab-an
\]
and hence
\[
2b-2ab=an-n.
\]
Since $a\ge2$, it follows from elementary algebra that $2b+n=0$,
therefore $b\le0$. We conclude since then
\[
d+an=2ab\le0
\]
and thus $d\le0$ . The second part of the
statement follows from Theorem \ref{thm:RATIONALSCROOLembedding}
and Theorem \ref{thm:RATIONALSCROLLSveryample}.
\end{description}
\end{proof}
\begin{prop}
\label{prop:irregularitygenusellipticscroll}Let $\S\su\mathbb{P}^{d}$ be
a non-degenerate surface and let $\C\in|H|$ be a smooth and irreducible
hyperplane section. If
\[
q\left(\S\right)=g\left(\C\right)=1,
\]
then $S$ is an elliptic scroll.
\end{prop}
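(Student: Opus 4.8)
The plan is to identify $\S$ as a geometrically ruled surface over an elliptic curve, embedded with one‑dimensional fibres. First I would exploit $g(\C)=1$: since $\C$ is smooth, $p_a(\C)=g(\C)=1$, so the genus formula (Corollary \ref{cor:genusformula}) gives $\C^2+\C\cdot\ks=0$, i.e. $\ks\cdot H=\ks\cdot\C=-\C^2=-H^2<0$. As $H$ is ample this shows $\ks$ is not pef (Lemma \ref{lem:pefbastavederenef}), so by Theorem \ref{thm:equivalence1} the surface $\S$ is ruled; since $q(\S)=1\neq0$ it is not rational. Hence, by Propositions \ref{prop:everysurfacesrelativelyminimalmodel} and \ref{prop:relativelyminimalruled}, there is a birational morphism $\sigma\colon\S\to\S_0$ onto a relatively minimal model $\S_0=\pc(\mathcal{E})\to B$ that is a $\Pu$-bundle, and Corollary \ref{cor:invariantsruled} forces $g(B)=q(\S)=1$, so $B$ is elliptic. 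I would take $\mathcal{E}$ normalized and fix a section $\C_0$ with $\C_0^2=-e$ and a fibre $F_0$, so that $\num(\S_0)=\Z[\C_0]\op\Z[F_0]$ and $\K_{\S_0}\equiv-2\C_0-eF_0$ (Proposition \ref{prop:picgeometricallyruled}, applying the genus formula to the section, which has genus $1$).

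The heart of the argument is to show that $\sigma$ is an isomorphism, equivalently that $H\cdot F=1$ for the general fibre $F$ of $\pi=\pi_0\circ\sigma$. Write $\sigma=\sigma_1\circ\dots\circ\sigma_k$ as a composition of blow-ups with total-transform exceptional curves $E_1,\dots,E_k$, satisfying $E_i\cdot E_j=-\delta_{ij}$, and set $\C_0'=\sigma_\ast\C\equiv m\C_0+bF_0$ in $\num(\S_0)$, where $m=\C\cdot F=H\cdot F\ge1$ because $H$ is very ample and $\C$ meets the general fibre (note $\C_0'\cdot F_0=\C\cdot\sigma^\ast F_0=m$ by the projection formula). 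Using the standard blow-up relations $\C=\sigma^\ast\C_0'-\sum_j\mu_j E_j$, $\ks=\sigma^\ast\K_{\S_0}+\sum_j E_j$ and $\C\cdot E_j=\mu_j$ (as in Lemma \ref{lem:disuguaglianzamolteplicita} and Proposition \ref{prop:proprietaminimali}), I would compute $\C^2=(\C_0')^2-\sum_j\mu_j^2$ and $\C\cdot\ks=\C_0'\cdot\K_{\S_0}+\sum_j\mu_j$. Substituting into the relation $\C^2+\C\cdot\ks=0$ from the first paragraph gives
\[
0=(m-1)(2b-me)+\sum_{j=1}^{k}\mu_j(1-\mu_j).
\]

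Now I would argue that the two summands have opposite sign, hence both vanish: $\mu_j(1-\mu_j)\le0$ for every $j$, while $(m-1)(2b-me)=(\C_0')^2+\C_0'\cdot\K_{\S_0}=2p_a(\C_0')-2\ge0$, because $\C$ is smooth and birational to $\C_0'$ under $\sigma$, so it is the normalization of $\C_0'$ and $p_a(\C_0')\ge p_a(\C)=1$ (Remark \ref{rem:geometricarithmeticgenus}). Thus $(m-1)(2b-me)=0$. If the factor $2b-me$ vanished, then $(\C_0')^2=-m^2e+2mb=0$, whence $H^2=\C^2=(\C_0')^2-\sum_j\mu_j^2\le0$, contradicting the ampleness of $H$; therefore $m=1$. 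This is the main obstacle: squeezing $m=1$ out of the genus identity depends on transporting the numerical data faithfully through $\sigma$ and on using $H^2>0$ to discard the degenerate solution $2b=me$.

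Finally, with $m=H\cdot F=1$ I would conclude quickly. All fibres of $\pi$ are numerically equivalent (Corollary \ref{cor:fibersnumerically}), so any fibre $F'=\sum n_iC_i$ satisfies $1=H\cdot F'=\sum n_i(H\cdot C_i)$ with each $H\cdot C_i\ge1$; hence every fibre is irreducible and reduced, and from $F'^2=0$, $\ks\cdot F'=-2$ and the genus formula it has $p_a=0$, so $F'\iso\Pu$ (Remark \ref{rem:geometricarithmeticgenus}). In particular $\S$ has no $(-1)$-curve, since such a curve would map to a point of $B$ and produce a reducible fibre; thus $\sigma$ is an isomorphism and $\S=\pc(\mathcal{E})$ is geometrically ruled over the elliptic curve $B$ (Corollary \ref{cor:surfaceisoprojectivebundle}). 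As $H\cdot F=1$, each fibre is embedded as a line, i.e. $\S$ is an elliptic scroll. As a check, once $\S$ is known to be geometrically ruled one may argue directly: writing $H\equiv a\C_0+bF$ and using $\ks\equiv-2\C_0-eF$, the relations $H^2$ and $\ks\cdot H=-H^2$ combine into $(ae-2b)(1-a)=0$ with $ae-2b=\ks\cdot H\neq0$, which again yields $a=H\cdot F=1$.
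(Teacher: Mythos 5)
Your proof is correct, but it reaches the key intermediate conclusion --- that $\S$ is itself geometrically ruled over an elliptic curve --- by a genuinely different route than the paper. The paper computes $\left(\ks+H\right)\cdot H=0$ from the genus formula exactly as you do, but then runs the type machinery: $t\left(\S\right)\neq3$ since $q=1$, $t\left(\S\right)\neq0$ since $\ks\cdot H<0$, and $t\left(\S\right)\neq1$ because $\ks+H$ nef together with $\left(\ks+H\right)\cdot H=0$ forces, via the Hodge Index Theorem \ref{thm:Hodge} and Proposition \ref{prop:nefpositivesquare}, that $-\ks\sim H$ is ample, whence $\S$ would be rational by Corollary \ref{cor:rationalnefandbig}; so $t\left(\S\right)=2$ and Theorem \ref{thm:classificazione} hands over the $\Pu$-bundle structure on $\S$ directly, with no exceptional locus to dispose of. You instead descend to a relatively minimal model and kill the blow-ups by the adjunction identity $\left(m-1\right)\left(2b-me\right)+\sum_{j}\mu_{j}\left(1-\mu_{j}\right)=0$, using $p_{a}\left(\sigma_{\ast}\C\right)\ge g\left(\C\right)=1$ and $H^{2}>0$ to force $m=H\cdot F=1$; this is a more hands-on classical argument that simultaneously yields the scroll condition, whereas the paper must still prove $a=1$ afterwards by a separate numerical case analysis (assuming $a\ge2$, deriving $2b=ae$, then $e=-1$ via Lemma \ref{lem:CUBICSCROLLinvariante}, then $H^{2}=0$). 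Your closing observation that the factorization reads $\left(1-a\right)\left(ae-2b\right)=0$ with $ae-2b=\ks\cdot H\neq0$ is in fact a cleaner way to finish that last step than the paper's, since it avoids invoking the classification of the invariant $e$ altogether. The only mild redundancy in your write-up is that once every fibre of $\pi$ is shown to be an irreducible smooth rational curve, Corollary \ref{cor:surfaceisoprojectivebundle} already identifies $\S$ with a $\Pu$-bundle; the separate remark that $\sigma$ contracts no curve is then automatic.
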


\begin{proof}
Since $q\left(\S\right)=1$, in view of Theorem \ref{thm:equivalence2},
we see that $\S$ is not rational and hence, in view of Theorem \ref{thm:classificazione}
we deduce $t\left(\S\right)\neq3$.

In view of Corollary \ref{cor:genusformula} we deduce
\[
\left(\ks+H\right)\cdot H=0,
\]
therefore we have
\[
\ks\cdot H=-H^{2}<0,
\]
then $\ks$ is not nef and, in view of Theorem \ref{thm:equivalence2},
we conclude that $t\left(\S\right)\neq0$.

Let us suppose that $t\left(\S\right)=1$, then $\ks+H$ is nef and,
by Proposition \ref{prop:nefpositivesquare}, we get $\left(\ks+H\right)^{2}\ge0$
and, since $\left(\ks+H\right)\cdot H=0$, in view of Theorem \ref{thm:Hodge}
we conclude that $-\ks\sim H$. In view of Corollary \ref{cor:amplenessnumericalproperty}
$-\ks$ is ample, but this is a contradiction with Corollary \ref{cor:rationalnefandbig}
since we have already observed that $\S$ cannot be rational. Then
$t\left(\S\right)=2$ and, in view of Theorem \ref{thm:classificazione}
and Corollary \ref{cor:invariantsruled}, there exists a surjective
morphism $\pi:S\to B$ where $B$ is an elliptic curve and $\pi$
gives a geometrically ruled surface structure on $\S$.

Let $H\li a\C_{0}+bF$ where $\C_{0}^{2}=-e\left(\S\right)$ and $F$
is an irreducible fibre of $\pi$. Since $H$ is very ample, we have
$a\ge1$ and to conclude it is enough to show that $a=1$. Let us
suppose that $a\ge2$, then
\begin{eqnarray*}
0 & = & \left(\ks+H\right)\cdot H\\
 & = & -ae\left(a-2\right)+b\left(a-2\right)+a\left(b-e\right)\\
 & = & -ae\left(a-1\right)+2b\left(a-1\right),
\end{eqnarray*}
therefore we get
\[
2b=ae.
\]
Since $H\cdot\C_{0}>0$, we deduce that $b>ae$ and hence $ae<0$,
in view of Lemma \ref{lem:CUBICSCROLLinvariante} and Theorem \ref{thm:possiblevalueseS}
we get that $e=-1$. In particular we have
\[
H^{2}=\left(-2b\C_{0}+bF\right)^{2}=4b^{2}-4b^{2}=0,
\]
which is clearly not possibile.
\end{proof}
\begin{prop}
\label{prop:geometricallyruledbound}Let $B$ be an elliptic curve
and $\pi:\S=\mathbb{P}_{B}\left(\mathcal{E}\right)\to B$ be a $\Pu$-bundle. If $H=\C_{0}+bF$
is very ample, then $h^{0}\left(\S,\os\left(H\right)\right)\le H^{2}.$
\end{prop}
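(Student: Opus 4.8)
The plan is to restrict everything to the section $\C_{0}$. Tensoring the structure sequence of $\C_{0}$ with $\os\left(H\right)$ and using $H-\C_{0}\li bF$ gives
\[
0\to\os\left(bF\right)\to\os\left(H\right)\to\mathcal{O}_{\C_{0}}\left(H\right)\to0,
\]
so that $h^{0}\left(\S,\os\left(H\right)\right)\le h^{0}\left(\S,\os\left(bF\right)\right)+h^{0}\left(\C_{0},\mathcal{O}_{\C_{0}}\left(H\right)\right)$. It then suffices to bound the two right-hand terms, each of which lives on an elliptic curve and is controlled by Riemann-Roch.

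For the second term, $\C_{0}$ is a section, so $\pi_{|\C_{0}}:\C_{0}\to B$ is an isomorphism and $\C_{0}$ is a smooth genus $1$ curve. Since $H$ is very ample we have $H\cdot\C_{0}>0$, and Proposition \ref{prop:prodottogrado} together with Riemann-Roch on the elliptic curve $\C_{0}$ yields $h^{0}\left(\C_{0},\mathcal{O}_{\C_{0}}\left(H\right)\right)=H\cdot\C_{0}=\C_{0}^{2}+b$. For the first term, $bF\cdot F=0$, so the ``in particular'' part of Proposition \ref{prop:coomologiacommutapushforward} identifies $h^{0}\left(\S,\os\left(bF\right)\right)$ with $h^{0}\left(B,\mathcal{O}_{B}\left(bP\right)\right)$ for a point $P\in B$, which equals $b$ as soon as $b\ge1$. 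Summing and using $H^{2}=\C_{0}^{2}+2b$ would then give precisely $h^{0}\left(\S,\os\left(H\right)\right)\le\C_{0}^{2}+2b=H^{2}$, as required.

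The whole argument hinges on the inequality $b\ge1$, which I expect to be the main obstacle: if $b\le0$ then $h^{0}\left(\S,\os\left(bF\right)\right)$ is $1$ or $0$ and the combined estimate becomes too weak. To settle this I would invoke the standing convention (used throughout the elliptic-scroll lemmas) that $\C_{0}$ is the distinguished section with $\C_{0}^{2}=-e\left(\S\right)$, obtained after normalizing $\mathcal{E}$, which does not change $\S$. By Proposition \ref{prop:picgeometricallyruled} this self-intersection is $\deg\left(\det\mathcal{E}\right)$, and the elliptic classification in Theorem \ref{thm:possiblevalueseS} together with Lemma \ref{lem:CUBICSCROLLinvariante} forces $e\left(\S\right)\ge-1$, hence $\C_{0}^{2}\le1$. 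On the other hand the restriction $\mathcal{O}_{\C_{0}}\left(H\right)$ of the very ample sheaf $\os\left(H\right)$ is very ample on the genus $1$ curve $\C_{0}$, and a very ample divisor on an elliptic curve has degree at least $3$; therefore $H\cdot\C_{0}=\C_{0}^{2}+b\ge3$, which gives $b\ge3-\C_{0}^{2}\ge2$. This secures $b\ge1$ and closes the proof. As a cross-check, one may note that under these same hypotheses Lemma \ref{lem:ELLIPTICSCROLLcoomologiasomma} applies (its hypothesis $H^{1}\left(B,\mathcal{O}_{B}\left(bP\right)\right)=0$ holds since $b\ge1$) and in fact upgrades the inequality to the equality $h^{0}\left(\S,\os\left(H\right)\right)=H^{2}$.
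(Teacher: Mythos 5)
Your proof is correct, and it is built on the same decomposition as the paper's — the exact sequence $0\to\os\left(bF\right)\to\os\left(H\right)\to\mathcal{O}_{\C_{0}}\left(H\right)\to0$ together with $h^{0}\left(\S,\os\left(bF\right)\right)=b$ and the degree count on the elliptic section — but the logic is organized genuinely differently. The paper first invokes Lemma \ref{lem:Seesistemoltoampiotaleche} (through the general bound $h^{0}\le H^{2}+2$ coming from Lemma \ref{lem:fondamentalegrado} and the non-rationality of $\S$) to reduce to excluding $h^{0}\left(\S,\os\left(H\right)\right)=H^{2}+1$, and then derives a contradiction: that value would give $h^{0}\left(\C_{0},\mathcal{O}_{\C_{0}}\left(H\right)\right)=H\cdot\C_{0}+1$, forcing $\C_{0}\iso\Pu$ by Remark \ref{rem:Cgradominimo}. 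You instead add the two outer terms directly, $h^{0}\left(\S,\os\left(H\right)\right)\le b+\left(\C_{0}^{2}+b\right)=H^{2}$, which bypasses Lemma \ref{lem:Seesistemoltoampiotaleche} entirely and, as you observe, in fact upgrades to the equality $h^{0}\left(\S,\os\left(H\right)\right)=H^{2}$ because $h^{1}\left(\S,\os\left(bF\right)\right)=0$. The two arguments also secure the positivity of $b$ by different means: the paper deduces $d>b>e\ge-1$ and rules out $b=0$ by noting it would force $H^{2}=1$, while you use that a very ample divisor on an elliptic curve has degree at least $3$, so $H\cdot\C_{0}\ge3$ and $b\ge3-\C_{0}^{2}\ge2$; both are sound and both rest on $e\left(\S\right)\ge-1$ from Lemma \ref{lem:CUBICSCROLLinvariante} and Theorem \ref{thm:possiblevalueseS}. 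Finally, your reading of $\C_{0}$ as the distinguished section with $\C_{0}^{2}=-e\left(\S\right)$ is the one the paper's own computation $2b=e+d$ presupposes, so your handling of the convention is consistent with the intended argument.
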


\begin{proof}
Let $d=H^{2}$ and $e=e\left(\S\right)$. In view of Corollary \ref{cor:invariantsruled}
and Theorem \ref{thm:equivalence2} we see that $\S$ is not rational,
therefore in view of Lemma \ref{lem:Seesistemoltoampiotaleche} it
is enough to show that $h^{0}\left(\S,\os\left(H\right)\right)\neq d+1$.
Since $H\li\C_{0}+bF$ we deduce that
\[
2b=e+d,
\]
moreover $H\cdot\C_{0}=-e+b=d-b>0$, in particular, in view of Lemma
\ref{lem:CUBICSCROLLinvariante} and Theorem \ref{thm:possiblevalueseS},
\[
d>b>e\ge-1.
\]
We see that $b>0$, otherwise $d=-e=1$, which is not possibile. Let
us consider the exact sequence
\[
0\to\os\left(bF\right)\to\os\left(H\right)\to\mathcal{O}_{\C_{0}}\left(H\right)\to0,
\]
in view of Proposition \ref{prop:coomologiacommutapushforward} we
deduce that
\[
h^{0}\left(\S,\os\left(bF\right)\right)=b\,\,\,\,\mbox{and}\,\,\,\,h^{1}\left(\S,\os\left(bF\right)\right)=0.
\]
Let us suppose that $h^{0}\left(\S,\os\left(H\right)\right)=d+1,$
then the exact cohomology sequence
\[
0\to H^{0}\left(\S,\os\left(bF\right)\right)\to H^{0}\left(\S,\os\left(H\right)\right)\to H^{0}\left(\C_{0},\mathcal{O}_{\C_{0}}\left(H\right)\right)\to0
\]
gives $h^{0}\left(\C_{0},\mathcal{O}_{\C_{0}}\left(H\right)\right)=d+1-b$.
We conclude since $H\cdot\C_{0}=d-b$, therefore, in view of Remark
\ref{rem:Cgradominimo}, we deduce $\C_{0}\iso\mathbb{P}^{1}$ which is a
contradiction .
\end{proof}
\begin{prop}
\label{prop:hyperplaneelliptic}Let $\S$ be a surface and $H\in\D\left(\S\right)$
be a very ample divisor with $H^{2}=d$ and $h^{0}\left(\S,\os\left(H\right)\right)=d+1$.
Then the general element $\C\in|H|$ is an elliptic curve.
\end{prop}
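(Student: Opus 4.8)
The plan is to realise $\S$ as a non-degenerate surface in $\mathbb{P}^{d}$, pin down the complete linear series that the general hyperplane section $\C$ carries, and thereby force $g\left(\C\right)=1$ through Lemma \ref{lem:divisorecurvaellittica}. Since $H$ is very ample and $h^{0}\left(\S,\os\left(H\right)\right)=d+1$, the morphism $\varphi_{|H|}$ embeds $\S$ as a non-degenerate surface of degree $H^{2}=d$ in $\mathbb{P}^{d}$. As $\S$ is two-dimensional we have $d\ge2$, and the value $d=2$ is impossible since a surface in $\mathbb{P}^{2}$ must be all of $\mathbb{P}^{2}$, which has degree $1\neq2$; hence $d\ge3$. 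By \cite[Theorem 6.6.1]{Ke} together with Corollary \ref{cor:BertiniII}, the general $\C\in|H|$ is a smooth irreducible curve, and for any such $\C$ we have $\C^{2}=H^{2}=d$ and $\deg\left(\oc\left(H\right)\right)=\C\cdot H=d$.

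Next I would bound $h^{0}\left(\C,\oc\left(H\right)\right)$ from both sides. Tensoring $0\to\os\to\os\left(H\right)\to\oc\left(H\right)\to0$ and taking cohomology gives, since $h^{0}\left(\S,\os\right)=1$,
\[
h^{0}\left(\C,\oc\left(H\right)\right)\ge h^{0}\left(\S,\os\left(H\right)\right)-1=d.
\]
On the other hand $\oc\left(H\right)$ is very ample of degree $d$, so $|\oc\left(H\right)|$ embeds $\C$ as a non-degenerate curve in $\mathbb{P}^{h^{0}\left(\C,\oc\left(H\right)\right)-1}$; Lemma \ref{lem:fondamentalegrado} then yields $d\ge h^{0}\left(\C,\oc\left(H\right)\right)-1$, that is $h^{0}\left(\C,\oc\left(H\right)\right)\le d+1$. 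Thus the only possibilities are $d$ and $d+1$.

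The crux is to exclude the extremal value $h^{0}\left(\C,\oc\left(H\right)\right)=d+1$. In that case $|\oc\left(H\right)|$ would embed $\C$ as a degree-$d$ non-degenerate curve in $\mathbb{P}^{d}$, so by Remark \ref{rem:Cgradominimo} we would have $\C\iso\Pu$. Since $\C^{2}=d>0$, Theorem \ref{thm:equivalence2} would then make $\S$ rational, whence $q\left(\S\right)=h^{1}\left(\S,\os\right)=0$; but then the restriction sequence above is right-exact and forces $h^{0}\left(\C,\oc\left(H\right)\right)=h^{0}\left(\S,\os\left(H\right)\right)-1=d$, a contradiction. Therefore $h^{0}\left(\C,\oc\left(H\right)\right)=d$, and applying Lemma \ref{lem:divisorecurvaellittica} to $D=H_{|\C}$ (with $\deg\left(D\right)=d\ge3$) gives $g\left(\C\right)=1$, so that $\C$ is an elliptic curve.

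I expect the main obstacle to be precisely this dichotomy step: producing the clean lower bound $h^{0}\ge d$, pairing it with the minimal-degree inequality of Lemma \ref{lem:fondamentalegrado} for the upper bound $h^{0}\le d+1$, and then decisively ruling out $h^{0}=d+1$. The last exclusion is where the rationality criterion of Theorem \ref{thm:equivalence2} does the essential work, converting the putative rationality of $\C$ into the vanishing $q\left(\S\right)=0$ that contradicts the hypothesis $h^{0}\left(\S,\os\left(H\right)\right)=d+1$.
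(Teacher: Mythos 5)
Your proof is correct and follows essentially the same route as the paper: the restriction sequence gives $h^{0}\left(\C,\oc\left(H\right)\right)\ge d$, Lemma \ref{lem:fondamentalegrado} gives the upper bound $d+1$, the extremal case is excluded via Remark \ref{rem:Cgradominimo} and Theorem \ref{thm:equivalence2} (rationality forces $q\left(\S\right)=0$ and hence $h^{0}\left(\C,\oc\left(H\right)\right)=d$, a contradiction), and Lemma \ref{lem:divisorecurvaellittica} concludes. Your explicit verification that $d\ge3$, needed to invoke Lemma \ref{lem:divisorecurvaellittica}, is a small point the paper leaves implicit and is a welcome addition.
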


\begin{proof}
In view of \cite[Theorem 6.6.1]{Ke} and Corollary \ref{cor:BertiniII},
the general element $\C\in|H|$ is a smooth irreducible curve. Let
us consider the exact sequence
\[
0\to\os\to\os\left(H\right)\to\oc\left(H\right)\to0,
\]
which gives the exact sequence in cohomology
\[
0\to\cc\to H^{0}\left(\S,\os\left(H\right)\right)\to H^{0}\left(\C,\oc\left(H\right)\right)\to H^{1}\left(\S,\os\right)\to\dots
\]
Since $h^{0}\left(\S,\os\left(H\right)\right)=d+1$, we deduce that
$h^{0}\left(\C,\oc\left(H\right)\right)\ge d$. Moreover $H\cdot\C=H^{2}=d$,
therefore, in view of Lemma \ref{lem:fondamentalegrado}
\[
d+1=H\cdot\C+1\ge h^{0}\left(\C,\oc\left(H\right)\right)\ge d.
\]
Let us suppose $h^{0}\left(\C,\oc\left(H\right)\right)=d+1$, then
$|H|_{|\C}$ embeds $\C$ as a non-degenerate curve of degree $d$
in $\mathbb{P}^{d}$, therefore, in view of Remark \ref{rem:Cgradominimo},
we deduce $\C\iso\Pu$. Since $C^{2}=d>0$, in view of Theorem \ref{thm:equivalence2}
we deduce that $h^{1}\left(\S,\os\right)=0$, then we have an exact
sequence
\[
0\to\cc\to\cc^{d+1}\to\cc^{d+1}\to0,
\]
which is clearly impossible. Hence we have
\[
h^{0}\left(\C,\oc\left(H\right)\right)=d
\]
and, in view of Lemma \ref{lem:divisorecurvaellittica}, we conclude
that $\C$ is an elliptic curve.
\end{proof}
\begin{thm}
\label{thm:almostminimaldegree}Let $\S$ be a surface and suppose
there exists $H$ very ample with $H^{2}=d$ and $h^{0}\left(\S,\os\left(H\right)\right)=d+1$.
Then $\S$ is a del Pezzo surface with $-\ks\li H$, where $H$ is
an hyperplane section. In particular $d\le9$.
\end{thm}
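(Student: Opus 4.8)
The plan is to study the general hyperplane section. Since $H$ is very ample with $h^{0}(\S,\os(H))=d+1$ and $H^{2}=d$, the linear system $|H|$ embeds $\S$ as a nondegenerate surface of degree $d$ in $\mathbb{P}^{d}$, so these are exactly the hypotheses of Proposition \ref{prop:hyperplaneelliptic}, which I would invoke to conclude that the general member $\C\in|H|$ is a smooth irreducible elliptic curve, i.e. $g(\C)=1$. Applying the genus formula (Corollary \ref{cor:genusformula}) to $\C\li H$ gives $1=1+\tfrac{1}{2}(H^{2}+H\cdot\ks)$, hence
\[
(\ks+H)\cdot H=0.
\]
This numerical identity, combined with the effectivity of $\ks+H$ that I will establish at the end, is what will force $-\ks\li H$.

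The heart of the matter is to prove $q(\S)=0$, and I expect the dichotomy on $q(\S)$ to be the main obstacle. First I would bound $q(\S)$ from above: taking cohomology of $0\to\os(-H)\to\os\to\oc\to0$, the term $H^{1}(\S,\os(-H))$ is Serre-dual to $H^{1}(\S,\os(\ks+H))$, which vanishes by the Ramanujam--Kawamata--Viehweg Theorem \ref{thm:RKW} since $H$ is nef and big. Thus $H^{1}(\S,\os)\hookrightarrow H^{1}(\C,\oc)$ and $q(\S)\le g(\C)=1$. To exclude $q(\S)=1$, I would argue by contradiction: if $q(\S)=g(\C)=1$, then Proposition \ref{prop:irregularitygenusellipticscroll} makes $\S$ an elliptic scroll, and Proposition \ref{prop:geometricallyruledbound} then yields $h^{0}(\S,\os(H))\le H^{2}=d$, contradicting the hypothesis $h^{0}(\S,\os(H))=d+1$. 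Hence $q(\S)=0$. The delicate point here is precisely matching these two propositions against the dimension count; everything that follows is formal.

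Finally I would conclude from the restriction sequence
\[
0\to\os(\ks)\to\os(\ks+H)\to\oc(\ks+H)\to0,
\]
where by adjunction $\oc(\ks+H)\iso\omega_{\C}$, so $h^{0}(\C,\oc(\ks+H))=g(\C)=1$. Since $q(\S)=0$, Serre duality gives $H^{1}(\S,\os(\ks))=0$, and the long exact sequence produces $h^{0}(\S,\os(\ks+H))=p_{g}(\S)+1\ge1$. Therefore $\ks+H$ is linearly equivalent to an effective divisor $D$; but $D\cdot H=(\ks+H)\cdot H=0$ while $H$ is ample, so every component of $D$ would meet $H$ positively (Theorem \ref{thm:NakaiMoishezon}), forcing $D=0$ and hence $-\ks\li H$. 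As $H$ is very ample, $-\ks$ is ample and $\S$ is a del Pezzo surface; and since $d(\S)=\ks^{2}=H^{2}=d$, the bound $d\le9$ follows immediately from Lemma \ref{lem:DelPezzogradomassimo}.
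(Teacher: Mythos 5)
Your proposal is correct and follows essentially the same route as the paper: reduce to $(\ks+H)\cdot H=0$ via Proposition \ref{prop:hyperplaneelliptic} and the genus formula, bound $q(\S)\le 1$ using Theorem \ref{thm:RKW}, exclude $q(\S)=1$ by combining Propositions \ref{prop:irregularitygenusellipticscroll} and \ref{prop:geometricallyruledbound}, and then extract $-\ks\li H$ from the effectivity of $\ks+H$. The only (immaterial) difference is that you obtain $q(\S)\le1$ from the restriction sequence for $\os$ rather than for $\os(\ks+H)$.
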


\begin{proof}
Let $H$ be an hyperplane section, in view of Proposition \ref{prop:hyperplaneelliptic}
and Corollary \ref{cor:genusformula} we deduce that
\[
\left(\ks+H\right)\cdot H=0.
\]
Let $\C\in|H|$ be a smooth irreducible curve. In view of Proposition
\ref{prop:hyperplaneelliptic}, the curve $\C$ is elliptic, therefore,
in view of adjunction formula \cite[Corollary 6.3.16]{Ke} we have
\[
\oc\left(\ks+H\right)\iso\oc,
\]
then we have the exact sequence
\[
0\to\os\left(\ks\right)\to\os\left(\ks+H\right)\to\oc\to0.
\]
In view of Theorem \ref{thm:RKW}, taking cohomology we have an exact
sequence

\begin{tikzpicture}[descr/.style={fill=white,inner sep=1.5pt}]
\matrix (m) [
matrix of math nodes,
row sep=1em,
column sep=2.5em,
text height=1.5ex, text depth=0.25ex
]
{
0 & H^0(\S ,\os\left(\ks\right)) & H^0(\S ,\os\left(\ks +H\right)) & H^0(\C , \oc) \\
& H^1(\S ,\os\left(\ks\right)) & 0, &  \\
};

\path[overlay,->, font=\scriptsize,>=latex]
(m-1-1) edge (m-1-2)
(m-1-2) edge (m-1-3)
(m-1-3) edge (m-1-4)
(m-1-4) edge[out=355,in=175] (m-2-2)
(m-2-2) edge (m-2-3) ;
\end{tikzpicture}

from which, using Serre duality, we have that $q\left(\S\right)\le1$.
It follows from Propositions \ref{prop:irregularitygenusellipticscroll}
and \ref{prop:geometricallyruledbound} that the case $q\left(\S\right)=1$
cannot occur, therefore we conclude that $h^{1}\left(\S,\os\right)=0$
and that
\[
h^{0}\left(\S,\os\left(\ks+H\right)\right)=1,
\]
since $\left(\ks+H\right)\cdot H=0$, this gives $-\ks\li H$, therefore
$\S$ is a del Pezzo surface embedded by the anticanonical system
which is very ample. The last statement follows from Lemma \ref{lem:DelPezzogradomassimo}.
\end{proof}

\newpage
\nocite{*}
\bibliographystyle{alpha}
\bibliography{referencesMinimalModel}

\end{document}